%
%
%
%
%
%
%
%
\documentclass[12pt]{article}
\usepackage{amssymb,amsmath, amsthm,latexsym, verbatim, amscd}
\usepackage{here}
\overfullrule=0pt
  \newtheorem{theorem}{Theorem}[section] %
  \newtheorem{proposition}[theorem]{Proposition} %
  \newtheorem{lemma}[theorem]{Lemma} %
  
  \newtheorem{corollary}[theorem]{Corollary} %

  \newtheorem{definition-theorem}[theorem]{Definition-Theorem}
\theoremstyle{definition} %
  \newtheorem{definition}[theorem]{Definition} %
  \newtheorem{example}[theorem]{Example} %
  \newtheorem{prob}[theorem]{Problem}

  \newtheorem{remark}[theorem]{Remark} %
   %
%

\def \Piinfty{\Pi}
\def \piinfty{\pi}




\def\rarrowsim{\smash{\mathop{\,\rightarrow\,}\limits
  ^{\lower1.5pt\hbox{$\scriptstyle\sim$}}}}


\newcommand{\invHom}[3]{\operatorname{Hom}_{#1}(#2,#3)}

\numberwithin{equation}{section}
\numberwithin{equation}{section}
\setcounter{equation}{0}

\numberwithin{table}{section}

\begin{document}
\title{Bounded multiplicity theorems for induction and restriction} 
\author{Toshiyuki KOBAYASHI}
\date{} %
\maketitle %

\begin{abstract}
We prove a geometric criterion
 for the bounded multiplicity property
 of \lq\lq{small}\rq\rq\
 infinite-dimensional representations
 of real reductive Lie groups
 in both induction and restrictions.

Applying the criterion to symmetric pairs, 
 we give a full description of the triples $H \subset G \supset G'$
 such that any irreducible admissible representations of $G$
 with $H$-distinguished vectors
 have the bounded multiplicity property 
 when restricted to the subgroup $G'$.  
This article also completes the proof of the general results
 announced in the previous paper
 [Adv.~Math. 2021, Section 7].  
\end{abstract}

{MSC 2020: Primary  22E46; 
          Secondary 
                    22E45, 
53D50, 
58J42, 
53C50. 
}

\section{Introduction}
In \cite{Ksuron, xktoshima}
we initiated a new line of investigation
 on the finiteness or the boundedness of {\it{multiplicities}}
 in induction and restriction, 
 and proposed a new avenue of research
 by clarifying a \lq\lq{nice framework}\rq\rq\
 for both global analysis 
 and branching problems
 with \lq\lq{firm grip}\rq\rq\ of group representations.  
This article gives its refinement
 by focusing
 on a family of \lq\lq{small}\rq\rq\ infinite-dimensional representations
 such as irreducible representations of $G$
 having $H$-distinguished vectors
 for reductive symmetric pairs $(G,H)$.

Let $G$ be a real reductive algebraic Lie group 
 with Lie algebra ${\mathfrak{g}}$.  
We assume $G$ is contained in a connected complex Lie group
 $G_{\mathbb{C}}$
 with Lie algebra ${\mathfrak{g}}_{\mathbb{C}}={\mathfrak{g}} \otimes_{\mathbb{R}}{\mathbb{C}}$, 
 though this assumption is easily relaxed.  
Let ${\mathcal{M}}(G)$ be the category
 of finitely generated, smooth admissible representations of $G$
 of moderate growth, 
 sometimes referred to as the {\it{Casselman--Wallach globalization}} 
 \cite[Chap.~11]{WaI}.  
Denote by $\operatorname{Irr}(G)$
 the set of irreducible objects
 in ${\mathcal{M}}(G)$, 
 and by $\operatorname{Irr}(G)_f$
 that of irreducible finite-dimensional ones.

We shall use the uppercase letter $\Pi$
 for representations of the group $G$, 
 and the lowercase letter $\pi$
 for those of a subgroup.

Suppose $G'$ is a reductive subgroup of $G$.  
For $\Piinfty \in \operatorname{Irr}(G)$
 and $\piinfty \in \operatorname{Irr}(G')$, 
 we define the {\it{multiplicity}} 
 of the restriction $\Pi|_{G'}$ 
 in the category ${\mathcal{M}}$ by 
\begin{equation}
\label{eqn:JLTp2}
[{\Pi}|_{G'}: \pi]
:= \dim_{\mathbb{C}} \invHom {G'}{\Piinfty|_{G'}}{\piinfty}
\in {\mathbb{N}} \cup \{\infty\}, 
\end{equation}
where 
$\operatorname{Hom}_{G'}(\,,\,)$ denotes the space 
 of continuous $G'$-homomorphisms
 between the Fr{\'e}chet representations.

In \cite[Thms.~C and D]{xktoshima}
 we established the following geometric criteria:

{\bf{Bounded multiplicity}} for a pair $(G,G')$: \enspace
\begin{equation}
\label{eqn:BB}
  \underset{\Pi \in \operatorname{Irr}(G)}\sup\,\,
  \underset{\pi \in \operatorname{Irr}(G')}\sup\,\,
  [\Pi|_{G'}:\pi]<\infty
\end{equation}
if and only if $(G_{\mathbb{C}} \times G_{\mathbb{C}}')/\operatorname{diag} G_{\mathbb{C}}'$ is spherical.

{\bf{Finite multiplicity}} for a pair $(G,G')$:\enspace
\begin{equation}
\label{eqn:PP}
  [\Pi|_{G'}:\pi]<\infty
\qquad
\text{for all $\Pi \in \operatorname{Irr}(G)$
 and $\pi \in \operatorname{Irr}(G')$}  
\end{equation}
if and only if $(G \times G')/\operatorname{diag} G'$ is real spherical.

Here we recall 
 that a complex $G_{\mathbb{C}}$-manifold $X$
 is called {\it{spherical}}
 if a Borel subgroup of $G_{\mathbb{C}}$
 has an open orbit in $X$, 
 and that a $G$-manifold $Y$ is called
 {\it{real spherical}}
 if a minimal parabolic subgroup of $G$
 has an open orbit in $Y$.

A remarkable feature of the above criterion is
 that the bounded multiplicity property \eqref{eqn:BB} is determined
 only by the pair of the complexified Lie algebras $({\mathfrak{g}}_{\mathbb{C}}, {\mathfrak{g}}_{\mathbb{C}}')$, 
 hence the classification
 of the pairs $(G,G')$ satisfying \eqref{eqn:BB}
 is reduced to a classical one \cite{xkramer}:
the pair $({\mathfrak{g}}, {\mathfrak{g}}')$ is 
 any real form 
 in the direct sum of the following pairs 
up to abelian ideals:

\begin{equation}
\label{eqn:BBlist}
({\mathfrak{g}}_{\mathbb{C}}, {\mathfrak{g}}_{\mathbb{C}}')
=
({\mathfrak{sl}}_n, {\mathfrak{gl}}_{n-1}), 
({\mathfrak{so}}_{n}, {\mathfrak{so}}_{n-1}),
\text{ or } 
({\mathfrak{so}}_8, {\mathfrak{spin}}_7). 
\end{equation}

When \eqref{eqn:BBlist} holds, 
 the supremum in \eqref{eqn:BB} equals one
 for many of the real forms
 such as $(SO(p,q), SO(p-1,q))$ or 
 $(SU(p,q), U(p-1,q))$
 \cite{xsunzhu}.

On the other hand, 
 the finite multiplicity property \eqref{eqn:PP}
 depends on real forms.  
It is fulfilled 
 for a Riemannian symmetric pair
 by Harish-Chandra's admissibility theorem, 
 whereas  it is not the case
 for some reductive symmetric pairs
 such as $(G,G')=(S L(p+q,{\mathbb{R}}), S O(p,q))$.  
A complete classification of the irreducible symmetric pairs
 $(G,G')$ satisfying 
 the finite multiplicity property \eqref{eqn:PP} was accomplished
 in \cite{xKMt}
 based on the above geometric criterion.

To go beyond these cases, 
 we observe
 that even when the pair $(G,G')$ does not satisfy
 the bounded multiplicity property \eqref{eqn:BB}
 or more broadly, 
 the finiteness property \eqref{eqn:PP}, 
 there may still exist a specific $\Pi \in \operatorname{Irr}(G)$
 for which a detailed study 
 of the restriction $\Pi|_{G'}$ 
 will be reasonable.  
Such $\Pi$ should be a \lq\lq{small}\rq\rq\ representation
 relative to the subgroup $G'$
 in some sense.  
This observation suggests 
 to look at the triple $(\Pi, G, G')$
 rather than a pair $(G,G')$ of groups.  
This formulation has been successful 
 in the study of $G'$-{\it{admissible restriction}}
 of $\Pi$, 
 namely, 
 the restriction $\Pi|_{G'}$ of $\Pi \in \operatorname{Irr}(G)$
 being discretely decomposable with finite multiplicity, 
 see \cite{K94Invent, xkAnn98, K98b, K19b}
 for the general theory, 
 and \cite{KO12, KO15} for some classification theory
 of the triples $(\Pi,G,G')$.

In this article, 
 we allow the case 
 where the restriction $\Pi|_{G'}$ is not 
 \lq\lq{discretely decomposable}\rq\rq, 
 and highlight the {\it{bounded multiplicity property}}.  
For this purpose, 
 we consider for $\Pi \in \operatorname{Irr}(G)$
 the following quantity:
\begin{equation}
\label{eqn:msup}
m(\Pi|_{G'}):=\underset{\pi \in \operatorname{Irr}(G')}\sup [\Pi|_{G'}:\pi]
 \in {\mathbb{N}} \cup \{\infty\}.  
\end{equation}

We address the following:
\begin{prob}
\label{q:Bdd}
Given a pair $(G,G')$, 
 find a subset $\Omega \equiv \Omega(G')$
 of $\operatorname{Irr}(G)$
 (or of ${\mathcal{M}}(G)$)
 such that 
$
  \underset{\Pi \in \Omega} \sup \,\, m(\Pi|_{G'})<\infty.  
$
\end{prob}

We note that Problem \ref{q:Bdd} is nontrivial 
 even when $G$ is a compact Lie group
 where $m(\Pi|_{G'})$ is individually finite, 
 see Example \ref{ex:SU3}
 for the case $(G, G')=(SU(3), SO(3))$.  
We begin with an observation of two opposite extremal choices of $\Omega$:
 a singleton {\it{vs}} the whole set $\operatorname{Irr}(G)$.    
When $\Omega$ is a singleton, 
 Problem \ref{q:Bdd} concerns the triple $(\Pi, G, G')$
 for which $\Pi \in \operatorname{Irr}(G)$ satisfies
 the bounded multiplicity property
 $m(\Pi|_{G'})<\infty$, 
 see \cite[Probl.~6.2 (2)]{xKVogan2015}.  
When $\Omega = \operatorname{Irr}(G)$, 
Problem \ref{q:Bdd} is nothing but the bounded multiplicity property
 \eqref{eqn:BB}
 for the pair $(G,G')$, 
 and the aforementioned geometric criterion was proved
 in \cite[Thm.~D]{xktoshima}.  
We are particularly interested in the intermediate case
 $\Omega=\operatorname{Irr}(G)_H$, 
  the infinite set of $H$-distinguished 
 irreducible representations of $G$.  
We also discuss Problem \ref{q:Bdd}
 when $\Omega$ is a subset
 of degenerate principal series representations, 
 see Theorem \ref{thm:introQsph} below.

Let us fix some notation.  
For $\Pi \in \operatorname{Irr}(G)$, 
 we denote by $\Pi^{-\infty}$ 
 the representation on the space
 of distribution vectors, 
 that is, 
 the topological dual of $\Pi$.  
For a closed subgroup $H$ of $G$, 
 we set
\begin{equation}
\label{eqn:IrrGH}
\operatorname{Irr}(G)_H
:=\{\Pi \in \operatorname{Irr}(G)
    :(\Pi^{-\infty})^H \ne \{0\}\}.  
\end{equation}

Let $\Pi^{\vee}$ be the contragredient representation
 in the category ${\mathcal{M}}(G)$.  
Then, 
 one has
 $\Pi \in \operatorname{Irr}(G)_H$
 if and only if $\invHom G {\Pi^{\vee}}{C^{\infty}(G/H)} \ne \{0\}$
 by the Frobenius reciprocity.  
Elements $\Pi$ in $\operatorname{Irr}(G)_H$
 (or $\Pi^{\vee}$)
 are sometimes referred to as {\it{$H$-distinguished}}, 
 or having {\it{nonzero $H$-periods}}.  
As a concrete setting of Problem \ref{q:Bdd}, 
 we study the following problem 
 when $(G,H)$ is a reductive symmetric pair.  
In this case, 
 all elements in $\operatorname{Irr}(G)_H$
 are quite \lq\lq{small}\rq\rq\
 representations in general, 
 see  {\it{e.g.}}, Proposition \ref{prop:GKdim}
 for an estimate of the Gelfand--Kirillov dimension.  

\begin{prob}
\label{q:bdd}
Find a criterion for the triple $H \subset G \supset G'$
 with bounded multiplicity property
 for the restriction:
\begin{equation}
\label{eqn:BBH}
\underset{\Pi \in \operatorname{Irr}(G)_H} \sup\,\,
m(\Pi|_{G'}) < \infty.  
\end{equation}
\end{prob}

In \cite[Thm.~7.6]{K21}
 we have given a geometric answer to Problem \ref{q:bdd}, 
 see Theorem \ref{thm:bdd} below, 
 together with some motivations, 
examples, and perspectives, 
but have postponed the detailed proof until this article.  
We also prove a full classification of the triples $(G,H,G')$
 satisfying the bounded multiplicity property \eqref{eqn:BBH}
 in the setting 
 where $(G,G')$ is a symmetric pair.

We recall that $(G,H)$ is a {\it{symmetric pair}}
 defined by an involution $\sigma$ of $G$, 
 if $H$ is an open subgroup
 of $G^{\sigma}=\{g \in G: \sigma g = g\}$.  
The same letter $\sigma$ will be used 
 to denote its differential.  
We take a maximal semisimple abelian subspace ${\mathfrak{j}}$
 in ${\mathfrak{g}}^{-\sigma}=\{X \in {\mathfrak{g}}:\sigma X=-X\}$.  
The dimension of ${\mathfrak{j}}$ is independent
 of the choice of such a subspace, 
 and is called the {\it{rank}} of the symmetric space $G/H$.  
We introduce the following terminology:

\begin{definition}
[Borel subalgebra for $G/H$]
\label{def:Borel}
A {\it{Borel subalgebra}}
 for the symmetric space $G/H$ is the complex parabolic subalgebra
 ${\mathfrak{q}}$
 of ${\mathfrak{g}}_{\mathbb{C}}$
 associated to a positive system $\Sigma^+({\mathfrak{g}}_{\mathbb{C}},{\mathfrak{j}}_{\mathbb{C}})$.  
We say the corresponding complex parabolic subgroup
 $Q$ $(\subset G_{\mathbb{C}})$
 is a {\it{Borel subgroup}} for the symmetric space $G/H$.  
\end{definition}

Borel subalgebras for the symmetric space $G/H$ are unique 
 up to inner automorphisms of ${\mathfrak{g}}_{\mathbb{C}}$.  
We sometimes write ${\mathfrak{b}}_{G/H}$ for ${\mathfrak{q}}$, 
 and $B_{G/H}$ for $Q$.  
We note that a Borel subalgebra ${\mathfrak{b}}_{G/H}$
 for $G/H$
 is determined 
 only by the complexification $({\mathfrak{g}}_{\mathbb{C}}, {\mathfrak{h}}_{\mathbb{C}})$, 
 and that it is not necessarily solvable.  
If ${\mathfrak{b}}_{G/H}$ is solvable 
 then the regular representation on $L^2(G/H)$ is tempered
 \cite[Thm.~1.1]{BK21}.

We prove the following.  
\begin{theorem}
\label{thm:bdd}
Let $B_{G/H}$ be a Borel subgroup for a reductive symmetric space $G/H$.  
Suppose $G'$ is an algebraic reductive subgroup of $G$, 
 and $G_U'$ is a compact real form of $G_{\mathbb{C}}'$.  
Then the following three conditions on the triple $(G,H,G')$
 are equivalent:
\begin{enumerate}
\item[{\rm{(i)}}]
$\underset{\Pi \in \operatorname{Irr}(G)_H}\sup m(\Pi|_{G'}) <\infty$.  

\item[{\rm{(ii)}}]
$G_{\mathbb{C}}/{B_{G/H}}$ is $G_{\mathbb{C}}'$-spherical.  
\item[{\rm{(iii)}}]
$G_{\mathbb{C}}/{B_{G/H}}$ is $G_U'$-strongly visible
 (\cite[Def.~3.3.1]{xrims40}).  
\end{enumerate}
\end{theorem}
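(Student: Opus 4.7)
The plan is to connect condition (i) to condition (ii) by realizing every $H$-distinguished irreducible representation of $G$ as a subquotient of a degenerate principal series induced from the Borel subgroup $B_{G/H}$, and then to reduce to the geometric bounded multiplicity criterion of \cite[Thm.~D]{xktoshima}. The equivalence (ii)$\iff$(iii) will follow from the general equivalence between holomorphic sphericity and strong visibility under a compact real form, which is a theme developed in \cite{xrims40}.

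First, I would establish the key reduction step: every $\Pi \in \operatorname{Irr}(G)_H$ appears as an irreducible subquotient of a degenerate principal series
\[
I(\lambda) := \operatorname{Ind}_{B_{G/H}}^G(\mathbb{C}_\lambda),
\]
with $\lambda$ determined by the infinitesimal character of $\Pi$. This should follow from the boundary value theory for reductive symmetric spaces (Oshima--Matsuki, Delorme) together with the fact that the defining open $H_{\mathbb{C}}$-orbit on $G_{\mathbb{C}}/B_{G/H}$ produces, by the Poisson transform, an $H$-invariant distribution vector in $I(\lambda)^{-\infty}$. The consequence needed is the multiplicity inequality $m(\Pi|_{G'}) \le m(I(\lambda)|_{G'})$.

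Next, for the induced family I would apply the orbit-theoretic criterion of \cite[Thm.~D]{xktoshima}, in the form adapted to induction from a (not necessarily minimal) complex parabolic: it asserts that $\sup_{\lambda} m(I(\lambda)|_{G'}) < \infty$ if and only if $G_{\mathbb{C}}/B_{G/H}$ is $G_{\mathbb{C}}'$-spherical. Combined with the previous step this yields (ii)$\Rightarrow$(i). For the converse (i)$\Rightarrow$(ii), assuming (ii) fails one picks a generic $\lambda$ for which $I(\lambda)$ is itself irreducible; by the construction of the $H$-period above it lies in $\operatorname{Irr}(G)_H$, while the same orbit criterion forces $m(I(\lambda)|_{G'}) = \infty$, contradicting (i). The equivalence (ii)$\iff$(iii) is purely geometric and should be handled by the author's general theorem that for a complex $G_{\mathbb{C}}'$-manifold, $G_{\mathbb{C}}'$-sphericity is equivalent to $G_U'$-strong visibility in the sense of \cite[Def.~3.3.1]{xrims40}; the forward direction uses a Lagrangian slice transverse to the open orbit, and the reverse complexifies the visible slice.

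The main obstacle I expect to encounter is the first reduction step. Uniformly realizing every element of $\operatorname{Irr}(G)_H$ inside some $I(\lambda)$ requires controlling $H$-distinguishedness across reducibility points, where the composition series of $I(\lambda)$ degenerates and $H$-invariant distribution vectors may be lost on some constituents and gained on others; making the multiplicity inequality survive this degeneration (and survive passage to subquotients) is the delicate analytic content, separate from the purely geometric criterion of \cite{xktoshima} that does the heavy lifting afterwards.
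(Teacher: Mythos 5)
There is a genuine gap at the very first step, and it propagates through the proposal. You set $I(\lambda) := \operatorname{Ind}_{B_{G/H}}^G(\mathbb{C}_\lambda)$, but $B_{G/H}$ is by definition a \emph{complex} parabolic subgroup of $G_{\mathbb{C}}$ (it is the complexification of a minimal parabolic of the auxiliary real form $\mathfrak{g}_{\mathbb{R}}=\mathfrak{g}_{\mathbb{C}}^{\sigma\theta}$, not of $\mathfrak{g}$), and in general the Lie algebra $\mathfrak{b}_{G/H}$ is \emph{not} defined over $\mathbb{R}$ with respect to $\mathfrak{g}$. So $B_{G/H}$ does not correspond to a real parabolic subgroup of $G$, and the induction $\operatorname{Ind}_{B_{G/H}}^G$ is not a well-defined functor on representations of $G$. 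The paper's entire ``$QP$-estimate'' machinery — the pair consisting of a \emph{real} parabolic $P$ together with a complex parabolic subalgebra $\mathfrak{q}\subset\mathfrak{p}_{\mathbb{C}}$, and the set $\operatorname{Irr}(P;\mathfrak{q})_f$ of finite-dimensional $P$-modules containing a relatively $\mathfrak{q}$-invariant vector — exists precisely to finesse this point: the actual realization (Theorem~\ref{thm:quotient}) writes $\Pi$ as a \emph{quotient} of $\operatorname{Ind}_{P_{G/H}}^G(\xi)$ for the real minimal parabolic $P_{G/H}$ for $G/H$ with $\xi\in\operatorname{Irr}(P_{G/H};\mathfrak{b}_{G/H})_f$, where $\xi$ is in general a genuinely higher-dimensional irreducible $P_{G/H}$-module, not a character. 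Your sketch collapses this to a character $\mathbb{C}_\lambda$ induced directly from $B_{G/H}$, which is not available.

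Two further issues. First, you use ``subquotient,'' but $m(\Pi|_{G'})\le m(I|_{G'})$ is not automatic for a subquotient $\Pi$ of $I$ — multiplicities in the Casselman--Wallach category are not monotone that way; the paper specifically passes to the dual and uses \emph{quotients}, for which the inequality follows by precomposition. Second, your argument for (i)$\Rightarrow$(ii) proposes choosing a generic $\lambda$ so that $I(\lambda)$ is irreducible and $H$-distinguished; even setting aside the $B_{G/H}$ issue, that route is delicate and the paper deliberately avoids it: Theorem~\ref{thm:bddfd} instead exhibits \emph{finite-dimensional} $H$-distinguished representations of unbounded multiplicity. This requires a genuinely nontrivial step you have not flagged, namely that the lattice $\operatorname{Irr}(G)_{H,f}\simeq\Lambda_+(\mathfrak{g}_{\mathbb{C}};\mathfrak{h}_{\mathbb{C}})$ provided by the Cartan--Helgason theorem can be much smaller than $\operatorname{Irr}(\mathfrak{g};\mathfrak{b}_{G/H})_f$, and one must shift the witness weight from Lemma~\ref{lem:VKN} by a very dominant element of $\Lambda_+(\mathfrak{g}_{\mathbb{C}};\mathfrak{h}_{\mathbb{C}})$ to land it inside $\operatorname{Irr}(G)_{H,f}$ without destroying the multiplicity growth; this is what the monotonicity Lemma~\ref{lem:increase} is for. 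The ``delicate analytic content'' you anticipated (tracking $H$-periods across reducibility points of a principal series) is not where the actual difficulty sits.
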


A special case of Theorem \ref{thm:bdd} includes
 the tensor product case.  
For $\Pi_1, \Pi_2 \in \operatorname{Irr}(G)$, 
 we set 
\begin{equation}
\label{eqn:JLTp5}
m(\Pi_1 \otimes \Pi_2):=
\underset{\Pi \in \operatorname{Irr}(G)}\sup
\dim_{\mathbb{C}} \invHom G{\Pi_1 \otimes \Pi_2}{\Pi}
\in {\mathbb{N}} \cup \{\infty\}.  
\end{equation}

\begin{theorem}
[Tensor product]
\label{thm:tensor}
Suppose that $(G,H_j)$ are reductive symmetric pairs, 
 and that $B_{G/H_{j}}$ are Borel subgroups
 for $G/H_j$ for $j=1,2$.  
Then the following three conditions on the triple 
 $(G,H_1, H_2)$ are equivalent:
\begin{enumerate}
\item[{\rm{(i)}}]
\begin{equation}
\label{eqn:bddt}
  \underset{\Pi_1 \in \operatorname{Irr}(G)_{H_1}}\sup\,\,
  \underset{\Pi_2 \in \operatorname{Irr}(G)_{H_2}}\sup\,\,
  m(\Pi_1 \otimes \Pi_2) <\infty.  
\end{equation}
\item[{\rm{(ii)}}]
$(G_{\mathbb{C}} \times G_{\mathbb{C}})/(B_{G/H_{1}} \times B_{G/H_{2}})$
 is $G_{\mathbb{C}}$-spherical via the diagonal action.  
\item[{\rm{(iii)}}]
$(G_{\mathbb{C}} \times G_{\mathbb{C}})/(B_{G/H_{1}} \times B_{G/H_{2}})$
 is $G_U$-strongly visible via the diagonal action.  
\end{enumerate}
\end{theorem}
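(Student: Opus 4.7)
The strategy is to reduce Theorem \ref{thm:tensor} to Theorem \ref{thm:bdd} via the standard doubling trick. I would set $\widetilde G := G \times G$ with involution $\sigma_1 \times \sigma_2$, and take $\widetilde H := H_1 \times H_2$, making $(\widetilde G, \widetilde H)$ a reductive symmetric pair. Let $\widetilde G' := \operatorname{diag}(G)$, an algebraic reductive subgroup of $\widetilde G$ whose compact real form inside $\widetilde G_{\mathbb{C}}$ is $\widetilde G'_U = \operatorname{diag}(G_U)$. The aim is to verify that the three conditions of Theorem \ref{thm:bdd} applied to the triple $(\widetilde G, \widetilde H, \widetilde G')$ correspond one-to-one with the three conditions of Theorem \ref{thm:tensor}.

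For the representation side, every $\widetilde\Pi \in \operatorname{Irr}(\widetilde G)$ is of the form $\Pi_1 \boxtimes \Pi_2$ with $\Pi_j \in \operatorname{Irr}(G)$, and $\widetilde\Pi|_{\operatorname{diag} G}$ coincides with the inner tensor product $\Pi_1 \otimes \Pi_2$ in $\mathcal{M}(G)$. Hence
\[
[\widetilde\Pi|_{\widetilde G'} : \Pi] = \dim \invHom{G}{\Pi_1 \otimes \Pi_2}{\Pi}
\qquad \text{for every } \Pi \in \operatorname{Irr}(G),
\]
so $m(\widetilde\Pi|_{\widetilde G'}) = m(\Pi_1 \otimes \Pi_2)$. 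Combined with the Künneth-type identity $((\Pi_1 \boxtimes \Pi_2)^{-\infty})^{H_1 \times H_2} \cong (\Pi_1^{-\infty})^{H_1} \otimes (\Pi_2^{-\infty})^{H_2}$, which yields $\widetilde\Pi \in \operatorname{Irr}(\widetilde G)_{\widetilde H}$ if and only if $\Pi_j \in \operatorname{Irr}(G)_{H_j}$ for both $j=1,2$, this identifies condition (i) of Theorem \ref{thm:bdd} for $(\widetilde G, \widetilde H, \widetilde G')$ with condition (i) of Theorem \ref{thm:tensor}.

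For the geometric side, a maximal semisimple abelian subspace of $\widetilde{\mathfrak g}^{-\widetilde\sigma}$ is $\mathfrak j_1 \oplus \mathfrak j_2$, where $\mathfrak j_i$ is such a subspace for $G/H_i$; with the direct-sum positive system, a Borel subalgebra for $\widetilde G/\widetilde H$ is $\mathfrak b_{G/H_1} \oplus \mathfrak b_{G/H_2}$. Therefore $B_{\widetilde G/\widetilde H} = B_{G/H_1} \times B_{G/H_2}$ and $\widetilde G'_{\mathbb{C}} = \operatorname{diag}(G_{\mathbb{C}})$ acts diagonally on $\widetilde G_{\mathbb{C}}/B_{\widetilde G/\widetilde H}$, so conditions (ii) and (iii) of Theorem \ref{thm:bdd} for the triple $(\widetilde G, \widetilde H, \widetilde G')$ read verbatim as conditions (ii) and (iii) of Theorem \ref{thm:tensor}. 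The equivalences then follow directly from Theorem \ref{thm:bdd}. The main technical point that needs care is the Künneth-type identification of distribution-vector periods on external tensor products; this can be handled via automatic continuity for Casselman--Wallach representations, or sidestepped by descending to the underlying $(\mathfrak g, K)$-modules, where Künneth for invariant functionals is formal.
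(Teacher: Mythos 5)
Your reduction to Theorem \ref{thm:bdd} via the doubling $(G\times G,\,H_1\times H_2,\,\operatorname{diag}G)$, together with the identifications $B_{\widetilde G/\widetilde H}=B_{G/H_1}\times B_{G/H_2}$ and $(\Pi_1\boxtimes\Pi_2)|_{\operatorname{diag}G}=\Pi_1\otimes\Pi_2$, is exactly the argument the paper gives in Section 5.3. The only difference is that you spell out the K\"unneth-type identification of $H_1\times H_2$-periods with pairs of $H_j$-periods, which the paper leaves implicit; this is a correct and welcome clarification, not a departure in method.
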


The classification theory for spherical varieties
 {\it{e.g.}}, \cite{xhnoo}, 
 or alternatively that for strongly visible actions
 {\it{e.g.}}, \cite{xtanaka12}
leads us to the classification
 of the triples $(G,H,G')$
 for Theorem \ref{thm:bdd}
 and the triples $(G,H_1, H_2)$ for Theorem \ref{thm:tensor}.  
See Theorems \ref{thm:listreal}, \ref{thm:cpxlist}, \ref{thm:gplist}, 
 and \ref{thm:tensorlist}
 for a full description.

Although \lq\lq{smallness}\rq\rq\
 of the representation $\Pi \in \operatorname{Irr}(G)$
 should be necessary in some sense
 for the boundedness property
 $m(\Pi|_{G'})<\infty$
 of the restriction $\Pi|_{G'}$, 
 invariants such as the associated variety
 are not informative enough
 for Problem \ref{q:bdd}, 
 as one may notice that a delicate example
 already shows up in the compact setting, 
 see Example \ref{ex:SU3}.  
To overcome this difficulty, 
 a key idea of our proof is 
 to use \lq\lq{$QP$ estimates}\rq\rq\
 which implement a pair
 of parabolic subalgebras ${\mathfrak{q}} \subset {\mathfrak{p}}_{\mathbb{C}}$ 
 dealing with the induction from $P$ to $G$, 
 where ${\mathfrak{q}}$ is not necessarily defined over ${\mathbb{R}}$.  
For a finite-dimensional irreducible $P$-module $\eta$, 
 we define $d_{\mathfrak{q}}(\eta)$
 to be the minimum of the dimensions 
 of non-zero ${\mathfrak{q}}$-submodules in $\eta$, 
 and denote by $\operatorname{Irr}(P;{\mathfrak{q}})_f$
 the subset of $\operatorname{Irr}(P)_f$
 with $d_{\mathfrak{q}}(\eta)=1$.

We deduce Theorem \ref{thm:bdd} from the following two results:
Theorem \ref{thm:introQsph} below gives
 \lq\lq{$QP$ estimates for restriction}\rq\rq\
 and Theorem \ref{thm:introquotient} is 
 a generalization of Harish-Chandra's subquotient theorem
 and Casselman's subrepresentation theorem
 for $H$-distinguished representations of $G$.

Let $\Omega_P:=\{\operatorname{Ind}_P^G (\xi):\text{$\xi$ is a character of $P$}\}$.  
We set 
\begin{equation}
\label{eqn:OmegaPQ}
  (\Omega_P \subset)\,\, \Omega_{P,{\mathfrak{q}}}
:=\{\operatorname{Ind}_P^G(\xi)
  :\xi \in \operatorname{Irr}(P;{\mathfrak{q}})_f\}
\quad
(\subset {\mathcal{M}}(G)).
\end{equation}
\begin{theorem}
[see Thereom \ref{thm:Qsph}]
\label{thm:introQsph}
Let $G \supset G'$ be a pair of real reductive algebraic Lie groups, 
 $P$ a parabolic subgroup of $G$, 
 and $Q$ a complex subgroup of $G_{\mathbb{C}}$
 such that ${\mathfrak{q}} \subset {\mathfrak{p}}_{\mathbb{C}}$.  
Then one has the equivalence:
\begin{equation}
\label{eqn:JLTthm16}
\text{
$G_{\mathbb{C}}/Q$ is $G_{\mathbb{C}}'$-spherical}
\iff
\text{
$
  \underset{\Pi \in \Omega_{P,{\mathfrak{q}}}}\sup\,\,
  m(\Pi|_{G'}) <\infty.  
$}
\end{equation}
\end{theorem}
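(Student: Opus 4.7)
The plan is to reduce both directions of the equivalence to the Vinberg--Kimelfeld dictionary between $G'_{\mathbb{C}}$-sphericity of $G_{\mathbb{C}}/Q$ and uniformly bounded multiplicities in the holomorphic induced representations $\operatorname{Ind}_Q^{G_{\mathbb{C}}}(\chi)$. The bridge between the real parabolic inductions in $\Omega_{P,\mathfrak{q}}$ and the complex-analytic data on $G_{\mathbb{C}}/Q$ is provided by the hypothesis $\mathfrak{q}\subset\mathfrak{p}_{\mathbb{C}}$: if $\xi\in\operatorname{Irr}(P;\mathfrak{q})_f$, the condition $d_{\mathfrak{q}}(\xi)=1$ furnishes a character $\chi$ of $\mathfrak{q}$ (which integrates to a character of $Q$, possibly after passing to a finite cover) together with a $\mathfrak{q}$-equivariant embedding $\mathbb{C}_\chi\hookrightarrow\xi$. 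Exploiting the fibration $G_{\mathbb{C}}/Q\to G_{\mathbb{C}}/P_{\mathbb{C}}$ and the fact that $G/P$ sits as a real form of the base, the first step is to construct a canonical continuous $G$-intertwiner relating $\operatorname{Ind}_P^G(\xi)$ to the space of sections of the holomorphic line bundle $L_\chi\to G_{\mathbb{C}}/Q$.

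For the forward direction, assume $G_{\mathbb{C}}/Q$ is $G'_{\mathbb{C}}$-spherical. Then by the standard correspondence between sphericity and uniform bounded multiplicity (the complex analogue of \cite[Thm.~D]{xktoshima}, going back to Vinberg--Kimelfeld), the $G'_{\mathbb{C}}$-isotypic multiplicities of $\operatorname{Ind}_Q^{G_{\mathbb{C}}}(\chi)$ are bounded uniformly in $\chi$ by a constant depending only on the $G'_{\mathbb{C}}$-orbit geometry on $G_{\mathbb{C}}/Q$. Pulling back this bound along the intertwiner of the previous paragraph gives the required $\sup_{\Pi\in\Omega_{P,\mathfrak{q}}}m(\Pi|_{G'})<\infty$.

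For the converse direction, I would argue contrapositively: if $G_{\mathbb{C}}/Q$ is not $G'_{\mathbb{C}}$-spherical, then the converse Vinberg--Kimelfeld assertion produces characters $\chi_n$ and irreducibles $V_n\in\operatorname{Irr}(G'_{\mathbb{C}})$ for which the complex multiplicities diverge. For each $\chi_n$ I would select an irreducible finite-dimensional $\mathfrak{p}_{\mathbb{C}}$-quotient $\xi_n$ of the generalized Verma module $U(\mathfrak{p}_{\mathbb{C}})\otimes_{U(\mathfrak{q})}\mathbb{C}_{\chi_n}$ (available for integral $\chi_n$), so that $\xi_n\in\operatorname{Irr}(P;\mathfrak{q})_f$ contains $\mathbb{C}_{\chi_n}$ in its $\mathfrak{q}$-socle. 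The divergent multiplicities on the complex side then transfer to $\operatorname{Ind}_P^G(\xi_n)|_{G'}$, contradicting the hypothesis.

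The main technical obstacle is making the comparison map of the first paragraph quantitatively faithful in both directions. Since $\mathfrak{q}$ need not be defined over $\mathbb{R}$, this intertwiner is in general neither injective nor surjective, so one has to control its kernel and cokernel uniformly as $\xi$ varies. The natural tools are an orbit-level identification between $G'$-orbits on $G/P$ and $G'_{\mathbb{C}}$-orbits on $G_{\mathbb{C}}/Q$ (an analogue of Matsuki duality adapted to the pair $(P,Q)$), combined with a boundary-value or Poisson-transform construction identifying distributional sections on the real side with holomorphic sections on the complex side. This orbit-geometric and analytic passage is the heart of the argument, and it is precisely where the hypothesis $\mathfrak{q}\subset\mathfrak{p}_{\mathbb{C}}$ is used essentially.
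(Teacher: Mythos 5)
There is a genuine gap in your forward direction. You propose to construct a continuous $G$-intertwiner from $\operatorname{Ind}_P^G(\xi)$ to the space of holomorphic sections $\mathcal{O}(G_{\mathbb{C}}/Q,\mathcal{L}_\chi)$ and to ``pull back'' the Vinberg--Kimelfeld bound along it. But $\mathcal{O}(G_{\mathbb{C}}/Q,\mathcal{L}_\chi)$ is a \emph{finite-dimensional} $G_{\mathbb{C}}$-module (Borel--Weil), whereas $\operatorname{Ind}_P^G(\xi)$ is an infinite-dimensional Fr\'echet $G$-module; any such intertwiner necessarily has huge kernel, and neither a surjection $\Pi\twoheadrightarrow V$ nor an injection $\Pi\hookrightarrow V$ transfers an upper bound on $m(V|_{G'})$ to an upper bound on $m(\Pi|_{G'})$. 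You identify this as ``the heart of the argument,'' but the Matsuki-duality/Poisson-transform idea you gesture at does not close it, and it is not where the hypothesis $\mathfrak{q}\subset\mathfrak{p}_{\mathbb{C}}$ is actually used.

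The paper avoids this issue entirely. For (iii)$\Rightarrow$(i) it stays on the real side: by Casselman's subrepresentation theorem, any $\pi\in\operatorname{Irr}(G')$ injects into $\operatorname{Ind}_{P'}^{G'}(\eta)$ for a minimal $P'$, so $\operatorname{Hom}_{G'}(\operatorname{Ind}_P^G(\xi)|_{G'},\pi)$ injects into a space of symmetry breaking operators between degenerate principal series of $G$ and $G'$; via Frobenius reciprocity and the Schwartz kernel theorem this becomes a space of $(\mathfrak{q}\oplus\mathfrak{q}'_{\operatorname{opp}}\oplus\mathfrak{g}'_{\mathbb{C}})$-invariant distributions on $G\times G'$; and the uniform bound comes from Tauchi's regular-holonomic $\mathcal{D}$-module estimate (Proposition \ref{prop:Dmodule}), whose geometric hypothesis is the finiteness of $B'$-orbits on the $G_{\mathbb{C}}'$-spherical variety $G_{\mathbb{C}}/Q$ (Brion--Vinberg). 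The role of $\mathfrak{q}\subset\mathfrak{p}_{\mathbb{C}}$ is to replace $\eta^\ast$ by its $\mathfrak{q}$-line inside $(C^\infty(P)\otimes\lambda)^{\mathfrak{q}}$ (Lemma \ref{lem:geomIrrPq}) at the level of the kernel distributions, not at the level of the induced representations. Your converse direction (Vinberg--Kimelfeld growth plus realizing $\Pi_{N\lambda}$ as a quotient of $\operatorname{Ind}_P^G(\xi_N)$ with $\xi_N\in\operatorname{Irr}(P;\mathfrak{q})_f$) is essentially the paper's Lemmas \ref{lem:VKN} and \ref{lem:210943} and is sound, modulo checking that the $\mathfrak{p}_{\mathbb{C}}$-integrality needed for a finite-dimensional quotient of your generalized Verma module is automatic for the $\chi_n$ produced (the paper sidesteps this by using the Borel--Weil construction directly).
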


A special case of Theorem \ref{thm:introQsph} with $Q=P_{\mathbb{C}}$ shows:
\begin{example}
[see Example \ref{ex:Qsph2}]
\label{ex:introQsph2}
One has the equivalence from \eqref{eqn:JLTthm16}:
\begin{equation}
\label{eqn:JLTex17}
  \text{$G_{\mathbb{C}}/P_{\mathbb{C}}$ is $G_{\mathbb{C}}'$-spherical}
\iff
\underset{\Pi \in \Omega_P}\sup m(\Pi|_{G'})< \infty.  
\end{equation}
\end{example}

Let $P_{G/H}$ be a \lq\lq{minimal parabolic subgroup}\rq\rq\
 for the symmetric space $G/H$
 (Definition \ref{def:Psigma}), 
 and ${\mathfrak{b}}_{G/H}$ a Borel subalgebra for $G/H$
 with ${\mathfrak{b}}_{G/H} \subset ({\mathfrak{p}}_{G/H})_{\mathbb{C}}$.  
\begin{theorem}
[see Theorem \ref{thm:quotient}]
\label{thm:introquotient}
Any $\Pi \in \operatorname{Irr}(G)_H$
 is obtained as the quotient of the induced representation 
 $\operatorname{Ind}_{P_{G/H}}^G(\xi)$ 
 for some $\xi \in \operatorname{Irr}(P_{G/H};{\mathfrak{b}}_{G/H})$.  
\end{theorem}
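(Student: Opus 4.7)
The plan is to dualize: it suffices to realize $\Pi^{\vee}$ as a subrepresentation of $\operatorname{Ind}_{P_{G/H}}^{G}(\widetilde{\xi})$ for a suitable $\widetilde{\xi}$, since the category $\mathcal{M}(G)$ admits contragredient duality and the inducing data can be twisted by the modular character of $P_{G/H}$ without affecting the condition $d_{\mathfrak{b}_{G/H}}(\cdot)=1$. The hypothesis $(\Pi^{-\infty})^{H}\neq 0$ supplies, through the pairing with an $H$-invariant distribution vector $\ell$, a nonzero continuous $G$-equivariant map $\Phi\colon \Pi^{\vee}\to C^{\infty}(G/H)$, and irreducibility of $\Pi^{\vee}$ makes $\Phi$ injective.

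I would then bring in the parabolic $P_{G/H}$ through its open orbit on $G/H$. By the defining property of a minimal $\sigma$-parabolic, $P_{G/H}\cdot eH$ is open dense and isomorphic to $P_{G/H}/(P_{G/H}\cap H)$, so the restriction of $\Phi$ to this open orbit yields a nonzero $P_{G/H}$-equivariant injection $\Pi^{\vee}\hookrightarrow C^{\infty}(P_{G/H}/(P_{G/H}\cap H))$. Invoking Casselman's theory of asymptotic expansions of matrix coefficients along the $\sigma$-split Cartan $A_{G/H}$, one extracts leading exponents. By Casselman--Osborne Frobenius reciprocity this produces an embedding $\Pi^{\vee}\hookrightarrow \operatorname{Ind}_{P_{G/H}}^{G}(\widetilde{\xi})$ with $\widetilde{\xi}$ a finite-dimensional irreducible quotient of the Jacquet module $J_{\overline{P}_{G/H}}(\Pi^{\vee})$.

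The key refinement is that one can arrange $d_{\mathfrak{b}_{G/H}}(\widetilde{\xi})=1$. Since $\mathfrak{b}_{G/H}$ is the parabolic of $\mathfrak{g}_{\mathbb{C}}$ with Levi equal to $\mathfrak{z}_{\mathfrak{g}_{\mathbb{C}}}(\mathfrak{j}_{\mathbb{C}})$ and is contained in $(\mathfrak{p}_{G/H})_{\mathbb{C}}$, this condition amounts to producing a one-dimensional $\mathfrak{b}_{G/H}$-stable subspace in $\widetilde{\xi}$. The $H$-invariance of $\ell$, transported through the open-orbit restriction, forces the resulting functions on $P_{G/H}/(P_{G/H}\cap H)$ to be equivariant for the appropriate characters of $A_{G/H}$ and invariant under the part of the Levi sitting in $H$; combined with the Matsuki--Oshima--Sekiguchi structure theory, this cuts the Jacquet module down to irreducibles carrying a one-dimensional $\mathfrak{b}_{G/H}$-weight space.

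Dualizing and absorbing modular characters yields the desired surjection $\operatorname{Ind}_{P_{G/H}}^{G}(\xi)\twoheadrightarrow \Pi$ with $\xi\in \operatorname{Irr}(P_{G/H};\mathfrak{b}_{G/H})_{f}$. I expect the main obstacle to be the third step, where the $H$-distinguished hypothesis must be leveraged to guarantee $d_{\mathfrak{b}_{G/H}}(\widetilde{\xi})=1$: the geometric passage from an $H$-period to a Levi-equivariance condition on the Jacquet quotient is the crux. The remaining steps are standard consequences of Frobenius reciprocity, the Casselman subrepresentation theorem (used here with the possibly nonminimal parabolic $P_{G/H}$ in place of a minimal parabolic of $G$), and the Casselman--Wallach globalization theory.
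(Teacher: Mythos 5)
Your duality reduction and your identification of the crux are both on target, but the proposal has two genuine gaps where it waves at standard theory that does not in fact apply in the form you need.

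First, the subrepresentation step. You write the embedding $\Pi^{\vee}\hookrightarrow\operatorname{Ind}_{P_{G/H}}^{G}(\widetilde{\xi})$ as if it followed from Casselman's asymptotic theory ``with the possibly nonminimal parabolic $P_{G/H}$ in place of a minimal parabolic.'' That substitution is not automatic: for a non-minimal parabolic the Levi $L_{\sigma}=M(\sigma)\,G(\sigma)\,A_{\sigma}$ contains the noncompact semisimple factor $G(\sigma)$, and irreducible quotients of the Jacquet module $J_{\overline{P}_{G/H}}(\Pi^{\vee})$ are then typically infinite dimensional, so one does not obtain $\widetilde{\xi}\in\operatorname{Irr}(P_{G/H})_{f}$ from general principles. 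What saves the situation is precisely the $H$-distinguishedness: since $G(\sigma)\subset H$, the $H$-invariant functional kills the $G(\sigma)$-action in the boundary data, and the remaining Levi factor $M(\sigma)A_{\sigma}$ is compact $\times$ abelian. This is built into Oshima's hyperfunction boundary-value map, which the paper invokes at exactly this point (Proposition~\ref{prop:embed}, after \cite{xoshima88b}); it does not come for free from a ``Casselman subrepresentation theorem for $P_{G/H}$,'' because no such theorem exists without using the $H$-period from the start.

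Second, the $d_{\mathfrak{b}_{G/H}}(\widetilde{\xi})=1$ step, which you correctly flag as the crux but then attribute vaguely to ``Matsuki--Oshima--Sekiguchi structure theory.'' The structure theory gives the ideal decomposition $\mathfrak{l}_{\sigma}=\mathfrak{m}(\sigma)\oplus\mathfrak{g}(\sigma)\oplus\mathfrak{a}_{\sigma}$ and reduces the vanishing of the highest weight of $\widetilde{\xi}$ on $\mathfrak{t}=\widetilde{\mathfrak{j}}\cap\mathfrak{h}$ to three pieces: on $\mathfrak{t}\cap\mathfrak{g}(\sigma)$ and $\mathfrak{a}_{\sigma}\cap\mathfrak{h}$ it is immediate from the form of $\widetilde{\xi}$, but on $\mathfrak{t}\cap\mathfrak{m}(\sigma)$ the needed ingredient is the \emph{Cartan--Helgason theorem} for the compact symmetric pair $(\mathfrak{m}(\sigma),\mathfrak{m}(\sigma)\cap\mathfrak{h})$: the $(\mathfrak{m}(\sigma)\cap\mathfrak{h})$-spherical condition forces the highest weight of the $M(\sigma)$-factor to vanish on the $\mathfrak{h}$-part of the $\sigma$-split Cartan of $\mathfrak{m}(\sigma)$. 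That is Lemma~\ref{lem:zetat}/\ref{lem:XiPq} in the paper, and it is the one lemma your outline does not supply a substitute for. Once these two inputs are in place, your dualization and the normalization by $\mathbb{C}_{2\rho}$ are exactly what the paper does.
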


Along the same line as in \cite{xktoshima, xkProg2014}, 
 the \lq\lq{$QP$ estimate}\rq\rq\
 for {\it{restriction}}
 ({\it{e.g.,}}
 the implication (ii) $\Rightarrow$ (i) in Theorem \ref{thm:bdd})
 is derived from the following 
 \lq\lq{$QP$ estimates for {\it{induction}}}\rq\rq\
 applied to $(G \times G')/\operatorname{diag} G'$.  
Theorem \ref{thm:introind} is a generalization
 of some results in \cite{xktoshima}
 relying on the theory 
 of \lq\lq{boundary valued maps}\rq\rq\
 and in Tauchi \cite{xtauchi}
 relying on the theory
 of holonomic ${\mathcal{D}}$-modules
 \cite{Ka83, KaKw81}.

\begin{theorem}
[see Theorem \ref{thm:indbdd} (1)]
\label{thm:introind}
Let $P$ be a parabolic subgroup of a real reductive algebraic Lie group
 $G$, 
 and  $H$ an algebraic subgroup.  
Suppose that $Q$ is a parabolic subgroup of $P_{\mathbb{C}}$
 such that $\#(Q \backslash G_{\mathbb{C}}/H_{\mathbb{C}})<\infty$.  
Then one has 
\[
  \underset{\eta \in \operatorname{Irr}(P)_f}\sup\,\,
  \underset{\tau \in \operatorname{Irr}(H)_f}\sup
  \frac{1}{d_{\mathfrak{q}}(\eta)\dim \tau}
  \dim_{\mathbb{C}} 
  \invHom G {\operatorname{Ind}_P^G(\eta)}{\operatorname{Ind}_H^G(\tau)} 
  <\infty.  
\]
\end{theorem}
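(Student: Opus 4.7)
The plan is to realize intertwining operators between the smooth induced representations as equivariant distributions on $G$, and to bound each stratum's contribution by combining the finite-orbit hypothesis with holonomic $\mathcal{D}$-module techniques. By a Schwartz-kernel-type reciprocity for smooth induction in the Casselman--Wallach category, there is an injection
\[
\invHom G {\operatorname{Ind}_P^G(\eta)}{\operatorname{Ind}_H^G(\tau)} \hookrightarrow \mathcal{D}'(G, \mathcal{V})^{P \times H},
\]
where $\mathcal{V}$ is the $G$-bundle whose fiber is $\eta^{\ast} \otimes \tau$ twisted by the appropriate modular characters of $P$ and $H$, and $(p,h) \in P \times H$ acts on $G$ by $(p,h) \cdot g = p g h^{-1}$.

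I would next stratify $G$ by its $P \times H$-orbits. Since $Q \subset P_{\mathbb{C}}$, the hypothesis $\#(Q \backslash G_{\mathbb{C}}/H_{\mathbb{C}}) < \infty$ forces $\#(P_{\mathbb{C}} \backslash G_{\mathbb{C}}/H_{\mathbb{C}}) < \infty$, hence only finitely many $P \times H$-orbits on $G$. The long exact sequence of equivariant distributions attached to the open--closed filtration by each orbit closure reduces the problem to bounding, for each orbit $\mathcal{O}$, the graded piece consisting of $P \times H$-invariant distributions on a tubular neighborhood of $\mathcal{O}$ modulo those supported on lower strata. This graded piece embeds into a space of transverse jets along $\mathcal{O}$, whose zeroth layer is isomorphic to $(\eta^{\ast} \otimes \tau \otimes \chi)^{S_{\mathcal{O}}}$ for the isotropy subgroup $S_{\mathcal{O}}$ of a base point of $\mathcal{O}$ and a fixed modular character $\chi$.

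Refining the complexification of $\mathcal{O}$ by a $Q$-orbit on $G_{\mathbb{C}}/H_{\mathbb{C}}$, one verifies that the complexified isotropy Lie algebra contains a suitable conjugate of $\mathfrak{q}$. Consequently the zeroth-layer contribution factors through a simple $\mathfrak{q}$-quotient of $\eta$, whose dimension is at most $d_{\mathfrak{q}}(\eta)$ (realized, via duality, by a minimal-dimensional simple $\mathfrak{q}$-submodule of $\eta^{\ast}$), and through $\tau$ itself, contributing at most $\dim \tau$. This accounts for the two factors in the denominator of the stated estimate.

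The main obstacle lies in bounding, uniformly in $(\eta,\tau)$, the number of nonzero transverse-jet layers; this step is not a consequence of the orbit finiteness alone. Here I would invoke Tauchi's holonomic $\mathcal{D}$-module theorem \cite{xtauchi}, which refines the boundary-value-map technique of \cite{xktoshima}: the finiteness of $Q$-orbits on $G_{\mathbb{C}}/H_{\mathbb{C}}$ ensures that the $\mathcal{D}$-module underlying the equivariant distributional sections is holonomic with characteristic variety confined to a fixed union of conormal bundles determined by the orbit geometry, and Kashiwara's constructibility theory \cite{Ka83, KaKw81} then yields a bound on its length, hence on the order of transverse jets, independent of the representation parameters $\eta$ and $\tau$. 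Summing these uniform estimates over the finitely many $P \times H$-orbits on $G$ completes the proof.
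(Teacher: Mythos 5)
There is a genuine gap. Your outline correctly begins with the Schwartz-kernel identification of $\invHom G {\operatorname{Ind}_P^G(\eta)}{\operatorname{Ind}_H^G(\tau)}$ with a space of $P\times H$-invariant distributions on $G$, and you correctly identify Tauchi's holonomic $\mathcal{D}$-module result as the engine behind the uniform bound.  But the specific mechanism by which the factor $d_{\mathfrak{q}}(\eta)$ is supposed to enter--your claim that ``the complexified isotropy Lie algebra [of a real $P\times H$-orbit $\mathcal O$] contains a suitable conjugate of $\mathfrak{q}$''--is false.  The stabilizer Lie algebra of a point $g_0\in\mathcal O$ is the twisted diagonal of $\mathfrak{p}\cap\operatorname{Ad}(g_0)\mathfrak{h}$; its complexification projects in the first factor to $\mathfrak{p}_{\mathbb{C}}\cap\operatorname{Ad}(g_0)\mathfrak{h}_{\mathbb{C}}$.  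For that to contain a conjugate of the parabolic $\mathfrak{q}$ would force $\mathfrak{q}$ to sit inside $\operatorname{Ad}(g_0)\mathfrak{h}_{\mathbb{C}}$, which is absurd.  Once this step fails, your stratum-by-stratum argument no longer produces anything smaller than $\dim\eta\cdot\dim\tau$ in the zeroth jet layer, so the factor $d_{\mathfrak{q}}(\eta)$ never materializes.

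The paper's route threads $d_{\mathfrak{q}}(\eta)$ into the estimate \emph{before} any orbit analysis, at the level of the coefficient module rather than at the level of stratum isotropy.  Choosing $Q$ (inside its $P_{\mathbb{C}}$-conjugacy class) so that $P/(P\cap Q)$ is closed in $P_{\mathbb{C}}/Q$, one takes an irreducible $\mathfrak{q}$-submodule $\lambda^{\vee}$ of $(\eta^{\ast})^{\vee}$ of minimal dimension $d_{\mathfrak{q}}(\eta)$ and uses the geometric realization lemma (Lemma~\ref{lem:geomIrrPq}) to embed $\eta^{\ast}$ into $(C^{\infty}(P)\otimes\lambda)^{\mathfrak{q}}$.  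This converts the space of $P\times H$-invariant $\mathcal V$-valued distributions into a space of $(\mathfrak{q}\oplus\mathfrak{h}_{\mathbb{C}})$-invariant distributions with the much smaller coefficient module $\lambda\otimes\tau$, of dimension $d_{\mathfrak{q}}(\eta)\dim\tau$.  Only then does one apply Tauchi's theorem with the complex group $B=Q\times H_{\mathbb{C}}$ acting on $X_{\mathbb{C}}=G_{\mathbb{C}}$ with finitely many orbits, obtaining the bound $C\dim(\lambda\otimes\tau)=C\,d_{\mathfrak{q}}(\eta)\dim\tau$.  This also clarifies a second, smaller issue in your reading of Tauchi's result: it is not a bound on the ``number of jet layers'' independent of the representation; it is a bound on the dimension of equivariant hyperfunctions of the form $C\dim\mu$, so the quantity $\dim\mu$ has to already be the product $d_{\mathfrak{q}}(\eta)\dim\tau$ at the moment you invoke it.  Without the $\eta^{\ast}\hookrightarrow(C^{\infty}(P)\otimes\lambda)^{\mathfrak{q}}$ reduction, neither the real orbit stratification nor the jet analysis will deliver the sharpened estimate over the cruder $\dim\eta\cdot\dim\tau$.
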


Individual finite multiplicity results
 for induction and restriction 
 can be read from the \lq\lq{$QP$ estimates}\rq\rq\
 for induction and restriction, 
 respectively, 
 by putting $Q:=P_{\mathbb{C}}$, 
 see Remark \ref{rem:33} and Example \ref{ex:46}
 for instance.  

\vskip 1pc
\par\noindent
{\bf{Organization of the paper}}\enspace
\newline
In Section \ref{sec:IrrPq}
 we introduce the set $\operatorname{Irr}(P;{\mathfrak{q}})_f$
 and discuss some basic properties
 of finite-dimensional representations.  
Bounded multiplicity theorems 
 for induction and restriction 
 of degenerate principal series representations
 with \lq\lq{$QP$ estimates}\rq\rq\
 are given in Sections \ref{sec:indbdd} and \ref{sec:restbdd}, 
 respectively.  
Section \ref{sec:Quotient} is devoted to a refinement
 of Casselman's subrepresentation theorem
 for $H$-distinguished representations
 (Theorem \ref{thm:introquotient}).  
With these preparations, 
 our main results (Theorems \ref{thm:bdd} and \ref{thm:tensor})
 will be proved
 in Section \ref{sec:5}.  
The classification of the triples $(G,H,G')$
 with the bounded multiplicity property \eqref{eqn:BBH} is given
 in Section \ref{sec:classification}, 
 and is proved in Section \ref{sec:pfclass}
 based on the geometric criteria
 in Theorems \ref{thm:bdd} and \ref{thm:tensor}.

\section{Preliminaries on $\operatorname{Irr}(P;{\mathfrak{q}})_f$}
\label{sec:IrrPq}

We prepare some finer properties
 of finite-dimensional representations
 that we shall need in the \lq\lq{$QP$ estimate}\rq\rq, 
 the uniform estimate
 of multiplicities 
 for a family of representations
 in induction and restriction.

\subsection{Definition of $\operatorname{Irr}(P;{\mathfrak{q}})_f$}
\label{subsec:IrrPq}

In this subsection
 we examine finite-dimensional irreducible representations
 of a Lie group $P$
 with respect to a parabolic subalgebra ${\mathfrak{q}}$.

Let $P$ be a real algebraic group
 or its open subgroup in the classical topology.  
We write $P=L N$ for its Levi decomposition, 
 and ${\mathfrak{p}}={\mathfrak{l}}+{\mathfrak{n}}$
 for the corresponding decomposition of the Lie algebras.  
We denote by $\operatorname{Irr}(P)_f$
 the set of equivalence classes
 of irreducible finite-dimensional representations of $P$, 
 and by $\operatorname{Irr}({\mathfrak{p}})_f$
 that of the Lie algebra ${\mathfrak{p}}$.  
If $P$ is connected, 
 one may regard $\operatorname{Irr}(P)_f$
 as a subset of $\operatorname{Irr}({\mathfrak{p}})_f$.  
Since the unipotent subgroup $N$ acts trivially
 on any irreducible finite-dimensional representation of $P$, 
 one has a natural bijection 
$
\operatorname{Irr}(P)_f
\simeq 
\operatorname{Ind}(L)_f
$
via the quotient map $P \to L \simeq P/N$.  

\begin{definition}
\label{def:FPq}
Let ${\mathfrak{p}}_{\mathbb{C}}={\mathfrak{l}}_{\mathbb{C}}+{\mathfrak{n}}_{\mathbb{C}}
$ be the complexified Lie algebra of 
$
 {\mathfrak{p}}={\mathfrak{l}}+{\mathfrak{n}}
$, 
 and ${\mathfrak{q}}$ a parabolic subalgebra
 in 
$
 {\mathfrak{p}}_{\mathbb{C}}
$, 
namely, 
 ${\mathfrak{q}}$ is the full inverse 
 of a parabolic subalgebra of ${\mathfrak{l}}_{\mathbb{C}}$
 via the quotient map 
 $\varpi \colon {\mathfrak{p}}_{\mathbb{C}} \to 
 {\mathfrak{p}}_{\mathbb{C}}/{\mathfrak{n}}_{\mathbb{C}}
 \simeq {\mathfrak{l}}_{\mathbb{C}}$.  
For $(\xi,V) \in \operatorname{Irr}(P)_f$, 
we define $d_{\mathfrak{q}}(\xi)$
 to be the minimal dimension
 of an irreducible ${\mathfrak{q}}$-submodule in $V$.  
We set 
\begin{align}
\label{eqn:JLTIrrPq}
\operatorname{Irr}(P;{\mathfrak{q}})_f
:=&\{\xi \in \operatorname{Irr}(P)_f:
    d_{\mathfrak{q}}(\xi)=1
\}, 
\\
\label{eqn:JLTIrrpq}
\operatorname{Irr}({\mathfrak{p}};{\mathfrak{q}})_f
:=&\{\xi \in \operatorname{Irr}({\mathfrak{p}})_f:
    d_{\mathfrak{q}}(\xi)=1
\}.  
\end{align}

\end{definition}

We say $v$ is a {\it{relatively ${\mathfrak{q}}$-invariant}} vector, 
 if there is a complex linear form $\lambda$ on ${\mathfrak{q}}$
 such that 
$
  \xi(X) v = \lambda (X) v
$
for all 
$X \in {\mathfrak{q}}$.  
By definition $d_{\mathfrak{q}}(\xi)=1$
 if and only if there is a non-zero
 relatively ${\mathfrak{q}}$-invariant vector.  
Let ${\mathfrak{u}}$ be the nilpotent radical of ${\mathfrak{q}}$.  
If $(\xi, V) \in \operatorname{Irr}({\mathfrak{p}})$,
 then 
\[
V^{\mathfrak{u}}:=\{v \in V
      :
      \xi(X)v=0\quad
     {}^{\forall}X \in {\mathfrak{u}}\} 
\]
 is the unique irreducible ${\mathfrak{q}}$-submodule of $V$, 
 and $d_{\mathfrak{q}}(\xi)=\dim V^{\mathfrak{u}}$.

Unlike the notation 
$
   \mathfrak{p}_{\mathbb{C}}=\mathfrak{p} \otimes_{\mathbb{R}}{\mathbb{C}}, 
$
 we do not use the letter $\mathfrak{q}_{\mathbb{C}}$
 in Definition \ref{def:FPq}
 to denote the (complex) parabolic subalgebra $\mathfrak{q}$, 
 because the subalgebra ${\mathfrak{q}}$ is not necessarily defined
 over ${\mathbb{R}}$.

We shall formulate Theorems \ref{thm:indbdd} and \ref{thm:restQQ}
 (\lq\lq{$QP$ estimate}\rq\rq)
 and Theorem \ref{thm:quotient}
 (quotient representation theorem 
 for $H$-distinguished representations)
 by using the pair $P$ and ${\mathfrak{q}}$, 
 which is a key 
 in proving Theorem \ref{thm:bdd}
 through a unified treatment
 both for the real polarization
 (the usual parabolic induction)
 when $\mathfrak{p} \cap \mathfrak{q}$ 
 is a real form of $\mathfrak{q}$
 and for the complex polarization
 (the Borel--Weil type induction)
 when $(\mathfrak{p} \cap \mathfrak{q})/\mathfrak{n}$ is reductive.

We collect some basic properties
 on $\operatorname{Irr}(P;{\mathfrak{q}})_f$.  
The proof is straightforward from the definition.  
\begin{lemma}
\label{lem:IrrPq}
Let ${\mathfrak{q}}$ and ${\mathfrak{q}}'$ be parabolic subalgebras
 of ${\mathfrak{p}}_{\mathbb{C}}$. 
One has 
\begin{alignat}{2}
\operatorname{Irr}({\mathfrak{p}};{\mathfrak{p}}_{\mathbb{C}})_f
&=\{\text{characters of ${\mathfrak{p}}$}\}, 
\notag
\\
\operatorname{Irr}(P;{\mathfrak{q}})_f
&\subset
\operatorname{Irr}({\mathfrak{p}};{\mathfrak{q}})_f
\quad
&&\text{if $P$ is connected};
\notag
\\
\operatorname{Irr}(P;{\mathfrak{q}})_f
&\simeq
\operatorname{Irr}(L;{\mathfrak{q}} \cap {\mathfrak{l}}_{\mathbb{C}})_f, 
\label{eqn:IrrPL}
\\
\label{eqn:AdIPq}
\operatorname{Irr}(P;{\mathfrak{q}})_f
&=
\operatorname{Irr}(P;\operatorname{Ad}(g){\mathfrak{q}})_f
\qquad
&&\text{for any $g \in P_{\mathbb{C}}$};
\\
   \operatorname{Irr}(P;{\mathfrak{q}})_f
   &\supset
   \operatorname{Irr}(P;{\mathfrak{q}}')_f
\qquad
&&\text{if ${\mathfrak{q}}\subset {\mathfrak{q}}'$};
\notag
\\
   \operatorname{Irr}(P;{\mathfrak{b}})_f
   &=
   \operatorname{Irr}(P)_f 
\quad
&&\text{if ${\mathfrak{b}}$ is a Borel subalgebra of ${\mathfrak{p}}_{\mathbb{C}}$}.  
\notag
\end{alignat}
\end{lemma}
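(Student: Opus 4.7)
The plan is to verify the six assertions of Lemma \ref{lem:IrrPq} directly from Definition \ref{def:FPq} and the observation in the paragraph preceding the lemma: for an irreducible $(\xi,V) \in \operatorname{Irr}(\mathfrak{p})_f$ one has $d_\mathfrak{q}(\xi) = \dim V^{\mathfrak{u}}$, where $\mathfrak{u}$ is the nilpotent radical of $\mathfrak{q}$. This single formula reduces every claim to a dimension count on $\mathfrak{u}$-invariants, and I would treat the items in an order that lets later steps reuse earlier ones.

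First I would handle the two extremes that pin down the boundary behavior. Taking $\mathfrak{q} = \mathfrak{p}_{\mathbb{C}}$ gives $\mathfrak{u} = \mathfrak{n}_{\mathbb{C}}$; by irreducibility $V^{\mathfrak{n}_{\mathbb{C}}}$ is $\mathfrak{p}_{\mathbb{C}}$-stable and hence equals $V$, so $d_{\mathfrak{p}_{\mathbb{C}}}(\xi) = 1$ precisely when $\dim V = 1$, i.e.\ when $\xi$ is a character. At the opposite end, when $\mathfrak{b}$ is a Borel subalgebra of $\mathfrak{p}_{\mathbb{C}}$ in the sense of Definition \ref{def:FPq}, it is the $\varpi$-preimage of a Borel $\mathfrak{b} \cap \mathfrak{l}_{\mathbb{C}}$ of the reductive $\mathfrak{l}_{\mathbb{C}}$; the highest weight theorem then produces a one-dimensional highest weight space in every irreducible $L$-module, forcing $d_\mathfrak{b}(\xi) = 1$ for all $\xi \in \operatorname{Irr}(P)_f$.

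Next I would address the structural identifications. For $\operatorname{Irr}(P;\mathfrak{q})_f \subset \operatorname{Irr}(\mathfrak{p};\mathfrak{q})_f$ when $P$ is connected, I would note that differentiating an irreducible $P$-module yields an irreducible $\mathfrak{p}$-module under connectedness, and that $d_\mathfrak{q}$ is defined purely at the Lie-algebra level. For the bijection $\operatorname{Irr}(P;\mathfrak{q})_f \simeq \operatorname{Irr}(L;\mathfrak{q}\cap\mathfrak{l}_{\mathbb{C}})_f$, I would combine the standard bijection $\operatorname{Irr}(P)_f \simeq \operatorname{Irr}(L)_f$ (from the triviality of the $N$-action on any $V \in \operatorname{Irr}(P)_f$) with the decomposition $\mathfrak{u} = (\mathfrak{u} \cap \mathfrak{l}_{\mathbb{C}}) + \mathfrak{n}_{\mathbb{C}}$ (valid because $\mathfrak{q} \supset \mathfrak{n}_{\mathbb{C}}$ and $\mathfrak{q}$ is a $\varpi$-preimage), which forces $V^\mathfrak{u}$ and $V^{\mathfrak{u}\cap\mathfrak{l}_{\mathbb{C}}}$ to coincide. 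The containment $\operatorname{Irr}(P;\mathfrak{q})_f \supset \operatorname{Irr}(P;\mathfrak{q}')_f$ whenever $\mathfrak{q} \subset \mathfrak{q}'$ is a monotonicity statement: any irreducible $\mathfrak{q}'$-submodule of $V$ is a $\mathfrak{q}$-module whose irreducible $\mathfrak{q}$-submodules have dimension no larger than itself, giving $d_\mathfrak{q}(\xi) \leq d_{\mathfrak{q}'}(\xi)$.

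The most delicate step, and the one I would treat most carefully, is the $P_{\mathbb{C}}$-conjugation identity \eqref{eqn:AdIPq}. Since $g \in P_{\mathbb{C}}$ need not lie in $P$, the first task is to extend $\xi$ to a holomorphic (algebraic) representation of $P_{\mathbb{C}}$ on $V$, which is justified because $P$ is a real algebraic group and $\xi$ is algebraic. Once $\xi(g)$ is available as a linear automorphism of $V$, it carries $\mathfrak{q}$-submodules bijectively onto $\operatorname{Ad}(g)\mathfrak{q}$-submodules of the same dimension, whence $d_\mathfrak{q}(\xi) = d_{\operatorname{Ad}(g)\mathfrak{q}}(\xi)$. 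I do not anticipate any substantive obstacle beyond this bookkeeping.
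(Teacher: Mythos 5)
Your overall strategy — reduce every claim to a dimension count on $V^{\mathfrak{u}}$ and verify each item directly from Definition~\ref{def:FPq} — is sound, and this is precisely the ``straightforward from the definition'' approach the paper intends. Five of the six verifications go through. The one genuine gap is in the conjugation identity \eqref{eqn:AdIPq}: you assert that ``$P$ is a real algebraic group and $\xi$ is algebraic'' so that $\xi$ extends to a holomorphic representation of $P_{\mathbb{C}}$. Neither clause is part of the hypotheses. The set $\operatorname{Irr}(P)_f$ used here (and later in Theorem~\ref{thm:indbdd}, where the supremum is over {\it all} of $\operatorname{Irr}(P)_f$) contains non-algebraic representations — for instance characters $e^{\lambda}$ of a split torus with $\lambda$ nonintegral, which admit no holomorphic extension to the complexification. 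So the intertwiner $\xi(g)$ you want to conjugate by need not exist, and the step as written does not close.

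The fix is to observe that $d_{\mathfrak{q}}(\xi)$ depends only on the complexified Lie-algebra module $(d\xi)\otimes_{\mathbb{R}}\mathbb{C}$ on ${\mathfrak{p}}_{\mathbb{C}}$, not on $\xi$ as a group representation. After invoking \eqref{eqn:IrrPL} to reduce to $P=L$ reductive, the ${\mathfrak{l}}_{\mathbb{C}}$-module $V$ is completely reducible and integrates to a holomorphic representation of a connected covering group $\widetilde{L}_{\mathbb{C}}$ of $\operatorname{Int}({\mathfrak{l}}_{\mathbb{C}})$; lifting $g\in L_{\mathbb{C}}$ to $\tilde g\in\widetilde{L}_{\mathbb{C}}$ yields an operator on $V$ intertwining the ${\mathfrak{l}}_{\mathbb{C}}$-action with its $\operatorname{Ad}(g)$-twist, and this operator carries ${\mathfrak{q}}$-submodules bijectively and isodimensionally onto $\operatorname{Ad}(g){\mathfrak{q}}$-submodules. (Equivalently: inner automorphisms of ${\mathfrak{l}}_{\mathbb{C}}$ fix isomorphism classes of finite-dimensional irreducible ${\mathfrak{l}}_{\mathbb{C}}$-modules, so $V$ and its $\operatorname{Ad}(g)$-twist are isomorphic, and any isomorphism does the job.) With this substitution your proof is complete; the remaining items are handled correctly, with only minor phrasing imprecision in the Borel case (for disconnected $P$ one uses a highest-weight line in {\it some} irreducible ${\mathfrak{l}}_{\mathbb{C}}$-constituent of $V$, which is all that $d_{\mathfrak{b}}(\xi)=1$ requires).
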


It is convenient to prepare notation
 for (finite-dimensional) holomorphic representations
 of a complex Lie group.

\begin{definition}
\label{def:IrrPhol}
For a connected complex Lie group $P_{\mathbb{C}}$, 
 we denote by $\operatorname{Irr}(P_{\mathbb{C}})_{\operatorname{hol}}$
 the set of equivalence classes
 of finite-dimensional irreducible holomorphic representations
 of $P_{\mathbb{C}}$.  
If $P$ is a real form of $P_{\mathbb{C}}$, 
 we have a natural inclusion 
 $\operatorname{Irr}(P_{\mathbb{C}})_{\operatorname{hol}} \hookrightarrow \operatorname{Irr}(P)_f$
 by restriction.  
Accordingly, 
 we set 
$\operatorname{Irr}(P_{\mathbb{C}};{\mathfrak{q}})_{\operatorname{hol}}
:=\operatorname{Irr}(P;{\mathfrak{q}})_f
 \cap \operatorname{Irr}(P_{\mathbb{C}})_{\operatorname{hol}}$.  
\end{definition}

The concept of opposite parabolic subalgebras
 in reductive Lie algebras
 is naturally extended to 
 the non-reductive case:

\begin{definition}
\label{def:Qopp}
Let $\varpi \colon {\mathfrak{p}}_{\mathbb{C}} \to {\mathfrak{p}}_{\mathbb{C}}/{\mathfrak{n}}_{\mathbb{C}} \simeq {\mathfrak{l}}_{\mathbb{C}}$
 be the projection as before, 
 and ${\mathfrak{q}}$ a parabolic subalgebra of ${\mathfrak{p}}_{\mathbb{C}}$.  We say ${\mathfrak{q}}_{\operatorname{opp}}$ is the opposite parabolic subalgebra of ${\mathfrak{q}}$
 in ${\mathfrak{p}}_{\mathbb{C}}$
 if ${\mathfrak{q}}_{\operatorname{opp}}$ is the full inverse 
of the opposite parabolic subalgebra
 of $\varpi({\mathfrak{q}})$
 in ${\mathfrak{l}}_{\mathbb{C}}$
 (with respect to a fixed Cartan subalgebra).  
\end{definition}

We denote by $\xi^{\vee}$
 the contragredient representation of $\xi$.  
Then one has the following.  

\begin{lemma}
\label{lem:xidual}
\begin{enumerate}
\item[{\rm{(1)}}]
$\xi \in \operatorname{Irr}(P;{\mathfrak{q}})_f$
 if and only if 
$\xi^{\vee} \in \operatorname{Irr}(P;{\mathfrak{q}}_{\operatorname{opp}})_f$.  
\item[{\rm{(2)}}]
$\xi \in \operatorname{Irr}({\mathfrak{p}};{\mathfrak{q}})_f$
 if and only if 
$\xi^{\vee} \in \operatorname{Irr}({\mathfrak{p}};{\mathfrak{q}}_{\operatorname{opp}})_f$.  
\end{enumerate}
\end{lemma}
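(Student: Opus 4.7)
The plan is to deduce both parts from a single dimension identity over the reductive Levi ${\mathfrak{l}}_{\mathbb{C}}$, reducing first from the group to the Lie algebra via decomposition into ${\mathfrak{p}}$-irreducibles, and then from ${\mathfrak{p}}$ to ${\mathfrak{l}}_{\mathbb{C}}$ by invoking \eqref{eqn:IrrPL} and the triviality of the nilradical action on finite-dimensional irreducibles.

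I would first establish part (2). For $(\xi, V) \in \operatorname{Irr}({\mathfrak{p}})_f$, the nilpotent ideal ${\mathfrak{n}}$ acts trivially, so the action factors through ${\mathfrak{l}}_{\mathbb{C}}$ and the ${\mathfrak{q}}$-submodule lattice of $V$ coincides with its ${\mathfrak{q}}'$-submodule lattice, where ${\mathfrak{q}}' := {\mathfrak{q}} \cap {\mathfrak{l}}_{\mathbb{C}}$ has nilradical ${\mathfrak{u}}'$. I would then invoke the standard reductive Lie algebra decomposition $V = V^{{\mathfrak{u}}'} \oplus {\mathfrak{u}}'_{\operatorname{opp}} V$ as ${\mathfrak{l}}_{{\mathfrak{q}}'}$-modules, which yields an ${\mathfrak{l}}_{{\mathfrak{q}}'}$-isomorphism $V^{{\mathfrak{u}}'} \simeq V/{\mathfrak{u}}'_{\operatorname{opp}} V$. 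The natural invariant pairing between $V$ and $V^{\vee}$ identifies $(V^{\vee})^{{\mathfrak{u}}'_{\operatorname{opp}}} = ({\mathfrak{u}}'_{\operatorname{opp}} V)^{\perp} \simeq (V/{\mathfrak{u}}'_{\operatorname{opp}} V)^{\vee}$, giving $\dim V^{{\mathfrak{u}}'} = \dim (V^{\vee})^{{\mathfrak{u}}'_{\operatorname{opp}}}$ and hence (2).

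To deduce (1) from (2), given $(\xi, V) \in \operatorname{Irr}(P)_f$ I would write $V|_{{\mathfrak{p}}} = \bigoplus_i V_i$ with $V_i \in \operatorname{Irr}({\mathfrak{p}})_f$, so that $V^{{\mathfrak{u}}} = \bigoplus_i V_i^{{\mathfrak{u}}}$ with each summand ${\mathfrak{l}}_{\mathfrak{q}}$-irreducible. Any one-dimensional ${\mathfrak{q}}$-submodule of $V$ lies in $V^{{\mathfrak{u}}}$ and, by Schur's lemma applied componentwise, must coincide with some $V_i^{{\mathfrak{u}}}$ of dimension one; conversely, any such $V_i^{{\mathfrak{u}}}$ is itself a one-dimensional ${\mathfrak{q}}$-submodule. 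Consequently $d_{{\mathfrak{q}}}(\xi) = 1$ if and only if $d_{{\mathfrak{q}}}(V_i) = 1$ for some $i$, and the analogous statement for $\xi^{\vee}|_{{\mathfrak{p}}} = \bigoplus_i V_i^{\vee}$ with ${\mathfrak{q}}_{\operatorname{opp}}$ in place of ${\mathfrak{q}}$ follows identically. Applying (2) to each $V_i$ then closes the equivalence.

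The only input that warrants care is the ${\mathfrak{l}}_{{\mathfrak{q}}'}$-equivariant splitting $V = V^{{\mathfrak{u}}'} \oplus {\mathfrak{u}}'_{\operatorname{opp}} V$: this rests on complete reducibility of finite-dimensional modules over the reductive Lie algebra ${\mathfrak{l}}_{{\mathfrak{q}}'}$, together with the observation that ${\mathfrak{u}}'$ and ${\mathfrak{u}}'_{\operatorname{opp}}$ carry opposite weights for the center of ${\mathfrak{l}}_{{\mathfrak{q}}'}$. Once that reductive-algebra input is granted, the remainder of the argument is formal bookkeeping with duality and Schur's lemma.
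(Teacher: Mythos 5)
Your proof is correct, and it takes a genuinely different route from the paper. The paper reduces both parts at once to the reductive case via the identification $\operatorname{Irr}(P;{\mathfrak{q}})_f \simeq \operatorname{Irr}(L;{\mathfrak{q}}\cap{\mathfrak{l}}_{\mathbb{C}})_f$ of \eqref{eqn:IrrPL} and then invokes Lemma \ref{lem:IrrGQ} (2), whose proof is weight-theoretic: the contragredient of $\Pi_\lambda$ has highest weight $-w_0\lambda$, and ${\mathfrak{p}}^{-w_0\Theta}$ is conjugate to ${\mathfrak{p}}^{\Theta}_-$ by an inner automorphism. You instead prove the stronger dimension identity $d_{\mathfrak{q}}(\xi)=d_{{\mathfrak{q}}_{\operatorname{opp}}}(\xi^{\vee})$ for every $\xi\in\operatorname{Irr}({\mathfrak{p}})_f$, via the $\mathfrak{l}_{{\mathfrak{q}}'}$-equivariant splitting $V=V^{{\mathfrak{u}}'}\oplus{\mathfrak{u}}'_{\operatorname{opp}}V$ and the duality identification $(V^{\vee})^{{\mathfrak{u}}'_{\operatorname{opp}}}=({\mathfrak{u}}'_{\operatorname{opp}}V)^{\perp}$; this is more intrinsic (no choice of Cartan subalgebra, positive system, or $w_0$), whereas the paper's argument is shorter because it piggybacks on the parametrization \eqref{eqn:IrrGQ} already established. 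Your reduction from $\operatorname{Irr}(P)_f$ to $\operatorname{Irr}({\mathfrak{p}})_f$ by decomposing $V|_{\mathfrak{p}}$ into $\mathfrak{p}$-irreducibles is also a reasonable alternative to using \eqref{eqn:IrrPL} directly, though the phrase ``by Schur's lemma applied componentwise, must coincide with some $V_i^{\mathfrak{u}}$'' overstates slightly --- if two summands $V_i^{\mathfrak{u}}$, $V_j^{\mathfrak{u}}$ are isomorphic one-dimensional $\mathfrak{l}_{\mathfrak{q}}$-modules, a diagonal line is a one-dimensional ${\mathfrak{q}}$-submodule not equal to either; but your actual conclusion, that $d_{\mathfrak{q}}(\xi)=1$ iff some $d_{\mathfrak{q}}(V_i)=1$, is correct and is all that is needed, so this is a cosmetic blemish rather than a gap.
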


By \eqref{eqn:IrrPL}, 
 it suffices to prove Lemma \ref{lem:xidual}
 in the reductive case, 
 which is shown in Lemma \ref{lem:IrrGQ} (2) below.

%
\subsection{Description of $\operatorname{Irr}(P;{\mathfrak{q}})_f$}
\label{subsec:ptheta}
%
As we saw in \eqref{eqn:IrrPL}, 
 the description of $\operatorname{Irr}(P;{\mathfrak{q}})_f$
 reduces to the case
 where $P$ is a reductive group, 
 for which we use the letter $G$ in this subsection
 for later purpose.  
Let $\widetilde{\mathfrak{j}}$ be a Cartan subalgebra
 of ${\mathfrak{g}}$, 
 $W$ the Weyl group
 of the root system
 $\Delta({\mathfrak{g}}_{\mathbb{C}}, \widetilde{\mathfrak{j}}_{\mathbb{C}})$, 
 and $w_0$ the longest element in $W$.  
We fix a $W$-invariant non-degenerate symmetric bilinear form $\langle\,,\,\rangle$ 
 on the dual space $\widetilde{\mathfrak{j}_{\mathbb{C}}}^{\ast}$.  
We take a positive system 
 $\Delta^+({\mathfrak{g}}_{\mathbb{C}},
  \widetilde{\mathfrak{j}_{\mathbb{C}}})$, 
 denote by $\Psi$ the set of simple roots, 
 and write $\Lambda_+ \equiv \Lambda_+({\mathfrak{g}}_{\mathbb{C}})$
 for the set of 
 dominant integral weights of $\widetilde{{\mathfrak{j}}_{\mathbb{C}}}$.  
Then the Cartan--Weyl highest weight theory establishes the bijection:
\begin{equation}
\label{eqn:CW}
\operatorname{Irr}({\mathfrak{g}})_f \simeq \Lambda_+, 
\qquad
 \Pi_{\lambda} \leftrightarrow \lambda.  
\end{equation}

If $\Pi_{\lambda}$ lifts to a holomorphic representation
 of  a connected complex reductive Lie group 
 $G_{\mathbb{C}}$
 with Lie algebra ${\mathfrak{g}}_{\mathbb{C}}$, 
 we use the same letter $\Pi_{\lambda}$
 to denote the lift.

Given a subset $\Theta$ in $\Psi$, 
 we write 
$
   {\mathfrak{g}}_{\mathbb{C}}
   =
   {\mathfrak{n}}_-^{\Theta}
   \oplus
   {\mathfrak{l}}_{\mathbb{C}}^{\Theta}
   \oplus
   {\mathfrak{n}}_+^{\Theta}
$
 for the Gelfand--Naimark decomposition, 
 where ${\mathfrak{l}}_{\mathbb{C}}^{\Theta}$ is a reductive subalgebra
 containing $\widetilde{\mathfrak{j}_{\mathbb{C}}}$
 with $\Delta({\mathfrak{l}}_{\mathbb{C}}^{\Theta}, \widetilde{\mathfrak{j}}_{\mathbb{C}})
=\Delta({\mathfrak{g}}_{\mathbb{C}}, \widetilde{\mathfrak{j}}_{\mathbb{C}})
 \cap {\mathbb{Z}}\text{-span} \Theta$, 
$
   {\mathfrak{p}}^{\Theta}\equiv{\mathfrak{p}}_+^{\Theta} :={\mathfrak{l}}_{\mathbb{C}}^{\Theta} \oplus {\mathfrak{n}}_+^{\Theta}$
 is a parabolic subalgebra
 with $\Delta({\mathfrak{n}}_{+}^{\Theta}, \widetilde{\mathfrak{j}}_{\mathbb{C}}) \subset \Delta^{+}({\mathfrak{g}}, \widetilde{\mathfrak{j}}_{\mathbb{C}}) $ 
 and 
$
   {\mathfrak{p}}_-^{\Theta}
   :=
   {\mathfrak{l}}_{\mathbb{C}}^{\Theta}
   \oplus
   {\mathfrak{n}}_-^{\Theta}
$
 is the opposite parabolic subalgebra.

\begin{lemma}
\label{lem:IrrGQ}
Suppose $\Theta$  is a subset of $\Psi$.  
\begin{enumerate}
\item[{\rm{(1)}}]
The map \eqref{eqn:CW} of taking highest weights
 induces the following bijection: 
\begin{equation}
\label{eqn:IrrGQ}
   \operatorname{Irr}({\mathfrak{g}};{\mathfrak{p}}^{\Theta})_f
   \simeq
    \{\lambda \in \Lambda_+
      :
      \langle \lambda, \alpha \rangle =0
      \quad
      {}^{\forall} \alpha \in \Theta
    \}.  
\end{equation}
\item[{\rm{(2)}}]
$\xi \in 
\operatorname{Irr}({\mathfrak{g}};{\mathfrak{p}}^{\Theta})_f
$
 if and only if 
$\xi^{\vee} \in 
\operatorname{Irr}({\mathfrak{g}};{\mathfrak{p}}_-^{\Theta})_f
$.  
\end{enumerate}
\end{lemma}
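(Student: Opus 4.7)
My plan is to reduce both parts to an explicit description of the unique irreducible $\mathfrak{p}^\Theta$-submodule of $\Pi_\lambda$, namely the space $V^{\mathfrak{n}_+^\Theta}$ of $\mathfrak{n}_+^\Theta$-invariants in the representation space $V$ of $\Pi_\lambda$.

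For (1), the commentary following Definition \ref{def:FPq} gives $d_{\mathfrak{p}^\Theta}(\Pi_\lambda) = \dim V^{\mathfrak{n}_+^\Theta}$. First I will identify $V^{\mathfrak{n}_+^\Theta}$ with the $\mathfrak{l}_{\mathbb{C}}^\Theta$-submodule $U(\mathfrak{l}_{\mathbb{C}}^\Theta) v_\lambda$ generated by the highest weight vector. The inclusion $\supseteq$ is immediate, since $\mathfrak{n}_+^\Theta \subseteq \mathfrak{n}^+$ annihilates $v_\lambda$ and $\mathfrak{l}_{\mathbb{C}}^\Theta$ normalizes $\mathfrak{n}_+^\Theta$. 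For the reverse inclusion I will use a ``second highest weight vector'' argument: were the containment strict, an $\mathfrak{l}_{\mathbb{C}}^\Theta$-highest weight vector in a complementary $\mathfrak{l}_{\mathbb{C}}^\Theta$-summand of $V^{\mathfrak{n}_+^\Theta}$ would be annihilated both by $\mathfrak{l}_{\mathbb{C}}^\Theta \cap \mathfrak{n}^+$ and by $\mathfrak{n}_+^\Theta$, hence by all of $\mathfrak{n}^+$, producing a $\mathfrak{g}_{\mathbb{C}}$-highest weight vector linearly independent from $v_\lambda$ and contradicting the irreducibility of $V$. Thus $V^{\mathfrak{n}_+^\Theta}$ is the irreducible $\mathfrak{l}_{\mathbb{C}}^\Theta$-module with highest weight $\lambda$, which is one-dimensional precisely when $\lambda$ annihilates $[\mathfrak{l}_{\mathbb{C}}^\Theta, \mathfrak{l}_{\mathbb{C}}^\Theta]$, equivalently when $\langle \lambda, \alpha \rangle = 0$ for every $\alpha \in \Theta$.

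For (2), I will establish the more general identity $d_{\mathfrak{q}}(\xi) = d_{\mathfrak{q}_{\operatorname{opp}}}(\xi^\vee)$ valid for any parabolic subalgebra $\mathfrak{q} \subset \mathfrak{g}_{\mathbb{C}}$ and $\xi \in \operatorname{Irr}(\mathfrak{g})_f$, writing $\mathfrak{u}, \mathfrak{u}_-$ for the nilpotent radicals and $\mathfrak{l}$ for the common Levi. The crux is to show that the composition
\[
V^{\mathfrak{u}} \hookrightarrow V \twoheadrightarrow V / \mathfrak{u}_- V
\]
is an $\mathfrak{l}$-equivariant isomorphism. Surjectivity follows from $V = U(\mathfrak{u}_-) V^{\mathfrak{u}}$, read off the PBW factorization $U(\mathfrak{g}_{\mathbb{C}}) = U(\mathfrak{u}_-) U(\mathfrak{l}) U(\mathfrak{u})$ applied to $v_\lambda$; injectivity is a weight-space check, since by the identification used in (1) all weights of $V^{\mathfrak{u}}$ lie in $\lambda - \mathbb{Z}_{\ge 0}\Theta$, whereas any weight of $\mathfrak{u}_- V$ acquires a strictly positive contribution from at least one simple root outside the Levi, so the two weight sets are disjoint. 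Combined with the standard duality $(V^\vee)^{\mathfrak{u}_-} \cong (V/\mathfrak{u}_- V)^*$ this yields
\[
d_{\mathfrak{q}_{\operatorname{opp}}}(\xi^\vee) = \dim (V^\vee)^{\mathfrak{u}_-} = \dim V/\mathfrak{u}_- V = \dim V^{\mathfrak{u}} = d_{\mathfrak{q}}(\xi),
\]
and specializing to $\mathfrak{q} = \mathfrak{p}^\Theta$ gives (2).

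The main technical subtlety is ruling out extra $\mathfrak{l}_{\mathbb{C}}^\Theta$-isotypic components in $V^{\mathfrak{n}_+^\Theta}$ beyond $U(\mathfrak{l}_{\mathbb{C}}^\Theta) v_\lambda$; this is handled by the ``second highest weight vector'' argument described above, which is the key idea underpinning both parts.
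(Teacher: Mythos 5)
Your proof is correct, but it takes a genuinely different route from the paper in part (2). For part (1) the underlying idea is the same: the paper works directly with the highest weight vector $v_\lambda$ and characterizes when $\lambda$ extends to a character of $\mathfrak{p}^\Theta$ (equivalently, when $\lambda$ vanishes on $\widetilde{\mathfrak{j}}_{\mathbb{C}}^\Theta:=[\mathfrak{l}_{\mathbb{C}}^\Theta,\mathfrak{l}_{\mathbb{C}}^\Theta]\cap\widetilde{\mathfrak{j}}_{\mathbb{C}}$), and this is the content of your computation $V^{\mathfrak{n}_+^\Theta}=U(\mathfrak{l}_{\mathbb{C}}^\Theta)v_\lambda$ together with the one-dimensionality criterion — so you have essentially re-derived the remark after Definition \ref{def:FPq} en route. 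For part (2), however, the paper argues purely combinatorially: the highest weight of $\xi^\vee$ is $-w_0\lambda$, the condition from (1) is invariant under $\Theta\mapsto -w_0\Theta$ since $\langle\,,\,\rangle$ is $W$-invariant, and $\mathfrak{p}^{-w_0\Theta}$ is $\operatorname{Inn}(\mathfrak{g}_{\mathbb{C}})$-conjugate to $\mathfrak{p}_-^\Theta$. Your argument instead exhibits the $\mathfrak{l}$-equivariant isomorphism $V^{\mathfrak{u}}\xrightarrow{\sim} V/\mathfrak{u}_-V$ via a PBW/weight-disjointness argument and dualizes, which is more structural, makes no use of $w_0$, and yields the stronger numerical identity $d_{\mathfrak{q}}(\xi)=d_{\mathfrak{q}_{\operatorname{opp}}}(\xi^\vee)$ for any parabolic $\mathfrak{q}$ — in fact this would directly establish the paper's Lemma \ref{lem:xidual} without the $d_{\mathfrak{q}}(\xi)=1$ restriction. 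The trade-off is that the paper's argument is shorter once (1) is in hand, while yours is self-contained and produces finer information.
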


\begin{proof}
(1)\enspace
We  set
$
\widetilde {\mathfrak{j}}_{\mathbb{C}}^{\Theta}
:=
[{\mathfrak{l}}_{\mathbb{C}}^{\Theta}, {\mathfrak{l}}_{\mathbb{C}}^{\Theta}] \cap \widetilde {\mathfrak{j}}_{\mathbb{C}}, 
$
which is a Cartan subalgebra
 of the semisimple part 
$
   [{\mathfrak{l}}_{\mathbb{C}}^{\Theta},{\mathfrak{l}}_{\mathbb{C}}^{\Theta}]
$
 of ${\mathfrak{l}}_{\mathbb{C}}^{\Theta}$.  
Then the right-hand side of \eqref{eqn:IrrGQ} equals
 $\{\lambda \in \Lambda_+:\text{$\lambda$ vanishes on $\widetilde {\mathfrak{j}}_{\mathbb{C}}^{\Theta}$}\}$.

Suppose $v$ is a relatively ${\mathfrak{p}}^{\Theta}$-invariant vector
 of  
$
   \xi \in \operatorname{Irr}({\mathfrak{g}};{\mathfrak{p}}^{\Theta})_f
$, 
 namely, 
 $v$ satisfies $\xi(X)v=\lambda(X)v$
 (${}^{\forall} X \in {\mathfrak{p}}^{\Theta}$) 
 for some  linear form $\lambda$ on ${\mathfrak{p}}^{\Theta}$.  
Then  $\lambda$ vanishes on $[{\mathfrak{p}}^{\Theta},{\mathfrak{p}}^{\Theta}]$, 
hence, 
 $\lambda|_{\widetilde{\mathfrak{j}}_{\mathbb{C}}}$ is the highest weight of $\xi$
 and $\lambda$ vanishes on $\widetilde{\mathfrak{j}}_{\mathbb{C}}^{\Theta}$.  
Conversely,
 if $\lambda \in \Lambda_+$ vanishes on $\widetilde {\mathfrak{j}}_{\mathbb{C}}^{\Theta}$, 
 then $\lambda$ extends to a character of ${\mathfrak{p}}^{\Theta}$
 via the quotient map 
$
   {\mathfrak{p}}^{\Theta} \to {\mathfrak{p}}^{\Theta}/[{\mathfrak{p}}^{\Theta}, {\mathfrak{p}}^{\Theta}]
  \simeq
 \widetilde {\mathfrak{j}}_{\mathbb{C}}/\widetilde {\mathfrak{j}}_{\mathbb{C}}^{\Theta}$, 
 and the highest vector $v$ of $\Pi_{\lambda}$ satisfies $\xi(X) v = \lambda(X) v$
 ${}^{\forall} X \in {\mathfrak{p}}^{\Theta}$.  
Hence $\lambda
 \in \operatorname{Irr}({\mathfrak{g}};{\mathfrak{p}}^{\Theta})_f$.  
\par\noindent
(2)\enspace
Since $-w_0 \lambda$ is the highest weight
 of the contragredient representation $\xi^{\vee}$, 
one has
 $\xi^{\vee} \in 
\operatorname{Irr}({\mathfrak{g}};{\mathfrak{p}}^{-w_0 \Theta})_f$
 if $\xi \in 
\operatorname{Irr}({\mathfrak{g}};{\mathfrak{p}}^{\Theta})_f
$, 
 and vice versa.  
Since ${\mathfrak{p}}^{-w_0 \Theta}$ is conjugate to ${\mathfrak{p}}_-^{\Theta}$
 by an inner automorphism of ${\mathfrak{g}}_{\mathbb{C}}$, 
 one has $\operatorname{Irr}({\mathfrak{g}};{\mathfrak{p}}^{-w_0 \Theta})_f
=\operatorname{Irr}({\mathfrak{g}};{\mathfrak{p}}_-^{\Theta})_f
$, 
 whence the assertion follows.  
\end{proof}

\subsection{Geometric realization
 for $\operatorname{Irr}(P;{\mathfrak{q}})_f$}
\label{subsec:BWIrrPq}

Suppose we are in the setting of Definition \ref{def:FPq}.  
In this subsection,
 we provide a geometric interpretation
 of $\operatorname{Irr}(P; {\mathfrak{q}})_f$.

We let the Lie algebra ${\mathfrak{p}}$ of $P$ act on $C^{\infty}(P)$
 as left invariant vector fields
 by $(d R(X) f)(g):=\frac{d}{d t}|_{t=0}f(g \exp t X)$, 
 and the same letter $d R$ is used
 to denote its complex linear extension to ${\mathfrak{p}}_{\mathbb{C}}$.  
For a ${\mathfrak{q}}$-module $(\tau, W)$, 
 we let ${\mathfrak{q}}$ act on $C^{\infty}(P) \otimes W$ 
 by $d R \otimes \operatorname{id} + \operatorname{id} \otimes \tau$, 
 which may be written simply as $d R \otimes \tau$.

\begin{lemma}
\label{lem:geomIrrPq}
Suppose that $(\tau, W)$ is a quotient
 of $(\xi,V) \in \operatorname{Irr}(P)_f$
 as a ${\mathfrak{q}}$-module.  
Then the left translation of $P$ leaves
 $(C^{\infty}(P) \otimes W)^{\mathfrak{q}}$ invariant, 
 and there is a natural injective $P$-homomorphism
 $\widetilde T \colon V \to (C^{\infty}(P)\otimes W)^{\mathfrak{q}}$.  
\end{lemma}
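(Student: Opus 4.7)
The plan is to write down an explicit matrix-coefficient type map $\widetilde T$ and verify both assertions by a direct computation, using only the chain rule and the hypothesis that there is a given surjective $\mathfrak{q}$-homomorphism $p\colon V \twoheadrightarrow W$. I do not anticipate any serious obstacle: the lemma is essentially the ``geometric'' side of the infinitesimal Frobenius reciprocity made explicit.

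For the invariance of $(C^\infty(P) \otimes W)^{\mathfrak{q}}$ under left translation, I would simply note that $L_h$ on $C^\infty(P)$ commutes with every $dR(X)$ and acts trivially on the tensor factor $W$; since the $\mathfrak{q}$-action is $dR \otimes \operatorname{id} + \operatorname{id} \otimes \tau$, it commutes with $L_h \otimes \operatorname{id}$, so the latter preserves the subspace of $\mathfrak{q}$-invariants.

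Next I would define
$$\widetilde T(v)(g) := p(\xi(g^{-1})v), \qquad v \in V, \ g \in P,$$
and carry out two one-line computations. For $X \in \mathfrak{q}$ (extended $\mathbb{C}$-linearly), differentiating $\widetilde T(v)(g\exp tX)$ at $t=0$ and using $(g\exp tX)^{-1} = \exp(-tX)\,g^{-1}$ will give
$$(dR(X)\widetilde T(v))(g) = -p\bigl(\xi(X)\xi(g^{-1})v\bigr) = -\tau(X)\,\widetilde T(v)(g),$$
where the second equality is precisely the $\mathfrak{q}$-equivariance of $p$; this places $\widetilde T(v)$ in $(C^\infty(P) \otimes W)^{\mathfrak{q}}$. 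Similarly, $L_h \widetilde T(v) = \widetilde T(\xi(h)v)$ follows from $(h^{-1}g)^{-1} = g^{-1}h$, proving that $\widetilde T$ intertwines $\xi$ with left translation.

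For injectivity, I would argue that $\ker \widetilde T$ is a $P$-submodule of $V$ by the equivariance just established, so by the irreducibility of $\xi$ it is either $0$ or $V$; since $W \ne 0$ and $p$ is surjective, $\widetilde T(v)(e) = p(v)$ is nonzero for some $v \in V$, so $\widetilde T \ne 0$ and hence $\ker \widetilde T = 0$. The only point requiring a little care is keeping the sign conventions in the combined action $dR \otimes \operatorname{id} + \operatorname{id} \otimes \tau$ consistent when verifying the $\mathfrak{q}$-covariance identity above; once that is in place the whole argument is essentially three short computations.
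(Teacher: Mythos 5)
Your proposal is correct and is essentially the same proof as in the paper: both define the identical matrix-coefficient map $\widetilde T(v)(g)=p(\xi(g^{-1})v)$, verify $\mathfrak{q}$-invariance by differentiating along $dR$, observe $P$-equivariance from $(h^{-1}g)^{-1}=g^{-1}h$, and deduce injectivity from the irreducibility of $\xi$. The only cosmetic difference is that the paper phrases the map via the bilinear matrix coefficient $T\colon V\times V^{\vee}\to C^{\infty}(P)$ restricted to the $\mathfrak{q}$-submodule $W^{\vee}\hookrightarrow V^{\vee}$ and then identifies $\operatorname{Hom}_{\mathfrak{q}}(W^{\vee},C^{\infty}(P))$ with $(C^{\infty}(P)\otimes W)^{\mathfrak{q}}$, whereas you work directly with the quotient map $p\colon V\twoheadrightarrow W$.
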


\begin{proof}
Denote by $(\xi^{\vee}, V^{\vee})$ the contragredient representation
 of $\xi$, 
 and consider a bilinear map
\[
   T \colon V \times V^{\vee} \to C^{\infty}(P), 
\quad
   (v,u) \mapsto (g \mapsto \langle \xi(g^{-1})v, u\rangle).  
\]
Taking the dual of the quotient map $V \to W$, 
 one has an injective ${\mathfrak{q}}$-homomorphism
 $W^{\vee} \hookrightarrow V^{\vee}$, 
 hence the restriction 
 $T|_{V \times W^{\vee}}$ induces a $P$-homomorphism 
 $\widetilde T \colon V \to \invHom{\mathbb{C}}{W^{\vee}}{C^{\infty}(P)}
 \simeq C^{\infty}(P) \otimes W$
 by $\widetilde T(v)(g):=T(v, \cdot\,)(g)$ for $v\in V$ and $g \in P$.  
Then 
$
   \widetilde T(v) \in \operatorname{Hom}_{\mathfrak{q}}(W^{\vee}, C^{\infty}(P))
   \simeq
    (C^{\infty}(P) \otimes W)^{{\mathfrak{q}}}
$
 because 
$
   d R(X) T(v,u)=T(v,d \xi^{\vee}(X)u)
   =
   T(v, \tau^{\vee} (X)u)
$
 for any $u \in W^{\vee}$
 and any $X \in {\mathfrak{p}}_{\mathbb{C}}$.  
The resulting $P$-homomorphism 
$
   \widetilde T \colon V \to (C^{\infty}(P)\otimes W)^{\mathfrak{q}}
$
 is injective
 because $(\xi,V)$ is irreducible.  
\end{proof}

\begin{remark}
In general, 
 the $P$-homomorphism $\widetilde T \colon V \to (C^{\infty}(P) \otimes W)^{\mathfrak{q}}$
 is not surjective.  
We note
 that $\widetilde T$ is bijective
 if $P$ is a connected compact Lie group
 by the Borel--Weil theorem.  
\end{remark}

Suppose $P_{\mathbb{C}}$ is a connected complex Lie group, 
 and $Q$ a parabolic subgroup of $P_{\mathbb{C}}$
 with Lie algebra ${\mathfrak{q}}$.  
For a holomorphic character ${\mathbb{C}}_{\lambda}$
 of $Q$, 
 we denote by ${\mathcal{L}}_{\lambda}$
 the $P_{\mathbb{C}}$-equivariant holomorphic line bundle
 $P_{\mathbb{C}} \times_Q {\mathbb{C}}_{\lambda}$
 over the flag variety $P_{\mathbb{C}}/Q$, 
 and by ${\mathcal{O}}(P_{\mathbb{C}}/Q, {\mathcal{L}}_{\lambda})$
 the space of holomorphic sections for ${\mathcal{L}}_{\lambda}$.

\begin{lemma}
[Geometric realization for $\operatorname{Irr}(P_{\mathbb{C}};{\mathfrak{q}})_{\operatorname{hol}}$]
\label{lem:BW}
{\rm{(1)}}\enspace
The regular representation of $P_{\mathbb{C}}$
 on ${\mathcal{O}}(P_{\mathbb{C}}/Q,{\mathcal{L}}_{\lambda})$
 belongs to 
$
   \operatorname{Irr}
   (P_{\mathbb{C}};
   {\mathfrak{q}}_{\operatorname{opp}})
_{\operatorname{hol}}
$
 if it is non-zero, 
 and its contragredient representation
 belongs to $\operatorname{Irr}(P_{\mathbb{C}};{\mathfrak{q}})_{\operatorname{hol}}$.  
\par\noindent
{\rm{(2)}}\enspace
Assume that $(\xi,V)$ is an irreducible holomorphic representation of $P_{\mathbb{C}}$
 such that its contragredient representation $\xi^{\vee}$ belongs 
 to $\operatorname{Irr}(P_{\mathbb{C}};{\mathfrak{q}})_{\operatorname{hol}}$.   
Then there exists a holomorphic character $\lambda$ of $Q$
 such that 
 $\xi$ is isomorphic to the regular representation
 of $P_{\mathbb{C}}$
 on ${\mathcal{O}}(P_{\mathbb{C}}/Q, {\mathcal{L}}_{\lambda})$.  
\end{lemma}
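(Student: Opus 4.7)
My approach is to reduce both parts to the classical Borel--Weil theorem for the reductive Levi $L_{\mathbb{C}}$ via the decomposition $P_{\mathbb{C}} = L_{\mathbb{C}} \ltimes N_{\mathbb{C}}$. Since Definition~\ref{def:FPq} makes ${\mathfrak{q}}$ the full inverse in ${\mathfrak{p}}_{\mathbb{C}}$ of a parabolic of ${\mathfrak{l}}_{\mathbb{C}}$, one has ${\mathfrak{n}}_{\mathbb{C}} \subset {\mathfrak{q}}$, hence at the group level $N_{\mathbb{C}} \subset Q$ and $Q = Q_L \ltimes N_{\mathbb{C}}$ with $Q_L := Q \cap L_{\mathbb{C}}$ a parabolic subgroup of the reductive complex group $L_{\mathbb{C}}$. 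The natural projection gives a biholomorphism $L_{\mathbb{C}}/Q_L \rarrowsim P_{\mathbb{C}}/Q$, on which $N_{\mathbb{C}}$ acts trivially.

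For (1), set $V := {\mathcal{O}}(P_{\mathbb{C}}/Q, {\mathcal{L}}_\lambda)$, which is finite-dimensional because the base is projective. A direct computation using $n^{-1}g = g \cdot (g^{-1}n^{-1}g)$ together with normality of $N_{\mathbb{C}}$ in $P_{\mathbb{C}}$ shows that the left-translation action of $N_{\mathbb{C}}$ on sections satisfies $(n \cdot f)(g) = \lambda(g^{-1}ng)f(g)$. Whenever $V$ is nonzero, the Lie--Kolchin fixed-point property for the unipotent $N_{\mathbb{C}}$-action, combined with normality ($V^{N_{\mathbb{C}}}$ is $P_{\mathbb{C}}$-stable) and the classical Borel--Weil statement applied to the underlying $L_{\mathbb{C}}$-equivariant identification $V \cong {\mathcal{O}}(L_{\mathbb{C}}/Q_L, {\mathcal{L}}_{\lambda|_{Q_L}})$, forces $\lambda|_{N_{\mathbb{C}}}$ to be trivial, so that the $P_{\mathbb{C}}$-action on $V$ factors through $L_{\mathbb{C}}$ and is irreducible. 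To locate the relative invariant in the contragredient, the evaluation functional $\operatorname{ev}_e \colon V \to {\mathbb{C}}$, $f \mapsto f(e)$, intertwines the $Q$-action on $V$ with the character $\lambda^{-1}$; thus $\operatorname{ev}_e \in V^{\vee}$ is a relative $Q$-invariant of weight $\lambda$, placing $V^{\vee} \in \operatorname{Irr}(P_{\mathbb{C}};{\mathfrak{q}})_{\operatorname{hol}}$, and Lemma~\ref{lem:xidual} then yields $V \in \operatorname{Irr}(P_{\mathbb{C}};{\mathfrak{q}}_{\operatorname{opp}})_{\operatorname{hol}}$.

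For (2), the matrix coefficient construction of Lemma~\ref{lem:geomIrrPq} delivers the desired embedding. Given a relative $Q$-invariant $u \in V^{\vee}$ of eigenvalue $\lambda$, the map
\[
T \colon V \to {\mathcal{O}}(P_{\mathbb{C}}/Q, {\mathcal{L}}_\lambda),
\qquad
T(v)(g) := \langle \xi(g^{-1})v, u \rangle,
\]
is $P_{\mathbb{C}}$-equivariant for left translation, satisfies the $Q$-equivariance $T(v)(gq) = \lambda(q)^{-1} T(v)(g)$ via $\xi^{\vee}(q)u = \lambda(q) u$, and is holomorphic in $g$ because $\xi$ is a holomorphic representation on the finite-dimensional $V$. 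Choosing $v$ with $\langle v, u \rangle \ne 0$ shows $T \ne 0$, whence $T$ is injective by irreducibility of $\xi$; part (1) then gives irreducibility of the target, so $T$ is an isomorphism.

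The main obstacle is the reconciliation of the $P_{\mathbb{C}}$-equivariance of $V$ with the reductive nature of the classical Borel--Weil theorem: one must verify that a nonzero $V$ forces $\lambda|_{N_{\mathbb{C}}}$ to be trivial, which is exactly the interaction point between finite-dimensionality, the Lie--Kolchin fixed-point property, and irreducibility. Everything else is routine, including the convention distinguishing ${\mathfrak{q}} \leftrightarrow V^{\vee}$ from ${\mathfrak{q}}_{\operatorname{opp}} \leftrightarrow V$ in part (1), which is precisely the content of Lemma~\ref{lem:xidual}.
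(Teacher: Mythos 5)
Your proof is correct and follows essentially the same route as the paper: reduce part~(1) to the reductive Levi $L_{\mathbb{C}}$ (the paper invokes the identification \eqref{eqn:IrrPL}, you unpack the fibration $L_{\mathbb{C}}/Q_L \rarrowsim P_{\mathbb{C}}/Q$ directly), apply the classical Borel--Weil theorem there, and for part~(2) use the matrix-coefficient map from Lemma~\ref{lem:geomIrrPq} and holomorphy plus irreducibility to conclude bijectivity. One small remark on part~(1): the step you single out as ``the main obstacle''---forcing $\lambda|_{N_{\mathbb{C}}}$ to be trivial via Lie--Kolchin and irreducibility---is in fact automatic and does not need any argument: $\lambda$ is a holomorphic (hence algebraic) character of the parabolic $Q = Q_L \ltimes N_{\mathbb{C}}$, and any algebraic character kills the unipotent radical, which contains $N_{\mathbb{C}}$; so $(n\cdot f)(g)=\lambda(g^{-1}ng)f(g)=f(g)$ immediately, and the $P_{\mathbb{C}}$-action on $\mathcal{O}(P_{\mathbb{C}}/Q,\mathcal{L}_\lambda)$ factors through $L_{\mathbb{C}}$ without appealing to a fixed-point argument. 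With that simplification, your proof matches the paper's.
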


We shall apply the above lemma
 also to real forms 
 $P$ of $P_{\mathbb{C}}$, 
 where we do not assume $P$ to be connected.

\begin{proof}
Let $L_{\mathbb{C}}$ be a Levi subgroup of $P_{\mathbb{C}}$.  
We note that $L_{\mathbb{C}}$ is connected.  
\par\noindent
(1)\enspace
By \eqref{eqn:IrrPL}, 
 it suffices to prove the assertion
 when $P_{\mathbb{C}}=L_{\mathbb{C}}$.  
We take a Cartan subalgebra $\widetilde{\mathfrak{j}}_{\mathbb{C}}$
 of ${\mathfrak{l}}_{\mathbb{C}}$, 
 and fix a positive system 
 $\Delta^+({\mathfrak{l}}_{\mathbb{C}},\widetilde {\mathfrak{j}}_{\mathbb{C}})$
 such that 
 $\Delta^+({\mathfrak{l}}_{\mathbb{C}},\widetilde {\mathfrak{j}}_{\mathbb{C}})
 \subset
 \Delta ({\mathfrak{q}},\widetilde{\mathfrak{j}}_{\mathbb{C}})$.  
We use the same letter $\lambda$ 
 to denote the differential, 
 and also its restriction 
 to the Cartan subalgebra $\widetilde {\mathfrak{j}}_{\mathbb{C}}$.  
Then the Borel--Weil theorem 
 for the  connected complex reductive Lie group
 $L_{\mathbb{C}}$
 tells that 
 ${\mathcal{O}}(P_{\mathbb{C}}/Q, {\mathcal{L}}_{\lambda})$ is non-zero
 if and only if $-\lambda$ is dominant
 with respect to $\Delta^+({\mathfrak{l}}_{\mathbb{C}},\widetilde {\mathfrak{j}}_{\mathbb{C}})$.  
In this case, 
 its contragredient representation contains
 ${\mathbb{C}}_{\lambda}$ 
as a ${\mathfrak{q}}$-submodule, 
 namely, 
 has the highest weight $\lambda$, 
 as seen in Lemma \ref{lem:IrrGQ}.  
\par\noindent
(2)\enspace
Retain the notation as in the proof
 of Lemma \ref{lem:geomIrrPq}.  
Then the matrix coefficient $T(v,u)$ is a holomorphic function on $P_{\mathbb{C}}$, 
 because $\xi$ is a holomorphic representation of $P_{\mathbb{C}}$.  
Since $(\xi^{\vee}, V^{\vee}) \in \operatorname{Irr}(P_{\mathbb{C}};{\mathfrak{q}})_{\operatorname{hol}}$, 
 there exists a one-dimensional ${\mathfrak{q}}$-submodule ${\mathbb{C}}u$
 in $V^{\vee}$, 
 on which the connected group $Q$ acts as a holomorphic character, 
 to be denoted by $\lambda$.  
Then $T(\cdot, u)$ induces a $P_{\mathbb{C}}$-homomorphism from $V$
 to $({\mathcal{O}}(P_{\mathbb{C}}) \otimes {\mathbb{C}}_{\lambda})^{\mathfrak{q}} \simeq {\mathcal{O}}(P_{\mathbb{C}}/Q, {\mathcal{L}}_{\lambda})$, 
 which is bijective by the irreducibility.  
Thus the lemma is shown.  
\end{proof}

\subsection{Multiplicities in finite-dimensional representations}
\label{subsec:fdrep}

For finite-dimensional representations, 
 the boundedness of multiplicity is equivalent
 to multiplicity-freeness
 in many settings.  
In this subsection 
 we prepare two lemmas
 in a way 
 that we need later.  
For the sake of completeness, 
 we give a proof of the first one.

Let $G_{\mathbb{C}}$ be a connected complex reductive Lie group.   
We take a Cartan subalgebra $\widetilde {\mathfrak{j}}_{\mathbb{C}}$
 of ${\mathfrak{g}}_{\mathbb{C}}$
 and fix a positive system $\Delta^+({\mathfrak{g}}_{\mathbb{C}}, 
\widetilde {\mathfrak{j}}_{\mathbb{C}})$.  
We denote by 
${\mathfrak{b}}$ the corresponding Borel subalgebra
 of ${\mathfrak{g}}_{\mathbb{C}}$, 
 by $\Lambda_+$ 
 the set of dominant integral weights, 
 and by $\Pi_{\lambda}$
 the irreducible holomorphic representation of $G_{\mathbb{C}}$
 if $\lambda \in \Lambda_+$ lifts
 to a character of the Cartan subgroup
 as before.

Suppose that ${\mathfrak{q}}$ is a parabolic subalgebra
 of ${\mathfrak{g}}_{\mathbb{C}}$.  
Without loss of generality, 
 we may assume
 that ${\mathfrak{q}}$ contains ${\mathfrak{b}}$.  
As in Lemma \ref{lem:IrrGQ} (1),
 we regard 
 $\operatorname{Irr}(G_{\mathbb{C}};{\mathfrak{q}})_{\operatorname{hol}}$
 as a subset of $\Lambda_+$
 via the bijection \eqref{eqn:CW}.

First, 
 suppose that $G_{\mathbb{C}}'$ is a connected complex reductive subgroup.  
For $\Pi \in \operatorname{Irr}(G_{\mathbb{C}})_{\operatorname{hol}}$, 
 we set
\begin{equation}
\label{eqn:mPi}
  m(\Pi|_{G_{\mathbb{C}}'})
  :=
  \underset{\pi \in \operatorname{Irr}(G_{\mathbb{C}}')_{\operatorname{hol}}}{\max} 
  [\Pi|_{G_{\mathbb{C}}'}:\pi], 
\end{equation}
 as an analog of $m(\Pi|_{G'})$ in \eqref{eqn:msup}.

\begin{lemma}
\label{lem:VKN}
If $G_{\mathbb{C}}/Q$ is not $G_{\mathbb{C}}'$-spherical, 
 then there exists $\lambda \in \Lambda_+$
 satisfying 
 $\Pi_{N\lambda} \in \operatorname{Irr}(G_{\mathbb{C}};{\mathfrak{q}})_{\operatorname{hol}}$
 and $m(\Pi_{N\lambda}|_{G_{\mathbb{C}}'}) \ge N+1$
 for all $N \in {\mathbb{N}}$.  
\end{lemma}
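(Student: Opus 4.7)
The plan is to encode the failure of $G_{\mathbb{C}}'$-sphericity of $X := G_{\mathbb{C}}/Q$ as a non-constant $B'$-invariant rational function on $X$ (with $B' = T'U'$ a Borel subgroup of $G_{\mathbb{C}}'$), and then to promote this to two algebraically independent $B'$-semi-invariant global sections of a common ample $G_{\mathbb{C}}$-equivariant line bundle on $X$ that share the same $T'$-weight. Products of these two sections will provide, inside each symmetric power, an $(N+1)$-dimensional family of $U'$-invariants of a single $T'$-weight, yielding the desired unbounded multiplicity.

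First, using Lemma \ref{lem:IrrGQ} together with Lemma \ref{lem:BW}, I would fix $\lambda \in \Lambda_+$ in the interior of the cone of $\mathfrak{q}$-compatible highest weights, so that $\Pi_{N\lambda} \in \operatorname{Irr}(G_{\mathbb{C}}; {\mathfrak{q}})_{\operatorname{hol}}$ for every $N \ge 0$ and the associated $G_{\mathbb{C}}$-equivariant line bundle ${\mathcal{L}}$ on $X$, chosen so that ${\mathcal{O}}(X, {\mathcal{L}}^{\otimes N}) \cong \Pi_{N\lambda}^{\vee}$ as $G_{\mathbb{C}}$-modules, is ample. Then I assemble the section ring $R := \bigoplus_{N \ge 0} {\mathcal{O}}(X, {\mathcal{L}}^{\otimes N})$, a finitely generated $\mathbb{N}$-graded integral $\mathbb{C}$-algebra carrying a $G_{\mathbb{C}}'$-action compatible with the grading.

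Since $X$ is not $G_{\mathbb{C}}'$-spherical, Rosenlicht's theorem yields a non-constant $B'$-invariant rational function $\phi \in \mathbb{C}(X)^{B'}$. Ampleness of ${\mathcal{L}}$ lets me write $\phi = f/g$ for some $d \ge 1$ with $f, g \in {\mathcal{O}}(X, {\mathcal{L}}^{\otimes d})$ both $U'$-invariant of a common $T'$-weight $\nu$; non-constancy of $\phi$ on the irreducible variety $X$ forces $f$ and $g$ to be algebraically independent in $R$. The $N+1$ monomials $f^a g^{N-a}$ ($a = 0, 1, \dots, N$) are then linearly independent elements of ${\mathcal{O}}(X, {\mathcal{L}}^{\otimes Nd})$ that are all $U'$-invariant of $T'$-weight $N\nu$. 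Highest-weight theory for $G_{\mathbb{C}}'$ identifies this $(N+1)$-dimensional space with a subspace of $\operatorname{Hom}_{G_{\mathbb{C}}'}(\pi, \Pi_{Nd\lambda}^{\vee})$ for the irreducible $G_{\mathbb{C}}'$-module $\pi$ of highest weight $N\nu$, yielding $m(\Pi_{Nd\lambda}|_{G_{\mathbb{C}}'}) \ge N+1$. Renaming $d\lambda$ as $\lambda$ completes the argument.

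The main obstacle I expect is producing $f$ and $g$ with the \emph{same} $\mathbb{N}$-degree and the \emph{same} $T'$-weight, as opposed to merely two algebraically independent homogeneous $B'$-semi-invariants in $R^{U'}$ of a priori unrelated bi-degrees (a naive Noether-normalization argument would only supply the latter). Encoding non-sphericity as the existence of a single non-constant $B'$-invariant rational function $\phi$ circumvents this point, since writing $\phi = f/g$ simultaneously forces the numerator and denominator into a common graded piece of $R^{U'}$ with identical $T'$-weight, and ampleness of ${\mathcal{L}}$ guarantees such a presentation exists for some $d \ge 1$. The remaining steps are of the standard Vinberg--Kimelfeld type.
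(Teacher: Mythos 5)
Your approach unrolls Vinberg--Kimelfeld's criterion (which the paper simply cites) into Rosenlicht's theorem plus a section-ring argument, and then uses the same monomial trick $f^a g^{N-a}$ as the paper's proof. The overall shape is right, but there is a genuine gap at exactly the point you flag as the \lq\lq{main obstacle}\rq\rq.

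The problematic sentence is: ampleness of $\mathcal{L}$ lets you write $\phi = f/g$ with $f,g \in \mathcal{O}(X,\mathcal{L}^{\otimes d})$ \emph{both $U'$-invariant of a common $T'$-weight}. Ampleness does give a presentation $\phi = f/g$ with $f,g$ in a common graded piece $\mathcal{O}(X,\mathcal{L}^{\otimes d})$, but generic such $f,g$ are nowhere near $B'$-semi-invariant, and there is no reason a presentation as a ratio of homogeneous sections of the \emph{fixed} ample bundle should have $B'$-semi-invariant numerator and denominator. The correct mechanism is not ampleness at all: one should take $D := \operatorname{div}_\infty(\phi)$, the polar divisor, and observe that $D$ is $B'$-stable because $\phi$ is $B'$-invariant. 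On the flag variety $X=G_{\mathbb{C}}/Q$ the bundle $\mathcal{O}_X(D)$ is automatically $G_{\mathbb{C}}$-linearized, its canonical section $s_D$ (vanishing exactly along $D$) is $B'$-semi-invariant, and then $g:= s_D$ and $f := \phi\, s_D$ are two $B'$-semi-invariant global sections of $\mathcal{O}_X(D)$ of the \emph{same} $T'$-weight (since $f/g = \phi$ is $T'$-invariant). This is also where Lemma \ref{lem:BW} reenters: the contragredient of $\mathcal{O}(X,\mathcal{O}_X(D)^{\otimes N})$ lies in $\operatorname{Irr}(G_{\mathbb{C}};\mathfrak{q})_{\operatorname{hol}}$, so you should take $\lambda$ to be the character of $Q$ attached to $\mathcal{O}_X(D)^{-1}$; your device of pre-fixing an ample $\mathcal{L}$ in the interior of the $\mathfrak{q}$-compatible cone is unnecessary and does not interact correctly with this step. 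Once $f$ and $g$ are obtained this way, your algebraic-independence observation (equivalent in content to the paper's integral-domain argument on $\mathcal{O}(G_{\mathbb{C}})$) correctly yields the $N+1$ linearly independent $B'$-semi-invariants $f^a g^{N-a}$ of weight $N\nu$, and the rest is as you say. So: good strategy, and the stumbling block you correctly identified is where the gap actually is; the fix is the $B'$-stability of the polar divisor, not ampleness.
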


\begin{proof}
Suppose $G_{\mathbb{C}}/Q$ is not $G_{\mathbb{C}}'$-spherical.  
By a result of Vinberg--Kimelfeld \cite[Cor.~1]{VK78}, 
 there exists a $G_{\mathbb{C}}$-homogeneous holomorphic line bundle ${\mathcal{L}}$
 over $G_{\mathbb{C}}/Q$
 such that the irreducible $G_{\mathbb{C}}$-module
 ${\mathcal{O}}(G_{\mathbb{C}}/Q, {\mathcal{L}})$
 contains an irreducible representation of $G_{\mathbb{C}}'$
 with multiplicity.  
This means that there exist linearly independent sections
 $f_1$, $f_2 \in {\mathcal{O}}(G_{\mathbb{C}}/Q, {\mathcal{L}})$
 and a dominant character $\mu$ of a Borel subgroup $B'$
 of $G_{\mathbb{C}}'$
 satisfying
$
  f_j(b^{-1} g)=\mu(b) f_j(g)
$
 ($j=1,2$) for any $b\in B'$, $g \in G_{\mathbb{C}}$.

We claim that the holomorphic sections 
 $f_1^i f_2^{N-i} \in {\mathcal{O}}(G_{\mathbb{C}}/Q, {\mathcal{L}}^{\otimes N})$ 
 ($0 \le i \le N$)
 are linearly independent.  
Indeed, 
 if $a_0 f_1^N + a_1 f_1^{N-1}f_2 + \cdots +a_N f_2^N=0$
 were a linear dependence, 
 then one would have $f_1 -t f_2=0$
 where $t$ is a zero of the equation
 $a_0 t^N + a_1 t^{N-1} + \cdots + a_N=0$
 because the ring ${\mathcal{O}}(G_{\mathbb{C}})$
 has no divisor.  
This means 
$
   \dim_{\mathbb{C}} 
   \invHom {G_{\mathbb{C}}'}
   {\pi_{N\mu}}
{{\mathcal{O}}
(G_{\mathbb{C}}/Q, {\mathcal{L}}^{\otimes N})|_{G_{\mathbb{C}}'}} 
   \ge N+1$
 because $B'$ acts on $f_1^i f_2^{N-i}$
 as the character ${\mathbb{C}}_{N\mu}$.  
Let $\lambda$ be the character of $Q$ acting 
 on the fiber of ${\mathcal{L}}^{-1}$ at the origin 
 $o = eQ \in G_{\mathbb{C}}/Q$.  
Then $\Pi_{N\lambda}$ is  the contragredient representation
 on ${\mathcal{O}}(G_{\mathbb{C}}/Q, {\mathcal{L}}^{\otimes N})$
 and  belongs to $\operatorname{Irr}(G_{\mathbb{C}};{\mathfrak{q}})_{\operatorname{hol}}$
 by Lemma \ref{lem:BW}
 (the Borel--Weil theorem).  
Hence
$
   \dim_{\mathbb{C}} \invHom {G_{\mathbb{C}}'}{\Pi_{N\lambda}|_{G_{\mathbb{C}}'}} {\pi_{N\mu}^{\vee}}
  \ge N+1
$, 
 showing the lemma.  
\end{proof}

Second, 
 we drop the reductive assumption of a subgroup.  
By a similar argument as in Lemma \ref{lem:VKN}, 
 one obtains the following:
\begin{lemma}
\label{lem:211197}
Let $H_{\mathbb{C}}$ be a complex algebraic subgroup of $G_{\mathbb{C}}$
(not necessarily reductive).  
If $H_{\mathbb{C}}$ does not have an open orbit in $G_{\mathbb{C}}/Q$, 
 then there exist a holomorphic character $\chi$ of $H_{\mathbb{C}}$
 and $\lambda \in \Lambda_+$ 
 satisfying $\Pi_{N \lambda} \in \operatorname{Irr}(G_{\mathbb{C}};{\mathfrak{q}})_{\operatorname{hol}}$
 and 
$
  \dim_{\mathbb{C}} \invHom{H_{\mathbb{C}}}{\Pi_{N\lambda}|_{H_{\mathbb{C}}}}{\chi^N} \ge N+1
$
 for all $N \in {\mathbb{N}}$.  
\end{lemma}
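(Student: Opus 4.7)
The plan is to mimic the proof of Lemma~\ref{lem:VKN}, replacing irreducible representations of a reductive subgroup by characters of the algebraic subgroup $H_{\mathbb{C}}$, and the theorem of Vinberg--Kimelfeld by an orbit-based analog for (possibly non-reductive) algebraic group actions.

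The key step is to establish the following non-reductive analog of Vinberg--Kimelfeld: if $H_{\mathbb{C}}$ has no open orbit on $G_{\mathbb{C}}/Q$, then there exist $\lambda \in \Lambda_+$ with ${\mathcal{L}}_\lambda$ a $G_{\mathbb{C}}$-equivariant holomorphic line bundle on $G_{\mathbb{C}}/Q$, a holomorphic character $\chi$ of $H_{\mathbb{C}}$, and two linearly independent sections $f_1, f_2 \in H^0(G_{\mathbb{C}}/Q, {\mathcal{L}}_\lambda)$ satisfying $h \cdot f_j = \chi(h) f_j$ for all $h \in H_{\mathbb{C}}$ and $j=1,2$. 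The argument uses Rosenlicht's theorem: the hypothesis yields $\mathbb{C}(G_{\mathbb{C}}/Q)^{H_{\mathbb{C}}} \neq \mathbb{C}$, so a non-constant $H_{\mathbb{C}}$-invariant rational function $\phi$ exists. Fixing an ample $G_{\mathbb{C}}$-equivariant line bundle ${\mathcal{L}}_{\lambda_0}$ (which exists since $Q$ is parabolic, so that $\Pi_{\lambda_0} \in \operatorname{Irr}(G_{\mathbb{C}};{\mathfrak{q}})_{\operatorname{hol}}$), one writes $\phi = s_1/s_2$ with $s_1, s_2 \in H^0(G_{\mathbb{C}}/Q, {\mathcal{L}}_{\lambda_0}^{\otimes k})$ for some $k$. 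Inside the finite-dimensional $H_{\mathbb{C}}$-submodule generated by $s_1$ and $s_2$, one then extracts a pair $f_1, f_2$ of $H_{\mathbb{C}}$-semi-invariants with common character $\chi$ by passing to the $H_{\mathbb{C}}^u$-invariants (where $H_{\mathbb{C}}^u$ is the unipotent radical of $H_{\mathbb{C}}$), decomposing under the action of a Levi subgroup, and exploiting the fact that multiplication by $\phi$ is $H_{\mathbb{C}}$-equivariant and preserves the subspace of regular sections.

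With $f_1, f_2$ in hand, the remainder of the argument parallels Lemma~\ref{lem:VKN} verbatim. Setting $\lambda := k\lambda_0$, the $N+1$ products $f_1^i f_2^{N-i} \in H^0(G_{\mathbb{C}}/Q, {\mathcal{L}}_\lambda^{\otimes N})$ for $0 \le i \le N$ are $H_{\mathbb{C}}$-semi-invariant with character $\chi^N$. Their linear independence follows from the fact that ${\mathcal{O}}(G_{\mathbb{C}})$ has no zero divisors: any nontrivial relation $\sum_{i=0}^N a_i f_1^i f_2^{N-i} = 0$ would force $f_1 = t f_2$ for a scalar root $t$ of $\sum a_i t^i$, contradicting the independence of $f_1$ and $f_2$. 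By Lemma~\ref{lem:BW} (the Borel--Weil realization), the contragredient of the $G_{\mathbb{C}}$-representation on $H^0(G_{\mathbb{C}}/Q, {\mathcal{L}}_\lambda^{\otimes N})$ is $\Pi_{N\lambda} \in \operatorname{Irr}(G_{\mathbb{C}};{\mathfrak{q}})_{\operatorname{hol}}$, and these $N+1$ sections correspond to $N+1$ linearly independent elements of $\invHom{H_{\mathbb{C}}}{\Pi_{N\lambda}|_{H_{\mathbb{C}}}}{\chi^N}$, as required.

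The main obstacle is the first step, namely producing $H_{\mathbb{C}}$-semi-invariant sections rather than merely $B_H$-semi-invariant ones with a common character. For reductive $H_{\mathbb{C}}$ this is handled directly by Vinberg--Kimelfeld, but in the non-reductive case the Lie--Kolchin theorem only supplies $B_H$-eigenvectors, and their $B_H$-characters need not extend to characters of $H_{\mathbb{C}}$ when the Levi part of $H_{\mathbb{C}}$ has non-abelian semisimple factors. Extra care is therefore required to arrange that the character $\chi$ lives on all of $H_{\mathbb{C}}$ and that the corresponding semi-invariant line is stable under the full Levi.
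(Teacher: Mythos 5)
Your proposal correctly identifies that the key external input for the non-reductive case must change from Vinberg--Kimelfeld to (a consequence of) Rosenlicht's theorem, and you correctly locate the sticking point: one must produce two linearly independent sections that are semi-invariant under {\emph{all}} of $H_{\mathbb{C}}$ with a {\emph{common}} character of $H_{\mathbb{C}}$, not merely $B_H$-highest-weight vectors. But the mechanism you sketch for this does not close the gap. Writing $\phi=s_1/s_2$ with $s_1,s_2\in H^0(G_{\mathbb C}/Q,{\mathcal L}_{\lambda_0}^{\otimes k})$ for a {\emph{predetermined}} ample ${\mathcal L}_{\lambda_0}$ gives sections with no reason to be $H_{\mathbb C}$-eigenvectors, and passing to $H_{\mathbb C}^{u}$-invariants inside the $H_{\mathbb C}$-module generated by $s_1,s_2$ only produces, after decomposing under a Levi $L_H$, irreducible $L_H$-submodules that need not be one-dimensional; Lie--Kolchin produces $B_H$-eigenlines whose characters generically do not extend to $H_{\mathbb C}$ when the Levi has nonabelian semisimple part. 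This is the same obstruction you name in your last paragraph, so as written the argument is circular at exactly the point that it needs to be non-trivial.

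The fix is to adapt the line bundle to $\phi$ rather than fixing ${\mathcal L}_{\lambda_0}$ in advance. Let $D_0=(\phi)_0$ and $D_\infty=(\phi)_\infty$ be the divisors of zeros and poles of a non-constant $\phi\in{\mathbb C}(G_{\mathbb C}/Q)^{H_{\mathbb C}}$. Because $\phi$ is $H_{\mathbb C}$-invariant and the decomposition of $\operatorname{div}(\phi)$ into positive and negative parts without common components is unique, $D_0$ and $D_\infty$ are $H_{\mathbb C}$-stable effective divisors with $D_0\sim D_\infty$. After replacing $\phi$ by $\phi^m$ for a suitable $m$, the line bundle ${\mathcal L}:={\mathcal O}(mD_0)$ admits a $G_{\mathbb C}$-equivariant structure on the flag variety $G_{\mathbb C}/Q$, hence ${\mathcal L}\simeq{\mathcal L}_\lambda$ for some character $\lambda$ of $Q$, and by Lemma~\ref{lem:BW} the dual of $H^0(G_{\mathbb C}/Q,{\mathcal L}^{\otimes N})$ is $\Pi_{N\lambda}\in\operatorname{Irr}(G_{\mathbb C};{\mathfrak q})_{\operatorname{hol}}$. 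The sections $f_1,f_2\in H^0({\mathcal L})$ with $\operatorname{div}(f_1)=mD_0$, $\operatorname{div}(f_2)=mD_\infty$ are each determined up to scalar; since $H_{\mathbb C}$ permutes each divisor into itself, one gets $h\cdot f_j=\chi_j(h)f_j$ for algebraic characters $\chi_j$ of $H_{\mathbb C}$, and the relation $f_1=\phi^m f_2$ together with $H_{\mathbb C}$-invariance of $\phi$ forces $\chi_1=\chi_2=:\chi$. With these $f_1,f_2$ (linearly independent because $f_1/f_2=\phi^m$ is non-constant), the product argument $f_1^i f_2^{N-i}$ for $0\leq i\leq N$ goes through verbatim as in Lemma~\ref{lem:VKN} and yields $\dim_{\mathbb C}\invHom{H_{\mathbb C}}{\Pi_{N\lambda}|_{H_{\mathbb C}}}{\chi^N}\geq N+1$. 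Note this is exactly the same kind of replacement of "a prechosen ample ${\mathcal L}$" by "${\mathcal L}$ adapted to the invariant function" that makes the non-reductive case work; once you make that change the rest of your sketch is correct.
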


\subsection{Complex symmetric pair and the Satake diagram}
\label{subsec:Satake}

This subsection and the next one will not be used
 until Section \ref{sec:5}.

Let $G$ be a connected semisimple Lie group, 
 $\sigma$ an involutive automorphism of $G$, 
 and $H$ an open subgroup of the fixed point group $G^{\sigma}$.  
We use the same letter $\sigma$
 to denote the complex linear extension 
 of its differential on ${\mathfrak{g}}_{\mathbb{C}}$.  
Then the Lie algebra ${\mathfrak{g}}$ of $G$ has
 a decomposition 
$
   {\mathfrak{g}}={\mathfrak{g}}^{\sigma} \oplus {\mathfrak{g}}^{-\sigma}
$
 into the eigenspaces
 of $\sigma$
 with eigenvalues $1$ and $-1$.  
We note
 that the Lie algebra ${\mathfrak{h}}$ of $H$ equals ${\mathfrak{g}}^{\sigma}$. We take a maximal semisimple abelian subspace ${\mathfrak{j}}$
 in ${\mathfrak{g}}^{-\sigma}$, 
 and fix $\Sigma^+({\mathfrak{g}}_{\mathbb{C}},{\mathfrak{j}}_{\mathbb{C}})$
 as in Definition \ref{def:Borel}.  
We extend ${\mathfrak{j}}$
 to a Cartan subalgebra $\widetilde{\mathfrak{j}}$
 of ${\mathfrak{g}}$.  
We refer to $\widetilde {\mathfrak{j}}$
 as a $\sigma$-{\it{split Cartan subalgebra}}.  
Via the direct sum decomposition 
 $\widetilde{\mathfrak{j}}={\mathfrak{t}} \oplus \mathfrak{j}$
 with ${\mathfrak{t}}:=\widetilde{\mathfrak{j}} \cap {\mathfrak{h}}$, 
 we may regard ${\mathfrak{j}}_{\mathbb{C}}^{\ast}$
 as a subspace of $\widetilde {\mathfrak{j}}_{\mathbb{C}}^{\ast}$.  
We choose a compatible positive system
$\Delta^+({\mathfrak{g}}_{\mathbb{C}},\widetilde{\mathfrak{j}_{\mathbb{C}}})$
 such that the restriction map 
 $\alpha \mapsto \alpha|_{\mathfrak{j}_{\mathbb{C}}}$
 sends $\Delta^+({\mathfrak{g}}_{\mathbb{C}},\widetilde{\mathfrak{j}_{\mathbb{C}}})$
 to $\Sigma^+({\mathfrak{g}}_{\mathbb{C}},\mathfrak{j}_{\mathbb{C}})
 \cup \{0\}$, 
 and denote by $\Psi$
the set of simple roots
 in $\Delta^+({\mathfrak{g}}_{\mathbb{C}},\widetilde {\mathfrak{j}}_{\mathbb{C}})$
 as in Section \ref{subsec:ptheta}.  
We set 
\begin{equation}
\label{eqn:Theta}
\Theta
:=\{\alpha \in \Psi: \alpha|_{{\mathfrak{j}}_{\mathbb{C}}}\equiv 0\}.  
\end{equation}
By Definition \ref{def:Borel}, 
 one has the following:
\begin{lemma}
\label{lem:BSatake}
The parabolic subalgebra ${\mathfrak{p}}^{\Theta}$
 of ${\mathfrak{g}}_{\mathbb{C}}$ is a Borel subalgebra ${\mathfrak{q}}$
 for the symmetric space $G/H$.  
\end{lemma}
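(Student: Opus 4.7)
The plan is to verify that the parabolic subalgebra $\mathfrak{p}^{\Theta}$ constructed from the subset $\Theta \subset \Psi$ matches the description in Definition \ref{def:Borel} of a Borel subalgebra for $G/H$, namely the complex parabolic subalgebra of $\mathfrak{g}_{\mathbb{C}}$ associated with the chosen positive system $\Sigma^{+}(\mathfrak{g}_{\mathbb{C}},\mathfrak{j}_{\mathbb{C}})$. Concretely, writing $\mathfrak{g}_{\mathbb{C}} = Z_{\mathfrak{g}_{\mathbb{C}}}(\mathfrak{j}_{\mathbb{C}}) \oplus \bigoplus_{\beta \in \Sigma(\mathfrak{g}_{\mathbb{C}},\mathfrak{j}_{\mathbb{C}})} \mathfrak{g}_{\mathbb{C}}^{\beta}$ for the restricted root decomposition, this parabolic is
\[
 \mathfrak{q} = Z_{\mathfrak{g}_{\mathbb{C}}}(\mathfrak{j}_{\mathbb{C}}) \;\oplus\; \bigoplus_{\beta \in \Sigma^{+}(\mathfrak{g}_{\mathbb{C}},\mathfrak{j}_{\mathbb{C}})} \mathfrak{g}_{\mathbb{C}}^{\beta}.
\]
So I must exhibit both equalities $\mathfrak{l}_{\mathbb{C}}^{\Theta} = Z_{\mathfrak{g}_{\mathbb{C}}}(\mathfrak{j}_{\mathbb{C}})$ and $\mathfrak{n}_{+}^{\Theta} = \bigoplus_{\beta \in \Sigma^{+}} \mathfrak{g}_{\mathbb{C}}^{\beta}$.

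The first step is to use the compatibility of the positive systems chosen in Subsection \ref{subsec:Satake}: the restriction map $\alpha \mapsto \alpha|_{\mathfrak{j}_{\mathbb{C}}}$ sends $\Delta^{+}(\mathfrak{g}_{\mathbb{C}},\widetilde{\mathfrak{j}}_{\mathbb{C}})$ into $\Sigma^{+}(\mathfrak{g}_{\mathbb{C}},\mathfrak{j}_{\mathbb{C}}) \cup \{0\}$. From this and the fact that $\Theta$ is by definition the set of simple roots that restrict to zero, it follows (a standard Satake-diagram fact) that the subsystem $\{\alpha \in \Delta : \alpha|_{\mathfrak{j}_{\mathbb{C}}} = 0\}$ has $\Theta$ as a set of simple roots. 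Hence this subsystem coincides with $\Delta \cap \mathbb{Z}\text{-span}(\Theta) = \Delta(\mathfrak{l}_{\mathbb{C}}^{\Theta},\widetilde{\mathfrak{j}}_{\mathbb{C}})$. Adding in $\widetilde{\mathfrak{j}}_{\mathbb{C}}$, which is obviously contained in $Z_{\mathfrak{g}_{\mathbb{C}}}(\mathfrak{j}_{\mathbb{C}})$, yields the first equality $\mathfrak{l}_{\mathbb{C}}^{\Theta} = Z_{\mathfrak{g}_{\mathbb{C}}}(\mathfrak{j}_{\mathbb{C}})$.

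For the second equality, $\mathfrak{n}_{+}^{\Theta} = \bigoplus_{\alpha \in \Delta^{+}\setminus \mathbb{Z}\text{-span}(\Theta)} \mathfrak{g}_{\mathbb{C}}^{\alpha}$ by definition. By the previous paragraph, the roots excluded from $\mathbb{Z}\text{-span}(\Theta)$ are precisely those with $\alpha|_{\mathfrak{j}_{\mathbb{C}}} \ne 0$; combined with the compatibility, $\alpha \in \Delta^{+}$ with nonzero restriction gives $\alpha|_{\mathfrak{j}_{\mathbb{C}}} \in \Sigma^{+}(\mathfrak{g}_{\mathbb{C}},\mathfrak{j}_{\mathbb{C}})$. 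Grouping root spaces by their restriction, and using that for each $\beta \in \Sigma$ the restricted root space $\mathfrak{g}_{\mathbb{C}}^{\beta}$ equals the direct sum of $\mathfrak{g}_{\mathbb{C}}^{\alpha}$ over all $\alpha \in \Delta$ with $\alpha|_{\mathfrak{j}_{\mathbb{C}}} = \beta$, I obtain $\mathfrak{n}_{+}^{\Theta} = \bigoplus_{\beta \in \Sigma^{+}} \mathfrak{g}_{\mathbb{C}}^{\beta}$, which is the nilpotent radical of $\mathfrak{q}$.

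The main (and essentially only) obstacle is the characterization of the vanishing-set of restriction in terms of $\Theta$, i.e.\ the assertion that $\{\alpha \in \Delta : \alpha|_{\mathfrak{j}_{\mathbb{C}}}=0\} = \Delta \cap \mathbb{Z}\text{-span}(\Theta)$. This is not formal from the definition of $\Theta$ but rather a consequence of the compatible choice of $\Delta^{+}(\mathfrak{g}_{\mathbb{C}},\widetilde{\mathfrak{j}}_{\mathbb{C}})$ made just before \eqref{eqn:Theta}, and is precisely the content of the standard Satake-diagram machinery for the pair $(\mathfrak{g}_{\mathbb{C}},\mathfrak{h}_{\mathbb{C}})$. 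Once this is recorded, everything else is bookkeeping within the Gelfand--Naimark decomposition recalled in Subsection \ref{subsec:ptheta}, and no further computation is required.
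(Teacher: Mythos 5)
Your argument is correct and matches what the paper leaves implicit: Lemma \ref{lem:BSatake} is introduced with ``By Definition \ref{def:Borel}, one has the following:'' and no proof, since the identification is considered immediate from the compatible choice of positive systems made in Section \ref{subsec:Satake}. Your explicit verification that $\mathfrak{l}_{\mathbb{C}}^{\Theta} = Z_{\mathfrak{g}_{\mathbb{C}}}(\mathfrak{j}_{\mathbb{C}})$ and $\mathfrak{n}_{+}^{\Theta} = \bigoplus_{\beta \in \Sigma^{+}} \mathfrak{g}_{\mathbb{C}}^{\beta}$, hinging on the identity $\{\alpha \in \Delta : \alpha|_{\mathfrak{j}_{\mathbb{C}}} = 0\} = \Delta \cap \mathbb{Z}\text{-span}(\Theta)$ (which follows from compatibility because the restrictions of simple roots outside $\Theta$ lie in the pointed cone spanned by $\Sigma^{+}$), is precisely the bookkeeping the paper omits.
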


One can read $\Theta$ from the Satake diagram
 ({\it{e.g.}}, \cite[p.~531]{He78})
 of another real form ${\mathfrak{g}}_{\mathbb{R}}$
 of the complex Lie algebra ${\mathfrak{g}}_{\mathbb{C}}$, 
 which we explain below.  
We take a Cartan involution $\theta$
 of ${\mathfrak{g}}_{\mathbb{C}}$
 commuting with $\sigma$.  
Since $\sigma$ is complex linear
 and $\theta$ is antilinear on ${\mathfrak{g}}_{\mathbb{C}}$, 
 $\sigma \theta$ is a complex conjugation
 of ${\mathfrak{g}}_{\mathbb{C}}$, 
 and ${\mathfrak{g}}_{\mathbb{R}}:={\mathfrak{g}}_{\mathbb{C}}^{\sigma\theta}$
 is a real form of ${\mathfrak{g}}_{\mathbb{C}}$.  
This yields 
 a one-to-one correspondence:
\begin{equation}
\label{eqn:gRghC}
\text{
a real form ${\mathfrak{g}}_{\mathbb{R}}$
 of ${\mathfrak{g}}_{\mathbb{C}}$
}
\longleftrightarrow
\text{a complex symmetric pair
 $({\mathfrak{g}}_{\mathbb{C}}, {\mathfrak{h}}_{\mathbb{C}})$.  
}
\end{equation}

We set ${\mathfrak{k}}_{\mathbb{R}}:={\mathfrak{h}}_{\mathbb{C}} \cap {\mathfrak{g}}_{\mathbb{R}}$.  
Note that $\sigma$ leaves ${\mathfrak{g}}_{\mathbb{R}}$ invariant, 
 and the restriction $\sigma|_{{\mathfrak{g}}_{\mathbb{R}}}$
 is a Cartan involution of ${\mathfrak{g}}_{\mathbb{R}}$.  
In particular,
 $\widetilde {\mathfrak{j}}_{\mathbb{C}} \cap {\mathfrak{g}}_{\mathbb{R}}$
 is a maximally split Cartan subalgebra of ${\mathfrak{g}}_{\mathbb{R}}$.  
Since a Borel subgroup 
 for the symmetric space $G/H$
 is determined 
 only by the complexified Lie algebras 
 ${\mathfrak{g}}_{\mathbb{C}}$ and ${\mathfrak{h}}_{\mathbb{C}}$, 
 and since 
$({\mathfrak{g}}_{\mathbb{C}},{\mathfrak{h}}_{\mathbb{C}}) \simeq 
(({\mathfrak{g}}_{\mathbb{R}})_{\mathbb{C}}, ({\mathfrak{k}}_{\mathbb{R}})_{\mathbb{C}})
$, 
 one obtains from Lemma \ref{lem:BSatake}
 the following:

\begin{lemma}
\label{lem:Satake}
The complexification of a minimal parabolic subalgebra
 of ${\mathfrak{g}}_{\mathbb{R}}$ is a Borel subalgebra
 for the symmetric space $G/H$.  
In particular, 
 if we take $\Theta$ to be the set of black circles 
in the Satake diagram, 
 then ${\mathfrak{p}}^{\Theta}$ is a Borel subalgebra
 of $G/H$.  
\end{lemma}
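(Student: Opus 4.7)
The plan is to reduce to the Riemannian case via the correspondence \eqref{eqn:gRghC} and then apply Lemma \ref{lem:BSatake}. Since the notion of Borel subalgebra for $G/H$ depends only on the complex symmetric pair $(\mathfrak{g}_{\mathbb{C}},\mathfrak{h}_{\mathbb{C}})$, and the Riemannian pair $(\mathfrak{g}_{\mathbb{R}},\mathfrak{k}_{\mathbb{R}})$ with involution $\sigma|_{\mathfrak{g}_{\mathbb{R}}}$ complexifies to this same pair by \eqref{eqn:gRghC}, it suffices to compute a Borel subalgebra by working with the Riemannian pair.

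For this Riemannian pair, $\sigma|_{\mathfrak{g}_{\mathbb{R}}}$ is a Cartan involution with fixed subalgebra $\mathfrak{k}_{\mathbb{R}}$ and $(-1)$-eigenspace $\mathfrak{p}_{\mathbb{R}}$. A maximal abelian subspace of $\mathfrak{p}_{\mathbb{R}}$ is $\mathfrak{a}_{\mathbb{R}} := \mathfrak{j}_{\mathbb{C}} \cap \mathfrak{g}_{\mathbb{R}}$, and the text's $\widetilde{\mathfrak{j}}_{\mathbb{R}} := \widetilde{\mathfrak{j}}_{\mathbb{C}} \cap \mathfrak{g}_{\mathbb{R}}$ is a maximally split Cartan subalgebra extending it. Note in particular that $\mathfrak{a}_{\mathbb{R},\mathbb{C}} = \mathfrak{j}_{\mathbb{C}}$ and $\widetilde{\mathfrak{j}}_{\mathbb{R},\mathbb{C}} = \widetilde{\mathfrak{j}}_{\mathbb{C}}$, so the roots of $(\mathfrak{g}_{\mathbb{R},\mathbb{C}},\widetilde{\mathfrak{j}}_{\mathbb{R},\mathbb{C}})$ coincide with those of $(\mathfrak{g}_{\mathbb{C}},\widetilde{\mathfrak{j}}_{\mathbb{C}})$, and the restricted roots of $\mathfrak{g}_{\mathbb{R}}$ on $\mathfrak{a}_{\mathbb{R}}$ coincide with $\Sigma(\mathfrak{g}_{\mathbb{C}},\mathfrak{j}_{\mathbb{C}})$. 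With the compatible positive system chosen in Section \ref{subsec:Satake} and $\Theta$ as in \eqref{eqn:Theta}, Lemma \ref{lem:BSatake} now asserts that $\mathfrak{p}^{\Theta}$ is a Borel subalgebra for $G/H$.

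It remains to identify $\mathfrak{p}^{\Theta} = \mathfrak{l}_{\mathbb{C}}^{\Theta}\oplus\mathfrak{n}_+^{\Theta}$ with the complexification of a minimal parabolic of $\mathfrak{g}_{\mathbb{R}}$. The Levi $\mathfrak{l}_{\mathbb{C}}^{\Theta}$ is generated by $\widetilde{\mathfrak{j}}_{\mathbb{C}}$ together with the root spaces whose roots restrict to $0$ on $\mathfrak{j}_{\mathbb{C}} = \mathfrak{a}_{\mathbb{R},\mathbb{C}}$, hence equals the centralizer of $\mathfrak{a}_{\mathbb{R},\mathbb{C}}$ in $\mathfrak{g}_{\mathbb{R},\mathbb{C}}$, namely $(\mathfrak{m}_{\mathbb{R}}\oplus\mathfrak{a}_{\mathbb{R}})_{\mathbb{C}}$ where $\mathfrak{m}_{\mathbb{R}}$ is the centralizer of $\mathfrak{a}_{\mathbb{R}}$ in $\mathfrak{k}_{\mathbb{R}}$. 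By the compatibility of positive systems, $\mathfrak{n}_+^{\Theta} = \sum_{\alpha\in\Delta^+,\,\alpha|_{\mathfrak{j}_{\mathbb{C}}}\ne 0}(\mathfrak{g}_{\mathbb{C}})_\alpha$ equals $(\mathfrak{n}_{\mathbb{R}})_{\mathbb{C}}$ with $\mathfrak{n}_{\mathbb{R}} = \sum_{\lambda\in\Sigma^+}\mathfrak{g}_{\mathbb{R},\lambda}$. Hence $\mathfrak{p}^{\Theta} = (\mathfrak{m}_{\mathbb{R}}\oplus\mathfrak{a}_{\mathbb{R}}\oplus\mathfrak{n}_{\mathbb{R}})_{\mathbb{C}}$, proving the first assertion. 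The ``in particular'' part is then immediate, since by the very definition of the Satake diagram of $\mathfrak{g}_{\mathbb{R}}$, the black circles are precisely those simple roots vanishing on $\mathfrak{a}_{\mathbb{R}}$, i.e.\ exactly the set $\Theta$ in \eqref{eqn:Theta}.

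The only delicate point, already implicit in the setup of Section \ref{subsec:Satake}, is the existence of a positive system $\Delta^+(\mathfrak{g}_{\mathbb{C}},\widetilde{\mathfrak{j}}_{\mathbb{C}})$ restricting to $\Sigma^+(\mathfrak{g}_{\mathbb{C}},\mathfrak{j}_{\mathbb{C}})\cup\{0\}$; this is a standard compatibility ensuring that the identification of $\mathfrak{n}_+^{\Theta}$ with $(\mathfrak{n}_{\mathbb{R}})_{\mathbb{C}}$ above goes through cleanly. Beyond this, the proof is a direct bookkeeping of Lemma \ref{lem:BSatake} under the passage from $(\mathfrak{g},\mathfrak{h})$ to the Riemannian real form $(\mathfrak{g}_{\mathbb{R}},\mathfrak{k}_{\mathbb{R}})$ via \eqref{eqn:gRghC}.
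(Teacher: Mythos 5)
Your proof is correct and follows essentially the same route as the paper: reduce to the Riemannian real form $(\mathfrak{g}_{\mathbb{R}},\mathfrak{k}_{\mathbb{R}})$ via the correspondence \eqref{eqn:gRghC} and the fact that a Borel subalgebra for $G/H$ depends only on $(\mathfrak{g}_{\mathbb{C}},\mathfrak{h}_{\mathbb{C}})$, then invoke Lemma \ref{lem:BSatake}. The only difference is that you spell out the Langlands-decomposition bookkeeping identifying $\mathfrak{p}^{\Theta}$ with $(\mathfrak{m}_{\mathbb{R}}\oplus\mathfrak{a}_{\mathbb{R}}\oplus\mathfrak{n}_{\mathbb{R}})_{\mathbb{C}}$, which the paper leaves implicit in a one-sentence derivation.
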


We shall use Lemma \ref{lem:Satake} in Section \ref{sec:pfclass}
 in the proof of the classification results
 in Section \ref{sec:classification}.  

\subsection{The Cartan--Helgason theorem vs $\operatorname{Irr}({\mathfrak{g}};{\mathfrak{p}}^{\Theta})_f$}
\label{subsec:CH}
In this subsection, 
 we examine $\operatorname{Irr}(G)_{H,f}$
 for a symmetric space $G/H$
 and compare it with 
$
   \operatorname{Irr}({\mathfrak{g}};{\mathfrak{p}}^{\Theta})_f
$, 
 see \eqref{eqn:JLTIrrpq}, 
 where ${\mathfrak{p}}^{\Theta}$ is the Borel subalgebra
 of the symmetric space $G/H$.

We retain the notation 
 as in Section \ref{subsec:Satake}, 
 and set 
\begin{equation}
\label{eqn:JLTCH}
  \Lambda_+({\mathfrak{g}}_{\mathbb{C}};{\mathfrak{h}}_{\mathbb{C}})
  :=
  \{\lambda \in {\mathfrak{j}}_{\mathbb{C}}^{\ast}
   :
   \frac{\langle \lambda, \beta \rangle}{\langle \beta, \beta \rangle} \in {\mathbb{N}}
   \quad
   \text{for all } \beta \in \Sigma^+({\mathfrak{g}}_{\mathbb{C}},{\mathfrak{j}}_{\mathbb{C}})
\}.  
\end{equation}
We regard 
$\Lambda_+({\mathfrak{g}}_{\mathbb{C}};{\mathfrak{h}}_{\mathbb{C}})$
 $(\subset {\mathfrak{j}}_{\mathbb{C}}^{\ast})$
 as a subset of $\Lambda_+$ ($\subset \widetilde{{\mathfrak{j}}_{\mathbb{C}}}^{\ast}$)
 via the decomposition 
 $\widetilde{\mathfrak{j}}={\mathfrak{t}} \oplus {\mathfrak{j}}$.  
Since $\langle \lambda, \alpha \rangle =0$
 for any $\lambda \in {\mathfrak{j}}_{\mathbb{C}}^{\ast}$
 and any $\alpha \in \Theta$, 
 Lemma \ref{lem:IrrGQ} (1) implies the following:

\begin{lemma}
\label{lem:CHabove}
Via the Cartan--Weyl bijection \eqref{eqn:CW}, 
 one has
\begin{equation}
\label{eqn:CHabove}
   \Lambda_+({\mathfrak{g}}_{\mathbb{C}};{\mathfrak{h}}_{\mathbb{C}})
   \subset
   \operatorname{Irr}({\mathfrak{g}};{\mathfrak{p}}^{\Theta})_f.  
\end{equation}
\end{lemma}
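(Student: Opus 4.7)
My plan is to combine the combinatorial characterization of $\operatorname{Irr}({\mathfrak{g}};{\mathfrak{p}}^{\Theta})_f$ from Lemma \ref{lem:IrrGQ}(1) with an orthogonality coming from the $\sigma$-eigenspace decomposition of $\widetilde{{\mathfrak{j}}}$. First, Lemma \ref{lem:IrrGQ}(1) identifies $\operatorname{Irr}({\mathfrak{g}};{\mathfrak{p}}^{\Theta})_f$ with $\{\mu \in \Lambda_+ : \langle \mu, \alpha \rangle = 0 \text{ for all } \alpha \in \Theta\}$ under the Cartan--Weyl bijection \eqref{eqn:CW}. Since $\Lambda_+({\mathfrak{g}}_{\mathbb{C}};{\mathfrak{h}}_{\mathbb{C}})$ is already regarded as a subset of $\Lambda_+$ via the decomposition $\widetilde{{\mathfrak{j}}} = {\mathfrak{t}} \oplus {\mathfrak{j}}$ (the integrality half of the Cartan--Helgason theorem, as announced just before the lemma), it remains only to verify the vanishing condition $\langle \lambda, \alpha \rangle = 0$ for $\lambda \in \Lambda_+({\mathfrak{g}}_{\mathbb{C}};{\mathfrak{h}}_{\mathbb{C}})$ and $\alpha \in \Theta$.

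For this orthogonality, I would argue as follows. Since $\sigma$ preserves the Killing form $B$ of ${\mathfrak{g}}$, the $(+1)$- and $(-1)$-eigenspaces ${\mathfrak{t}}$ and ${\mathfrak{j}}$ of $\sigma|_{\widetilde{{\mathfrak{j}}}}$ are $B$-orthogonal inside $\widetilde{{\mathfrak{j}}}$. Via the $B$-induced isomorphism $\widetilde{{\mathfrak{j}}} \simeq \widetilde{{\mathfrak{j}}}^{\ast}$, this yields an orthogonal decomposition $\widetilde{{\mathfrak{j}}}_{\mathbb{C}}^{\ast} = {\mathfrak{t}}_{\mathbb{C}}^{\ast} \oplus {\mathfrak{j}}_{\mathbb{C}}^{\ast}$ with respect to the $W$-invariant bilinear form $\langle\,,\,\rangle$ on $\widetilde{{\mathfrak{j}}}_{\mathbb{C}}^{\ast}$ used in defining both $\Lambda_+({\mathfrak{g}}_{\mathbb{C}};{\mathfrak{h}}_{\mathbb{C}})$ and the criterion of Lemma \ref{lem:IrrGQ}(1). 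Now $\lambda$ lies in the summand ${\mathfrak{j}}_{\mathbb{C}}^{\ast}$ (by the extension-by-zero convention on ${\mathfrak{t}}_{\mathbb{C}}$), while $\alpha \in \Theta$ lies in ${\mathfrak{t}}_{\mathbb{C}}^{\ast}$ because $\alpha|_{{\mathfrak{j}}_{\mathbb{C}}} \equiv 0$; hence $\langle \lambda, \alpha \rangle = 0$. Feeding this into the characterization of Lemma \ref{lem:IrrGQ}(1) yields the desired inclusion $\Lambda_+({\mathfrak{g}}_{\mathbb{C}};{\mathfrak{h}}_{\mathbb{C}}) \subset \operatorname{Irr}({\mathfrak{g}};{\mathfrak{p}}^{\Theta})_f$.

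There is essentially no serious obstacle here; the argument is a direct unwinding of the $\sigma$-eigenspace structure, exactly as sketched in the sentence immediately preceding the lemma. The only bookkeeping I would be careful about is that the two inclusions ${\mathfrak{j}}_{\mathbb{C}}^{\ast} \hookrightarrow \widetilde{{\mathfrak{j}}}_{\mathbb{C}}^{\ast}$ and ${\mathfrak{t}}_{\mathbb{C}}^{\ast} \hookrightarrow \widetilde{{\mathfrak{j}}}_{\mathbb{C}}^{\ast}$ are compatible with the $W$-invariant bilinear form used in both definitions; this is automatic because that form originates from the Killing form on $\widetilde{{\mathfrak{j}}}$ under which the splitting $\widetilde{{\mathfrak{j}}} = {\mathfrak{t}} \oplus {\mathfrak{j}}$ is already orthogonal.
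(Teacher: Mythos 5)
Your argument is correct and is precisely the paper's own (one-sentence) proof, spelled out: the paper observes that $\langle \lambda, \alpha\rangle = 0$ for $\lambda \in \mathfrak{j}_{\mathbb{C}}^{\ast}$ and $\alpha \in \Theta$ and then invokes Lemma~\ref{lem:IrrGQ}~(1), and your expansion via the $B$-orthogonality of the $\sigma$-eigenspaces $\mathfrak{t}$ and $\mathfrak{j}$ inside $\widetilde{\mathfrak{j}}$ is exactly the reason that vanishing holds. The compatibility of the $W$-invariant form with the splitting, which you flag at the end, is indeed the only bookkeeping point, and your justification (Killing-form orthogonality of $\sigma$-eigenspaces) is the standard one.
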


\begin{remark}
\label{rem:rankCH}
Both $\Lambda_+({\mathfrak{g}}_{\mathbb{C}};{\mathfrak{h}}_{\mathbb{C}})$
 and $\operatorname{Irr}({\mathfrak{g}};{\mathfrak{p}}^{\Theta})_f$
 are free semigroups, 
 but the former
 may be much smaller
 than the latter.  
For example, 
 if $(G,H)=({}^\backprime G \times {}^\backprime G, \operatorname{diag}{}^\backprime G)$, 
 then the rank of the semigroup 
$
   \operatorname{Irr}({\mathfrak{g}};{\mathfrak{p}}^{\Theta})_f
$
 is twice the rank of the semigroup  
$
     \Lambda_+({\mathfrak{g}}_{\mathbb{C}};{\mathfrak{h}}_{\mathbb{C}})
$.  
\end{remark}

For the simplicity of the proof, 
 we adopt the definition 
 of $\operatorname{Irr}(G)_{H,f}$ as the set 
$
  \{\Pi \in \operatorname{Irr}(G)_{f}:
   \Pi^H \ne \{0\}
  \}
$
 rather than 
$
   \{\Pi \in \operatorname{Irr}(G)_{f}:
   (\Pi^{\vee})^H \ne \{0\}
  \}
$
in the next lemma
by an abuse of notation, 
 however, 
this definition coincides 
 with the previous one
 as we shall prove in Lemma \ref{lem:GHdual} below.  

\begin{lemma}
[Cartan--Helgason theorem]
\label{lem:CH}
Let $(G,H)$ be a symmetric pair 
 defined by an involution $\sigma$ of a connected semisimple Lie group
 $G$.  
We regard both $\operatorname{Irr}(G)_f$
 and $\Lambda_+({\mathfrak{g}}_{\mathbb{C}}, {\mathfrak{h}}_{\mathbb{C}})$
 as a subset of $\Lambda_+$.  
\begin{enumerate}
\item[{\rm{(1)}}]
$\Lambda_+ ({\mathfrak{g}}_{\mathbb{C}}; {\mathfrak{h}}_{\mathbb{C}})
 = \operatorname{Irr}(G)_{H,f}$
 if $\sigma$ is a Cartan involution of $G$.  

\item[{\rm{(2)}}]
If $G$ is a real form of the simply connected complex Lie group 
 $G_{\mathbb{C}}$
 with Lie algebra ${\mathfrak{g}}_{\mathbb{C}}$, 
 then $\Lambda_+({\mathfrak{g}}_{\mathbb{C}}; {\mathfrak{h}}_{\mathbb{C}})=
\operatorname{Irr}(G)_{H,f}$.  

\item[{\rm{(3)}}]
For a general semisimple symmetric pair $(G,H)$, 
there exists a positive integer $k$
 such that  
\begin{equation}
\label{eqn:GHut}
k   \Lambda_+({\mathfrak{g}}_{\mathbb{C}};{\mathfrak{h}}_{\mathbb{C}})
   \subset 
   \operatorname{Irr}(G)_{H,f}
   \subset
   \Lambda_+({\mathfrak{g}}_{\mathbb{C}}; {\mathfrak{h}}_{\mathbb{C}}).  
\end{equation}
\end{enumerate}
\end{lemma}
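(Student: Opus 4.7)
The plan is to reduce all three parts to the \emph{complex Cartan--Helgason theorem}, which I would state as follows: if $\widetilde{G}_{\mathbb{C}}$ is a simply connected complex reductive Lie group with holomorphic involution $\sigma$ and $\widetilde{H}_{\mathbb{C}}:=\widetilde{G}_{\mathbb{C}}^\sigma$ (which is automatically connected by Steinberg's theorem), then a holomorphic irreducible representation $\Pi_\lambda$ has a nonzero $\widetilde{H}_{\mathbb{C}}$-invariant vector if and only if $\lambda \in \Lambda_+({\mathfrak{g}}_{\mathbb{C}};{\mathfrak{h}}_{\mathbb{C}})$. I would prove this auxiliary statement via Frobenius reciprocity, identifying $\widetilde{H}_{\mathbb{C}}$-invariance with an equivariant embedding $\Pi_\lambda \hookrightarrow {\mathcal{O}}(\widetilde{G}_{\mathbb{C}}/\widetilde{H}_{\mathbb{C}})$, and then invoking the complex sphericity of $\widetilde{G}_{\mathbb{C}}/\widetilde{H}_{\mathbb{C}}$: a Borel subgroup $B_{G/H}$ for $G/H$ has an open orbit on $\widetilde{G}_{\mathbb{C}}/\widetilde{H}_{\mathbb{C}}$, and the triviality and integrality constraints on a weight at the stabilizer of a generic point in this orbit match exactly the conditions defining $\Lambda_+({\mathfrak{g}}_{\mathbb{C}};{\mathfrak{h}}_{\mathbb{C}})$ in \eqref{eqn:JLTCH}.

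From this, the upper inclusions in all three parts follow uniformly: if $v\in\Pi_\lambda^H$ is nonzero, then $v$ is ${\mathfrak{h}}$-fixed, hence ${\mathfrak{h}}_{\mathbb{C}}$-fixed by complex-linear extension; lifting $\Pi_\lambda$ to a holomorphic representation of $\widetilde{G}_{\mathbb{C}}$, the vector becomes fixed by the connected group $\widetilde{H}_{\mathbb{C}}$, and the complex theorem above forces $\lambda \in \Lambda_+({\mathfrak{g}}_{\mathbb{C}};{\mathfrak{h}}_{\mathbb{C}})$. For the reverse inclusion in (1), I would quote the classical Riemannian Cartan--Helgason theorem: when $\sigma$ is a Cartan involution, $H=K$ is compact, and passing from an ${\mathfrak{h}}_{\mathbb{C}}$-fixed line to a $K$-fixed vector is standard since the unique ${\mathfrak{h}}_{\mathbb{C}}$-fixed line is preserved by $K$ and averaging over the compact group $K$ extracts a nonzero invariant. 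For the reverse inclusion in (2), simple connectedness of $G_{\mathbb{C}}$ forces $H_{\mathbb{C}}=G_{\mathbb{C}}^\sigma$ to be connected by Steinberg, so any ${\mathfrak{h}}_{\mathbb{C}}$-fixed vector is $H_{\mathbb{C}}$-fixed, hence $H$-fixed.

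Part (3) is the delicate step and requires an auxiliary positive integer $k$, because $H$ may have a nontrivial component group and $\lambda \in \Lambda_+({\mathfrak{g}}_{\mathbb{C}};{\mathfrak{h}}_{\mathbb{C}})$ need not a priori lift to a representation of $G$ (only of the simply connected cover $\widetilde{G}_{\mathbb{C}}$). Given such a $\lambda$, the complex theorem produces a unique one-dimensional ${\mathfrak{h}}_{\mathbb{C}}$-fixed line in $\Pi_\lambda$, on which the finite group generated by $H/H^0$ and the kernel of the covering $\widetilde{G}_{\mathbb{C}} \to G$ acts through a character $\chi_\lambda$. Since $\Lambda_+({\mathfrak{g}}_{\mathbb{C}};{\mathfrak{h}}_{\mathbb{C}})$ is a finitely generated free semigroup and each such character group is finite, one can select a single positive integer $k$ that simultaneously trivializes $\chi_\lambda^k$ on every generator and makes $k\lambda$ descend to an irreducible representation of $G$, yielding the first inclusion of \eqref{eqn:GHut}. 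I expect the main technical obstacle to lie precisely here: in bookkeeping the component group of $H$ against the kernel of the covering $\widetilde{G}_{\mathbb{C}}\to G$ so that a \emph{uniform} $k$ works for all of $\Lambda_+({\mathfrak{g}}_{\mathbb{C}};{\mathfrak{h}}_{\mathbb{C}})$ at once.
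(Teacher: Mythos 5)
Your overall strategy is sound and actually somewhat different from the paper's. For part (2) you propose to prove a \emph{complex} Cartan--Helgason theorem directly ($\Pi_\lambda^{\widetilde{H}_{\mathbb{C}}} \ne 0 \iff \lambda \in \Lambda_+({\mathfrak{g}}_{\mathbb{C}};{\mathfrak{h}}_{\mathbb{C}})$), from sphericity of $\widetilde{G}_{\mathbb{C}}/\widetilde H_{\mathbb{C}}$ plus an analysis of the stabilizer along the open Borel orbit, and deduce (2) by the connectedness of $\widetilde H_{\mathbb{C}} = \widetilde G_{\mathbb{C}}^\sigma$. The paper goes the other way around: it takes (1) (the Riemannian Cartan--Helgason theorem) as the known input, lifts $\sigma$ to $G_{\mathbb{C}}$, chooses a commuting Cartan involution $\theta$ of $G_{\mathbb{C}}$, and notes that $H$ and $K_{\mathbb{R}}:=G_{\mathbb{C}}^{\sigma} \cap G_{\mathbb{C}}^{\theta\sigma}$ are both real forms of the connected $H_{\mathbb{C}}$, so $\operatorname{Irr}(G)_{H,f}$ is identified with $\operatorname{Irr}(G_{\mathbb{R}})_{K_{\mathbb{R}},f}$, which (1) pins down. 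Your route is self-contained but requires carrying out the stabilizer computation in the open Borel orbit to actually recover the integrality conditions in \eqref{eqn:JLTCH}, which is precisely the nontrivial content of the complex Cartan--Helgason theorem --- the paper's unitary-trick reduction avoids redoing this.

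In part (3) there is a genuine gap in the way you package the argument. You speak of ``the finite group generated by $H/H^0$ and the kernel of the covering $\widetilde{G}_{\mathbb{C}} \to G$,'' but this object is not well defined: there is no covering $\widetilde{G}_{\mathbb{C}} \to G$ (the dimensions differ, and in general $G$ is not even a quotient of the analytic subgroup $\widetilde{G} \subset \widetilde{G}_{\mathbb{C}}$ with Lie algebra ${\mathfrak{g}}$), and $H/H^0$ is a quotient group, not a subgroup of anything acting on the fixed line. Moreover, to extract a \emph{uniform} $k$ you implicitly need the multiplicativity $\chi_{\lambda_1+\lambda_2}=\chi_{\lambda_1}\chi_{\lambda_2}$ on the spherical lines, and this is asserted without justification. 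The paper sidesteps both problems by separating the two obstructions: it chooses $a$ with $a\Lambda_+ \subset \operatorname{Irr}(G)_f$ (handling descent from $\widetilde G$ to $G$), and then, for $b:=[H:H_0]$, shows by an explicit tensor-power argument that the image of $v^{\otimes b}$ in the Cartan component $\Pi_{b\lambda}$ is a vector on which $H/H_0$ acts trivially --- which is exactly where the multiplicativity is actually established. Setting $k=ab$ then gives \eqref{eqn:GHut}. So your intuition about what must be controlled is right, but the argument as written does not define the group you want to act, and does not supply the needed compatibility between the characters as $\lambda$ varies; these have to be made precise along the lines the paper takes.
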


\begin{proof}[Proof of Lemma \ref{lem:CH}]
(1)\enspace
This is the (usual) Cartan--Helgason theorem.  
See \cite[Thm.~3.3.1.1]{War72} or \cite[p.~139]{He94}
 for example.  
\newline
(2)\enspace
The involution $\sigma$ of $G$ lifts to a holomorphic involution
 of the simply connected complex group $G_{\mathbb{C}}$, 
 for which we shall use the same letter $\sigma$.  
We take a Cartan involution $\theta$ of $G_{\mathbb{C}}$
 commuting with $\sigma$.  
Then $\sigma \theta$ is an anti-holomorphic involution
 of $G_{\mathbb{C}}$.  
We set $H_{\mathbb{C}}:=G_{\mathbb{C}}^{\sigma}$, 
 $G_{\mathbb{R}}:=G_{\mathbb{C}}^{\theta\sigma}$.  
Since $G_{\mathbb{C}}$ is simply connected, 
 both $H_{\mathbb{C}}$ and $G_{\mathbb{R}}$ are connected
 by a result of Borel
 \cite{Bo61}, 
 and $K_{\mathbb{R}}:=H_{\mathbb{C}} \cap G_{\mathbb{R}}$ is a maximal compact subgroup of $G_{\mathbb{R}}$.  
Therefore, 
 one has a natural bijection $\operatorname{Irr}(G_{\mathbb{R}})_f \simeq \operatorname{Irr}(G)_f \simeq \Lambda_+$
 via the holomorphic continuation 
 because $G_{\mathbb{C}}$ is simply connected.  
Since both $H$ and $K_{\mathbb{R}}$ are real forms
 of the connected complex Lie group $H_{\mathbb{C}}$, 
 there is a one-to-one correspondence
 between $\operatorname{Irr}(G_{\mathbb{R}})_{K_{\mathbb{R}},f}$
 and $\operatorname{Irr}(G)_{H,f}$, 
 and the former identifies with $\Lambda_+({\mathfrak{g}}_{\mathbb{C}};{\mathfrak{h}}_{\mathbb{C}})$ by (1).   
\newline
(3)\enspace
We now consider the general case
 where $G$ is not necessarily a subgroup
 of the simply connected group $G_{\mathbb{C}}$, 
 and use the letter $\widetilde G$
 to denote the analytic subgroup of $G_{\mathbb{C}}$
 with Lie algebra ${\mathfrak{g}}$.  
(Note that $G$ in the proof of (2) played the role of $\widetilde G$ here.)
The holomorphic involution $\sigma$ 
 of $G_{\mathbb{C}}$ leaves $\widetilde G$ invariant.  
We set $\widetilde H:=\widetilde G^{\sigma}$.  
Then $\operatorname{Irr}(\widetilde G)_{\widetilde H, f} \simeq \Lambda_+({\mathfrak{g}}_{\mathbb{C}};{\mathfrak{h}}_{\mathbb{C}})$ by (2).

For any $(\Pi, V) \in \operatorname{Irr}(G)_f$, 
 the simply connected group $G_{\mathbb{C}}$ acts holomorphically on $V$, 
 and thus one has a natural quotient map 
 $\widetilde G \to G/\operatorname{Ker} \Pi$.  
In turn, 
 one has an injection 
$
   \operatorname{Irr}(G)_f \hookrightarrow \operatorname{Irr}(\widetilde G)_f
 \simeq \Lambda_+
$, 
 which induces
\[
\operatorname{Irr}(G)_{H,f} \subset \operatorname{Irr}(G)_{H_0,f}
             \hookrightarrow \operatorname{Irr}(\widetilde G)_{\widetilde H,f}
=
\Lambda_+({\mathfrak{g}}_{\mathbb{C}};{\mathfrak{h}}_{\mathbb{C}}), 
\]
 where $H_0$ denotes the identity component of $H$.  
For the middle inclusion, 
 we have used 
 that $\widetilde H=\widetilde G^{\sigma}$ is contained
 in the connected subgroup $G_{\mathbb{C}}^{\sigma}$.  
Hence we have shown the right inclusion in \eqref{eqn:GHut}.

To see the left inclusion in \eqref{eqn:GHut}, 
 we set $b:=[H:H_0]$, 
 the number of connected components in $H$.  
We claim that $\Pi_{b \lambda} \in \operatorname{Irr}(G)_{H,f}$
 for any $(\Pi_{\lambda}, V) \in \operatorname{Irr}(G)_{H_0,f}$.  
In fact, 
 we take a generator $v$
 in the space $V^{H_0}$ of $H_0$-fixed vectors in $V$, 
 which is one-dimensional.  
Then the quotient group $H/H_0$ leaves $V^{H_0}={\mathbb{C}}v$ invariant.  
On the other hand, 
 the $b$-th tensor product representation
 $V \otimes \cdots \otimes V$
 contains uniquely an irreducible subrepresentation $\Pi_{b \lambda}$.  
Let $S \colon V \otimes \cdots \otimes V \to \Pi_{b \lambda}$
 be the projection.  
Then the $H_0$-fixed indecomposable vector
 $v \otimes \cdots \otimes v \in V \otimes \cdots \otimes V$
 has a non-zero image, 
 say $v_b$, in $\Pi_{b \lambda}$.  
Moreover, 
 since $H/H_0$ acts on ${\mathbb{C}}v$
 as a scalar, 
 its diagonal action on ${\mathbb{C}} v\otimes \cdots \otimes v$
 is trivial
 because $b$ is the order of the finite group $H/H_0$.  
Thus $\Pi_{b \lambda} \in \operatorname{Irr}(G)_{H,f}$.

We take a positive integer $a$ such that
$a \Lambda_+ =a \operatorname{Irr}(\widetilde G)_f \subset \operatorname{Irr}(G)_f$.  
Then one has 
$
   a \Lambda_+({\mathfrak{g}}_{\mathbb{C}};{\mathfrak{h}}_{\mathbb{C}})
  = a \operatorname{Irr}(\widetilde G)_{\widetilde H, f}
  \subset \operatorname{Irr}(G)_{H_0,f}.  
$
Hence we have shown
\[
  a b \Lambda_+({\mathfrak{g}}_{\mathbb{C}};{\mathfrak{h}}_{\mathbb{C}})
  \subset
  b \operatorname{Irr}(G)_{H_0, f} \subset \operatorname{Irr}(G)_{H,f}.  
\]
This proves the left inclusion of \eqref{eqn:GHut} with $k=ab$.  
\end{proof}

\begin{lemma}
\label{lem:GHdual}
We consider two involutions of $\operatorname{Irr}(G)_f$ given by
\begin{align*}
\Pi & \mapsto \Pi^{\sigma}:=\Pi \circ \sigma, 
\\
\Pi & \mapsto \Pi^{\vee}
\quad
\text{(contragredient representation)}.  
\end{align*}
Then $\Pi^{\sigma} \simeq \Pi^{\vee}$
 for all $\Pi \in \operatorname{Irr}(G)_{H,f}$.  
In particular, 
 $\Pi^{\vee} \in \operatorname{Irr}(G)_{H,f}$
 if and only if $\Pi \in \operatorname{Irr}(G)_{H,f}$.  
\end{lemma}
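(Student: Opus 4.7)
The plan is to reduce the isomorphism $\Pi^{\sigma}\simeq\Pi^{\vee}$ to a highest-weight computation enabled by the Cartan--Helgason theorem. First I would invoke Lemma~\ref{lem:CH}(3): the highest weight $\lambda$ of any $\Pi\in\operatorname{Irr}(G)_{H,f}$ lies in $\Lambda_{+}({\mathfrak{g}}_{\mathbb{C}};{\mathfrak{h}}_{\mathbb{C}})\subset{\mathfrak{j}}_{\mathbb{C}}^{\ast}$, so in particular $\lambda$ vanishes on ${\mathfrak{t}}=\widetilde{\mathfrak{j}}\cap{\mathfrak{h}}$. Since $\sigma$ acts as $+1$ on ${\mathfrak{t}}$ and as $-1$ on ${\mathfrak{j}}$, this translates to the symmetry $\lambda\circ\sigma=-\lambda$.

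Next, for any $\widetilde{\mathfrak{j}}_{\mathbb{C}}$-weight vector $v$ of $\Pi$ with weight $\nu$, the identity
\[
d\Pi^{\sigma}(X)v \;=\; d\Pi(\sigma X)v \;=\; \nu(\sigma X)v
\qquad(X\in\widetilde{\mathfrak{j}}_{\mathbb{C}})
\]
shows that $v$ is a weight vector of $\Pi^{\sigma}$ with weight $\nu\circ\sigma$, so the weight multiset of $\Pi^{\sigma}$ is the $\sigma^{\ast}$-image of that of $\Pi$. Because $\sigma$ permutes $\Delta({\mathfrak{g}}_{\mathbb{C}},\widetilde{\mathfrak{j}}_{\mathbb{C}})$, the map $\sigma^{\ast}$ normalizes the Weyl group $W$ (it conjugates $s_{\alpha}$ to $s_{\sigma^{\ast}\alpha}$), so the extremal weights of $\Pi^{\sigma}$ form the single $W$-orbit $\sigma^{\ast}(W\lambda)=W(-\lambda)=-W\lambda$, whose unique dominant representative is $-w_{0}\lambda$. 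Since $-w_{0}\lambda$ is also the highest weight of $\Pi^{\vee}$, and since both $\Pi^{\sigma}$ and $\Pi^{\vee}$ are irreducible $G$-modules, we conclude $\Pi^{\sigma}\simeq\Pi^{\vee}$.

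Finally, the ``in particular'' assertion is immediate from the first claim: for $h\in H\subset G^{\sigma}$ one has $\Pi^{\sigma}(h)=\Pi(\sigma h)=\Pi(h)$, so the subspace $V^{H}$ coincides for $\Pi$ and $\Pi^{\sigma}$; consequently $\Pi\mapsto\Pi^{\sigma}$ preserves $\operatorname{Irr}(G)_{H,f}$, and the isomorphism $\Pi^{\sigma}\simeq\Pi^{\vee}$ transports this invariance to $\Pi\mapsto\Pi^{\vee}$. The only nonroutine ingredient is the first step: extracting from Cartan--Helgason the symmetry $\sigma^{\ast}\lambda=-\lambda$ that makes the Weyl-orbit identification work; once $\lambda\in{\mathfrak{j}}_{\mathbb{C}}^{\ast}$ is known, the remainder is bookkeeping on weight multisets.
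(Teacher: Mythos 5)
Your proposal is correct and follows the same route as the paper: invoke the Cartan--Helgason theorem (Lemma~\ref{lem:CH}) to get $\sigma^{\ast}\lambda=-\lambda$ for the highest weight $\lambda$, observe that $\Pi^{\sigma}$ and $\Pi^{\vee}$ then share the extremal weight $-\lambda$ and hence coincide, and transport the $H$-fixed-vector condition through $\Pi\mapsto\Pi^{\sigma}$. Your elaboration of why $\sigma\lambda$ is an extremal weight of $\Pi^{\sigma}$ (via the $\sigma^{\ast}$-twist of the weight multiset and normality of $W$) is just a more detailed spelling-out of the one-line assertion in the paper's proof.
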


\begin{proof}
Suppose $\lambda$ is the highest weight of $\Pi$.  
Then $\Pi^{\vee}$ has an extremal weight $-\lambda$, 
 whereas $\Pi^{\sigma}$ has an extremal weight $\sigma \lambda$
 which equals $-\lambda$ by Lemma \ref{lem:CH}.  
Hence $\Pi^{\sigma} \simeq \Pi^{\vee}$.

Suppose $\Pi \in \operatorname{Irr}(G)_{H,f}$.  
Then obviously $\Pi^{\sigma} \in \operatorname{Irr}(G)_{H,f}$, 
hence $\Pi^{\vee} \in \operatorname{Irr}(G)_{H,f}$.
\end{proof}

\section{Bounded multiplicity results for induction}
\label{sec:indbdd}

In the classical harmonic analysis
 on the Riemannian symmetric space $G/K$, 
 building blocks
 of representations in $C^{\infty}(G/K)$
 are constructed
 by the twisted Poisson transform, 
 an integral $G$-intertwining operator from 
the spherical principal series representation
 to $C^{\infty}(G/K)$, 
 see \cite{He94} for instance.  
More generally, 
 for a closed subgroup $H$ in $G$, 
 we consider the space
 $\invHom{G}
{\operatorname{Ind}_P^G(\xi)}
{\operatorname{Ind}_H^G(\tau)}
$
 of generalized Poisson transforms, 
 where $P$ is a parabolic subgroup of $G$, 
 $\xi \in \operatorname{Irr}(P)_f$, 
 and $\tau \in \operatorname{Irr}(H)_f$.  
In this section, 
 we give an estimate of
 $\dim_{\mathbb{C}} \invHom{G}
{\operatorname{Ind}_P^G(\xi)}
{\operatorname{Ind}_H^G(\tau)}
$
 as a refinement of the bounded multiplicity theorems
 proved in \cite[Thm.~B]{xktoshima}
 in terms of a pair of parabolic subgroups $Q \subset P_{\mathbb{C}}$.  
The main result of this section is Theorem \ref{thm:indbdd}, 
 of which the first statement provides a uniform bound
 of the multiplicities
 (\lq\lq{$QP$ estimate}\rq\rq)
 under a geometric condition 
 $\#(Q \backslash G_{\mathbb{C}}/H_{\mathbb{C}})<\infty$, 
 strengthening a formulation of Tauchi \cite{xtauchi}.  
In turn, 
 this leads us
 to \lq\lq{$QP$ estimates}\rq\rq\
 for restriction in Section \ref{sec:restbdd}.

\subsection{Geometric condition for bounded multiplicity}

Let $H$ be a closed subgroup of a Lie group $G$.  
For a finite-dimensional representation $(\eta, V)$ of $H$, 
 we write $\operatorname{Ind}_H^G(\eta)$
 for the (unnormalized) induced representation of $G$
 on the Fr{\'e}chet space 
 $C^{\infty}(G/H, {\mathcal{V}})$
 of smooth sections 
 for the homogeneous $G$-vector bundle
 ${\mathcal{V}}:=G\times_H V$
 over $G/H$.  
If $H$ is a parabolic subgroup $P$ of $G$, 
 then $\operatorname{Ind}_P^G(\eta)$
 is of moderate growth.

\begin{theorem}
[\lq\lq{$QP$ estimate}\rq\rq\ for induction]
\label{thm:indbdd}
Let $G$ be a real reductive algebraic Lie group, 
 $H$ an algebraic subgroup, 
 $P$ a parabolic subgroup of $G$, 
 and $G_{\mathbb{C}} \supset H_{\mathbb{C}}$, $P_{\mathbb{C}}$ their complexifications.  
Suppose that $Q$ is a (complex) parabolic subgroup of $G_{\mathbb{C}}$
 with Lie algebra ${\mathfrak{q}}$
 such that $Q \subset P_{\mathbb{C}}$.  
\begin{enumerate}
\item[{\rm{(1)}}]
 If $\#(Q \backslash G_{\mathbb{C}}/H_{\mathbb{C}})<\infty$, 
 then there exists $C>0$
such that
\begin{equation}
\label{eqn:JLTthm31a}
  \dim_{\mathbb{C}} \invHom G {\operatorname{Ind}_P^G(\eta)}{\operatorname{Ind}_H^G(\tau)} \le C d_{\mathfrak{q}}(\eta)\dim \tau
\end{equation}
for any $\eta \in \operatorname{Irr}(P)_f$
 and any $\tau \in \operatorname{Irr}(H)_f$.  
In particular, 
 one has 
\begin{equation}
\label{eqn:JLTthm31b}
  \dim_{\mathbb{C}} \invHom G {\operatorname{Ind}_P^G(\eta)}{\operatorname{Ind}_H^G(\tau)} \le C \dim \tau
\end{equation}
for any $\eta \in \operatorname{Irr}(P;{\mathfrak{q}})_f$
 and any $\tau \in \operatorname{Irr}(H)_f$.  
\item[{\rm{(2)}}]
Conversely, 
 if there exists $C>0$ such that 
\eqref{eqn:JLTthm31b} holds
for any $\eta \in \operatorname{Irr}(P;{\mathfrak{q}})_f$
 and any $\tau \in \operatorname{Irr}(H)_f$, 
 then $Q$ has an open orbit in $G_{\mathbb{C}}/H_{\mathbb{C}}$.  
\end{enumerate}
\end{theorem}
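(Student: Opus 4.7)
The overall strategy is to realize $\invHom G {\operatorname{Ind}_P^G(\eta)}{\operatorname{Ind}_H^G(\tau)}$ as a space of bi-equivariant distributions on $G$, pass to the complexification $G_{\mathbb{C}}/H_{\mathbb{C}}$, and analyze it through the $Q$-orbit structure there. Part~(1) will rely on a holonomic $\mathcal{D}$-module argument in the spirit of \cite{xtauchi, KaKw81, Ka83}; Part~(2) will be established contrapositively from Lemma~\ref{lem:211197}.

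For Part~(1), I would begin with the equivariant Schwartz kernel identification (cf.~\cite[\S 3]{xktoshima}), which realizes the Hom space as a space of $(P,\eta^{\vee})$-equivariant distributional sections of $\mathcal{V}_{\tau}$ on $G/H$. The key step will be to upgrade the $P$-equivariance to a $Q$-equivariance: selecting an irreducible $\mathfrak{q}$-submodule $V_0 \subset \eta$ of minimum dimension $d_{\mathfrak{q}}(\eta)$, a morphism is determined by its restriction to $V_0$ up to a factor of at most $d_{\mathfrak{q}}(\eta)\dim\tau$, and the restricted datum is encoded by a relatively $(Q,\chi)$-equivariant distribution on $G_{\mathbb{C}}/H_{\mathbb{C}}$ for an appropriate character $\chi$ of $Q$. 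The hypothesis $\#(Q\backslash G_{\mathbb{C}}/H_{\mathbb{C}})<\infty$ then makes the associated algebraic $\mathcal{D}_{G_{\mathbb{C}}/H_{\mathbb{C}}}$-module (twisted by $\tau$) holonomic by the standard $Q$-equivariance criterion, so its solution space is finite-dimensional with dimension bounded by a constant $C = C(G,H,Q)$ independent of $\eta$ and $\tau$. This yields \eqref{eqn:JLTthm31a}; specializing to $\eta \in \operatorname{Irr}(P;\mathfrak{q})_f$ recovers \eqref{eqn:JLTthm31b}.

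For Part~(2), I argue contrapositively. If $Q$ has no open orbit on $G_{\mathbb{C}}/H_{\mathbb{C}}$, equivalently $H_{\mathbb{C}}$ has no open orbit on $G_{\mathbb{C}}/Q$, then Lemma~\ref{lem:211197} produces a holomorphic character $\chi$ of $H_{\mathbb{C}}$, a weight $\lambda\in\Lambda_{+}$, and holomorphic $G_{\mathbb{C}}$-representations $\Pi_{N\lambda}\in\operatorname{Irr}(G_{\mathbb{C}};\mathfrak{q})_{\operatorname{hol}}$ with $\dim\invHom{H_{\mathbb{C}}}{\Pi_{N\lambda}|_{H_{\mathbb{C}}}}{\chi^{N}}\ge N+1$. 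The explicit $(N+1)$-dimensional family arises (as in the proof of Lemma~\ref{lem:VKN}) from monomials $f_{1}^{i}f_{2}^{N-i}$ in two $H_{\mathbb{C}}$-relatively invariant holomorphic sections of a line bundle $\mathcal{L}^{\otimes N}$ on $G_{\mathbb{C}}/Q$. Via Lemma~\ref{lem:BW} and a fibrewise Borel--Weil identification along $G_{\mathbb{C}}/Q \to G_{\mathbb{C}}/P_{\mathbb{C}}$, these sections, restricted to the real form $G/P \subset G_{\mathbb{C}}/P_{\mathbb{C}}$, define $N+1$ linearly independent $G$-intertwiners in $\invHom G {\operatorname{Ind}_P^G(\eta_N)}{\operatorname{Ind}_H^G(\tau_N)}$, where $\eta_N$ is the associated element of $\operatorname{Irr}(P;\mathfrak{q})_f$ (after applying Lemma~\ref{lem:xidual} to handle the opposite parabolic convention) and $\tau_N$ is the one-dimensional $H$-representation $\chi^{N}|_H$. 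This contradicts the uniform bound, proving the converse.

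The main obstacle will be Part~(1), specifically the careful transfer from $P$-equivariant smooth distributions on $G/H$ to an algebraic $Q$-equivariant $\mathcal{D}$-module on $G_{\mathbb{C}}/H_{\mathbb{C}}$ while keeping the factor $d_{\mathfrak{q}}(\eta)$ sharp---crucially when $Q$ is not defined over $\mathbb{R}$. This will require a fusion of the boundary value formalism of \cite{xktoshima} with the holonomic framework of \cite{xtauchi}; once the $\mathcal{D}$-module is correctly in place, holonomicity under the finite-orbit assumption and the resulting finite-dimensional solution bound are standard. Part~(2), by contrast, is comparatively soft once Lemma~\ref{lem:211197} and the Borel--Weil realization of Lemma~\ref{lem:BW} are combined, the main delicate point being the passage from finite-dimensional $H_{\mathbb{C}}$-multiplicities to genuine $G$-intertwiners via the explicit holomorphic sections.
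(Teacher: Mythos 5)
Your proposal tracks the paper's own route very closely: Part~(1) via the Schwartz-kernel identification plus a passage to $\mathfrak{q}$-invariance and Tauchi's holonomic bound, and Part~(2) via Lemma~\ref{lem:211197}, a Borel--Weil realization, and Frobenius reciprocity. The skeleton is right; one could sign off here, but a few things in your Part~(1) sketch are not quite how the factor $d_{\mathfrak{q}}(\eta)$ actually enters and are worth correcting.

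First, the distributions are most conveniently kept on the \emph{group} $G$, not as sections over $G/H$ and certainly not as data living on $G_{\mathbb{C}}/H_{\mathbb{C}}$: one uses \eqref{eqn:KSisom} to identify the Hom space with $({\mathcal{D}}'(G)\otimes\eta^{\ast}\otimes\tau)^{P\times H}$, with $P$ acting by right translation and $H$ by left translation. Second, one does not ``restrict a morphism to $V_{0}$ up to a factor''; rather, one selects an irreducible $\mathfrak q$-submodule $\lambda^{\vee}$ of $\eta\otimes {\mathbb{C}}_{-2\rho}$ of dimension $d_{\mathfrak{q}}(\eta)$ (this is $(\eta^{\ast})^{\vee}$), applies Lemma~\ref{lem:geomIrrPq} to get an injective $P$-map $\eta^{\ast}\hookrightarrow(C^{\infty}(P)\otimes\lambda)^{\mathfrak{q}}$, hence an inclusion $({\mathcal{D}}'(G)\otimes\eta^{\ast})^{P}\hookrightarrow({\mathcal{D}}'(G)\otimes\lambda)^{\mathfrak{q}}$, and only then invokes Tauchi's Proposition~\ref{prop:Dmodule} with $X_{\mathbb{C}}=G_{\mathbb{C}}$, $B=Q\times H_{\mathbb{C}}$, $\mu=\lambda\otimes\tau$. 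The factor $d_{\mathfrak{q}}(\eta)\dim\tau$ appears as $\dim\mu$ in that proposition, not from any extension ambiguity. Third, $\lambda$ is \emph{not} in general a character of $Q$ -- it is a $\mathfrak{q}$-module of dimension $d_{\mathfrak{q}}(\eta)$; writing it as a character $\chi$ would silently restrict you to $\eta\in\operatorname{Irr}(P;\mathfrak{q})_{f}$ and leave \eqref{eqn:JLTthm31a} unproved. Finally, the ``main obstacle'' you flag -- passing from $P$-equivariant smooth distributions to the complex-holonomic framework when $Q$ is not defined over $\mathbb{R}$ -- is resolved in the paper by arranging $P/(P\cap Q)$ closed in $P_{\mathbb{C}}/Q$ (so $G=K(P\cap Q)$) and choosing a relatively compact semi-analytic $K$-invariant open $U\subset G$ with $UQ\supset G$, making restriction to $U$ injective on $\mathfrak{q}$-invariants; this is the step your sketch glosses over. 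Part~(2) as you outline it is essentially the paper's argument (here $d_{\mathfrak{q}}(\xi)=1$, so the character bookkeeping is legitimate); the paper streamlines the construction of the $N+1$ intertwiners by writing $\Pi_{N\lambda}$ as a quotient of $\operatorname{Ind}_{P}^{G}(\xi_{N})$ via Lemma~\ref{lem:210943} and then appealing to Frobenius reciprocity, which avoids having to exhibit the intertwiners by hand.
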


See Definition \ref{def:FPq}
 for the definition of $d_{\mathfrak{q}}(\eta)$
 $(\le \dim \eta)$, 
 and \eqref{eqn:JLTIrrPq}
 for the definition of $\operatorname{Irr}(P;{\mathfrak{q}})_f$.  
The point of Theorem \ref{thm:indbdd} is 
 that $Q$ is not necessarily defined over ${\mathbb{R}}$, 
 which we shall see useful in the proof of Theorem \ref{thm:bdd}
 in Section \ref{sec:5}.  
We also present a number of bounded multiplicity results for restriction in Section \ref{sec:restbdd}.  

\begin{remark}
As the proof shows, 
 one can relax the assumption
 of the second statement
 by the following condition:
there exists $C>0$ such that 
\[
  \dim_{\mathbb{C}} \invHom G {\operatorname{Ind}_P^G(\eta)}{\operatorname{Ind}_H^G(\tau)} \le C 
\]
for any $\eta \in \operatorname{Irr}(P;{\mathfrak{q}})_f$
 and any character $\tau$ of $H$.  
\end{remark}

\begin{remark}
\label{rem:33}
{\rm{(1)}}\enspace
If $Q$ is a Borel subgroup of $G_{\mathbb{C}}$, 
 then $d_{\mathfrak{q}}(\eta)=1$.  
In this case, 
 the first statement of Theorem \ref{thm:indbdd} 
 was proved in \cite[Thm.~B]{xktoshima}.  
\newline
{\rm{(2)}}\enspace
If $Q=P_{\mathbb{C}}$, 
 then $d_{\mathfrak{q}}(\eta)=\dim \eta$.  
In this case, 
 the first statement of Theorem \ref{thm:indbdd} was proved in Tauchi \cite[Thm.~1.13]{xtauchi}.  
\newline
{\rm{(3)}}\enspace
If $\#(P_{\mathbb{C}} \backslash G_{\mathbb{C}}/H_{\mathbb{C}})<\infty$, 
 then Theorem \ref{thm:indbdd} (1) implies
 a {\it{finite multiplicity theorem}}
 of the induction
\begin{equation}
\label{eqn:fm}
\dim_{\mathbb{C}} \invHom {G}{\operatorname{Ind}_P^G(\eta)}{\operatorname{Ind}_H^G(\tau)}<\infty
\quad
{}^{\forall} \eta  \in \operatorname{Irr}(P)_f, 
{}^{\forall} \tau \in \operatorname{Irr}(H)_f.  
\end{equation}
However, 
 the converse statement is not true.  
For example, 
 if $P$ is a minimal parabolic subgroup, 
 as one sees from \cite[Thm.~A]{xktoshima}
 that 
\[
\text{\eqref{eqn:fm}}
\iff
\text{$G/H$ is real spherical}
\quad
{\begin{matrix}
{\times \hskip -1.3pc \Longrightarrow}
\\[-.3em]
\!\!
\Longleftarrow
\end{matrix}}
\quad
\#(P_{\mathbb{C}}\backslash G_{\mathbb{C}}/H_{\mathbb{C}})<\infty, 
\]
hence $\#(P_{\mathbb{C}} \backslash G_{\mathbb{C}}/H_{\mathbb{C}})<\infty$
 is {\it{not}} a necessary condition
 for the finite multiplicity property \eqref{eqn:fm}.  
For a general parabolic subgroup $P$, 
 the following geometric necessary condition was 
 proved in \cite[Cor.~6.8]{xkProg2014}:
\[\text{
 \eqref{eqn:fm} $\Longrightarrow$ $P$ has an open orbit in $G/H$.  
}\]
The second statement in Theorem \ref{thm:indbdd}
 is a refinement 
 of this statement for \lq\lq{$QP$ estimate}\rq\rq.  
\end{remark}

\subsection{Proof of Theorem \ref{thm:indbdd} (1)}

In this subsection 
 we give a proof of the first statement 
 of Theorem \ref{thm:indbdd}.  
For a real analytic manifold $M$, 
 we denote by ${\mathcal{B}}$
 the sheaf of hyperfunctions
 {\`a} la Sato \cite{S59}.  
We shall regard distributions
 as generalized functions {\`a} la Gelfand \cite{GS64}
 rather than continuous linear forms on $C_c^{\infty}(M)$
 so that one has a natural inclusion
 $C^{\infty}(M) \subset {\mathcal{D}}'(M) \subset {\mathcal{B}}(M)$.

For $M=G$ (group manifold), 
 we consider two actions $L$ and $R$ of $G$
 on $C^{\infty}(G)$, ${\mathcal{D}}'(G)$, 
 and ${\mathcal{B}}(G)$
 by $L(g_1)R(g_2) f := f(g_1^{-1} \cdot g_2)$
 for $g_1, g_2 \in G$ on the same spaces, 
 which induce the actions
 of the complexified Lie algebra
 ${\mathfrak{g}}_{\mathbb{C}}$, 
 to be denoted by $d L$ and $d R$, respectively.

For a parabolic subgroup $P$ of $G$, 
 we denote by ${\mathbb{C}}_{2\rho}$
 the one-dimensional representation of $P$
 defined by 
\begin{equation}
\label{eqn:2rho}
  P \to {\mathbb{R}}_{>0}, \quad p \mapsto \det|\operatorname{Ad}(p) \colon {\mathfrak{g}}/{\mathfrak{p}} \to {\mathfrak{g}}/{\mathfrak{p}}|^{-1}.  
\end{equation}
Then ${\mathcal{L}}_{2\rho}:=G \times_P {\mathbb{C}}_{2\rho}$
 is the volume bundle over $G/P$, 
 and the integration yields a $G$-invariant linear form $C^{\infty}(G/P, {\mathcal{L}}_{\lambda}) \to {\mathbb{C}}$.

Let $(\eta, V)$ be a finite-dimensional representation of $P$
 and ${\mathcal{V}}:=G \times_P V$ the $G$-homogeneous vector bundle
 over $G/P$
 associated to $(\eta,V)$.  
We set $\eta^{\ast}:= \eta^{\vee} \otimes {\mathbb{C}}_{2 \rho}$, 
where $\eta^{\vee}$ is the contragredient representation of $\eta$.  
The dualizing bundle ${\mathcal{V}}^{\ast}$
 of ${\mathcal{V}}=G \times_P V$ is given 
 as the $G$-homogeneous vector bundle over $G/P$
 associated to $\eta^{\ast}$.  
Then one has a canonical $G$-invariant perfect pairing
 between $\operatorname{Ind}_P^G(\eta)$
 and $\operatorname{Ind}_P^G(\eta^{\ast})$
 by the composition of the two maps:
\[
  C^{\infty}(G/P, {\mathcal{V}}) \times C^{\infty}(G/P, {\mathcal{V}}^{\ast}) 
\to C^{\infty}(G/P, {\mathcal{L}}_{2\rho})
\to {\mathbb {C}}.  
\]

Suppose that $(\tau,W)$ is a finite-dimensional representation
 of a closed subgroup $H$.  
By the Schwartz kernel theorem, 
 any continuous linear operator $T$ from $\operatorname{Ind}_P^G(\eta)$ to  $\operatorname{Ind}_H^G(\tau)$
 can be obtained by a bundle-valued distribution kernel
 on $G/P \times G/H$.  
This distribution is $G$-invariant under the diagonal 
action on $G/P \times G/H$
 if $T$ intertwines $G$-actions.  
Moreover, 
 since $G/P$ is compact, 
 it follows from \cite[Prop.~3.2]{KS15}
 that one has a natural bijection:
\begin{equation}
\label{eqn:KSisom}
   \invHom G {\operatorname{Ind}_P^G(\eta)}{\operatorname{Ind}_H^G(\tau)}
   \simeq
   ({\mathcal{D}}'(G) \otimes \eta^{\ast} \otimes \tau)^{P \times H}, 
\end{equation}
where we let $P$ act on ${\mathcal{D}}'(G) \otimes V^{\vee}\otimes W$
 by $R \otimes \eta^{\ast} \otimes \operatorname{id}$, 
 and $H$ by $L \otimes \operatorname{id} \otimes \tau$.

Let $L$ be a Levi part of $P$, 
 and $L_{\mathbb{C}}$ its complexification.  
Since $P_{\mathbb{C}}/Q \simeq L_{\mathbb{C}}/(L_{\mathbb{C}} \cap Q)$ is a (generalized) flag variety
 of the complex reductive group $L_{\mathbb{C}}$, 
 one has 
$
    \# (P \backslash P_{\mathbb{C}}/Q)<\infty
$
 because 
$ 
   \# (L \backslash L_{\mathbb{C}}/(L_{\mathbb{C}} \cap Q))< \infty
$ 
 by a result of Wolf \cite{W69}.  
In particular, 
 one finds $x \in P_{\mathbb{C}}$ such that $P x Q/Q$ is closed
 in $P_{\mathbb{C}}/Q$.  
Obviously, 
 the assumption of Theorem \ref{thm:indbdd} is unchanged
 if we replace $Q$ by $\operatorname{Ad}(x) Q$, 
 and so is the conclusion of Theorem \ref{thm:indbdd}
 by \eqref{eqn:AdIPq}.  
Thus we may and do assume
 that $P/(P \cap Q)$ is closed in $P_{\mathbb{C}}/Q$ from now on.

Suppose $\eta \in \operatorname{Irr}(P)_f$.  
One observes $d_{\mathfrak{q}}(\eta)=d_{\mathfrak{q}}(\eta \otimes {\mathbb{C}}_{-2\rho})$.  
Take an irreducible ${\mathfrak{q}}$-submodule
 $\lambda^{\vee}$
 of the ${\mathfrak{p}}_{\mathbb{C}}$-module
 $(\eta^{\ast})^{\vee} \simeq \eta \otimes {\mathbb{C}}_{-2\rho}$
 such that
$
   \dim \lambda^{\vee}= d_{\mathfrak{q}}(\eta)
$.  
We write $\lambda$ $(\simeq \lambda^{\vee \, \vee})$
 for the contragredient representation of $\lambda^{\vee}$.  
Clearly, 
 $\dim \lambda=d_{\mathfrak{q}}(\eta)$.  
By Lemma \ref{lem:geomIrrPq}, 
 there is an injective $P$-homomorphism
 $\eta^{\ast}
 \hookrightarrow
 (C^{\infty}(P)\otimes {\lambda})^{\mathfrak{q}}$. 
Hence, 
 one has 
\[
 ({\mathcal{D}}'(G) \otimes \eta^{\ast})^P
\hookrightarrow 
 ({\mathcal{D}}'(G) \otimes C^{\infty}(P)
  \otimes {\lambda})^{P \times {\mathfrak{q}}}
\simeq
  ({\mathcal{D}}'(G) \otimes {\lambda})^{\mathfrak{q}}.  
\]
By \eqref{eqn:KSisom}, 
 the first statement of Theorem \ref{thm:indbdd} is reduced 
 to the following:

\begin{proposition}
\label{prop:qhbdd}
Assume $\#(Q \backslash G_{\mathbb{C}}/H_{\mathbb{C}})<\infty$.    
Then there exists $C>0$
 such that for any ${\mathfrak{q}}$-module $\lambda$
 and for any $\tau \in \operatorname{Irr}(H)_f$
\begin{equation}
\label{eqn:JLTprop34}
\dim_{{\mathbb{C}}}({\mathcal{D}}'(G) \otimes {\lambda}
\otimes \tau)^{{\mathfrak{q}} \oplus {\mathfrak{h}}_{\mathbb{C}}}
\le 
C \dim \lambda \dim \tau, 
\end{equation}
where ${\mathfrak{q}}$ acts on the first and second factors
 by $d R \otimes \lambda$, 
 and ${\mathfrak{h}}_{\mathbb{C}}$ on the first
 and third by $d L \otimes \tau$.  
\end{proposition}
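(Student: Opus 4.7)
The plan is to realize the invariant space in \eqref{eqn:JLTprop34} as the global distribution solutions of a holonomic $\mathcal{D}$-module on $G_{\mathbb{C}}$, and then estimate its dimension via a characteristic-cycle bound, in the spirit of Tauchi \cite{xtauchi} and Kashiwara--Kawai \cite{KaKw81, Ka83}. The essential novelty, relative to Tauchi's case $Q = P_{\mathbb{C}}$, is that $Q$ need not be defined over ${\mathbb{R}}$, so the argument must be run intrinsically on the complexification $G_{\mathbb{C}}$ and only specialized to the real form $G$ at the end by passing through hyperfunctions.

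First I would construct a coherent $\mathcal{D}_{G_{\mathbb{C}}}$-module $\mathcal{M} = \mathcal{M}(\lambda, \tau)$ as the quotient of $\mathcal{D}_{G_{\mathbb{C}}} \otimes_{\mathbb{C}} (\lambda \otimes \tau)^{\vee}$ by the left ideal generated by the twisted right action of ${\mathfrak{q}}$ through $\lambda$ and the twisted left action of ${\mathfrak{h}}_{\mathbb{C}}$ through $\tau$. Since ${\mathcal{D}}'(G) \hookrightarrow {\mathcal{B}}(G)$, the invariant space in \eqref{eqn:JLTprop34} injects into $\operatorname{Hom}_{\mathcal{D}_{G_{\mathbb{C}}}}(\mathcal{M}, {\mathcal{B}})$, and it therefore suffices to bound the dimension of this hyperfunction solution space.

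Second I would verify that $\mathcal{M}$ is holonomic. Its characteristic variety is contained in the union of the conormal bundles $T^{\ast}_{\mathcal{O}} G_{\mathbb{C}}$ as $\mathcal{O}$ ranges over the orbits of $Q \times H_{\mathbb{C}}$ acting on $G_{\mathbb{C}}$ by $(q,h) \cdot g := q g h^{-1}$. The hypothesis $\#(Q \backslash G_{\mathbb{C}} / H_{\mathbb{C}}) < \infty$ is precisely the finiteness of these orbits, so the characteristic variety is a finite union of Lagrangian subvarieties of $T^{\ast} G_{\mathbb{C}}$, whence $\mathcal{M}$ is holonomic. Crucially, the \emph{support} of the characteristic variety is independent of $\lambda$ and $\tau$; only the multiplicities along each conormal bundle depend on them.

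Third, the Kashiwara--Kawai theorem bounds $\dim_{\mathbb{C}} \operatorname{Hom}_{\mathcal{D}_{G_{\mathbb{C}}}}(\mathcal{M}, {\mathcal{B}})$ by a linear combination of these multiplicities, with coefficients determined solely by the real-orbit topology of the stratification and hence independent of $\lambda$ and $\tau$. The main obstacle is the uniform control of multiplicities: I need each to be bounded by $\dim\lambda \cdot \dim\tau$. This should follow from the fact that $\mathcal{M}$ is, by construction, a quotient of the free $\mathcal{D}_{G_{\mathbb{C}}}$-module $\mathcal{D}_{G_{\mathbb{C}}} \otimes_{\mathbb{C}} (\lambda \otimes \tau)^{\vee}$ of rank $\dim\lambda \cdot \dim\tau$, so that along each orbit stratum its restriction is generically a flat bundle of rank at most $\dim\lambda \cdot \dim\tau$, and the characteristic cycle multiplicity along $T^{\ast}_{\mathcal{O}} G_{\mathbb{C}}$ is controlled by this generic rank. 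Summing the uniform contributions from the finitely many strata then produces the required constant $C$, depending only on the orbit geometry of $Q \times H_{\mathbb{C}}$ on $G_{\mathbb{C}}$ and not on $\lambda$ or $\tau$.
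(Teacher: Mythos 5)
Your plan reproduces, in outline, the mechanism behind Tauchi's uniform multiplicity theorem (Proposition~\ref{prop:Dmodule} in the paper, cited from \cite{xtauchi}), whereas the paper's proof of Proposition~\ref{prop:qhbdd} takes that theorem as given and supplies the step your argument omits: a reduction from the noncompact group manifold $G$ to a relatively compact open set.  Characteristic-cycle/Kashiwara--Kawai bounds on hyperfunction solutions are formulated over a \emph{relatively compact} semi-analytic open subset $U$ of the real form, not over all of $G$; stated globally on a noncompact real analytic manifold, $\dim\operatorname{Hom}_{\mathcal{D}_{G_{\mathbb{C}}}}(\mathcal{M},\mathcal{B}(G))$ is not directly controlled in the way you assert.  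The paper handles this by first conjugating $P$ inside $P_{\mathbb{C}}$ (possible since $\#(P\backslash P_{\mathbb{C}}/Q)<\infty$ by Wolf) so that $P/(P\cap Q)$ is closed in $P_{\mathbb{C}}/Q$, whence $P\cap Q$ is cocompact in $P$ and $G=K(P\cap Q)$; then taking $U$ to be the full preimage in $G$ of a small relatively compact semi-analytic neighborhood of the base point of $K\backslash G$; and finally using $UQ\supset K(P\cap Q)=G$ to prove that the restriction $(\mathcal{B}(G)\otimes\mu)^{\mathfrak{q}}\to(\mathcal{B}(U)\otimes\mu)^{\mathfrak{q}}$ is injective, so that the bound on $U$ implies the one on $G$.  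This is where the hypothesis ${\mathfrak{q}}\subset{\mathfrak{p}}_{\mathbb{C}}$ actually enters, and your proposal never uses it.

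A subsidiary issue: you claim that because $\mathcal{M}(\lambda,\tau)$ is a quotient of the free module $\mathcal{D}_{G_{\mathbb{C}}}\otimes(\lambda\otimes\tau)^{\vee}$ of rank $r=\dim\lambda\dim\tau$, the characteristic cycle multiplicities are each bounded by $r$.  That inference is not automatic: a quotient of a rank-$r$ free module over a local ring can have length strictly greater than $r$, so the rank of a presentation does not by itself bound cycle multiplicities.  Establishing the uniform bound (linear in $\dim\mu$, with constant depending only on the finite stratification) is precisely the content of Tauchi's theorem; it is earned, not asserted.  Citing it as a black box, as the paper does, is the efficient route; if you insist on a self-contained argument you would have to actually estimate the multiplicities, using that the first-order system presenting $\mathcal{M}(\lambda,\tau)$ has a principal symbol ideal independent of $\lambda,\tau$, rather than invoking ``quotient of rank $r$'' as if it settled the matter.
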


For the proof, 
 we use the following result
 by Tauchi, 
 which is based on the theory
 of (regular) holonomic ${\mathcal{D}}$-modules
 (see \cite{Ka83, KaKw81} for instance).  

\begin{proposition}
[{\cite[Thm.~1.14]{xtauchi}}]
\label{prop:Dmodule}
Suppose a complex Lie group $B$ acts holomorphically 
 on  a complex manifold $X_{\mathbb{C}}$
 with finite number of orbits.  
Let $X_{\mathbb{R}}$ be a real form of $X_{\mathbb{C}}$, 
 and $U$ a relatively compact semi-analytic open subset of $X_{\mathbb{R}}$.  Then there exists a constant $C>0$
 such that 
$\dim_{\mathbb{C}}
({\mathcal{B}}(U) \otimes \mu)^{\mathfrak{b}} 
 \le C \dim \mu$
 for any finite-dimensional representation $\mu$
 of the Lie algebra ${\mathfrak{b}}$ of $B$.  
Here $({\mathcal{B}}(U) \otimes \mu)^{\mathfrak{b}}$
 denotes the space of ${\mathfrak{b}}$-invariant vector-valued hyperfunctions
 on $U$
 via the diagonal action.  
\end{proposition}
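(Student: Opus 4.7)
The plan is to convert the estimate on $\mathfrak{b}$-invariant hyperfunctions into a statement about hyperfunction solutions of a coherent $\mathcal{D}_{X_{\mathbb{C}}}$-module $\mathcal{M}_\mu$ built from $\mu$, and then to use the finite-orbit hypothesis to show that $\mathcal{M}_\mu$ is regular holonomic with characteristic variety independent of $\mu$. First I would set
\[
\mathcal{M}_\mu := \mathcal{D}_{X_{\mathbb{C}}} \otimes_{U(\mathfrak{b})} \mu^{\vee},
\]
where $\mathfrak{b}$ acts on $\mathcal{D}_{X_{\mathbb{C}}}$ via the infinitesimal action, and then use Hom-tensor adjunction to identify
\[
\operatorname{Hom}_{\mathcal{D}_{X_{\mathbb{C}}}}\!\bigl(\mathcal{M}_\mu,\mathcal{B}(U)\bigr)\simeq \operatorname{Hom}_{U(\mathfrak{b})}\!\bigl(\mu^{\vee},\mathcal{B}(U)\bigr)\simeq \bigl(\mathcal{B}(U)\otimes\mu\bigr)^{\mathfrak{b}},
\]
reducing the problem to bounding the dimension of the left-hand side.

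Next I would establish holonomicity of $\mathcal{M}_\mu$, uniformly in $\mu$. Filter $\mathcal{M}_\mu$ by the order filtration of $\mathcal{D}_{X_{\mathbb{C}}}$ applied to the generating subspace $1\otimes\mu^{\vee}$. At the symbol level, the defining relations $X\otimes v+1\otimes \mu^{\vee}(X)v$ become the principal symbols of the holomorphic vector fields $X\in\mathfrak{b}$, since the scalar term $\mu^{\vee}(X)v$ is of order zero and drops out. Hence the characteristic variety $\operatorname{Char}(\mathcal{M}_\mu)$ is contained in the zero locus of the moment map $T^{*}X_{\mathbb{C}}\to\mathfrak{b}^{*}$, and this locus equals the union of conormal bundles to the $B$-orbits. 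Because $B$ has only finitely many orbits on $X_{\mathbb{C}}$, this union is a finite union of Lagrangian subvarieties; thus $\mathcal{M}_\mu$ is holonomic, with $\operatorname{Char}(\mathcal{M}_\mu)$ independent of $\mu$.

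Then I would verify regularity and extract the bound. On each $B$-orbit $O$, the restriction of $\mathcal{M}_\mu$ is a $B$-equivariant integrable connection of generic rank at most $\dim\mu$; since $B$ acts transitively on $O$ with algebraic stabilizers, this connection has regular singularities along $\partial O$, so $\mathcal{M}_\mu$ is regular holonomic. Kashiwara's constructibility theorem for hyperfunction solutions of regular holonomic systems then implies that $R\mathcal{H}om_{\mathcal{D}_{X_{\mathbb{C}}}}(\mathcal{M}_\mu,\mathcal{B}_{X_{\mathbb{R}}})$ is $\mathbb{R}$-constructible with respect to the (finite) stratification of $X_{\mathbb{R}}$ inherited from the orbit stratification of $X_{\mathbb{C}}$, with generic stalk dimension at most $\dim\mu$ on each stratum. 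Since $U$ is relatively compact and semi-analytic, $\overline{U}$ meets only finitely many strata, and a standard bound for sections of constructible sheaves on a relatively compact semi-analytic set yields
\[
\dim_{\mathbb{C}}\operatorname{Hom}_{\mathcal{D}_{X_{\mathbb{C}}}}\!\bigl(\mathcal{M}_\mu,\mathcal{B}(U)\bigr)\leq C\dim\mu,
\]
where $C$ depends only on the stratification and on $U$.

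The main obstacle I expect is the last step: passing from stratumwise generic rank to a uniform linear bound in $\dim\mu$ in the semi-analytic (non-algebraic, non-compact) setting, and controlling behavior along the boundary strata. The first two steps are essentially formal once the finite-orbit hypothesis is in place, while the quantitative piece requires careful application of Kashiwara's constructibility theorem together with the semi-analytic geometry of $U$ to extract a constant $C$ that is genuinely independent of $\mu$.
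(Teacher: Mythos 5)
You should first note that the paper does not prove this proposition at all: it is imported verbatim from Tauchi [Thm.~1.14], with only the remark that the proof is ``based on the theory of (regular) holonomic $\mathcal{D}$-modules''. Your Steps 1--2 reconstruct the expected route correctly: the adjunction identifying $(\mathcal{B}(U)\otimes\mu)^{\mathfrak{b}}$ with $\operatorname{Hom}_{\mathcal{D}_{X_{\mathbb{C}}}}(\mathcal{M}_\mu,\mathcal{B}(U))$ for $\mathcal{M}_\mu=\mathcal{D}_{X_{\mathbb{C}}}\otimes_{U(\mathfrak{b})}\mu^{\vee}$ is standard, and the moment-map estimate $\operatorname{Char}(\mathcal{M}_\mu)\subset\bigcup_{O}\overline{T^{*}_{O}X_{\mathbb{C}}}$, hence holonomicity from the finite-orbit hypothesis, is correct and is the same mechanism used in the earlier finite-multiplicity papers cited here.

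The genuine gap is Step 3. The claim that $\mathcal{M}_\mu$ is regular holonomic is false in general: take $B=(\mathbb{C},+)$ acting on $X_{\mathbb{C}}=\mathbb{P}^1$ by translation (two orbits) and $\mu^{\vee}=\mathbb{C}_c$ with $c\neq 0$; then $\mathcal{M}_\mu=\mathcal{D}_{\mathbb{P}^1}/\mathcal{D}_{\mathbb{P}^1}(\partial_x-c)$ is the exponential module $e^{cx}$, which has an irregular singularity at the fixed point $\infty$. (This is not an artifact; it is the same phenomenon as Whittaker-twisted $\mathcal{D}$-modules on flag varieties. Your heuristic that a $B$-equivariant connection on an orbit with algebraic stabilizer has regular singularities along the boundary confuses genuine equivariance with the twisted, or monodromic, equivariance that $\mathcal{M}_\mu$ actually enjoys.) The proposition itself does not need regularity: Kashiwara's constructibility theorem for $R\mathcal{H}om_{\mathcal{D}}(\mathcal{M},\mathcal{O}_{X_{\mathbb{C}}})$ holds for arbitrary holonomic $\mathcal{M}$, and since $\mathcal{B}_{X_{\mathbb{R}}}$ is $R\Gamma_{X_{\mathbb{R}}}(\mathcal{O}_{X_{\mathbb{C}}})$ up to orientation twist and shift, the hyperfunction solution complex is still $\mathbb{R}$-constructible. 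But your quantitative step then collapses: ``generic stalk dimension at most $\dim\mu$'' via the Riemann--Hilbert picture is unavailable at boundary strata, where (as in the example above) there are additional solutions supported on lower-dimensional strata. The linear bound in $\dim\mu$ has to be extracted instead from the local index theorem together with the observation that the multiplicities of the characteristic cycle of $\mathcal{M}_\mu$ are at most $\dim\mu$ times those for the trivial $\mu$ --- because $\operatorname{gr}\mathcal{M}_\mu$ is a quotient of $\bigl(\mathcal{O}_{T^{*}X_{\mathbb{C}}}/(\sigma(X):X\in\mathfrak{b})\bigr)\otimes\mu^{\vee}$ --- and this is precisely the part of Tauchi's argument that your sketch does not reconstruct.
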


\begin{proof}
[Proof of Proposition \ref{prop:qhbdd}]
Let $K$ be a maximal compact subgroup of $G$.  
We recall that we have chosen $P$ 
 such that $P/(P\cap Q)$ is closed in the flag variety $P_{\mathbb{C}}/Q$.  
In particular, 
 the algebraic subgroup $P \cap Q$ is cocompact in $P$, 
 hence one has $G=K(P \cap Q)$.

As in \cite[Thm.~3.16]{KS15}, 
 we capture all invariant distributions
 (or hyperfunctions) on $G$
 by those on a sufficiently large open subset $U$.  
For this, 
we fix a relatively compact semi-analytic open neighbourhood
 of $o=K e \in K \backslash G$, 
 and define $U$ to be its full inverse 
 via the quotient map $G \to K \backslash G$.  
Then $U$ is a left $K$-invariant,
 relatively compact,
 semi-analytic open subset in $G$.  
Moreover the restriction map
$
 ({\mathcal{B}}(G) \otimes \mu)^{\mathfrak{q}}
 \to
 ({\mathcal{B}}(U) \otimes \mu)^{\mathfrak{q}}
$
 is injective
 because $(G_{\mathbb{C}} \supset)$ $U Q \supset K(P \cap Q)=G$.  
Hence one has natural inclusions:
\[
  ({\mathcal{D}}'(G) \otimes {\lambda} \otimes \tau)^{{\mathfrak{q}} \oplus {\mathfrak{h}}_{\mathbb{C}}}
\subset 
  ({\mathcal{B}}(G) \otimes {\lambda} \otimes \tau)^{{\mathfrak{q}} \oplus {\mathfrak{h}}_{\mathbb{C}}}
\subset   
  ({\mathcal{B}}(U) \otimes {\lambda} \otimes \tau)^{{\mathfrak{q}} \oplus {\mathfrak{h}}_{\mathbb{C}}}.  
\]
We take $X_{\mathbb{R}} :=(G \times G)/\operatorname{diag}G$ $(\simeq G)$
 and apply Proposition \ref{prop:Dmodule}
 to the setting
 $(B, X_{\mathbb{C}}, \mu):=(Q \times H_{\mathbb{C}}, G_{\mathbb{C}}, 
{\lambda} \otimes \tau)
$.  
Then $\dim_{\mathbb{C}} ({\mathcal{B}}_G(U) \otimes {\lambda} \otimes \tau)^{{\mathfrak{q}} \oplus {\mathfrak{h}}_{\mathbb{C}}} \le C \dim \lambda \dim \tau$, 
 whence the inequality \eqref{eqn:JLTprop34} follows.  
\end{proof}

\subsection{Proof of Theorem \ref{thm:indbdd} (2)}
\label{subsec:pfindbdd2}

We recall our setting
 where $G$ is a real form of $G_{\mathbb{C}}$.  
We take a positive system 
 $\Delta^+({\mathfrak{g}}_{\mathbb{C}}, \widetilde {\mathfrak{j}}_{\mathbb{C}})$
 such that the corresponding Borel subgroup $B$
 is contained in the complex parabolic subgroup $Q$ $(\subset P_{\mathbb{C}})$.

\begin{lemma}
\label{lem:210943}
Let $\Pi_{\lambda}$ be an irreducible holomorphic 
representation
 of $G_{\mathbb{C}}$
 with highest weight $\lambda \in \Lambda_+$.  
Let $\eta$ be the regular representation
 of $P_{\mathbb{C}}$
 on ${\mathcal{O}}(P_{\mathbb{C}}/B, {\mathcal{L}}_{-\lambda})$, 
 and define a representation of $P$
 by $\xi:=\eta^{\vee}|_P \otimes {\mathbb{C}}_{2\rho}$.  

\begin{enumerate}
\item[{\rm{(1)}}]
There is a surjective $G$-homomorphism 
 $\operatorname{Ind}_P^G(\xi) \to \Pi_{\lambda}$.  
\item[{\rm{(2)}}]
If $\Pi_{\lambda} \in \operatorname{Irr}(G;{\mathfrak{q}})_f$
 then $\xi \in \operatorname{Irr}(P;{\mathfrak{q}})_f$.  
\end{enumerate}
\end{lemma}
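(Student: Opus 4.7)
My overall plan is to realize $\Pi_\lambda$ as a quotient of $\operatorname{Ind}_P^G(\xi)$ by combining the Borel--Weil theorem with Frobenius reciprocity and Casselman--Wallach duality, and then to verify the ${\mathfrak{q}}$-condition in (2) by a direct highest-weight computation.

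For part (1), I would first apply Borel--Weil in stages to the chain $B \subset P_{\mathbb{C}} \subset G_{\mathbb{C}}$. Since $\lambda \in \Lambda_+$, inducing holomorphic sections through $P_{\mathbb{C}}$ produces a $G_{\mathbb{C}}$-equivariant identification $\Pi_\lambda^{\vee} \simeq {\mathcal{O}}(G_{\mathbb{C}}/P_{\mathbb{C}}, {\mathcal{E}}_\eta)$, where ${\mathcal{E}}_\eta$ is the holomorphic $G_{\mathbb{C}}$-equivariant vector bundle with fibre $\eta$ at the origin. Evaluation at the origin is a $P_{\mathbb{C}}$-equivariant surjection onto $\eta$; restricting to the real form $P$ and applying Frobenius reciprocity for smooth induction gives a nonzero $G$-homomorphism $\Pi_\lambda^{\vee} \to \operatorname{Ind}_P^G(\eta|_P)$, which is injective because $\Pi_\lambda^{\vee}$ is irreducible as a $G$-module. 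Noting that $\xi^{\vee} \otimes {\mathbb{C}}_{2\rho} \simeq \eta|_P$, the target embeds into the full topological dual of $\operatorname{Ind}_P^G(\xi)$ via the inclusion of smooth sections into distribution sections. Dualizing the composite $\Pi_\lambda^{\vee} \hookrightarrow \operatorname{Ind}_P^G(\xi)^{\vee}$ in the category ${\mathcal{M}}(G)$, where duality is exact, yields the desired surjection $\operatorname{Ind}_P^G(\xi) \twoheadrightarrow \Pi_\lambda$.

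For part (2), I would use the arrangement ${\mathfrak{b}} \subset {\mathfrak{q}} \subset {\mathfrak{p}}_{\mathbb{C}}$ already in force to write ${\mathfrak{q}} = {\mathfrak{p}}^{\Theta_Q}$ for some $\Theta_Q \subset \Psi$. By Lemma \ref{lem:IrrGQ}(1), the hypothesis $\Pi_\lambda \in \operatorname{Irr}({\mathfrak{g}};{\mathfrak{q}})_f$ is equivalent to $\langle \lambda, \alpha \rangle = 0$ for every $\alpha \in \Theta_Q$. By Lemma \ref{lem:BW}, the $P_{\mathbb{C}}$-module $\eta^{\vee}$ contains a ${\mathfrak{b}}$-stable line of weight $\lambda$; the vanishing condition is precisely what is required for this weight to extend from a ${\mathfrak{b}}$-character to a ${\mathfrak{q}}$-character, so that the line is in fact ${\mathfrak{q}}$-stable and $\eta^{\vee} \in \operatorname{Irr}(P_{\mathbb{C}};{\mathfrak{q}})_{\operatorname{hol}}$, whence $\eta^{\vee}|_P \in \operatorname{Irr}(P;{\mathfrak{q}})_f$. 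Since ${\mathbb{C}}_{2\rho}$ is a character of $P$ whose differential extends to ${\mathfrak{p}}_{\mathbb{C}} \supset {\mathfrak{q}}$, tensoring with it preserves the existence of a relatively ${\mathfrak{q}}$-invariant line, and thus $\xi \in \operatorname{Irr}(P;{\mathfrak{q}})_f$.

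The main obstacle will be the duality bookkeeping in (1): one must verify that Frobenius reciprocity lands precisely in the smooth-section subspace of $\operatorname{Ind}_P^G(\xi)^{\vee}$, that the ${\mathbb{C}}_{2\rho}$-twist in the definition of $\xi$ matches the volume-density duality on $G/P$, and that exactness of the duality functor on ${\mathcal{M}}(G)$ converts the resulting injection into the claimed surjection. Once this bookkeeping is settled, part (2) is a transparent weight-theoretic consequence of Lemmas \ref{lem:IrrGQ} and \ref{lem:BW}.
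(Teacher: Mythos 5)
Your proposal is correct, and part~(1) follows essentially the same route as the paper: Borel--Weil in stages gives $\Pi_\lambda^{\vee}\simeq\mathcal{O}(G_{\mathbb{C}}/P_{\mathbb{C}},\mathcal{V}_\eta)$, restriction to $G$ gives an injection $\Pi_\lambda^{\vee}\hookrightarrow\operatorname{Ind}_P^G(\eta|_P)$, and dualizing in $\mathcal{M}(G)$ produces the surjection. The \lq\lq{main obstacle}\rq\rq\ you flag, though, is a non-issue once phrased correctly, and your detour through the \emph{full} topological dual is a slight misstep: since $G/P$ is compact, the integration pairing against the density bundle (set up in Section~3 of the paper) identifies $\operatorname{Ind}_P^G(\eta|_P)=\operatorname{Ind}_P^G(\xi^{\vee}\otimes\mathbb{C}_{2\rho})=\operatorname{Ind}_P^G(\xi^{\ast})$ as \emph{exactly} the Casselman--Wallach contragredient of $\operatorname{Ind}_P^G(\xi)$ in $\mathcal{M}(G)$, not merely a subspace of the distributional dual; that is what legitimizes the final dualization without any further factoring. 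For part~(2), you replace the paper's argument --- induction by stages through $B\subset Q\subset P_{\mathbb{C}}$, which exhibits $\eta$ as holomorphic sections of a line bundle over $P_{\mathbb{C}}/Q$ and then invokes Lemma~\ref{lem:BW}~(2) --- by a direct highest-weight computation: the $\mathfrak{b}$-highest weight line of $\eta^{\vee}$ has weight $\lambda$, and this line is $\mathfrak{q}$-stable precisely when $\lambda$ is orthogonal to the simple roots in the Levi of $\mathfrak{q}$, which the hypothesis supplies through Lemma~\ref{lem:IrrGQ}~(1). Both routes are valid; yours is more elementary (pure Cartan--Weyl theory), while the paper's stays parallel in flavor to the two-stage holomorphic induction already used in~(1) and records the one-dimensionality of the $\mathfrak{q}$-stable subspace geometrically via the bundle ${\mathcal{O}}(Q/B,{\mathcal{L}}_{-\lambda})$. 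Be slightly more careful with one phrase: what is needed is not that \lq\lq{$\lambda$ extends to a $\mathfrak{q}$-character}\rq\rq\ in the abstract, but that the $\mathfrak{u}$-annihilated subspace $V^{\mathfrak{u}}$ (the unique irreducible $\mathfrak{q}$-submodule) reduces to the highest weight line, which is equivalent to $\langle\lambda,\alpha\rangle=0$ for $\alpha\in\Theta_Q$ and is what Lemma~\ref{lem:IrrGQ}~(1) actually delivers.
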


\begin{proof}
By the Borel--Weil theorem, 
 the contragredient representation $\Pi_{\lambda}^{\vee}$
 is realized as the regular representation
 on ${\mathcal{O}}(G_{\mathbb{C}}/B, {\mathcal{L}}_{-\lambda})$.  
We denote
 by ${\mathcal{V}}_{\eta}$ the $G_{\mathbb{C}}$-equivariant vector bundle
 over $G_{\mathbb{C}}/P_{\mathbb{C}}$ 
associated to the $P_{\mathbb{C}}$-module $\eta$.  
Induction by stages for $B \subset P_{\mathbb{C}} \subset G_{\mathbb{C}}$ shows a natural isomorphism as $G_{\mathbb{C}}$-modules:
\[
  (\Pi_{\lambda}^{\vee}
  \simeq)\,\,
  {\mathcal{O}}(G_{\mathbb{C}}/B, {\mathcal{L}}_{-\lambda})
  \simeq
  {\mathcal{O}}(G_{\mathbb{C}}/P_{\mathbb{C}}, {\mathcal{V}}_{\eta}),  
\]
which yields an injective $G$-homomorphism
$
  \Pi_{\lambda}^{\vee} \hookrightarrow \operatorname{Ind}_P^G(\eta|_P).  
$
Taking the dual, 
 we see that $\Pi_{\lambda}$ occurs as the quotient $G$-module
 of the induced representation 
$
 \operatorname{Ind}_P^G(\xi). 
$

\par\noindent
(2)\enspace
Using induction by stages for $B \subset Q\subset P_{\mathbb{C}}$ this time, 
 one has
$
  \eta \simeq {\mathcal{O}}(P_{\mathbb{C}}/Q, {\mathcal{V}}_{-\lambda}), 
$
 where ${\mathcal{V}}_{-\lambda}$ stands for the $P_{\mathbb{C}}$-equivariant 
holomorphic vector bundle over $P_{\mathbb{C}}/Q$
 associated to the $Q$-module 
 ${\mathcal{O}}(Q/B, {\mathcal{L}}_{-\lambda})$.  
By Lemma \ref{lem:IrrGQ} (1), 
 $\lambda$ is orthogonal to all the roots
 in the Levi subalgebra of ${\mathfrak{q}}$, 
 hence 
 ${\mathcal{O}}(Q/B, {\mathcal{L}}_{-\lambda})$ is one-dimensional.   
In turn, 
 one has $\eta^{\vee} \in \operatorname{Irr}(P_{\mathbb{C}};{\mathfrak{q}})_{\operatorname{hol}}$
 by Lemma \ref{lem:BW} (2), 
 and thus $\xi \in \operatorname{Irr}(P;{\mathfrak{q}})_f$
 because $\eta^{\vee}|_P  =\xi \otimes {\mathbb{C}}_{-2\rho}$.  
\end{proof}

\begin{proof}
[Proof of Theorem \ref{thm:indbdd} (2)]
Suppose that $Q$ does not have an open orbit
 in $G_{\mathbb{C}}/H_{\mathbb{C}}$.  
By Lemma \ref{lem:211197}, 
 there exist $\lambda \in \Lambda_+$ and a character $\chi$ of $H$
such that $\dim_{\mathbb{C}} \invHom H{\Pi_{N\lambda}|_H}{\chi^N} \ge N+1$
 for all $N \in {\mathbb{N}}$.  
In turn, 
it follows from Lemma \ref{lem:210943}
 that there exists 
 $\xi_N \in \operatorname{Irr}(P;{\mathfrak{q}})_f$
 for every $N \in {\mathbb{N}}$
 such that 
 the irreducible $G$-module $\Pi_{N\lambda}$
 is a quotient of the degenerate principal series representation 
 $\operatorname{Ind}_P^G(\xi_N)$, 
 hence one has
$\dim_{\mathbb{C}}\invHom H {\operatorname{Ind}_P^G(\xi_N)|_{H}}{\chi^N} \ge N+1$.  
The Frobenius reciprocity shows
$
   \dim_{\mathbb{C}}\invHom G 
  {\operatorname{Ind}_P^G(\xi_N)}{\operatorname{Ind}_H^G(\chi^N)} \ge N+1
$, 
whence the second statement of Theorem \ref{thm:indbdd}.  
\end{proof}

\section{Bounded multiplicity results for restriction}
\label{sec:restbdd}

In this section, 
 we derive bounded multiplicity results
 (\lq\lq{$QP$ estimates}\rq\rq)
 for {\it{restriction}} from those for {\it{induction}}
 in Theorem \ref{thm:indbdd}
 along the same line of the argument
 as in \cite{xkProg2014, xktoshima}. 
The results here will be used in Section \ref{sec:5}
 for the proof of Theorem \ref{thm:bdd}
 by a specific choice 
 of the parabolic subgroups $Q \subset P_{\mathbb{C}}$.  
Throughout this section, 
 we suppose
 that $G_{\mathbb{C}} \supset G_{\mathbb{C}}'$ are connected reductive Lie groups
 and that $G \supset G'$ are their real forms.

\subsection{Bounded multiplicity theorems for restriction}
\label{subsec:bddrest}
We begin with a \lq\lq{$QP$ estimates}\rq\rq\
 of the space of \lq\lq{symmetry breaking operators}\rq\rq\
 between (degenerate) principal series representations
 of a group $G$ and those of a subgroup $G'$.  
The results  might be
 of their own interest  
 because they indicate a nice broader framework 
 for detailed study
 of such operators.  
See 
 {\it{e.g.}}, \cite{KS15, xksbonvec}
 for the construction and the classification
 of symmetry breaking operators
 for principal series of two conformal groups,  
 see also Examples \ref{ex:kop}, \ref{ex:NO}, and \ref{ex:tri}
 by \cite{CKOP11, KOP11, NO18} for some of the most degenerate cases.  

\begin{theorem}
[\lq\lq{$QP$ estimate}\rq\rq\ for restriction]
\label{thm:restQQ}
Let $G \supset G'$ be a pair of real reductive algebraic Lie groups, 
 and $P$ and $P'$ are parabolic subgroups
 of $G$ and $G'$, 
respectively.  
Suppose that $Q$ and $Q'$ are (complex) parabolic subgroups
 of $G_{\mathbb{C}}$ and $G_{\mathbb{C}}'$, 
 respectively, 
 such that 
 ${\mathfrak{q}}\subset {\mathfrak{p}}_{\mathbb{C}}$, 
 ${\mathfrak{q}}' \subset {\mathfrak{p}}_{\mathbb{C}}'$, 
 and ${\#}(Q_{\operatorname{opp}}' \backslash G_{\mathbb{C}}/Q) < \infty$.  
Then there exists $C>0$ such that 
\begin{equation}
\label{eqn:dpsbdd}
  \dim_{\mathbb{C}} \invHom{G'}{\operatorname{Ind}_P^G(\xi)|_{G'}}{\operatorname{Ind}_{P'}^{G'}(\eta)} \le C d_{\mathfrak{q}}(\xi) d_{\mathfrak{q}'}(\eta) 
\end{equation}
 for any $\xi \in \operatorname{Irr}(P)_f$
 and any $\eta \in \operatorname{Irr}(P')_f$.  
In particular, 
 one has 
\begin{equation}
\label{eqn:JLT41}
  \underset{\xi \in \operatorname{Irr}(P;{\mathfrak{q}})_f}\sup\,\,
  \underset{\eta \in \operatorname{Irr}(P';{\mathfrak{q}}')_f}\sup\,\,
  \dim_{\mathbb{C}} \invHom{G'}{\operatorname{Ind}_P^G(\xi)|_{G'}}{\operatorname{Ind}_{P'}^{G'}(\eta)} <\infty.  
\end{equation}
\end{theorem}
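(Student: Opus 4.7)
The plan is to deduce the ``$QP$ estimate for restriction'' from the ``$QP$ estimate for induction'' (Theorem \ref{thm:indbdd}(1)) by applying the latter to the doubled group $\widetilde G := G \times G'$ with subgroup $H := \operatorname{diag} G'$, following the philosophy of \cite{xkProg2014, xktoshima}. The key observation is that the outer tensor product of two induced representations is itself induced from a product parabolic, so questions about restriction become questions about induction on a larger group.

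\textbf{Step 1 (Functorial reduction).} Setting $\widetilde P := P \times P'$ and $\eta^{\ast} := \eta^{\vee} \otimes \mathbb{C}_{2\rho_{P'}}$, so that $\operatorname{Ind}_{P'}^{G'}(\eta^{\ast})$ is the smooth dual of $\operatorname{Ind}_{P'}^{G'}(\eta)$ (via the canonical pairing recalled in \S 3.2), one has a natural isomorphism
\[
\invHom{G'}{\operatorname{Ind}_P^G(\xi)|_{G'}}{\operatorname{Ind}_{P'}^{G'}(\eta)}
\simeq
\invHom{\widetilde G}{\operatorname{Ind}_{\widetilde P}^{\widetilde G}(\xi \boxtimes \eta^{\ast})}{C^{\infty}(\widetilde G/H)}
\]
obtained by passing to diagonal invariants in $\operatorname{Ind}_P^G(\xi) \boxtimes \operatorname{Ind}_{P'}^{G'}(\eta)^{\vee}$ and then applying Frobenius reciprocity for $H \subset \widetilde G$, together with the identification $\operatorname{Ind}_P^G(\xi) \boxtimes \operatorname{Ind}_{P'}^{G'}(\eta^{\ast}) \simeq \operatorname{Ind}_{\widetilde P}^{\widetilde G}(\xi \boxtimes \eta^{\ast})$.

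\textbf{Step 2 (Application of Theorem \ref{thm:indbdd}(1)).} Define $\widetilde Q := Q \times Q'_{\operatorname{opp}}$, a complex parabolic subgroup of $\widetilde G_{\mathbb{C}}$ with Lie algebra contained in $\widetilde{\mathfrak p}_{\mathbb{C}}$. The geometric hypothesis $\#(\widetilde Q \backslash \widetilde G_{\mathbb{C}}/H_{\mathbb{C}}) < \infty$ is checked using the $G_{\mathbb{C}}$-equivariant isomorphism $\widetilde G_{\mathbb{C}}/H_{\mathbb{C}} \xrightarrow{\sim} G_{\mathbb{C}}$, $(g_1, g_2) \mapsto g_1 g_2^{-1}$, under which the $\widetilde Q$-action is $(q, q') \cdot g = q g (q')^{-1}$, so that $\widetilde Q \backslash \widetilde G_{\mathbb{C}}/H_{\mathbb{C}} \simeq Q \backslash G_{\mathbb{C}}/Q'_{\operatorname{opp}}$, whose cardinality equals $\#(Q'_{\operatorname{opp}} \backslash G_{\mathbb{C}}/Q)$ and is finite by hypothesis. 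Applying Theorem \ref{thm:indbdd}(1) with $(\eta, \tau) = (\xi \boxtimes \eta^{\ast}, \mathbf 1)$ yields a constant $C > 0$ such that the right-hand side above is bounded by $C \cdot d_{\widetilde{\mathfrak q}}(\xi \boxtimes \eta^{\ast})$.

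\textbf{Step 3 (Simplification of the $d$-factor) and conclusion.} One has $d_{\widetilde{\mathfrak q}}(\xi \boxtimes \eta^{\ast}) = d_{\mathfrak q}(\xi) \cdot d_{{\mathfrak q}'_{\operatorname{opp}}}(\eta^{\ast})$ since a minimal $\widetilde{\mathfrak q}$-submodule of an outer tensor product splits as the outer tensor product of minimal submodules. Tensoring with the $P'$-character $\mathbb{C}_{2\rho_{P'}}$ does not affect the dimension of a minimal ${\mathfrak q}'_{\operatorname{opp}}$-submodule, so $d_{{\mathfrak q}'_{\operatorname{opp}}}(\eta^{\ast}) = d_{{\mathfrak q}'_{\operatorname{opp}}}(\eta^{\vee})$, which equals $d_{\mathfrak q'}(\eta)$ by Lemma \ref{lem:xidual}. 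This proves \eqref{eqn:dpsbdd}, and \eqref{eqn:JLT41} is then immediate from the definition \eqref{eqn:JLTIrrPq} of $\operatorname{Irr}(P; \mathfrak q)_f$.

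\textbf{Main obstacle.} The delicate point is the interplay between duality and the ``$\operatorname{opp}$'' construction: the geometric hypothesis of the theorem pairs $Q$ on the $G$-side with $Q'_{\operatorname{opp}}$ (not $Q'$) on the $G'$-side, and it is precisely this ``$\operatorname{opp}$'' that is absorbed by the contragredient $\eta^{\vee}$ appearing in $\eta^{\ast}$, via Lemma \ref{lem:xidual}, so that the bound ultimately involves $d_{\mathfrak q'}(\eta)$ rather than $d_{\mathfrak q'_{\operatorname{opp}}}(\eta)$. Since $Q$ and $Q'$ need not be defined over $\mathbb{R}$, this bookkeeping must be carried out purely algebraically, and checking the finiteness of $Q \backslash G_{\mathbb{C}}/Q'_{\operatorname{opp}}$ (versus $Q \backslash G_{\mathbb{C}}/Q'$) is the substantive geometric content that makes the argument go through.
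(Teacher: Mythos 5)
Your proof matches the paper's argument step for step: the same chain of isomorphisms reducing $\operatorname{Hom}_{G'}(\operatorname{Ind}_P^G(\xi)|_{G'},\operatorname{Ind}_{P'}^{G'}(\eta))$ to $\operatorname{Hom}_{G\times G'}(\operatorname{Ind}_{P\times P'}^{G\times G'}(\xi\boxtimes\eta^\ast),\operatorname{Ind}_{\operatorname{diag}G'}^{G\times G'}(\mathbf 1))$ via the dualizing character and Frobenius reciprocity, the same application of Theorem \ref{thm:indbdd}(1) to $G\times G'$ with $Q\times Q'_{\operatorname{opp}}$, and the same identification $(Q\times Q'_{\operatorname{opp}})\backslash(G_{\mathbb C}\times G'_{\mathbb C})/\operatorname{diag}G'_{\mathbb C}\simeq Q'_{\operatorname{opp}}\backslash G_{\mathbb C}/Q$. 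The only place you are slightly more explicit than the paper is Step 3: the paper states the general bound \eqref{eqn:dpsbdd} as an immediate consequence and only invokes Lemma \ref{lem:xidual} (stated there for the $d=1$ case) when verifying the $\operatorname{Irr}(P;\mathfrak q)_f$ membership needed for \eqref{eqn:JLT41}, whereas you spell out that $d_{\widetilde{\mathfrak q}}(\xi\boxtimes\eta^\ast)=d_{\mathfrak q}(\xi)\,d_{\mathfrak q'_{\operatorname{opp}}}(\eta^{\vee})=d_{\mathfrak q}(\xi)\,d_{\mathfrak q'}(\eta)$ --- note that the literal statement of Lemma \ref{lem:xidual} gives only the $d=1$ equivalence, so strictly you should appeal to the highest-weight computation underlying it (as in Lemma \ref{lem:IrrGQ}) for the general $d$; this is a cosmetic point. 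The one ingredient you gloss over is the surjectivity of the first isomorphism in Step 1, which the paper attributes to \cite[Thm.~5.4]{xksbonvec}; you should flag that this is a nontrivial automatic-continuity statement rather than formal nonsense.
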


Here we recall from Definition \ref{def:FPq} for the quantity 
 $d_{\mathfrak{q}}(\xi)$, 
 and from Definition \ref{def:Qopp}
 that 
 $Q_{\operatorname{opp}}'$ is the opposite parabolic subgroup
 of $Q'$
 in $P_{\mathbb{C}}'$.

\begin{proof}
[Proof of Theorem \ref{thm:restQQ}]
For $\eta \in \operatorname{Irr}(P')_f$, 
 we set $\eta^{\ast}:=\eta^{\vee} \otimes {\mathbb{C}}_{2\rho'}$ 
 where ${\mathbb{C}}_{2\rho'}$ is a character
 of $P'$
 defined as in \eqref{eqn:2rho}.  
Then the induced representation $\operatorname{Ind}_{P'}^{G'}(\eta^{\ast})$
 is the contragredient representation
 of $\operatorname{Ind}_{P'}^{G'}(\eta)$
 in the category ${\mathcal{M}}(G')$, 
 and one has the following natural isomorphisms:

\begin{align*}
\invHom{G'}
{\operatorname{Ind}_{P}^G(\xi)|_{G'}}
{\operatorname{Ind}_{P'}^{G'}(\eta)}
\overset \sim \to &
\invHom{G'}
{\operatorname{Ind}_{P}^G(\xi)|_{G'} \otimes
 \operatorname{Ind}_{P'}^{G'}(\eta^{\ast})}
{{\mathbb{C}}}
\\
\simeq&
\invHom{G'}
{\operatorname{Ind}_{P \times P'}^{G \times G'}(\xi \boxtimes \eta^{\ast})|_{\operatorname{diag}G'}}
{\mathbb{C}}
\\
\overset \sim \leftarrow&
\invHom{G \times G'}
{\operatorname{Ind}_{P \times P'}^{G \times G'}(\xi \boxtimes \eta^{\ast})}
{\operatorname{Ind}_{\operatorname{diag}G'}^{G \times G'}({\bf{1}})}. 
\end{align*}

Here the injectivity of the first isomorphism
 is easy, 
 and for the proof of the surjectivity, 
 see \cite[Thm.~5.4]{xksbonvec}
 for instance.  
The last isomorphism is the Frobenius reciprocity.

Then the first assertion of Theorem \ref{thm:restQQ} follows from Theorem \ref{thm:indbdd} 
 applied to the direct product group $G \times G'$ 
 because 
\begin{equation}
\label{eqn:JLTQQ}
   (Q \times Q_{\operatorname{opp}}') \backslash (G_{\mathbb{C}} \times G'_{\mathbb{C}})
   /\operatorname{diag}G_{\mathbb{C}}'
  \simeq
   Q_{\operatorname{opp}}' \backslash G_{\mathbb{C}} /Q
\end{equation}
 is a finite set.  
Moreover, 
 if $\eta \in \operatorname{Irr}(P'; {\mathfrak{q}}')_f$, 
 then the contragredient representation $\eta^{\vee}$
 belongs to $\operatorname{Irr}(P; {\mathfrak{q}}_{\operatorname{opp}}')_f$
 by Lemma \ref{lem:xidual}, 
hence so does $\eta^{\ast}$.  
Thus the second assertion also holds
 because the outer tensor product representation
$
   \xi \boxtimes \eta^{\ast}
$
 belongs to 
$
\operatorname{Irr}(P \times P'; {\mathfrak{q}} \oplus {\mathfrak{q}}_{\operatorname{opp}}')_f
$
 if $\xi \in \operatorname{Irr}(P; {\mathfrak{q}})_f$
 and $\eta \in \operatorname{Irr}(P'; {\mathfrak{q}}')_f$.  
\end{proof}

When $Q'$ is a Borel subgroup of $G_{\mathbb{C}}'$
 in Theorem \ref{thm:restQQ}, 
 one obtains the converse statement as follows.  
We recall from \eqref{eqn:OmegaPQ}
 that 
$
\Omega_{P,{\mathfrak{q}}}
=
\{\operatorname{Ind}_P^G(\xi):\xi \in \operatorname{Irr}(P;{\mathfrak{q}})_f\}
$ 
$(\subset {\mathcal{M}}(G)$), 
 and from \eqref{eqn:msup}
 that $m(\Pi|_{G'})=\underset{\pi \in \operatorname{Irr}(G')}\sup
  \dim_{\mathbb{C}} \invHom{G'}{\Pi|_{G'}}{\pi}$.

\begin{theorem}
\label{thm:Qsph}
Let $G \supset G'$ be a pair of real reductive algebraic Lie groups, 
 and $P$ a (real) parabolic subgroup of $G$.  
Suppose that $Q$ is a parabolic subgroup of $G_{\mathbb{C}}$
 such that ${\mathfrak{q}} \subset {\mathfrak{p}}_{\mathbb{C}}$.  
Then the following four conditions are equivalent:
\begin{enumerate}
\item[{\rm{(i)}}]
There exists $C>0$
such that 
\begin{equation}
\label{eqn:JLT42a}
  \dim_{\mathbb{C}} \invHom{G'}{\operatorname{Ind}_P^G(\xi)|_{G'}}{\pi} \le C d_{\mathfrak{q}}(\xi)
\end{equation}
 for any $\xi \in \operatorname{Irr}(P)_f$
 and any $\pi \in \operatorname{Irr}(G')$.  
\item[{\rm{(ii)}}]
$
  \underset{\Pi \in \Omega_{P,{\mathfrak{q}}}}\sup\,\,
  m(\Pi|_{G'})
   <\infty.  
$
\item[{\rm{(iii)}}]
$G_{\mathbb{C}}/Q$ is $G_{\mathbb{C}}'$-spherical.  
\item[{\rm{(iv)}}]
$G_{\mathbb{C}}/Q$ is $G_U'$-strongly visible
 (\cite[Def.~3.3.1]{xrims40}).  
\end{enumerate}
\end{theorem}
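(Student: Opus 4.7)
The plan is to prove the cycle $(\mathrm{i}) \Rightarrow (\mathrm{ii}) \Rightarrow (\mathrm{iii}) \Rightarrow (\mathrm{i})$, leaving the equivalence $(\mathrm{iii}) \Leftrightarrow (\mathrm{iv})$ to Kobayashi's theory of strongly visible actions on generalized flag varieties as developed in \cite{xrims40}. The implication $(\mathrm{i}) \Rightarrow (\mathrm{ii})$ is immediate: specializing \eqref{eqn:JLT42a} to $\xi \in \operatorname{Irr}(P;\mathfrak{q})_f$ forces $d_{\mathfrak{q}}(\xi)=1$ by Definition \ref{def:FPq}, so $m(\operatorname{Ind}_P^G(\xi)|_{G'}) \le C$ uniformly over $\Omega_{P,\mathfrak{q}}$.

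For $(\mathrm{iii}) \Rightarrow (\mathrm{i})$, I would combine Theorem \ref{thm:restQQ} with Casselman's subrepresentation theorem. Fix a minimal parabolic $P_{\min}'$ of $G'$ and a Borel subgroup $Q'$ of $G_{\mathbb{C}}'$ with $\mathfrak{q}' \subset (\mathfrak{p}_{\min}')_{\mathbb{C}}$; such a $Q'$ exists by choosing a positive system of roots for $\mathfrak{g}_{\mathbb{C}}'$ compatible with the Iwasawa decomposition. Then $\mathfrak{q}'\cap\mathfrak{l}_{\mathbb{C}}'$ is a Borel subalgebra of the Levi, so by Lemma \ref{lem:IrrPq} every $\eta \in \operatorname{Irr}(P_{\min}')_f$ satisfies $d_{\mathfrak{q}'}(\eta)=1$. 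The sphericity hypothesis (iii), together with the classical Vinberg--Brion finiteness theorem for Borel orbits on spherical varieties, yields $\#(Q_{\operatorname{opp}}' \backslash G_{\mathbb{C}}/Q) < \infty$, as $Q_{\operatorname{opp}}'$ is again a Borel of $G_{\mathbb{C}}'$. Casselman's subrepresentation theorem embeds any $\pi \in \operatorname{Irr}(G')$ into $\operatorname{Ind}_{P_{\min}'}^{G'}(\eta)$ for some $\eta \in \operatorname{Irr}(P_{\min}')_f$, inducing an injection
\[
\invHom{G'}{\operatorname{Ind}_P^G(\xi)|_{G'}}{\pi} \hookrightarrow \invHom{G'}{\operatorname{Ind}_P^G(\xi)|_{G'}}{\operatorname{Ind}_{P_{\min}'}^{G'}(\eta)},
\]
and applying \eqref{eqn:dpsbdd} of Theorem \ref{thm:restQQ} to the pair $(Q, Q')$ yields the bound \eqref{eqn:JLT42a}.

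For $(\mathrm{ii}) \Rightarrow (\mathrm{iii})$, I argue by contraposition. If $G_{\mathbb{C}}/Q$ is not $G_{\mathbb{C}}'$-spherical, Lemma \ref{lem:VKN} produces $\lambda \in \Lambda_+$ such that $\Pi_{N\lambda} \in \operatorname{Irr}(G_{\mathbb{C}};\mathfrak{q})_{\operatorname{hol}}$ and $m(\Pi_{N\lambda}|_{G_{\mathbb{C}}'}) \ge N+1$ for every $N$. Since $G'$ is Zariski-dense in $G_{\mathbb{C}}'$, restriction to the real form preserves irreducibility and multiplicities of finite-dimensional holomorphic representations, so $m(\Pi_{N\lambda}|_{G'}) \ge N+1$. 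By Lemma \ref{lem:210943} there exists $\xi_N \in \operatorname{Irr}(P;\mathfrak{q})_f$ with a surjective $G$-homomorphism $\operatorname{Ind}_P^G(\xi_N) \twoheadrightarrow \Pi_{N\lambda}$, which dually yields an injection $\invHom{G'}{\Pi_{N\lambda}|_{G'}}{\pi} \hookrightarrow \invHom{G'}{\operatorname{Ind}_P^G(\xi_N)|_{G'}}{\pi}$ for each $\pi \in \operatorname{Irr}(G')$. Hence $m(\operatorname{Ind}_P^G(\xi_N)|_{G'}) \ge N+1$, and since $\operatorname{Ind}_P^G(\xi_N) \in \Omega_{P,\mathfrak{q}}$, this contradicts (ii) as $N \to \infty$.

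The main obstacle I anticipate lies in the multiplicity transfer between complex and real forms in $(\mathrm{ii}) \Rightarrow (\mathrm{iii})$: one must verify carefully that inequivalent irreducible holomorphic representations of $G_{\mathbb{C}}'$ remain inequivalent upon restriction to $G'$, so that the complex-side lower bound of Lemma \ref{lem:VKN} genuinely propagates to the real-side quantity $m(\Pi|_{G'})$ measured against $\operatorname{Irr}(G')$. This follows from Zariski-density of $G'$ in the connected complex group $G_{\mathbb{C}}'$ under the standing hypothesis of the paper, but the bookkeeping must be done explicitly since the sup in \eqref{eqn:mPi} is over holomorphic irreducibles while the sup in \eqref{eqn:msup} is over $\operatorname{Irr}(G')$.
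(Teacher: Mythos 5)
Your proposal is correct and follows essentially the same route as the paper: $(\mathrm{iii}) \Rightarrow (\mathrm{i})$ via Casselman's subrepresentation theorem combined with the $QP$ estimate of Theorem \ref{thm:restQQ} and the Brion--Vinberg finiteness of Borel orbits on spherical varieties, $(\mathrm{i}) \Rightarrow (\mathrm{ii})$ by specializing to $\operatorname{Irr}(P;\mathfrak{q})_f$, $(\mathrm{ii}) \Rightarrow (\mathrm{iii})$ by contraposition via Lemmas \ref{lem:VKN} and \ref{lem:210943}, and $(\mathrm{iii}) \Leftrightarrow (\mathrm{iv})$ cited to the theory of strongly visible actions. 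The transfer step you flag in $(\mathrm{ii}) \Rightarrow (\mathrm{iii})$ — passing from multiplicity over $\operatorname{Irr}(G_{\mathbb{C}}')_{\operatorname{hol}}$ to multiplicity over $\operatorname{Irr}(G')$ via Zariski density of $G'$ in the connected group $G_{\mathbb{C}}'$ — is indeed the point the paper leaves implicit in its appeal to the proof of Theorem \ref{thm:indbdd}(2), and you handle it correctly.
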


\begin{remark}
\begin{enumerate}
\item[(1)]
A distinguished feature of Theorem \ref{thm:Qsph} is 
 that the necessary and sufficient condition 
of the bounded multiplicity property
 is given only by the complexification
 $(G_{\mathbb{C}}, G_{\mathbb{C}}')$, 
 which traces back
 to \cite{Ksuron, xktoshima}.  
\item[(2)]
When $(G_{\mathbb{C}},G_{\mathbb{C}}')$ is a symmetric pair, 
 the parabolic subgroups $Q$ 
 satisfying the sphericity condition (iii) were classified in \cite{xhnoo}.  
See also \cite{xrims40, xtanaka12} 
 for some classification of strongly visible actions.  
\end{enumerate}
\end{remark}

We present two extreme choices
 of the parabolic subgroup $Q$, 
 namely, 
 the smallest one $Q=B$ (Borel subgroup)
 in Example \ref{ex:Qsph1}
 and the largest one $Q=P_{\mathbb{C}}$ in Example \ref{ex:Qsph2} below.  
An intermediate choice of $Q$ in Theorem \ref{thm:Qsph}
 will be crucial in Section \ref{sec:5}
 for the proof of Theorem \ref{thm:bdd}.  

\begin{example}
\label{ex:Qsph1}
When $Q$ is a Borel subgroup $B$ of $G_{\mathbb{C}}$, 
 one has $\operatorname{Irr}(P;{\mathfrak{b}})_f=\operatorname{Irr}(P)_f$
 by Lemma \ref{lem:IrrPq}, 
 hence the equivalence (ii) $\iff$ (iii) in Theorem \ref{thm:Qsph}
 gives an alternative proof
 of \cite[Thm.~D]{xktoshima}.  
\end{example}

\begin{example}
\label{ex:Qsph2}
When $Q=P_{\mathbb{C}}$, 
 Theorem \ref{thm:Qsph} implies the equivalence
 of the following three conditions on the triple $(G,P,G')$:
\begin{enumerate}
\item[{\rm{(i)}}]
$[\operatorname{Ind}_P^G(\xi)|_{G'}:\pi]\le C \dim \xi$
\quad
 for all $\xi \in \operatorname{Irr}(P)_f$
 and all $\pi \in \operatorname{Irr}(G')$;
\item[{\rm{(ii)}}]
$\underset{\chi \in \operatorname{Hom}(P,{\mathbb{C}}^{\times})}\sup
\,\,
 m(\operatorname{Ind}_P^G(\chi)|_{G'})
 <\infty
$;
\item[{\rm{(iii)}}]
$G_{\mathbb{C}}/P_{\mathbb{C}}$ is $G_{\mathbb{C}}'$-spherical.  
\end{enumerate}
See Example \ref{ex:kop} for some concrete cases
 where the branching laws of the unitary representation $L^2\operatorname{-Ind}_P^G(\chi)|_{G'}$
 are explicitly obtained
 in this framework.  
\end{example}

\begin{example}
\label{ex:46}
When $Q=P_{\mathbb{C}}$ 
 and $Q'$ is the complexification 
of a minimal parabolic subgroup $P_{\operatorname{min}}'$ of $G'$
 in Theorem \ref{thm:restQQ}, 
 one has the following
 as in the proof of Theorem \ref{thm:Qsph} below: 
{\sl{
 if $P$ is a parabolic subgroup of $G$
 satisfying 
 $\#(P_{\operatorname{min}}' \backslash G_{\mathbb{C}}/P_{\mathbb{C}})<\infty$
 then one has
\begin{equation}
\label{eqn:JLT46}
 \dim_{\mathbb{C}} \invHom {G'} {\operatorname{Ind}_P^G(\xi)|_{G'}}{\pi}<\infty
\text{ for any $\xi \in \operatorname{Irr}(P)_f$
 and $\pi \in \operatorname{Irr}(G')$.  
}
\end{equation}
}}
\end{example}

\begin{proof}
[Proof of Theorem \ref{thm:Qsph}]
(iii) $\Rightarrow$ (i).\enspace
Let $P'$ be a minimal parabolic subgroup of $G'$, 
 and $B'$ a Borel subgroup of $G_{\mathbb{C}}'$
 such that ${\mathfrak{b}}' \subset {\mathfrak{p}}_{\mathbb{C}}'$.  
By Casselman's subrepresentation theorem 
(Example \ref{ex:Casselman}), 
 for any $\pi \in \operatorname{Irr}(G')$
 there exists $\eta \in \operatorname{Irr}(P')_f$
 such that 
$\invHom{G'}{\pi}{\operatorname{Ind}_{P'}^{G'}(\eta)} \ne \{0\}$.  
Then one has an injection
$
\invHom{G'}{\operatorname{Ind}_{P}^{G}(\xi)|_{G'}}{\pi}
\hookrightarrow
\invHom{G'}{\operatorname{Ind}_{P}^{G}(\xi)|_{G'}}
           {\operatorname{Ind}_{P'}^{G'}(\eta)}.  
$
Thus the implication (iii) $\Rightarrow$ (i) follows
 as a special case of Theorem \ref{thm:restQQ}
 because 
 $\operatorname{Irr}(P')_f=\operatorname{Irr}(P';{\mathfrak{b}}')_f$
 and the number of $B'$-orbits in the $G_{\mathbb{C}}'$-spherical variety $G_{\mathbb{C}}/Q$
 is finite by a result of Brion \cite{B86} and Vinberg \cite{V86}.  
\newline
(i) $\Rightarrow$ (ii).\enspace Obvious
 because $d_{\mathfrak{q}}(\xi)=1$
 if $\xi \in \operatorname{Irr}(P;{\mathfrak{q}})_f$
 by definition.  
\newline
(ii) $\Rightarrow$ (iii).\enspace
This follows from Lemmas \ref{lem:VKN}
 and \ref{lem:210943}
 as in the proof of Theorem \ref{thm:indbdd} (2).  
\newline
(iii) $\iff$ (iv).\enspace
The equivalence (iii) $\iff$ (iv) is proved
 in \cite{xtanaka}.  
\end{proof}

\subsection{Bounded multiplicity for tensor product}
\label{subsec:tensorbdd}

The tensor product of two representations
 is regarded
 as the restriction 
of the outer tensor product representation 
 of the direct product group 
 $G \times G$ with respect to its subgroup $\operatorname{diag}G$.  
Thus the following theorem follows readily
 as a special case 
 of Theorem \ref{thm:restQQ}.  

\begin{theorem}
[\lq\lq{$QP$ estimate}\rq\rq\ for tensor product]
\label{thm:QQtensor}
Let $G$ be a real reductive algebraic Lie group, 
 and $P_j$ $(j=1,2,3)$
 (real) parabolic subgroups of $G$.  
Suppose that $Q_j$ $(j=1,2,3)$ are parabolic subgroups of $G_{\mathbb{C}}$
 such that 
 $Q_j \subset (P_j)_{\mathbb{C}}$
 $(1 \le j \le 3)$
 and 
$
   {\#}
   ((Q_1 \times Q_2)\backslash (G_{\mathbb{C}} \times G_{\mathbb{C}})
    /\operatorname{diag} (Q_3)_{\operatorname{opp}})<\infty
$.  
Then there exists $C>0$
 such that 
\[
   \dim_{\mathbb{C}} \invHom G 
   {\operatorname{Ind}_{P_1}^G(\xi_1) \otimes \operatorname{Ind}_{P_2}^G(\xi_2)}
   {\operatorname{Ind}_{P_3}^G(\xi_3)}
   \le C d_{\mathfrak{q}_1}(\xi_1) d_{\mathfrak{q}_2}(\xi_2) d_{\mathfrak{q}_3}(\xi_3)
\]
for any $\xi_j \in \operatorname{Irr}(P_j)_f$
 $(j=1,2,3)$.  
In particular, 
 one has
\begin{equation}
\label{eqn:JLT47}
   \dim_{\mathbb{C}} \invHom G 
   {\operatorname{Ind}_{P_1}^G(\xi_1) \otimes \operatorname{Ind}_{P_2}^G(\xi_2)}
   {\operatorname{Ind}_{P_3}^G(\xi_3)}
   \le C
\end{equation}
for any $\xi_j \in \operatorname{Irr}(P_j;{\mathfrak{q}}_j)_f$ 
$(1 \le j \le 3)$.  
\end{theorem}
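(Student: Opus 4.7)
The strategy is exactly the reduction hinted at in the text: interpret the tensor product as a restriction and then invoke Theorem~\ref{thm:restQQ}. Concretely, I would set $\widetilde G := G \times G$ with the subgroup $\widetilde G' := \operatorname{diag} G$, and take parabolics $\widetilde P := P_1 \times P_2 \subset \widetilde G$ and $\widetilde P' := P_3 \subset \widetilde G'$, together with the complex parabolics $\widetilde Q := Q_1 \times Q_2 \subset \widetilde G_{\mathbb{C}}$ and $\widetilde Q' := Q_3 \subset \widetilde G'_{\mathbb{C}} = \operatorname{diag} G_{\mathbb{C}}$. The outer tensor product representation $\xi_1 \boxtimes \xi_2$ of $\widetilde P$ induces to $\operatorname{Ind}_{P_1}^G(\xi_1)\boxtimes \operatorname{Ind}_{P_2}^G(\xi_2)$, and restricting this along $\widetilde G' \subset \widetilde G$ is precisely the inner tensor product $\operatorname{Ind}_{P_1}^G(\xi_1)\otimes \operatorname{Ind}_{P_2}^G(\xi_2)$ as a $G$-module. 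Thus
\[
\invHom{G}{\operatorname{Ind}_{P_1}^G(\xi_1)\otimes \operatorname{Ind}_{P_2}^G(\xi_2)}{\operatorname{Ind}_{P_3}^G(\xi_3)}
\simeq
\invHom{\widetilde G'}{\operatorname{Ind}_{\widetilde P}^{\widetilde G}(\xi_1\boxtimes \xi_2)|_{\widetilde G'}}{\operatorname{Ind}_{\widetilde P'}^{\widetilde G'}(\xi_3)}.
\]

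Next I would check that the triple $(\widetilde G,\widetilde G';\widetilde P,\widetilde P';\widetilde Q,\widetilde Q')$ satisfies the hypotheses of Theorem~\ref{thm:restQQ}. The inclusions $\widetilde{\mathfrak{q}} = \mathfrak{q}_1\oplus\mathfrak{q}_2 \subset (\mathfrak{p}_1)_{\mathbb{C}}\oplus(\mathfrak{p}_2)_{\mathbb{C}} = \widetilde{\mathfrak{p}}_{\mathbb{C}}$ and $\widetilde{\mathfrak{q}}' = \mathfrak{q}_3 \subset (\mathfrak{p}_3)_{\mathbb{C}} = \widetilde{\mathfrak{p}}'_{\mathbb{C}}$ are immediate. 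The double coset finiteness condition required by Theorem~\ref{thm:restQQ} reads
\[
\#\bigl(\widetilde Q'_{\operatorname{opp}}\backslash \widetilde G_{\mathbb{C}}/\widetilde Q\bigr)
= \#\bigl(\operatorname{diag}(Q_3)_{\operatorname{opp}}\backslash(G_{\mathbb{C}}\times G_{\mathbb{C}})/(Q_1\times Q_2)\bigr)<\infty,
\]
which is exactly our hypothesis (after taking inverses to swap left and right cosets).

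The remaining point is the multiplicativity of $d_{\widetilde{\mathfrak{q}}}$ on outer tensor products. Here I would simply observe that if $v_j \in V_{\xi_j}$ is a relatively $\mathfrak{q}_j$-invariant vector for $j=1,2$, then $v_1\otimes v_2$ is relatively $\widetilde{\mathfrak{q}}$-invariant in $V_{\xi_1}\boxtimes V_{\xi_2}$, and conversely the unique irreducible $\widetilde{\mathfrak{q}}$-submodule of $V_{\xi_1}\boxtimes V_{\xi_2}$ (given by the joint $\widetilde{\mathfrak{u}}$-invariants, where $\widetilde{\mathfrak{u}} = \mathfrak{u}_1\oplus\mathfrak{u}_2$ is the nilpotent radical of $\widetilde{\mathfrak{q}}$) factors as the outer tensor product of the unique irreducible $\mathfrak{q}_j$-submodules of $V_{\xi_j}$. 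Therefore
\[
d_{\widetilde{\mathfrak{q}}}(\xi_1\boxtimes \xi_2)
= d_{\mathfrak{q}_1}(\xi_1)\,d_{\mathfrak{q}_2}(\xi_2).
\]
Plugging this into the conclusion \eqref{eqn:dpsbdd} of Theorem~\ref{thm:restQQ} yields the bound by $C\,d_{\mathfrak{q}_1}(\xi_1)d_{\mathfrak{q}_2}(\xi_2)d_{\mathfrak{q}_3}(\xi_3)$ claimed in Theorem~\ref{thm:QQtensor}, and \eqref{eqn:JLT47} is the specialization in which each $d_{\mathfrak{q}_j}(\xi_j)$ equals $1$. Since every step is either a direct substitution into Theorem~\ref{thm:restQQ} or the elementary multiplicativity observation above, there is no genuine obstacle; the only vigilance needed is bookkeeping of the opposite parabolic $\widetilde Q'_{\operatorname{opp}} = \operatorname{diag}(Q_3)_{\operatorname{opp}}$ to match the double coset condition stated in the theorem.
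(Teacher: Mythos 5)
Your proof is correct and takes exactly the route the paper indicates: the paper proves Theorem~\ref{thm:QQtensor} as an immediate specialization of Theorem~\ref{thm:restQQ} to $(\widetilde G,\widetilde G')=(G\times G,\operatorname{diag}G)$ with $\widetilde P=P_1\times P_2$, $\widetilde P'=P_3$, $\widetilde Q=Q_1\times Q_2$, $\widetilde Q'=Q_3$. You have also correctly filled in the two details the paper treats as obvious, namely the inversion of the double coset condition and the multiplicativity $d_{\mathfrak{q}_1\oplus\mathfrak{q}_2}(\xi_1\boxtimes\xi_2)=d_{\mathfrak{q}_1}(\xi_1)d_{\mathfrak{q}_2}(\xi_2)$.
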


As in \cite{xkProg2014} for instance, 
 one may reformulate Theorem \ref{thm:QQtensor}
 as a bounded multiplicity theorem for invariant trilinear forms.

\begin{theorem}
[Invariant trilinear forms]
\label{thm:trilinear}
Let $G$ be a real reductive algebraic Lie group, 
 and $P_j$ $(j=1,2,3)$
 (real) parabolic subgroups of $G$.  
Suppose that $Q_j$ $(j=1,2,3)$ are parabolic subgroups of $G_{\mathbb{C}}$
 such that 
 $Q_j \subset (P_j)_{\mathbb{C}}$
 $(1 \le j \le 3)$
 and 
$
   {\#}
   ((Q_1 \times Q_2)\backslash (G_{\mathbb{C}} \times G_{\mathbb{C}})
    /\operatorname{diag} Q_3)<\infty
$.  
Then there exists $C>0$
 such that 
\[
   \dim_{\mathbb{C}} \invHom G 
   {\operatorname{Ind}_{P_1}^G(\xi_1) \otimes \operatorname{Ind}_{P_2}^G(\xi_2)
   \otimes \operatorname{Ind}_{P_3}^G(\xi_3)}
   {\mathbb{C}}
   \le C d_{\mathfrak{q}_1}(\xi_1) d_{\mathfrak{q}_2}(\xi_2) d_{\mathfrak{q}_3}(\xi_3)
\]
for any $\xi_j \in \operatorname{Irr}(P_j)_f$
 $(j=1,2,3)$.  
In particular, 
one has
\begin{equation}
\label{eqn:tribdd}
   \dim_{\mathbb{C}} \invHom G 
   {\operatorname{Ind}_{P_1}^G(\xi_1) \otimes \operatorname{Ind}_{P_2}^G(\xi_2)
   \otimes
   \operatorname{Ind}_{P_3}^G(\xi_3)}
   {\mathbb{C}}
   \le C,   
\end{equation}
for any $\xi_j \in \operatorname{Irr}(P_j;{\mathfrak{q}}_j)_f$
 $(j=1,2,3)$. 
\end{theorem}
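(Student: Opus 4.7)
The plan is to derive Theorem \ref{thm:trilinear} from the ``$QP$ estimate'' for induction (Theorem \ref{thm:indbdd} (1)), applied to the diagonal embedding $\operatorname{diag} G \hookrightarrow G \times G \times G$. This parallels the reduction used for Theorem \ref{thm:restQQ} via $\operatorname{diag} G' \hookrightarrow G \times G'$, but enlarged by one factor. Set $\widetilde G := G \times G \times G$, $\widetilde H := \operatorname{diag} G$, $\widetilde P := P_1 \times P_2 \times P_3$, $\widetilde Q := Q_1 \times Q_2 \times Q_3 \subset \widetilde P_{\mathbb{C}}$, and $\eta := \xi_1 \boxtimes \xi_2 \boxtimes \xi_3 \in \operatorname{Irr}(\widetilde P)_f$. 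The restriction of the outer tensor product $\operatorname{Ind}_{\widetilde P}^{\widetilde G}(\eta) \simeq \operatorname{Ind}_{P_1}^G(\xi_1) \boxtimes \operatorname{Ind}_{P_2}^G(\xi_2) \boxtimes \operatorname{Ind}_{P_3}^G(\xi_3)$ to $\widetilde H$ is the inner tensor product of the three induced representations, so Frobenius reciprocity yields
\[
\invHom G {\operatorname{Ind}_{P_1}^G(\xi_1) \otimes \operatorname{Ind}_{P_2}^G(\xi_2) \otimes \operatorname{Ind}_{P_3}^G(\xi_3)}{\mathbb{C}}
\simeq
\invHom{\widetilde G}{\operatorname{Ind}_{\widetilde P}^{\widetilde G}(\eta)}{\operatorname{Ind}_{\widetilde H}^{\widetilde G}(\mathbf{1})}.
\]

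Next I verify the double coset hypothesis of Theorem \ref{thm:indbdd} in this enlarged setting. Using the right action of $\operatorname{diag} G_{\mathbb{C}}$ to normalize the third coordinate to the identity, the map $(g_1, g_2, g_3) \mapsto (g_1 g_3^{-1}, g_2 g_3^{-1})$ induces a natural bijection
\[
\widetilde Q \backslash \widetilde G_{\mathbb{C}} / \widetilde H_{\mathbb{C}}
\simeq
(Q_1 \times Q_2) \backslash (G_{\mathbb{C}} \times G_{\mathbb{C}}) / \operatorname{diag} Q_3,
\]
so the hypothesis of Theorem \ref{thm:trilinear} is exactly the finiteness hypothesis of Theorem \ref{thm:indbdd}. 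Applying the latter with $\tau := \mathbf{1}$ (hence $\dim \tau = 1$) produces a constant $C > 0$ such that
\[
\dim_{\mathbb{C}} \invHom{\widetilde G}{\operatorname{Ind}_{\widetilde P}^{\widetilde G}(\eta)}{\operatorname{Ind}_{\widetilde H}^{\widetilde G}(\mathbf{1})}
\le C \cdot d_{\widetilde{\mathfrak{q}}}(\eta).
\]

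The remaining identification $d_{\widetilde{\mathfrak{q}}}(\eta) = d_{\mathfrak{q}_1}(\xi_1) \cdot d_{\mathfrak{q}_2}(\xi_2) \cdot d_{\mathfrak{q}_3}(\xi_3)$ is routine: the nilradical of $\widetilde{\mathfrak{q}}$ splits as $\mathfrak{u}_1 \oplus \mathfrak{u}_2 \oplus \mathfrak{u}_3$, and invariants on the outer tensor product split correspondingly into $V_1^{\mathfrak{u}_1} \otimes V_2^{\mathfrak{u}_2} \otimes V_3^{\mathfrak{u}_3}$, so the claim follows from the characterization $d_{\mathfrak{q}}(\xi) = \dim V^{\mathfrak{u}}$ recorded immediately after Definition \ref{def:FPq}. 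Combining the displays gives the main estimate, and the second inequality \eqref{eqn:tribdd} is its immediate specialization to $\xi_j \in \operatorname{Irr}(P_j;\mathfrak{q}_j)_f$, where each $d_{\mathfrak{q}_j}(\xi_j) = 1$ by definition. No genuine obstacle arises beyond the double-coset bookkeeping in the second paragraph; all the analytic weight of the argument is absorbed by Theorem \ref{thm:indbdd}, which in turn rests on Proposition \ref{prop:Dmodule} (Tauchi's holonomic ${\mathcal{D}}$-module bound).
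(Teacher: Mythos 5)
Your proof is correct, and it reaches the same analytic engine as the paper — Theorem~\ref{thm:indbdd}(1), i.e.\ Tauchi's holonomic ${\mathcal D}$-module bound — but via a shorter, more direct chain of reductions. The paper treats Theorem~\ref{thm:trilinear} as a ``reformulation'' of Theorem~\ref{thm:QQtensor}, which is itself the special case of Theorem~\ref{thm:restQQ} for the pair $(G\times G,\operatorname{diag}G)$; if one unfolds that chain, it also lands on Theorem~\ref{thm:indbdd} applied to $G\times G\times G\supset\operatorname{diag}G$ with $\tau=\mathbf 1$, which is exactly where you go directly. What your shortcut buys is that you never pass through the intermediate step of dualizing the third factor $\operatorname{Ind}_{P_3}^G(\xi_3)\leadsto\operatorname{Ind}_{P_3}^G(\xi_3^{\ast})$, so you never invoke Lemma~\ref{lem:xidual} or the $\mathfrak q_{\operatorname{opp}}$ bookkeeping; this is precisely why your hypothesis correctly reads $\operatorname{diag}Q_3$ where Theorem~\ref{thm:QQtensor} reads $\operatorname{diag}(Q_3)_{\operatorname{opp}}$, and you recovered the right form without the translation. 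Your double-coset bijection $(g_1,g_2,g_3)\mapsto(g_1g_3^{-1},g_2g_3^{-1})$ is verified correctly, the multiplicativity $d_{\widetilde{\mathfrak q}}(\xi_1\boxtimes\xi_2\boxtimes\xi_3)=\prod_j d_{\mathfrak q_j}(\xi_j)$ is the right consequence of the characterization $d_{\mathfrak q}(\xi)=\dim V^{\mathfrak u}$, and the Frobenius-reciprocity step you need is exactly the one the paper also uses (the final isomorphism in the proof of Theorem~\ref{thm:restQQ}) — you even avoid the nontrivial surjectivity claim for $\operatorname{Hom}(\Pi,\pi)\to\operatorname{Hom}(\Pi\otimes\pi^{\vee},\mathbb C)$ cited to \cite[Thm.~5.4]{xksbonvec}, since your target is already $\mathbb C$.
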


See {\it{e.g.,}} \cite{MWZ99, Mt15} for classification results 
 of the triples $(Q_1, Q_2, Q_3)$
 satisfying 
$
  \#((Q_1 \times Q_2) \backslash (G_{\mathbb{C}} \times G_{\mathbb{C}})/\operatorname{diag} Q_3)<\infty
$
 for some classical complex simple Lie groups $G_{\mathbb{C}}$.  
See also Example \ref{ex:tri} for some recent works 
 on integral trilinear forms
 by Clerc et al.
 \cite{CKOP11, C15}
 which fits well into the framework of Theorem \ref{thm:trilinear}.

It also deserves to discuss Theorem \ref{thm:QQtensor}
 in the special setting
 where one of $Q_1$, $Q_2$ or $Q_3$ is a Borel subgroup of $G_{\mathbb{C}}$:

\begin{theorem}
\label{thm:QQB}
Let $G$ be a real reductive algebraic Lie group, 
 $Q_1$, $Q_2$ be parabolic subgroups of $G_{\mathbb{C}}$, 
 and $G_U$ a maximal compact subgroup of $G_{\mathbb{C}}$.  
Suppose that $P_1$ and $P_2$ are real parabolic subgroups of $G$
 such that 
 $Q_j \subset (P_j)_{\mathbb{C}}$
 $(j=1,2)$.  
Then the following four conditions are equivalent:
\begin{enumerate}
\item[{\rm{(i)}}]
There exists 
 $C>0$
 such that 
\begin{equation}
\label{eqn:JLT49}
   \dim_{\mathbb{C}} \invHom G 
   {\operatorname{Ind}_{P_1}^G(\xi_1) \otimes \operatorname{Ind}_{P_2}^G(\xi_2)}
   {\pi}
   \le C d_{\mathfrak{q}_1}(\xi_1) d_{\mathfrak{q}_2}(\xi_2)
\end{equation}
for any $\xi_j \in \operatorname{Irr}(P_j)_f$ $(j=1,2)$
 and any $\pi \in \operatorname{Irr}(G)$.  

\item[{\rm{(ii)}}]
There exists 
 $C>0$
 such that 
\begin{equation}
\label{eqn:JLT49b}
   \dim_{\mathbb{C}} \invHom G 
   {\operatorname{Ind}_{P_1}^G(\xi_1) \otimes \operatorname{Ind}_{P_2}^G(\xi_2)}
   {\pi}
   \le C 
\end{equation}
for any $\xi_j \in \operatorname{Irr}(P_j;{\mathfrak{q}}_j)_f$
 $(j=1,2)$
 and any $\pi \in \operatorname{Irr}(G)$.  

\item[{\rm{(iii)}}]
$G_{\mathbb{C}}/Q_1 \times G_{\mathbb{C}}/Q_2$ is 
$G_{\mathbb{C}}$-spherical
 via the diagonal action.  
\item[{\rm{(iv)}}]
$G_{\mathbb{C}}/Q_1 \times G_{\mathbb{C}}/Q_2$ is 
$G_U$-strongly visible
 via the diagonal action.  
\end{enumerate}
\end{theorem}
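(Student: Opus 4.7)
The plan is to deduce all four equivalences by mirroring the proof of Theorem~\ref{thm:Qsph}, with the diagonal embedding $G \hookrightarrow G \times G$ used to convert tensor product estimates into restriction estimates. The implication (i) $\Rightarrow$ (ii) is immediate from the definition \eqref{eqn:JLTIrrPq}, since $d_{\mathfrak{q}_j}(\xi_j)=1$ whenever $\xi_j \in \operatorname{Irr}(P_j;\mathfrak{q}_j)_f$. The equivalence (iii) $\iff$ (iv) is exactly the geometric fact from \cite{xtanaka} already invoked in Theorem~\ref{thm:Qsph}, so nothing new needs to be proved there.

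For (iii) $\Rightarrow$ (i), I would fix a minimal parabolic subgroup $P_3$ of $G$ and a Borel subgroup $Q_3 = B$ of $G_{\mathbb{C}}$ with $B \subset (P_3)_{\mathbb{C}}$. The assumption that $G_{\mathbb{C}}/Q_1 \times G_{\mathbb{C}}/Q_2$ is $G_{\mathbb{C}}$-spherical (via the diagonal action) is equivalent, by Brion--Vinberg, to the finiteness
\[
   \#\bigl((Q_1 \times Q_2)\backslash (G_{\mathbb{C}} \times G_{\mathbb{C}})/\operatorname{diag} B\bigr) < \infty.
\]
Then Casselman's subrepresentation theorem embeds any $\pi \in \operatorname{Irr}(G)$ into some $\operatorname{Ind}_{P_3}^G(\eta)$ with $\eta \in \operatorname{Irr}(P_3)_f = \operatorname{Irr}(P_3;\mathfrak{b})_f$, giving an injection
\[
   \invHom{G}{\operatorname{Ind}_{P_1}^G(\xi_1) \otimes \operatorname{Ind}_{P_2}^G(\xi_2)}{\pi}
   \hookrightarrow
   \invHom{G}{\operatorname{Ind}_{P_1}^G(\xi_1) \otimes \operatorname{Ind}_{P_2}^G(\xi_2)}{\operatorname{Ind}_{P_3}^G(\eta)}.
\]
Applying Theorem~\ref{thm:QQtensor} with these choices (noting $d_{\mathfrak{b}}(\eta)=1$) yields the bound \eqref{eqn:JLT49} with a constant independent of $\pi$.

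For the main reverse direction (ii) $\Rightarrow$ (iii), I would argue by contrapositive, applying Lemma~\ref{lem:VKN} to the pair $(G_{\mathbb{C}} \times G_{\mathbb{C}}, \operatorname{diag} G_{\mathbb{C}})$ with the parabolic $Q_1 \times Q_2 \subset G_{\mathbb{C}} \times G_{\mathbb{C}}$. If the diagonal action on $G_{\mathbb{C}}/Q_1 \times G_{\mathbb{C}}/Q_2$ is not spherical, the lemma provides $\lambda = (\lambda_1, \lambda_2) \in \Lambda_+ \times \Lambda_+$ with $\Pi_{N\lambda_1} \boxtimes \Pi_{N\lambda_2} \in \operatorname{Irr}(G_{\mathbb{C}} \times G_{\mathbb{C}};\mathfrak{q}_1 \oplus \mathfrak{q}_2)_{\operatorname{hol}}$ and an irreducible $\pi_{N\mu}$ of $\operatorname{diag} G_{\mathbb{C}}$ such that
\[
   \dim_{\mathbb{C}}\invHom{G}{\Pi_{N\lambda_1} \otimes \Pi_{N\lambda_2}}{\pi_{N\mu}} \ge N+1
\]
after restriction to the real form $G$ (holomorphic irreducibility is preserved). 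Then Lemma~\ref{lem:210943} produces $\xi_{j,N} \in \operatorname{Irr}(P_j;\mathfrak{q}_j)_f$ with a surjection $\operatorname{Ind}_{P_j}^G(\xi_{j,N}) \twoheadrightarrow \Pi_{N\lambda_j}$, so tensoring yields a surjection $\operatorname{Ind}_{P_1}^G(\xi_{1,N}) \otimes \operatorname{Ind}_{P_2}^G(\xi_{2,N}) \twoheadrightarrow \Pi_{N\lambda_1} \otimes \Pi_{N\lambda_2}$, which transfers the unbounded growth to the left-hand side of \eqref{eqn:JLT49b}, contradicting (ii). The only subtle point is the compatibility of the outer tensor product ${\mathfrak q}$-module structure when passing $\operatorname{Irr}(G_{\mathbb{C}};\mathfrak{q}_j)_{\operatorname{hol}}$ through the reduction of Lemma~\ref{lem:210943} individually on each factor, but this is immediate from the product structure of the parabolic $Q_1 \times Q_2$. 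No step presents a genuine obstacle beyond faithfully porting the arguments of Theorem~\ref{thm:Qsph} to the diagonal setting.
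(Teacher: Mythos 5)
The paper gives no explicit proof of Theorem~\ref{thm:QQB}; it is tacitly left as the specialization of Theorem~\ref{thm:Qsph} applied to the pair $G\times G\supset\operatorname{diag}G$ with $P=P_1\times P_2$ and $Q=Q_1\times Q_2$, using $\operatorname{Ind}_{P_1\times P_2}^{G\times G}(\xi_1\boxtimes\xi_2)|_{\operatorname{diag}G}\simeq\operatorname{Ind}_{P_1}^G(\xi_1)\otimes\operatorname{Ind}_{P_2}^G(\xi_2)$ and $d_{\mathfrak{q}_1\oplus\mathfrak{q}_2}(\xi_1\boxtimes\xi_2)=d_{\mathfrak{q}_1}(\xi_1)d_{\mathfrak{q}_2}(\xi_2)$. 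Your proposal simply unfolds that reduction step by step --- Casselman plus Theorem~\ref{thm:QQtensor} for (iii)$\Rightarrow$(i), Lemmas~\ref{lem:VKN} and~\ref{lem:210943} on each factor for (ii)$\Rightarrow$(iii) --- and is correct and essentially identical in spirit to what the paper intends.
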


A special case with $Q_j=(P_j)_{\mathbb{C}}$
 $(j=1,2)$
 implies the following:
\begin{corollary}
\label{cor:fmtensor}
Let $G$ be a real reductive algebraic Lie group, 
 and $P_j$ $(j=1,2)$ parabolic subgroups.  
Then the following five conditions
 on the triple $(G,P_1, P_2)$
 are equivalent:
\par\noindent
{\rm{(i)}}\enspace
There exists 
 $C>0$
 such that 
\begin{equation*}
   m({\operatorname{Ind}_{P_1}^G(\xi_1) \otimes \operatorname{Ind}_{P_2}^G(\xi_2)})
   \le C \dim \xi_1 \dim \xi_2, 
\quad {}^{\forall}\xi_j \in \operatorname{Irr}(P_j)_f \, (j=1,2).  
\end{equation*}
{\rm{(ii)}}\enspace
There exists 
 $C>0$
 such that 
\begin{equation}
\label{eqn:JLT4102}
   m({\operatorname{Ind}_{P_1}^G(\xi_1) \otimes \operatorname{Ind}_{P_2}^G(\xi_2)})
   \le C
\end{equation}
for all characters $\xi_j$ of $P_j$ $(j=1,2)$.  
\par\noindent
{\rm{(iii)}}\enspace
${\mathcal{O}}(G_{\mathbb{C}}/{P_1}_{\mathbb{C}}, {\mathcal{L}}_1)
 \otimes {\mathcal{O}}(G_{\mathbb{C}}/{P_2}_{\mathbb{C}}, {\mathcal{L}}_2)$
 is a multiplicity free $G_{\mathbb{C}}$-module
 for any $G_{\mathbb{C}}$-equivariant holomorphic line bundles
 ${\mathcal{L}}_j$ on $G_{\mathbb{C}}/{P_j}_{\mathbb{C}}$
 $(j=1,2)$.  
\par\noindent
{\rm{(iv)}}\enspace
$G_{\mathbb{C}}/{P_1}_{\mathbb{C}} \times G_{\mathbb{C}}/{P_2}_{\mathbb{C}}$ is $\operatorname{diag}(G_U)$-strongly visible.  

\par\noindent
{\rm{(v)}}\enspace
$G_{\mathbb{C}}/{P_1}_{\mathbb{C}} \times G_{\mathbb{C}}/{P_2}_{\mathbb{C}}$ 
 is $\operatorname{diag}(G_{\mathbb{C}})$-spherical.  
\end{corollary}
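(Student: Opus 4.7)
The plan is to deduce Corollary \ref{cor:fmtensor} directly from Theorem \ref{thm:QQB} by specializing to the choice $Q_j := (P_j)_{\mathbb{C}}$ for $j=1,2$, supplemented by the classical Vinberg--Kimelfeld criterion to obtain the equivalence with condition (iii).

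With this choice of $Q_j$, Lemma \ref{lem:IrrPq} yields $\operatorname{Irr}(P_j;(P_j)_{\mathbb{C}})_f = \{\text{characters of } P_j\}$, while directly from Definition \ref{def:FPq} one has $d_{\mathfrak{q}_j}(\xi_j) = \dim \xi_j$ for every $\xi_j \in \operatorname{Irr}(P_j)_f$, since the only ${\mathfrak{p}}_{j,\mathbb{C}}$-submodule of an irreducible $P_j$-module is the module itself. Under this translation, conditions (i)--(iv) of Theorem \ref{thm:QQB} become, respectively, conditions (i), (ii), (v), (iv) of the Corollary (after passing from $\dim \operatorname{Hom}_G(\,\cdot\,,\pi)$ to $m(\,\cdot\,)$ by taking the supremum over $\pi \in \operatorname{Irr}(G)$). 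This immediately yields the equivalences (i) $\iff$ (ii) $\iff$ (iv) $\iff$ (v).

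To close the loop with (iii), I would invoke the Vinberg--Kimelfeld theorem \cite{VK78}: a quasi-projective $G_{\mathbb{C}}$-variety $X$ is $G_{\mathbb{C}}$-spherical if and only if ${\mathcal{O}}(X,{\mathcal{L}})$ is a multiplicity-free $G_{\mathbb{C}}$-module for every $G_{\mathbb{C}}$-equivariant holomorphic line bundle ${\mathcal{L}}$ on $X$. Applying this to $X = G_{\mathbb{C}}/(P_1)_{\mathbb{C}} \times G_{\mathbb{C}}/(P_2)_{\mathbb{C}}$ with the diagonal $G_{\mathbb{C}}$-action, and using that every $G_{\mathbb{C}}$-equivariant holomorphic line bundle on this product of generalized flag varieties arises as an outer tensor product ${\mathcal{L}}_1 \boxtimes {\mathcal{L}}_2$, together with the K\"unneth-type identification
\[
{\mathcal{O}}(G_{\mathbb{C}}/(P_1)_{\mathbb{C}},{\mathcal{L}}_1) \otimes {\mathcal{O}}(G_{\mathbb{C}}/(P_2)_{\mathbb{C}},{\mathcal{L}}_2) \simeq {\mathcal{O}}(X,{\mathcal{L}}_1 \boxtimes {\mathcal{L}}_2)
\]
as $G_{\mathbb{C}}$-modules under the diagonal action, one obtains (iii) $\iff$ (v), completing the chain.

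Since all substantive work is already absorbed into Theorem \ref{thm:QQB}, no real obstacle remains. The only mild points requiring care are the verification $d_{\mathfrak{q}_j}(\xi_j) = \dim \xi_j$ when $Q_j = (P_j)_{\mathbb{C}}$, the matching of the ranges (characters of $P_j$ versus all of $\operatorname{Irr}(P_j;(P_j)_{\mathbb{C}})_f$) in condition (ii), and the routine identification of $G_{\mathbb{C}}$-equivariant line bundles on the product with outer tensor products of line bundles on the factors.
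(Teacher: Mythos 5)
Your proposal is correct and follows essentially the same route as the paper: the equivalence of (i), (ii), (iv), (v) is obtained by specializing $Q_j := (P_j)_{\mathbb{C}}$ in Theorem \ref{thm:QQB} (using $d_{\mathfrak{q}_j}(\xi_j) = \dim\xi_j$ and $\operatorname{Irr}(P_j;(P_j)_{\mathbb{C}})_f = \{\text{characters}\}$), and (iii) $\iff$ (v) is the Vinberg--Kimelfeld criterion applied to the product flag variety with the diagonal action; the paper cites \cite{VK78} together with Lemma \ref{lem:VKN} for this last step, which your invocation of both directions of the VK criterion subsumes.
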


\begin{proof}
The equivalence (i) $\iff$ (ii) $\iff$ (iv) $\iff$ (v)
 is a special case
 of Theorem \ref{thm:QQB}.  
The equivalence (iii) $\iff$ (v)
 for the finite-dimensional representation theory
 follows from \cite{VK78}
 and Lemma \ref{lem:VKN}.  
\end{proof}

Littelmann \cite{Li94} classified the pairs of parabolic subgroups $({P_1}_{\mathbb{C}}, {P_2}_{\mathbb{C}})$
 such that
 $G_{\mathbb{C}}/{P_1}_{\mathbb{C}} \times G_{\mathbb{C}}/{P_2}_{\mathbb{C}}$
 are $G_{\mathbb{C}}$-spherical
 under the assumption 
 that ${P_1}_{\mathbb{C}}$ and ${P_2}_{\mathbb{C}}$
 are maximal, 
 whereas Tanaka \cite{xtanaka12}
 classified all the pairs $({P_1}_{\mathbb{C}}, {P_2}_{\mathbb{C}})$
 such that 
 $G_{\mathbb{C}}/{P_1}_{\mathbb{C}} \times G_{\mathbb{C}}/{P_2}_{\mathbb{C}}$
 is $G_U$-strongly visible.

For later applications 
 it would deserve to mention a further special case:
\begin{corollary}
\label{cor:pantensor}
Let $G$ be a real reductive algebraic Lie group, 
 and $P_1$, $P_2$ parabolic subgroups
 with abelian unipotent radical.  
Then the uniform bounded estimate \eqref{eqn:JLT4102} holds
 for any characters $\xi_j$ of $P_j$
$(j=1,2)$.  
\end{corollary}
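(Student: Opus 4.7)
By Corollary~\ref{cor:fmtensor}, specifically the equivalence (ii) $\iff$ (v), the uniform bound \eqref{eqn:JLT4102} for all characters $\xi_1,\xi_2$ of $P_1,P_2$ is equivalent to the $\operatorname{diag}(G_{\mathbb{C}})$-sphericity of the double flag variety $G_{\mathbb{C}}/{P_1}_{\mathbb{C}} \times G_{\mathbb{C}}/{P_2}_{\mathbb{C}}$. My plan is thus to establish this geometric statement under the hypothesis that the unipotent radicals of $P_1$ and $P_2$ are abelian, and then quote Corollary~\ref{cor:fmtensor}.

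First I would reduce to the case where ${\mathfrak{g}}_{\mathbb{C}}$ is simple. The decomposition of ${\mathfrak{g}}_{\mathbb{C}}$ into simple ideals induces compatible factorizations of $P_1,P_2$ and of the two flag varieties, and $\operatorname{diag}(G_{\mathbb{C}})$-sphericity is preserved under products. In each simple factor, $P_j$ is either the whole factor (trivial) or a proper parabolic whose unipotent radical remains abelian. Next, I would invoke the classical structural fact that, for simple ${\mathfrak{g}}_{\mathbb{C}}$, a proper parabolic subalgebra has abelian nilradical precisely when it is the maximal parabolic associated to a \textbf{cominuscule} simple root, equivalently when $G_{\mathbb{C}}/P_{\mathbb{C}}$ is an irreducible compact Hermitian symmetric space; characters of $P_{\mathbb{C}}$ then correspond, via the Borel--Weil theorem (Lemma~\ref{lem:BW}), to integer multiples of the minuscule fundamental weight attached to the removed node. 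The enumeration of such nodes by Dynkin type is standard.

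Finally, I would appeal to Littelmann's classification \cite{Li94} of pairs of maximal parabolic subgroups $({P_1}_{\mathbb{C}},{P_2}_{\mathbb{C}})$ for which $G_{\mathbb{C}}/{P_1}_{\mathbb{C}} \times G_{\mathbb{C}}/{P_2}_{\mathbb{C}}$ is $\operatorname{diag}(G_{\mathbb{C}})$-spherical: direct inspection shows that every pair of cominuscule maximal parabolics appears in his list. This establishes the required sphericity, and Corollary~\ref{cor:fmtensor} then yields \eqref{eqn:JLT4102}.

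The main obstacle is that the above argument leans on Littelmann's case-by-case classification rather than a uniform geometric mechanism. A classification-free alternative would start from the observation that the opposite nilradical ${\mathfrak{u}}_1^-$ is abelian, so the big Bruhat cell identifies $G_{\mathbb{C}}/{P_1}_{\mathbb{C}}$ with the vector space ${\mathfrak{u}}_1^-$ on which the Levi $L_{1,{\mathbb{C}}}$ acts as a prehomogeneous vector space of parabolic type with finitely many orbits (by the Vinberg theory of short ${\mathbb{Z}}$-gradings). One would then analyze how these $L_{1,{\mathbb{C}}}$-orbits interact with a Borel subgroup $B \subset {P_2}_{\mathbb{C}}$ acting on $G_{\mathbb{C}}/{P_2}_{\mathbb{C}}$ to exhibit a $\operatorname{diag}(B)$-open orbit on the product; carrying this out uniformly, however, still appears to require detailed orbit bookkeeping and is the technically delicate point.
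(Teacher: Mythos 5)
Your argument is correct, but it takes a different route from the paper. The paper's proof is a one-line reduction: it notes that if the unipotent radicals of $P_1$ and $P_2$ are abelian, then the product $G_{\mathbb{C}}/{P_1}_{\mathbb{C}} \times G_{\mathbb{C}}/{P_2}_{\mathbb{C}}$ is strongly visible under the diagonal $G_U$-action by \cite[Thm.~1.7]{K07}, and then feeds this into Corollary~\ref{cor:fmtensor} via the implication (iv) $\Rightarrow$ (ii). You instead reduce to simple ${\mathfrak{g}}_{\mathbb{C}}$, identify parabolics with abelian nilradical as cominuscule maximal parabolics, and then invoke Littelmann's classification \cite{Li94} together with the implication (v) $\Rightarrow$ (ii) of Corollary~\ref{cor:fmtensor}. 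Both arguments are sound; the paper's version is shorter because \cite[Thm.~1.7]{K07} already packages the hypothesis on abelian nilradicals into a visibility statement without passing through the cominuscule/Hermitian dictionary or a list inspection. The classification-free alternative you sketch at the end (abelian opposite nilradical, prehomogeneous Levi action, Bruhat cell bookkeeping) is closer in spirit to the actual proof of \cite[Thm.~1.7]{K07}, but you are right that carrying it out from scratch here would be the delicate part; the paper avoids redoing it by citing the theorem.
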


\begin{proof}
If the unipotent radicals of $P_1$ and $P_2$ are abelian, 
 then $G_{\mathbb{C}}/{P_1}_{\mathbb{C}} \times G_{\mathbb{C}}/{P_2}_{\mathbb{C}}$
 is strongly visible 
 via the diagonal $G_U$-action
 \cite[Thm.~1.7]{K07}.  
\end{proof}

\begin{example}
Let $G=G L_n({\mathbb{R}})$
 and $P_1$, $P_2$ any two maximal parabolic subgroups of $G$.  
Then Corollary \ref{cor:pantensor} applies.  
\end{example}

\begin{example}
Let $G=S O(n,1)$ and $P$ be a minimal parabolic subgroup of $G$.  
Then Corollary \ref{cor:pantensor} applies to $P_1=P_2=P$.  
We note
 that the uniform bounded estimate \eqref{eqn:JLT4102} fails
 if we allow $\xi_j \in \operatorname{Irr}(P_j)_f$
 when $n \ge 4$ by \cite{xkProg2014}.  
\end{example}

\section{Oshima's embedding theorem --- revisited}
\label{sec:Quotient}

In this section, 
 we analyze irreducible $H$-distinguished representations of $G$
 for reductive symmetric spaces $G/H$.

The classical Casselman's subrepresentation theorem
 asserts that any irreducible representation 
 $\Pi \in \operatorname{Irr}(G)$
 can be realized
 as a subrepresentation
 of a principal series representation, 
 or equivalently, 
 as a quotient of another principal series representation.  
If $\Pi$ is $H$-distinguished, 
 there should be some constraints 
 on the parameter of the principal series representations
 depending on $H$.  
The main result of this section
 is Theorem \ref{thm:quotient}, 
 which asserts 
 that $\Pi$ is a quotient
 of some degenerate principal series $\operatorname{Ind}_{P_{G/H}}^{G}(\xi)$
 where $P_{G/H}$ is a \lq\lq{minimal parabolic subgroup}\rq\rq\
 for $G/H$
 (Definition \ref{def:Psigma}).  
We discover a useful description 
 of the constraints of $\xi$
 by using the notion
 \lq\lq{$\operatorname{Irr}(P;{\mathfrak{q}})_f$}\rq\rq\
 introduced in Definition \ref{def:FPq}
 with ${\mathfrak{q}}$ being a Borel subalgebra for $G/H$ 
 (Definition \ref{def:Borel}).  
The results of this section 
 will be used to deduce Theorem \ref{thm:bdd} from Theorem \ref{thm:indbdd}.

Throughout this section, 
 we retain our setting
 that $G$ is  contained in a connected complexification $G_{\mathbb{C}}$.  
We also assume $G$ is connected.  

\subsection{Quotient theorem for $\operatorname{Irr}(G)_H$}
\label{subsec:Quotient}

Suppose that $G/H$ is a symmetric space
 given by an involutive automorphism $\sigma$ of a connected real reductive Lie group $G$.  
We take a Cartan involution $\theta$ of $G$
 which commutes with $\sigma$, 
 and write $K$ for the corresponding maximal compact subgroup
 of $G$.

\begin{definition}
[Minimal parabolic subgroup for $G/H$]
\label{def:Psigma}
Let ${\mathfrak{a}}$ be a maximal abelian subspace
 in ${\mathfrak{g}}^{-\sigma, -\theta}
:= \{X \in {\mathfrak{g}}:\sigma(X)=\theta(X)=-X\}$, 
 and $L_{\sigma}$ the centralizer
 of ${\mathfrak{a}}$ in $G$.  
We fix a positive system $\Sigma^+({\mathfrak{g}}, {\mathfrak{a}})$, 
 and write $P_{G/H}$ or $P_{\sigma}$ 
 for the corresponding (real) parabolic subgroup of $G$.  
We say $P_{G/H}$ is
 a {\it{minimal parabolic subgroup 
 for $G/H$}}.  
We write $P_{G/H}=L_{\sigma} N_{\sigma}$ 
 and ${\mathfrak{p}}_{G/H}={\mathfrak{l}}_{\sigma} +{\mathfrak{n}}_{\sigma}$
 for the Levi decomposition 
 with ${\mathfrak{l}}_{\sigma} \supset {\mathfrak{a}}$.  
\end{definition}

\begin{remark}
(1)\enspace
We note that ${\mathfrak{a}}$ may be strictly smaller
 than the split abelian subspace of ${\mathfrak{l}}_{\sigma}$.  
\par\noindent
(2)\enspace
It seems
that the terminologies \lq\lq{Borel subalgebras
 for reductive symmetric spaces $G/H$}\rq\rq\
 (Definition \ref{def:Borel})
 and \lq\lq{minimal parabolic subgroups for $G/H$}\rq\rq\
 (Definition \ref{def:Psigma})
 are not commonly used, 
 cf.~\cite[Ex.~4.4]{K19b}.  
As Theorem \ref{thm:quotient} below suggests, 
 we believe
 that these terminologies 
 are natural generalizations
 of the classical ones 
 for the group manifold.  
\end{remark}

As we see in Section \ref{subsec:Oshima} below, 
 the following statement holds:
\begin{lemma}
\label{lem:BP}
One can take a Borel subalgebra ${\mathfrak{b}}_{G/H}$ for $G/H$
 (Definition \ref{def:Borel})
 such that ${\mathfrak{b}}_{G/H} \subset ({\mathfrak{p}}_{G/H})_{\mathbb{C}}$.  
\end{lemma}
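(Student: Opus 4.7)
The plan is to construct $\mathfrak{j}$ and a positive system $\Sigma^+(\mathfrak{g}_\mathbb{C}, \mathfrak{j}_\mathbb{C})$ that refine the data $(\mathfrak{a}, \Sigma^+(\mathfrak{g}, \mathfrak{a}))$ defining $P_{G/H}$, so that the resulting Borel subalgebra automatically lies in $(\mathfrak{p}_{G/H})_\mathbb{C}$. First, I will extend $\mathfrak{a}$ to a maximal semisimple abelian subspace of $\mathfrak{g}^{-\sigma}$ by setting $\mathfrak{j} := \mathfrak{a} \oplus \mathfrak{t}$, where $\mathfrak{t}$ is any maximal abelian subspace of the centralizer $Z_{\mathfrak{g}^{-\sigma, \theta}}(\mathfrak{a})$. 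Using the $\theta$-eigenspace decomposition $\mathfrak{g}^{-\sigma} = \mathfrak{g}^{-\sigma, \theta} \oplus \mathfrak{g}^{-\sigma, -\theta}$ together with the maximality of $\mathfrak{a}$ (in the $-\theta$-part) and of $\mathfrak{t}$ (in the $\mathfrak{a}$-centralizer of the $+\theta$-part), any element centralizing $\mathfrak{j}$ in $\mathfrak{g}^{-\sigma}$ splits into pieces lying in $\mathfrak{a}$ and in $\mathfrak{t}$, so $\mathfrak{j}$ is genuinely maximal; its elements are semisimple since $\mathfrak{a}$ and $\mathfrak{t}$ lie in the two Cartan summands of $\mathfrak{g}$.

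Next, since $\mathfrak{j}_\mathbb{C}$ acts semisimply on $\mathfrak{g}_\mathbb{C}$, one has a weight decomposition with weight set $\Sigma(\mathfrak{g}_\mathbb{C}, \mathfrak{j}_\mathbb{C})$, and the restriction $\beta \mapsto \beta|_\mathfrak{a}$ sends this set into $\Sigma(\mathfrak{g}, \mathfrak{a}) \cup \{0\}$. I will pick $H_0 \in \mathfrak{a}$ with $\Sigma^+(\mathfrak{g}, \mathfrak{a}) = \{\gamma : \gamma(H_0) > 0\}$, pick a generic $H_1 \in i\mathfrak{t}$ (so that no $\beta \in \Sigma(\mathfrak{g}_\mathbb{C}, \mathfrak{j}_\mathbb{C})$ vanishes on both $H_0$ and $H_1$), and declare $\Sigma^+(\mathfrak{g}_\mathbb{C}, \mathfrak{j}_\mathbb{C})$ via the lexicographic rule attached to $(H_0, H_1)$. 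Let $\mathfrak{q}$ be the standard parabolic of $\mathfrak{g}_\mathbb{C}$ built from this positive system; by Definition \ref{def:Borel}, $\mathfrak{q}$ is a Borel subalgebra for $G/H$.

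Finally, I verify $\mathfrak{q} \subset (\mathfrak{p}_{G/H})_\mathbb{C}$: the Levi factor $Z_{\mathfrak{g}_\mathbb{C}}(\mathfrak{j}_\mathbb{C})$ lies in $Z_{\mathfrak{g}_\mathbb{C}}(\mathfrak{a}) = (\mathfrak{l}_\sigma)_\mathbb{C}$ because $\mathfrak{a} \subset \mathfrak{j}$; and for each $\beta \in \Sigma^+(\mathfrak{g}_\mathbb{C}, \mathfrak{j}_\mathbb{C})$ the lexicographic choice yields $\beta(H_0) \ge 0$, while regularity of $H_0$ for $\Sigma(\mathfrak{g}, \mathfrak{a})$ forces $\beta|_\mathfrak{a} \in \Sigma^+(\mathfrak{g}, \mathfrak{a}) \cup \{0\}$, placing the associated weight space in $(\mathfrak{n}_\sigma)_\mathbb{C}$ or in $(\mathfrak{l}_\sigma)_\mathbb{C}$ respectively. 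Summing yields the desired inclusion. The main (mild) obstacle is the maximality verification in the first step; everything afterward is the classical lexicographic compatibility argument between positive systems for $\mathfrak{a}$ and for $\mathfrak{j}$, and requires no new idea.
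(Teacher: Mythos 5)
Your proof is correct and follows essentially the same route as the paper's treatment in Section~\ref{subsec:Oshima}: extend $\mathfrak{a}$ to a maximal (semisimple) abelian subspace $\mathfrak{j}\subset\mathfrak{g}^{-\sigma}$, then choose a positive system $\Sigma^+(\mathfrak{g}_{\mathbb{C}},\mathfrak{j}_{\mathbb{C}})$ compatible with $\Sigma^+(\mathfrak{g},\mathfrak{a})$ under restriction, so that the resulting parabolic sits inside $(\mathfrak{p}_{G/H})_{\mathbb{C}}$. What you add over the paper is the explicit implementation: the splitting $\mathfrak{j}=\mathfrak{a}\oplus\mathfrak{t}$ with $\mathfrak{t}$ maximal abelian in $Z_{\mathfrak{g}^{-\sigma,\theta}}(\mathfrak{a})$, the $\theta$-eigenspace argument for maximality of $\mathfrak{j}$, and the lexicographic rule attached to $(H_0,H_1)\in\mathfrak{a}\times i\mathfrak{t}$ realizing the compatibility; these steps are all sound (note that weights do take real values on $\mathfrak{a}+i\mathfrak{t}$, and genericity of $H_1$ is achievable because a weight vanishing on both $H_0$ and $i\mathfrak{t}$ must vanish on all of $\mathfrak{j}$). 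The paper instead packages the compatibility inside a larger structural setup --- a $\sigma$-split and $\theta$-split Cartan subalgebra $\widetilde{\mathfrak{j}}$ containing both $\mathfrak{j}$ and a maximal $\mathfrak{a}_{\mathfrak{p}}\subset\mathfrak{g}^{-\theta}$, together with a commutative diagram of restriction maps among four positive systems --- but that extra machinery is there to serve the rest of Section~\ref{sec:Quotient} (in particular the proof of Theorem~\ref{thm:quotient}), not Lemma~\ref{lem:BP} alone; your leaner argument suffices for the lemma as stated.
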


The goal of this section is 
 to prove the following reformulation 
 of Oshima's embedding theorem
\cite[Thm.~4.15]{xoshima88b}.

\begin{theorem}
[Quotient representation theorem
 for $H$-distinguished representations]
\label{thm:quotient}
Let $G/H$ be a reductive symmetric space,
 and $P_{G/H}$ a minimal parabolic subgroup for $G/H$.  
For any $\Pi \in \operatorname{Irr}(G)_H$, 
 there exists $\xi \in \operatorname{Irr}(P_{G/H}; {\mathfrak{b}}_{G/H})_f$
 such that $\Pi$ is a quotient
 of the (degenerate) principal series representation
$
{\operatorname{Ind}_{P_{G/H}}^G(\xi)}
$
 of $G$, 
where ${\mathfrak{b}}_{G/H}$ is a Borel subalgebra
 for $G/H$
 with ${\mathfrak{b}}_{G/H} \subset ({\mathfrak{p}}_{G/H})_{\mathbb{C}}$.  
\end{theorem}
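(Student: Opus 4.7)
The plan is to derive Theorem \ref{thm:quotient} from Oshima's embedding theorem \cite[Thm.~4.15]{xoshima88b} by translating the inducing parameter it produces into the language of Definition \ref{def:FPq}. First I would dualize: by Frobenius reciprocity, $(\Pi^{-\infty})^{H}\neq\{0\}$ is equivalent to a nonzero $G$-embedding $\Pi^{\vee}\hookrightarrow C^{\infty}(G/H)$, and realizing $\Pi$ as a quotient of $\operatorname{Ind}_{P_{G/H}}^{G}(\xi)$ amounts to realizing $\Pi^{\vee}$ as a subrepresentation of the contragredient $\operatorname{Ind}_{P_{G/H}}^{G}(\xi^{*})$, where $\xi^{*}:=\xi^{\vee}\otimes\mathbb{C}_{2\rho}$. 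Thus it suffices to produce $\eta:=\xi^{*}$ lying in $\operatorname{Irr}(P_{G/H};\mathfrak{b})_{f}$ for some Borel subalgebra $\mathfrak{b}$ for $G/H$: Lemma \ref{lem:xidual}, combined with the fact that tensoring by a $1$-dimensional character preserves $d_{\mathfrak{q}}(\cdot)$ and with the freedom in choosing the positive system $\Sigma^{+}(\mathfrak{g}_{\mathbb{C}},\mathfrak{j}_{\mathbb{C}})$ in Definition \ref{def:Borel}, then yields $\xi$ in $\operatorname{Irr}(P_{G/H};\mathfrak{b}_{G/H})_{f}$ with the conventions of the statement.

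Next I would invoke Oshima's embedding theorem on the $G$-submodule of $C^{\infty}(G/H)$ generated by $\Pi^{\vee}$: it yields a pair $(\tau,\lambda)$ consisting of an irreducible finite-dimensional representation $(\tau,V_{\tau})$ of $M_{\sigma}$ carrying a non-zero $(M_{\sigma}\cap H)$-distinguished vector, together with $\lambda\in\mathfrak{a}_{\mathbb{C}}^{*}$, such that $\Pi^{\vee}\hookrightarrow\operatorname{Ind}_{P_{G/H}}^{G}(\tau\otimes\mathbb{C}_{\lambda})$. The $(M_{\sigma}\cap H)$-distinguishedness of $\tau$ is dictated by the $H$-invariance of the ambient space $C^{\infty}(G/H)$: the value at the base point of $G/P_{G/H}$ of the boundary value of an $H$-finite section must transform under the stabilizer $M_{\sigma}\cap H$ as a distinguished vector of $\tau$.

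Finally, I would apply the Cartan--Helgason theorem (Lemma \ref{lem:CH}) to the symmetric pair $(M_{\sigma},M_{\sigma}\cap H)$. The decomposition $\mathfrak{j}=\mathfrak{a}\oplus\mathfrak{j}_{\theta}$ with $\mathfrak{j}_{\theta}:=\mathfrak{j}\cap\mathfrak{k}$ realizes $\mathfrak{j}_{\theta}$ as a maximal $\sigma$-split abelian subspace of $\mathfrak{m}_{\sigma}^{-\sigma}$; choosing $\Sigma^{+}(\mathfrak{g}_{\mathbb{C}},\mathfrak{j}_{\mathbb{C}})$ compatibly with both $\mathfrak{a}$ and $\mathfrak{j}_{\theta}$ (via Lemma \ref{lem:BP}) identifies $\mathfrak{b}_{G/H}\cap(\mathfrak{m}_{\sigma})_{\mathbb{C}}$ with a Borel subalgebra for $M_{\sigma}/(M_{\sigma}\cap H)$. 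Lemmas \ref{lem:CHabove} and \ref{lem:CH}, the latter absorbing component-group ambiguities via the bound \eqref{eqn:GHut}, produce a relatively $\mathfrak{b}_{G/H}\cap(\mathfrak{m}_{\sigma})_{\mathbb{C}}$-invariant vector in $\tau$. Combined with the character $\mathbb{C}_{\lambda}$ of $\mathfrak{a}$ and the reduction \eqref{eqn:IrrPL}, this places $\tau\otimes\mathbb{C}_{\lambda}$ in $\operatorname{Irr}(P_{G/H};\mathfrak{b}_{G/H})_{f}$, and the dualization of paragraph one delivers the required $\xi$.

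The main obstacle I anticipate is the precise extraction of the $(M_{\sigma}\cap H)$-distinguishedness of the $M_{\sigma}$-factor $\tau$ from Oshima's theorem, since his original formulation emphasizes the boundary value construction rather than isolating this constraint; careful handling of component-group issues (absorbed by Lemma \ref{lem:CH}(3)) and of the normalization of the boundary value map will be required. A secondary technical point is verifying the compatibility of the positive systems on $\mathfrak{j}_{\mathbb{C}}$, $\mathfrak{a}_{\mathbb{C}}^{*}$, and $(\mathfrak{j}_{\theta})_{\mathbb{C}}^{*}$, which should follow from a Satake-diagram-style analysis along the lines of Section \ref{subsec:Satake}.
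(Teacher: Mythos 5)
Your proposal tracks the paper's proof closely: both reduce the theorem to Oshima's embedding theorem via dualization (Frobenius reciprocity and $\zeta\mapsto\zeta^{\vee}\otimes\mathbb{C}_{2\rho}$), and both derive the constraint on the inducing parameter by applying the Cartan--Helgason theorem to the symmetric pair sitting inside the Levi factor of $P_{G/H}$. The differences are cosmetic: you route the contragredient through Lemma \ref{lem:xidual} and an adjustment of the positive system where the paper invokes Lemma \ref{lem:GHdual} directly to get $\zeta^{\vee}\in\operatorname{Irr}(P_{G/H};\mathfrak b_{G/H})_f$ with the same Borel subalgebra, and you speak loosely of $M_\sigma$ where the paper carefully isolates the semisimple factor $\mathfrak m(\sigma)$ in the decomposition $\mathfrak l_\sigma=\mathfrak m(\sigma)\oplus\mathfrak g(\sigma)\oplus\mathfrak a_\sigma$ before applying Cartan--Helgason.
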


It should be noted
 that the argument of this section gives
 a \lq\lq{coarse estimate of the size}\rq\rq\
 of $\Pi \in \operatorname{Irr}(G)_H$ as follows:
\begin{proposition}
\label{prop:GKdim}
For any irreducible subquotient $\Pi$ in $C^{\infty}(G/H)$, 
 the Gelfand--Kirillov dimension, 
 to be denoted by $\operatorname{DIM}(\Pi)$, 
 satisfies
$
\operatorname{DIM}(\Pi) \le \dim G/P_{G/H}.  
$
In particular, 
 $\operatorname{DIM}(\Pi) \le \dim G/P_{G/H}$
 for any $\Pi \in \operatorname{Irr}(G)_H$.  
\end{proposition}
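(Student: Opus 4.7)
The plan is to combine Theorem~\ref{thm:quotient} with a direct $K$-type computation of the Gelfand--Kirillov dimension of a degenerate principal series. The main input will be the identity $\operatorname{DIM}(\operatorname{Ind}_{P_{G/H}}^G(\xi)) = \dim G/P_{G/H}$ for any finite-dimensional representation $\xi$ of $P_{G/H}$; since the Gelfand--Kirillov dimension is monotone non-increasing under passing to subquotients in $\mathcal{M}(G)$, the bound on $\operatorname{DIM}(\Pi)$ will follow once $\Pi$ is realized as a subquotient of such an induced representation.

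First I would establish the dimension formula. Since $P_{G/H}$ contains a minimal parabolic subgroup of $G$, the decomposition $G = K P_{G/H}$ yields the $K$-module isomorphism
\[
\operatorname{Ind}_{P_{G/H}}^G(\xi)\big|_K \simeq \operatorname{Ind}_{K \cap P_{G/H}}^K(\xi|_{K \cap P_{G/H}}).
\]
The sum of $K$-type multiplicities over $K$-types of highest weight bounded in norm by $N$ grows polynomially in $N$ with leading exponent $\dim K/(K \cap P_{G/H}) = \dim G/P_{G/H}$, and by the standard equivalence between $K$-type growth and the Gelfand--Kirillov dimension this gives the claimed formula.

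Next, for the \emph{in particular} assertion concerning $\Pi \in \operatorname{Irr}(G)_H$, Theorem~\ref{thm:quotient} provides $\xi \in \operatorname{Irr}(P_{G/H}; \mathfrak{b}_{G/H})_f$ together with a surjection $\operatorname{Ind}_{P_{G/H}}^G(\xi) \twoheadrightarrow \Pi$ in $\mathcal{M}(G)$, so monotonicity gives $\operatorname{DIM}(\Pi) \le \dim G/P_{G/H}$ at once.

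For the broader statement about an arbitrary irreducible subquotient $\Pi$ of $C^\infty(G/H)$, I would extend Theorem~\ref{thm:quotient} to the subquotient setting using the embedding theorem of Oshima on which it is based: any such $\Pi$ should be an irreducible subquotient of some $\operatorname{Ind}_{P_{G/H}}^G(\xi)$ with $\xi$ finite-dimensional, and the same monotonicity argument then concludes the proof. The main obstacle is this extension beyond the $H$-distinguished case: I would pass to a finitely generated (hence finite length) $G$-submodule $V \subset (C^\infty(G/H))_K$ containing $\Pi$ as a subquotient, apply the Frobenius reciprocity recalled in the introduction to the irreducible submodules in the socle of $V$ (whose contragredients lie in $\operatorname{Irr}(G)_H$), and propagate the embedding through a composition series of $V$ with a compatible choice of $\xi$ governed by the fixed geometry of $G/H$ rather than by any individual subquotient.
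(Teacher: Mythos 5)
Your overall route is the paper's: realize $\Pi$ as a subquotient of a degenerate principal series induced from $P_{G/H}$, and then apply the standard GK-dimension bound
$\operatorname{DIM} \bigl(\operatorname{Ind}_{P_{G/H}}^G(\xi)\bigr) \le \dim G/P_{G/H}$
(the paper cites \cite[Lem.~6.7]{xkProg2014} for this, whereas you re-derive it via the $K$-type growth of $\operatorname{Ind}_{K\cap P_{G/H}}^K(\xi|_{K\cap P_{G/H}})$; that is a correct and self-contained way to obtain the same input). Your handling of the case $\Pi\in\operatorname{Irr}(G)_H$ via Theorem~\ref{thm:quotient} is fine.

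Where you go astray is in treating the general-subquotient case as an extra difficulty requiring a socle/composition-series argument on a finitely generated submodule of $C^\infty(G/H)_K$. That machinery is unnecessary and, as sketched, does not obviously close: knowing that the socle of $V$ has $H$-distinguished contragredients does not by itself tell you that the deeper composition factors of $V$ live in degenerate principal series induced from $P_{G/H}$, so ``propagating the embedding through a composition series with a compatible $\xi$'' is an assertion you have not justified. The point you are missing is that Proposition~\ref{prop:embed} (Oshima's embedding theorem) is already stated for an \emph{arbitrary} $\Pi\in\operatorname{Irr}(G)$ occurring as a subquotient of $C^\infty(G/H)$: it produces $\zeta\in\Xi\subset\operatorname{Irr}(P_{G/H})_f$ and a nonzero map $\Pi\to\operatorname{Ind}_{P_{G/H}}^G(\zeta)$ directly, without any restriction to the $H$-distinguished case. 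So the general statement follows from Proposition~\ref{prop:embed} and the GK-dimension bound in one step; Theorem~\ref{thm:quotient} and your proposed extension are not needed for this proposition.
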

The proof of Theorem \ref{thm:quotient} and Proposition \ref{prop:GKdim}
 will be given in Section \ref{subsec:pfquotient}.

\subsection{Oshima's embedding theorem}
\label{subsec:Oshima}

In order to state Oshima's embedding theorem, 
 we need to prepare some notations
 and structure theorems
 of reductive symmetric spaces $G/H$, 
 which we recall now from \cite{OS84}.

Retain the setting as in Section \ref{subsec:Quotient}.  
We extend ${\mathfrak{a}}$ $(\subset {\mathfrak{g}}^{-\sigma} \cap {\mathfrak{g}}^{-\theta})$ to two maximal abelian subspaces
 ${\mathfrak{j}}$ in ${\mathfrak{g}}^{-\sigma}$
 and ${\mathfrak{a}}_{\mathfrak{p}}$ in ${\mathfrak{g}}^{-\theta}$.  
Then one has $[{\mathfrak{j}}, {\mathfrak{a}}_{\mathfrak{p}}]=\{0\}$, 
 hence there exists a $\sigma$-split and $\theta$-split Cartan subalgebra
 $\widetilde {\mathfrak{j}}$ of ${\mathfrak{g}}$
 containing both ${\mathfrak{j}}$ and ${\mathfrak{a}}_{\mathfrak{p}}$.  
As in Section \ref{subsec:CH}, 
 one has a direct sum decomposition 
 $\widetilde {\mathfrak{j}}={\mathfrak{t}} \oplus {\mathfrak{j}}$
 with ${\mathfrak{t}}:=\widetilde{\mathfrak{j}} \cap {\mathfrak{h}}$.  
Moreover one can take positive systems
 $\Delta^+({\mathfrak{g}}_{\mathbb{C}},
 \widetilde {\mathfrak{j}}_{\mathbb{C}})$, 
 $\Sigma^+({\mathfrak{g}}, {\mathfrak{a}}_{\mathfrak{p}})$
 and $\Sigma^+({\mathfrak{g}}_{\mathbb{C}}, {\mathfrak{j}}_{\mathbb{C}})$
 such that they are compatible
 with a fixed positive system $\Sigma^+({\mathfrak{g}}, {\mathfrak{a}})$
 in the sense that the restriction maps
 induce the following commutative diagram.  
\begin{alignat*}{5}
&
&&
&&\Sigma^+({\mathfrak{g}}_{\mathbb{C}}, {\mathfrak{j}}_{\mathbb{C}})\cup\{0\}
&&
&&
\\
&
&&\nearrow
&&
&&\searrow
&&
\\
&\Delta^+({\mathfrak{g}}_{\mathbb{C}},
 \widetilde {\mathfrak{j}}_{\mathbb{C}})
&&
&&
&&
&&\Sigma^+({\mathfrak{g}}, {\mathfrak{a}})\cup \{0\}
\\
&
&&\searrow
&&
&&\nearrow
&&
\\
&
&&
&&
\Sigma^+({\mathfrak{g}}, {\mathfrak{a}}_{\mathfrak{p}})\cup\{0\}
&&
&&
\end{alignat*}

We define a Borel subalgebra ${\mathfrak{b}}_{G/H}$
 for the symmetric space $G/H$
 by using the positive system $\Sigma^+({\mathfrak{g}}_{\mathbb{C}}, {\mathfrak{j}}_{\mathbb{C}})$.  
Then one has ${\mathfrak{b}}_{G/H} \subset ({\mathfrak{p}}_{G/H})_{\mathbb{C}}$.

We denote by ${\mathfrak{g}}({\mathfrak{a}}_{\mathfrak{p}};\lambda)$
 the weight space 
 for a linear form $\lambda$ on ${\mathfrak{a}}_{\mathfrak{p}}$, 
 and set ${\mathfrak{m}}:={\mathfrak{g}}({\mathfrak{a}}_{\mathfrak{p}};0) \cap 
{\mathfrak{k}}$.  
Since $G_{\mathbb{C}}$ is connected, 
 it follows from a result of Satake \cite{Sa60}
 (also \cite[p.~435]{He78})
 that the centralizer $M$ of ${\mathfrak{a}}_{\mathfrak{p}}$ in $K$
 equals $M_0 K ({\mathfrak{a}}_{\mathfrak{p}})$, 
 where $M_0$ is the identity component of $M$
 and $K({\mathfrak{a}}_{\mathfrak{p}})$ is the finite group 
 defined by $K \cap \exp(\sqrt{-1} {\mathfrak{a}}_{\mathfrak{p}})$.

We fix a $G$-invariant non-degenerate symmetric bilinear form $\langle \cdot, \cdot \rangle$
 on the Lie algebra 
 such that $\langle \cdot, \cdot \rangle$ is negative definite
 on ${\mathfrak{k}} \equiv {\mathfrak{g}}^{\theta}$, 
 positive definite on ${\mathfrak{g}}^{-\theta}$ and ${\mathfrak{g}}^{\theta} \perp {\mathfrak{g}}^{-\theta}$.  
Then $\langle \cdot, \cdot \rangle$ is non-degenerate on any $\theta$-stable subspace
 of ${\mathfrak{g}}$, 
 in particular, 
 on the centralizer ${\mathfrak{l}}_{\sigma}$
 of ${\mathfrak{a}}$ in ${\mathfrak{g}}$.  
Let ${\mathfrak{z}}({\mathfrak{l}}_{\sigma})$
 be the center of the Lie algebra ${\mathfrak{l}}_{\sigma}$.  
Obviously 
 ${\mathfrak{a}} \subset {\mathfrak{z}}({\mathfrak{l}}_{\sigma})$, 
 but we need a more detailed description of ${\mathfrak{l}}_{\sigma}$.  
We set 
\begin{align*}
   &{\mathfrak{a}}_{\sigma}:={\mathfrak{z}}({\mathfrak{l}}_{\sigma})^{-\theta}
\equiv \{Y \in {\mathfrak{z}}({\mathfrak{l}}_{\sigma}): \theta Y =-Y\}, 
\\
 &{\mathfrak{g}}(\sigma):\text{the semisimple ideal of 
 ${\mathfrak{l}}_{\sigma}$ generated by ${\mathfrak{g}}({\mathfrak{a}}_{\mathfrak{p}};\lambda)$ with $\lambda|_{\mathfrak{a}} \not \equiv 0$.  }
\\
&{\mathfrak{m}}({\sigma}):\text{the orthogonal complement of ${\mathfrak{g}}({\sigma})\oplus {\mathfrak{a}}_{\sigma}$ in $\mathfrak{l}_{\sigma}$}. 
\end{align*}
Then one has ${\mathfrak{g}}({\sigma})\subset {\mathfrak{h}}$, 
 ${\mathfrak{m}}({\sigma})\subset {\mathfrak{m}}$, 
 and ${\mathfrak{a}} \subset \mathfrak{a}_{\sigma} \subset {\mathfrak{a}}_{\mathfrak{p}}$.  
The Levi subalgebra 
 ${\mathfrak{l}}_{\sigma}$
 is decomposed into the direct sum of $\sigma$-stable ideals:
\begin{equation}
\label{eqn:lsigma}
   {\mathfrak{l}}_{\sigma}={\mathfrak{m}}({\sigma}) \oplus {\mathfrak{g}}({\sigma}) \oplus {\mathfrak{a}}_{\sigma}.  
\end{equation}

Let $G(\sigma)$, $M(\sigma)_0$, $A_{\sigma}$, and $N_{\sigma}$ be the analytic subgroups
 of $G$
 with Lie algebra ${\mathfrak{g}}({\sigma})$, 
 ${\mathfrak{m}}({\sigma})$, 
 ${\mathfrak{a}}_{\sigma}$, 
 and ${\mathfrak{n}}_{\sigma}$, 
 respectively.  
We set $M({\sigma}):=M({\sigma})_0 K({\mathfrak{a}}_{\mathfrak{p}})$.  
Accordingly to the direct decomposition \eqref{eqn:lsigma}, 
 one has 
$
   P_{G/H}=M({\sigma})G(\sigma)A_{\sigma}N_{\sigma}
$ 
by \cite[Lem.~8.12]{OS84},
 see also \cite[Lem.~1.5]{xoshima88b}.  
We introduce a subset
 of $\operatorname{Irr}(P_{G/H})_f$ as follows.

\begin{definition}
\label{def:Xi}
Let $\Xi$ be the collection of $\zeta \in \operatorname{Irr}(P_{G/H})_f$
 of the form 
\begin{equation}
\label{eqn:zetaeta}
  \zeta(m x e^{Y} n)=\eta(m)e^{\mu(Y)}
\quad
\text{for $m \in M(\sigma)$, $x \in G(\sigma)$, 
$Y \in {\mathfrak{a}}$, $n \in N_{\sigma}$, }
\end{equation}
 for some $\eta \in \operatorname{Irr}(M(\sigma))_f$
 and $\mu \in {\mathfrak{a}}_{\mathbb{C}}^{\ast}$
 such that $\eta$ has a non-zero $({\mathfrak{m}}(\sigma) \cap {\mathfrak{h}})$-fixed vector.  
\end{definition}

We are ready to state Oshima's embedding theorem 
 in a slightly different formulation from the original:
\begin{proposition}
[T.~Oshima]
\label{prop:embed}
If $\Pi \in \operatorname{Irr}(G)$
 occurs as a subquotient
 in $C^{\infty}(G/H)$, 
 then there exists $\zeta \in \Xi$
 such that 
$
  \invHom G \Pi {\operatorname{Ind}_{P_{G/H}}^{G}(\zeta)}
\ne 
\{0\}. 
$
\end{proposition}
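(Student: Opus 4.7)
The plan is to derive Proposition \ref{prop:embed} as a reformulation of Oshima's embedding theorem \cite[Thm.~4.15]{xoshima88b}, which asserts precisely that any irreducible subquotient $\Pi$ of $C^{\infty}(G/H)$ embeds into some $\operatorname{Ind}_{P_{G/H}}^{G}(\zeta_0)$ for $\zeta_0 \in \operatorname{Irr}(P_{G/H})_f$. The remaining work is to verify that $\zeta_0$ can be taken in the set $\Xi$ of Definition \ref{def:Xi}.

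I would first invoke Oshima's theorem to produce a nonzero $G$-homomorphism $\Pi \to \operatorname{Ind}_{P_{G/H}}^{G}(\zeta_0)$, and then analyze the shape of $\zeta_0$ using the direct sum decomposition $\mathfrak{l}_\sigma = \mathfrak{m}(\sigma) \oplus \mathfrak{g}(\sigma) \oplus \mathfrak{a}_\sigma$ of \eqref{eqn:lsigma} together with the group-level factorization $P_{G/H} = M(\sigma) G(\sigma) A_\sigma N_\sigma$. Since $\mathfrak{m}(\sigma)$, $\mathfrak{g}(\sigma)$, and $\mathfrak{a}_\sigma$ commute modulo $\mathfrak{n}_\sigma$, any finite-dimensional irreducible representation of $P_{G/H}$ factors as an outer tensor product of an irreducible representation $\eta$ of $M(\sigma)$, an irreducible representation of $G(\sigma)$, and a character $e^\mu$ of $A_\sigma$, with $N_\sigma$ acting trivially. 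Because $G(\sigma) \subset H$ and Oshima's boundary value of $\Pi$ lives on a $P_{G/H}$-orbit in $G/H$, the $G(\sigma)$-factor of $\zeta_0$ must act trivially, so $\zeta_0$ already has the form \eqref{eqn:zetaeta}. The $(\mathfrak{m}(\sigma) \cap \mathfrak{h})$-fixity of $\eta$ then follows from the observation that the isotropy of the open $P_{G/H}$-orbit in $G/H$ equals $M(\sigma) \cap H$ modulo $N_\sigma$, a fact established in the structural analysis of \cite[\S 8]{OS84}, so that the right $H$-invariance of the matrix coefficients of $\Pi$ realized in $C^\infty(G/H)$ descends to an $M(\sigma)\cap H$-fixed vector in $\eta$.

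The main obstacle, and the genuinely substantive content, is the construction of a nonzero boundary value map valued in a single irreducible $\zeta_0$ rather than merely a composition factor of a generalized principal series: the asymptotic expansion of matrix coefficients of $\Pi$ along $A_\sigma$ has leading exponents and coefficients that are a priori difficult to control, and one must know both that the leading term is nonzero and that it transforms irreducibly under $P_{G/H}$. This rests on Oshima's theory of regular singularities for the system of invariant differential operators on $G/H$ along the walls cut out by $A_\sigma$, combined with the Oshima--Sekiguchi structure theory of reductive symmetric spaces from \cite{OS84}. In the formal proof I would simply cite these results and carry out the translation of parameters needed to pass from the original formulation of \cite[Thm.~4.15]{xoshima88b} to the characterization of $\Xi$ given in Definition \ref{def:Xi}.
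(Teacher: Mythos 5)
Your proposal follows essentially the same route as the paper: invoke Oshima's embedding theorem \cite[Thm.~4.15]{xoshima88b} to realize $\Pi$ inside $\operatorname{Ind}_{P_{G/H}}^G(\zeta_0)$, then check that the constraints Oshima imposes on $\zeta_0$ precisely place it in $\Xi$. The paper accomplishes the second step by citing specific constraints in \cite[Prop.~4.1 and (4.5)]{xoshima88b}, whereas you unpack them conceptually via the decomposition \eqref{eqn:lsigma} and the isotropy structure of the open $P_{G/H}$-orbit, but this is a matter of exposition, not a different argument.
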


\begin{proof}
By taking the \lq\lq{hyperfunction boundary-valued maps}\rq\rq\
 \cite[Thm.~4.15]{xoshima88b}, 
 see also \cite{xktoshima}, 
 one sees
 that there exists $\zeta \in \operatorname{Irr}(P_{G/H})_f$
 such that
 $\Pi$ is a subrepresentation
 of the (degenerate) principal series representation 
 $\operatorname{Ind}_{P_{G/H}}^G(\zeta)$
 with $\zeta$ satisfying 
 certain additional constraints,  
 which imply $\zeta \in \Xi$
 by Proposition 4.1 and (4.5), 
loc.\ cit.  
\end{proof}

Proposition \ref{prop:embed} generalizes Harish-Chandra's subquotient theorem 
 and Casselman's subrepresentation theorem:

\begin{example}
[Casselman's subrepresentation theorem]
\label{ex:Casselman}
The group manifold $(G \times G)/\operatorname{diag}G$
 is an example
 of a symmetric space.  
For any $\Pi \in \operatorname{Irr}(G)$, 
 one has a natural embedding
 of the outer tensor product representation
 $\Pi \boxtimes \Pi^{\vee}$
 into $C^{\infty}(G) \simeq C^{\infty}((G \times G)/\operatorname{diag} G)$
 by taking matrix coefficients.

Let $P_+$ be a minimal parabolic subgroup of the group $G$, 
and $P_-$ the opposite parabolic subgroup.  
Then $P_+ \times P_-$ is a minimal parabolic subgroup
 for the symmetric space $(G \times G)/\operatorname{diag}G$
 in the sense of Definition \ref{def:Psigma}.  
Therefore Proposition \ref{prop:embed} tells 
 that $\Pi \boxtimes \Pi^{\vee}$ is realized as a subrepresentation
 of some principal series representation
 of the direct product group $G \times G$, 
 hence so is $\Pi$ for $G$.  
\end{example}

\subsection{Proof of Theorem \ref{thm:quotient}
 and Proposition \ref{prop:GKdim}}
\label{subsec:pfquotient}

A key lemma to derive Theorem \ref{thm:quotient} from Proposition \ref{prop:embed} is the following.  
We recall
 that the Levi subgroup $L_{\sigma}$ of $P_{G/H}$
 is not necessarily connected.  

\begin{lemma}
\label{lem:zetat}
Let $\zeta = \zeta_1 \oplus \cdots \oplus \zeta_r$
 be the irreducible decomposition
 of $\zeta \in \Xi$
 when restricted to the identity component
 $(L_{\sigma})_0$ of the Levi subgroup $L_{\sigma}$.  
Let $\lambda_i$ 
$(\in \widetilde{\mathfrak{j}}_{\mathbb{C}}^{\ast})$ be the highest weight of $\zeta_i$
 with respect to the positive system
$
   \Delta^+({{\mathfrak{l}}}_{\sigma \mathbb{C}}, \widetilde {\mathfrak{j}}_{\mathbb{C}})
 := 
\Delta^+({{\mathfrak{g}}}_{\mathbb{C}}, \widetilde {\mathfrak{j}}_{\mathbb{C}})
\cap 
\Delta({{\mathfrak{l}}}_{\sigma \mathbb{C}}, \widetilde {\mathfrak{j}}_{\mathbb{C}})
$.  
Then for any $i$ $(1 \le i \le \lambda)$, 
$\lambda_i$ vanishes
 on the subspace ${\mathfrak{t}}=\widetilde {\mathfrak{j}} \cap {\mathfrak{h}}$.  

\end{lemma}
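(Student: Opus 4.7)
The plan is to exploit the direct sum decomposition of ideals \eqref{eqn:lsigma}, $\mathfrak{l}_\sigma = \mathfrak{m}(\sigma) \oplus \mathfrak{g}(\sigma) \oplus \mathfrak{a}_\sigma$, together with the explicit formula of $\zeta \in \Xi$ in Definition~\ref{def:Xi}, and to verify the vanishing of $\lambda_i$ on $\mathfrak{t}$ piecewise on each summand. First, since $\mathfrak{a}_\sigma \subset \widetilde{\mathfrak{j}}$, the Cartan subalgebra $\widetilde{\mathfrak{j}}$ lies inside $\mathfrak{l}_\sigma$ and splits as $\widetilde{\mathfrak{j}} = (\widetilde{\mathfrak{j}} \cap \mathfrak{m}(\sigma)) \oplus (\widetilde{\mathfrak{j}} \cap \mathfrak{g}(\sigma)) \oplus \mathfrak{a}_\sigma$. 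Intersecting with $\mathfrak{h}$, and using $\mathfrak{g}(\sigma) \subset \mathfrak{h}$, the $\sigma$-stability of each ideal, and the identity $\mathfrak{a}_\sigma^{-\sigma} = \mathfrak{a}$ (from maximality of $\mathfrak{a}$ in $\mathfrak{g}^{-\sigma,-\theta}$), one obtains the compatible splitting $\mathfrak{t} = (\widetilde{\mathfrak{j}} \cap \mathfrak{m}(\sigma))^{\sigma} \oplus (\widetilde{\mathfrak{j}} \cap \mathfrak{g}(\sigma)) \oplus \mathfrak{a}_\sigma^{\sigma}$.

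Next, the formula of Definition~\ref{def:Xi} shows that $\zeta$ acts trivially on $G(\sigma)$, via $d\eta$ on $\mathfrak{m}(\sigma)$, and as the character of $A_\sigma$ whose differential $\widetilde{\mu} \in \mathfrak{a}_{\sigma,\mathbb{C}}^{\ast}$ extends $\mu \in \mathfrak{a}_{\mathbb{C}}^{\ast}$ by zero on $\mathfrak{a}_\sigma^{\sigma}$. Since the three ideals commute, every irreducible $(L_\sigma)_0$-component $\zeta_i$ takes the form $\eta_i \otimes \mathbb{C}_{\widetilde{\mu}}$, where $\eta_i$ is an irreducible $M(\sigma)_0$-component of $\eta|_{M(\sigma)_0}$. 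Consequently $\lambda_i|_{\widetilde{\mathfrak{j}} \cap \mathfrak{g}(\sigma)} = 0$, $\lambda_i|_{\mathfrak{a}_\sigma} = \widetilde{\mu}$, and $\lambda_i|_{\widetilde{\mathfrak{j}} \cap \mathfrak{m}(\sigma)}$ is the highest weight of $\eta_i$.

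Finally, vanishing of $\lambda_i$ on $\mathfrak{t}$ reduces to three assertions: on $\widetilde{\mathfrak{j}} \cap \mathfrak{g}(\sigma)$ it is immediate; on $\mathfrak{a}_\sigma^{\sigma}$ it follows from $\widetilde{\mu}|_{\mathfrak{a}_\sigma^\sigma} = 0$; and on $(\widetilde{\mathfrak{j}} \cap \mathfrak{m}(\sigma))^{\sigma}$ it follows from the Cartan--Helgason theorem. For the last point, I would apply Lemma~\ref{lem:CH}\,(3) to the compact symmetric pair $(M(\sigma)_0, (M(\sigma) \cap H)_0)$: the highest weight of any irreducible $M(\sigma)_0$-module with a non-zero $(\mathfrak{m}(\sigma) \cap \mathfrak{h})$-fixed vector lies in $\Lambda_+(\mathfrak{m}(\sigma)_\mathbb{C}; (\mathfrak{m}(\sigma) \cap \mathfrak{h})_\mathbb{C})$, which by the very definition \eqref{eqn:JLTCH} annihilates $(\widetilde{\mathfrak{j}} \cap \mathfrak{m}(\sigma))^{\sigma}$. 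The main technical point will be to secure this conclusion for \emph{every} component $\eta_i$ and not only for those containing a fixed vector in the decomposition of $\eta|_{M(\sigma)_0}$; I expect to handle it by observing that $K(\mathfrak{a}_\mathfrak{p}) \subset \exp(\sqrt{-1}\, \mathfrak{a}_\mathfrak{p}) \subset \exp(\widetilde{\mathfrak{j}}_\mathbb{C})$ centralizes $\widetilde{\mathfrak{j}}$ pointwise, so the finite group $M(\sigma)/M(\sigma)_0$ acts trivially on the Cartan and permutes the $\eta_i$ by $M(\sigma)_0$-module isomorphisms. Hence all $\eta_i$ share a common highest weight, and the existence of one $(\mathfrak{m}(\sigma) \cap \mathfrak{h})$-fixed vector in $\eta$ forces that common weight to satisfy the required vanishing.
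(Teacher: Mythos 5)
Your proof is correct and follows essentially the same path as the paper: decompose $\mathfrak{t}$ along the ideal decomposition $\mathfrak{l}_\sigma=\mathfrak{m}(\sigma)\oplus\mathfrak{g}(\sigma)\oplus\mathfrak{a}_\sigma$, read off the vanishing on the $\mathfrak{g}(\sigma)$ and $\mathfrak{a}_\sigma$ parts directly from \eqref{eqn:zetaeta}, apply the Cartan--Helgason theorem (Lemma~\ref{lem:CH}) to the symmetric pair $(\mathfrak{m}(\sigma),\mathfrak{m}(\sigma)\cap\mathfrak{h})$ for one component, and then use that $K(\mathfrak{a}_{\mathfrak{p}})$ acts trivially on $\widetilde{\mathfrak{j}}$ to conclude $\lambda_1=\cdots=\lambda_r$. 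Your slightly more explicit discussion of why all components share a common highest weight is exactly the observation the paper invokes at the end of its argument.
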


\begin{proof}
[Proof of Lemma \ref{lem:zetat}]
Accordingly to the ideal decomposition \eqref{eqn:lsigma}, 
 ${\mathfrak{t}}$ is given as a direct sum: 
\[
{\mathfrak{t}}
=({\mathfrak{t}} \cap {\mathfrak{m}}(\sigma))
               \oplus
({\mathfrak{t}} \cap {\mathfrak{g}}(\sigma))
\oplus
({\mathfrak{a}}_{\sigma}\cap {\mathfrak{h}}).  
\]
One sees readily from \eqref{eqn:zetaeta}
 that $\lambda_1$, $\cdots$, $\lambda_r$ vanish
 on $({\mathfrak{t}} \cap {\mathfrak{g}}(\sigma))
               \oplus
({\mathfrak{a}}_{\sigma}\cap {\mathfrak{h}})$.  
Let us prove
 that they also vanish 
 on ${\mathfrak{t}} \cap {\mathfrak{m}}(\sigma)$.  
We denote by $\eta_i$ $(1 \le i \le r)$
 the restriction of the differential representation of $\zeta_i$
 to ${\mathfrak{m}}(\sigma)$, 
 which is still irreducible.  
Since $\zeta \in \Xi$, 
 at least one of $\eta_j \in \operatorname{Irr}({\mathfrak{m}}(\sigma))_f$
 has an 
 $({\mathfrak{m}}({\sigma}) \cap {\mathfrak{h}})$-fixed vector.  
Since ${\mathfrak{m}}({\sigma})$ is $\sigma$-stable, 
 $({\mathfrak{m}}({\sigma}), {\mathfrak{m}}({\sigma}) \cap {\mathfrak{h}})$
 forms a symmetric pair, 
 and $
   \widetilde {\mathfrak{j}} \cap {\mathfrak{m}}({\sigma})
   =
   ({\mathfrak{t}} \cap {\mathfrak{m}}({\sigma})) \oplus ({\mathfrak{j}} \cap 
{\mathfrak{m}}({\sigma})) 
$
is a $\sigma$-split Cartan subalgebra
 of ${\mathfrak{m}}(\sigma)$
 such that ${\mathfrak{j}} \cap {\mathfrak{m}}(\sigma)$
 is a maximal abelian subspace in ${\mathfrak{m}}(\sigma)^{-\sigma}$.  
Applying the Cartan--Helgason theorem
 (Lemma \ref{lem:CH}) to the semisimple symmetric pair
 $({\mathfrak{m}}({\sigma}), {\mathfrak{m}}({\sigma}) \cap {\mathfrak{h}})$, 
 one obtains
$
  \lambda_j|_{{\mathfrak{t}} \cap {\mathfrak{m}}(\sigma)}=0.  
$
Hence one has $\lambda_j|_{{\mathfrak{t}}}\equiv 0$.

Since $P_{G/H} = (P_{G/H})_0 K({\mathfrak{a}}_{\mathfrak{p}})$
 and since the finite group 
$
   K({\mathfrak{a}}_{\mathfrak{p}})=K \cap \exp(\sqrt{-1}{\mathfrak{a}}_{\mathfrak{p}})
$ acts trivially
 on $\widetilde {\mathfrak{j}}$, 
 one has $\lambda_1 =\cdots = \lambda_r$.  
Thus Lemma \ref{lem:zetat} is proved.  
\end{proof}

We recall that 
 ${\mathfrak{b}}_{G/H}$ is the parabolic subalgebra
 of ${\mathfrak{g}}_{\mathbb{C}}$
 associated
 to $\sum^+({\mathfrak{g}}_{\mathbb{C}}, {\mathfrak{j}}_{\mathbb{C}})$.

\begin{lemma}
\label{lem:XiPq}
For any $\zeta \in \Xi$, 
both $\zeta$ and $\zeta^{\vee}$ belongs to 
$\operatorname{Irr}(P_{G/H};{\mathfrak{b}}_{G/H})_f$.  
\end{lemma}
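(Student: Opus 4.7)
The plan is to reduce, via Lemma~\ref{lem:IrrPq}, the claim to an orthogonality statement on the Levi subalgebra $\mathfrak{l}_{\sigma,\mathbb{C}}$, which will then follow from combining Lemma~\ref{lem:zetat} with Lemma~\ref{lem:IrrGQ}(1). First I would observe that, by the compatibility of the three positive systems in Section~\ref{subsec:Oshima}, every root $\alpha \in \Delta^+(\mathfrak{g}_{\mathbb{C}}, \widetilde{\mathfrak{j}}_{\mathbb{C}})$ with $\alpha|_{\mathfrak{a}} \ne 0$ satisfies $\alpha|_{\mathfrak{j}_{\mathbb{C}}} \in \Sigma^+(\mathfrak{g}_{\mathbb{C}}, \mathfrak{j}_{\mathbb{C}})$. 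Hence $\mathfrak{n}_{\sigma,\mathbb{C}}$ is contained in the nilradical of $\mathfrak{b}_{G/H}$, and the inclusion $\mathfrak{b}_{G/H} \subset \mathfrak{p}_{G/H,\mathbb{C}} = \mathfrak{l}_{\sigma,\mathbb{C}} \oplus \mathfrak{n}_{\sigma,\mathbb{C}}$ yields the vector-space decomposition $\mathfrak{b}_{G/H} = (\mathfrak{b}_{G/H} \cap \mathfrak{l}_{\sigma,\mathbb{C}}) \oplus \mathfrak{n}_{\sigma,\mathbb{C}}$ with $\mathfrak{b}_{G/H} \cap \mathfrak{l}_{\sigma,\mathbb{C}}$ a parabolic subalgebra of $\mathfrak{l}_{\sigma,\mathbb{C}}$. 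Since $N_{\sigma}$ acts trivially on $V_{\zeta}$ by \eqref{eqn:zetaeta}, a one-dimensional $\mathfrak{b}_{G/H}$-submodule of $V_{\zeta}$ is the same as a one-dimensional $(\mathfrak{b}_{G/H} \cap \mathfrak{l}_{\sigma,\mathbb{C}})$-submodule; equivalently, by \eqref{eqn:IrrPL}, it suffices to show $\zeta|_{L_{\sigma}} \in \operatorname{Irr}(L_{\sigma}; \mathfrak{b}_{G/H} \cap \mathfrak{l}_{\sigma,\mathbb{C}})_{f}$.

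Next I would invoke Lemma~\ref{lem:zetat} to write $\zeta|_{(L_{\sigma})_{0}} = \zeta_{1} \oplus \cdots \oplus \zeta_{r}$ with each highest weight $\lambda_{i}$ vanishing on $\mathfrak{t}$, so that $\lambda_{i} \in \mathfrak{j}_{\mathbb{C}}^{\ast}$. The Levi component of $\mathfrak{b}_{G/H} \cap \mathfrak{l}_{\sigma,\mathbb{C}}$ has roots $\alpha$ with $\alpha|_{\mathfrak{j}_{\mathbb{C}}} = 0$, hence lying in $\mathfrak{t}_{\mathbb{C}}^{\ast}$. Because $\sigma$ preserves the $W$-invariant form $\langle\,\cdot\,,\,\cdot\,\rangle$ and $\mathfrak{t}$, $\mathfrak{j}$ are its $(\pm 1)$-eigenspaces, the subspaces $\mathfrak{t}_{\mathbb{C}}^{\ast}$ and $\mathfrak{j}_{\mathbb{C}}^{\ast}$ of $\widetilde{\mathfrak{j}}_{\mathbb{C}}^{\ast}$ are orthogonal. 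Hence $\langle \lambda_{i}, \alpha \rangle = 0$ for all such $\alpha$, and Lemma~\ref{lem:IrrGQ}(1) shows $\zeta_{i} \in \operatorname{Irr}((\mathfrak{l}_{\sigma})_{0}; \mathfrak{b}_{G/H} \cap \mathfrak{l}_{\sigma,\mathbb{C}})_{f}$. Consequently each $\zeta_{i}$, and hence $\zeta$, contains a one-dimensional $(\mathfrak{b}_{G/H} \cap \mathfrak{l}_{\sigma,\mathbb{C}})$-submodule, giving $\zeta \in \operatorname{Irr}(P_{G/H}; \mathfrak{b}_{G/H})_{f}$.

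For the contragredient $\zeta^{\vee}$, the formula $\zeta^{\vee}(m x e^{Y} n) = \eta^{\vee}(m) e^{-\mu(Y)}$ again has the shape \eqref{eqn:zetaeta}, and Lemma~\ref{lem:GHdual} applied to the compact symmetric pair underlying $(M(\sigma), M(\sigma) \cap H)$ shows that $\eta^{\vee}$ is $(\mathfrak{m}(\sigma) \cap \mathfrak{h})$-distinguished whenever $\eta$ is; thus $\zeta^{\vee} \in \Xi$ and the previous argument applies verbatim. The main obstacle is in this last step: ensuring that the Cartan--Helgason-type duality of Lemma~\ref{lem:GHdual} transfers to $M(\sigma)$, which is compact but possibly disconnected and not semisimple, with the finite component group $K(\mathfrak{a}_{\mathfrak{p}})$ handled by observing that it acts trivially on $\widetilde{\mathfrak{j}}$ and therefore respects taking contragredients on the distinguished line.
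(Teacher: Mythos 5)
Your proof follows essentially the same route as the paper: reduce to the Levi $\mathfrak{l}_{\sigma,\mathbb{C}}$ via \eqref{eqn:IrrPL}, invoke Lemma~\ref{lem:zetat} to see that the highest weights $\lambda_i$ vanish on $\mathfrak{t}_{\mathbb{C}}$, apply Lemma~\ref{lem:IrrGQ}\,(1) to get $\zeta \in \operatorname{Irr}(P_{G/H};\mathfrak{b}_{G/H})_f$, and handle $\zeta^{\vee}$ by the Cartan--Helgason duality of Lemma~\ref{lem:GHdual} applied to the smaller symmetric pair inside $M(\sigma)$. The only presentational difference is that you phrase the contragredient step as showing $\zeta^{\vee} \in \Xi$ explicitly, whereas the paper applies Lemma~\ref{lem:GHdual} directly to the constituents $\zeta_j$ of $\zeta|_{(L_{\sigma})_0}$ and says the argument of Lemma~\ref{lem:zetat} then runs again; these are the same argument. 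The concern you flag at the end about disconnectedness and the reductive (rather than semisimple) nature of $M(\sigma)$ is legitimate but not special to your route; the paper handles it implicitly in Lemma~\ref{lem:zetat} by noting $K(\mathfrak{a}_{\mathfrak{p}})$ acts trivially on $\widetilde{\mathfrak{j}}$, exactly the observation you make.
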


\begin{proof}
We observe that the $\sigma$-split Cartan subalgebra
 $\widetilde {\mathfrak{j}}={\mathfrak{t}}+{\mathfrak{j}}$
 of ${\mathfrak{g}}$ is also 
 that of ${\mathfrak{l}}_{\sigma}$, 
 and that ${\mathfrak{b}}_{G/H} \cap {\mathfrak{l}}_{\sigma {\mathbb{C}}}$
 is a minimal parabolic subalgebra
 for the smaller symmetric pair 
 $({\mathfrak{l}}_{\sigma}, {\mathfrak{l}}_{\sigma} \cap {\mathfrak{h}})$, 
 as ${\mathfrak{b}}_{G/H} \cap {\mathfrak{l}}_{\sigma \mathbb{C}}$
 is the parabolic subalgebra 
 of ${\mathfrak{l}}_{\sigma {\mathbb{C}}}$
 associated to $\sum^+({\mathfrak{l}}_{\sigma {\mathbb{C}}}, {\mathfrak{j}}_{\mathbb{C}})$.

Suppose $\zeta \in \Xi$.  
With the notation as in Lemma \ref{lem:zetat}, 
 one has $\lambda_i|_{\mathfrak{t}_{\mathbb{C}}} \equiv 0$
 for all $i$ ($1 \le i \le r$).  
In turn, 
 it follows from Lemma \ref{lem:IrrGQ}
 that $\zeta_i$ belongs to $\operatorname{Irr}({\mathfrak{l}}_{\sigma};{\mathfrak{b}}_{G/H} \cap {\mathfrak{l}}_{\sigma {\mathbb{C}}})_f$, 
 which is naturally identified with 
 $\operatorname{Irr}({\mathfrak{p}}_{G/H};{\mathfrak{b}}_{G/H})_f$
by \eqref{eqn:IrrPL}, 
 hence $\zeta \in \operatorname{Irr}(P_{G/H};{\mathfrak{b}}_{G/H})_f$.

By Lemma \ref{lem:GHdual}, 
 if $\zeta_j$ has
 an $({\mathfrak{m}}(\sigma) \cap {\mathfrak{h}})$-fixed vector
 for some $j$, 
 then so does 
 the contragredient representation
 $\zeta_j^{\vee} \in \operatorname{Irr}({\mathfrak{l}}_{\sigma})_f$.  
Thus the same argument as in Lemma \ref{lem:zetat} 
 works, 
 and one concludes 
 $\zeta^{\vee} \in \operatorname{Irr}(P_{G/H};{\mathfrak{b}}_{G/H})_f$, 
 too.  
\end{proof}

\begin{proof}
[Proof of Theorem \ref{thm:quotient}]
Suppose $\Pi \in \operatorname{Irr}(G)_H$.  
Then the contragredient representation $\Pi^{\vee}$
 in the category ${\mathcal{M}}(G)$ can be realized
 as a subrepresentation of $C^{\infty}(G/H)$
 by the Frobenius reciprocity.  
By Oshima's embedding theorem (Proposition \ref{prop:embed}), 
 one finds $\zeta \in \Xi$
 such that $\invHom G{\Pi^{\vee}}{\operatorname{Ind}_{P_{G/H}}^G(\zeta)}\ne \{0\}$.  
Taking the dual in the category 
 ${\mathcal{M}}(G)$, 
 one has $\invHom G{\operatorname{Ind}_{P_{G/H}}^G(\zeta^{\ast})} \Pi \ne \{0\}$
 where we set $\zeta^{\ast}:=\zeta^{\vee} \otimes {\mathbb{C}}_{2\rho} \in \operatorname{Irr}(P_{G/H})_f$
 as in \eqref{eqn:2rho}.  
Since $\zeta^{\vee} \in \operatorname{Irr}(P_{G/H};{\mathfrak{b}}_{G/H})_f$
 by Lemma \ref{lem:XiPq}, 
 so does $\zeta^{\ast}$.  
Thus Theorem \ref{thm:quotient} is proved.  
\end{proof}

\begin{proof}
[Proof of Proposition \ref{prop:GKdim}]
Since any irreducible subquotient $\Pi \in C^{\infty}(G/H)$
 occurs in $\operatorname{Ind}_{P_{G/H}}^G(\zeta)$
 for some $\zeta \in \operatorname{Irr}(P_{G/H})_f$
 by Proposition \ref{prop:embed}, 
 the conclusion follows from \cite[Lem.~6.7]{xkProg2014}.  
\end{proof}

\section{Proof of Theorems \ref{thm:bdd} and \ref{thm:tensor}}
\label{sec:5}

In this section we complete the proof
 of Theorems \ref{thm:bdd} and \ref{thm:tensor}.  
We retain the assumption 
 that $G_{\mathbb{C}} \supset G_{\mathbb{C}}'$
 are connected complex Lie groups
 and that $G$ and $G'$ are real forms
 of $G_{\mathbb{C}}$ and $G_{\mathbb{C}}'$, 
 respectively.  
Since the equivalence 
 between the sphericity (ii) and the strong visibility (iii)
 is known in this setting
 by \cite{xtanaka}, 
 we shall prove
 the equivalence
 between the bounded multiplicity property (i)
 and the sphericity (ii).

\subsection{Proof of (ii) $\Rightarrow$ (i) in Theorem \ref{thm:bdd}}
Suppose we are in the setting of Theorem \ref{thm:bdd}.  
Let $B_{G/H}$ $(\subset G_{\mathbb{C}})$ be a Borel subgroup 
 for the symmetric space $G/H$
 (Definition \ref{def:Borel}).

Assume that $G_{\mathbb{C}}/{B_{G/H}}$ is $G_{\mathbb{C}}'$-spherical.  
We take a minimal parabolic subgroup $P_{G/H}$
 for the symmetric space $G/H$
 (Definition \ref{def:Psigma})
 such that ${\mathfrak{b}}_{G/H} \subset ({\mathfrak{p}}_{G/H})_{\mathbb{C}}$
 as in Lemma \ref{lem:BP}.

Suppose $\Pi \in \operatorname{Irr}(G)_H$
 and $ \pi \in \operatorname{Irr}(G')$.  
By Theorem \ref{thm:quotient}, 
 one finds $\xi \in \operatorname{Irr}(P_{G/H}; {\mathfrak{b}}_{G/H})_f$ 
 such that 
$
 \invHom G{\operatorname{Ind}_{P_{G/H}}^G (\xi)}{\Pi} \ne \{0\}
$, 
 which induces an inclusion:
\begin{align*}
\invHom{G'}
{\Pi|_{G'}}
{\pi}
\hookrightarrow
&
\invHom{G'}
{\operatorname{Ind}_{P_{G/H}}^G(\xi)|_{G'}}
{\pi}.  
\end{align*}

Thus the implication (ii) $\Rightarrow$ (i)
 in Theorem \ref{thm:bdd}
 is deduced from the implication (iii) $\Rightarrow$ (ii)
 in Theorem \ref{thm:Qsph}.

\subsection{Proof of (i) $\Rightarrow$ (ii) in Theorem \ref{thm:bdd}}
In this section, 
 we give a proof for the implication (i) $\Rightarrow$ (ii)
 in Theorem \ref{thm:bdd}
 by reducing it to the finite-dimensional case 
 as formulated in Theorem \ref{thm:bddfd} below.  

\begin{theorem}
\label{thm:bddfd}
Let $(G,H)$ be a reductive symmetric pair, 
 and $B_{G/H}$ a Borel subgroup for $G/H$ (Definition \ref{def:Borel}).  
Suppose $G'$ is an algebraic reductive subgroup of $G$. 
If $G_{\mathbb{C}}/{B_{G/H}}$ is not $G_{\mathbb{C}}'$-spherical, 
 then one has 
\[
  \underset{\Pi \in \operatorname{Irr}(G)_{H,f}}{\operatorname{sup}}
    m(\Pi|_{G'})
  =\infty.  
\]
\end{theorem}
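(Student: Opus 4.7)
The plan is to derive the claimed unbounded multiplicity in $\operatorname{Irr}(G)_{H,f}$ from the failure of sphericity of $G_{\mathbb{C}}/B_{G/H}$ by combining Lemma \ref{lem:VKN} with the Cartan--Helgason description of $H$-distinguished finite-dimensional representations supplied by Lemma \ref{lem:CH}.

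First, apply Lemma \ref{lem:VKN} with $Q := B_{G/H}$: since $G_{\mathbb{C}}/Q$ is assumed not $G_{\mathbb{C}}'$-spherical, one obtains $\lambda \in \Lambda_+$ with $\Pi_{N\lambda} \in \operatorname{Irr}(G_{\mathbb{C}}; \mathfrak{b}_{G/H})_{\operatorname{hol}}$ and $m(\Pi_{N\lambda}|_{G_{\mathbb{C}}'}) \geq N+1$ for every $N$. The central step is to arrange that $\lambda$ lies in the Cartan--Helgason semigroup $\Lambda_+(\mathfrak{g}_{\mathbb{C}}; \mathfrak{h}_{\mathbb{C}})$, so that Lemma \ref{lem:CH}(2) endows each $\Pi_{N\lambda}$ with a nonzero $H_{\mathbb{C}}$-fixed vector. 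To this end I would revisit the interior of the proof of Lemma \ref{lem:VKN}: given linearly independent $B'$-eigensections $f_1, f_2$ of $\mathcal{L}_{-\lambda_0}$ of the same weight, multiplying both by an arbitrary $B'$-eigensection of $\mathcal{L}_{-\nu}$ produces linearly independent $B'$-eigensections of $\mathcal{L}_{-(\lambda_0+\nu)}$ of the same weight. Combining this tensor construction with a $\sigma$-symmetrization exploiting the duality $\Pi^{\sigma} \simeq \Pi^{\vee}$ from Lemma \ref{lem:GHdual}, one pushes $\lambda_0$ into $\Lambda_+(\mathfrak{g}_{\mathbb{C}}; \mathfrak{h}_{\mathbb{C}})$ while preserving the non-multiplicity-free property.

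Once $\lambda \in \Lambda_+(\mathfrak{g}_{\mathbb{C}}; \mathfrak{h}_{\mathbb{C}})$ is in hand, Lemma \ref{lem:CH}(3) supplies a positive integer $k$ with $\Pi_{kN\lambda} \in \operatorname{Irr}(G)_{H,f}$ for every $N$. Since these representations are finite-dimensional and algebraic, irreducible constituents of $\Pi_{kN\lambda}|_{G'}$ correspond bijectively to those of $\Pi_{kN\lambda}|_{G_{\mathbb{C}}'}$ by complexification, hence $m(\Pi_{kN\lambda}|_{G'}) = m(\Pi_{kN\lambda}|_{G_{\mathbb{C}}'}) \geq kN+1$, giving the desired infinite supremum.

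The main obstacle is the intermediate refinement step. As Remark \ref{rem:rankCH} makes explicit, $\Lambda_+(\mathfrak{g}_{\mathbb{C}}; \mathfrak{h}_{\mathbb{C}})$ is in general a strict sub-semigroup of $\operatorname{Irr}(\mathfrak{g}; \mathfrak{b}_{G/H})_f$ (for instance in the group case its rank is halved), so the weight produced by Lemma \ref{lem:VKN} is not automatically $H_{\mathbb{C}}$-distinguished. Exhibiting a distinguished witness for non-sphericity is the delicate technical heart of the argument.
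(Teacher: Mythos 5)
Your outline matches the architecture of the paper's proof: apply Lemma~\ref{lem:VKN} with $Q=B_{G/H}$ to get $\lambda$ with $m(\Pi_{N\lambda}|_{G_{\mathbb{C}}'})\geq N+1$, then shift $\lambda$ into the Cartan--Helgason semigroup $\Lambda_+({\mathfrak{g}}_{\mathbb{C}};{\mathfrak{h}}_{\mathbb{C}})$ by an operation that does not decrease multiplicities, and finish with Lemma~\ref{lem:CH}. You correctly identify both the key monotonicity trick (multiplying by a highest-weight section, which the paper formalizes as Lemma~\ref{lem:increase}) and the heart of the difficulty, namely that $\operatorname{Irr}({\mathfrak{g}};{\mathfrak{b}}_{G/H})_f$ is strictly larger than $\Lambda_+({\mathfrak{g}}_{\mathbb{C}};{\mathfrak{h}}_{\mathbb{C}})$.

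However, the device you propose for the shift does not close the gap, and you yourself flag it as unresolved. Lemma~\ref{lem:GHdual} asserts $\Pi^{\sigma}\simeq\Pi^{\vee}$ only for $\Pi$ \emph{already} in $\operatorname{Irr}(G)_{H,f}$ --- its proof rests on the highest weight already vanishing on ${\mathfrak{t}}$ --- so invoking it to force membership in $\operatorname{Irr}(G)_{H,f}$ is circular. The paper instead makes the ``$\sigma$-symmetrization'' explicit: choose $\nu\in\Lambda_+({\mathfrak{g}}_{\mathbb{C}};{\mathfrak{h}}_{\mathbb{C}})$ (so $\sigma\nu=-\nu$) regular enough that $2\langle\nu,\alpha\rangle\geq\langle\sigma\lambda,\alpha\rangle$ for every $\alpha\in\Delta^+({\mathfrak{g}}_{\mathbb{C}},\widetilde{{\mathfrak{j}}}_{\mathbb{C}})$ with $\alpha|_{\mathfrak{j}}\neq 0$; for the remaining $\alpha$ one uses that $\lambda$, being a character of $Q$, vanishes on $[{\mathfrak{l}}_{\mathbb{C}},{\mathfrak{l}}_{\mathbb{C}}]$, so $\langle\sigma\lambda,\alpha\rangle=0$. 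Hence $2\nu-\sigma\lambda$ is dominant, and $\Lambda:=\lambda+(2\nu-\sigma\lambda)=\lambda+\nu-\sigma(\lambda+\nu)$ is dominant, is of the form $\lambda$ plus a dominant weight (so Lemma~\ref{lem:increase} applies), and vanishes on ${\mathfrak{t}}=\widetilde{{\mathfrak{j}}}^{\sigma}$ by construction, giving $2\Lambda\in\Lambda_+({\mathfrak{g}}_{\mathbb{C}};{\mathfrak{h}}_{\mathbb{C}})$. The regularity estimate on $\nu$ and the cancellation on the compact roots are exactly the ingredients your sketch is missing.
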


We should remark
 that under the same assumption 
 one has 
\[
  \underset{\Pi \in \operatorname{Irr}(G;{\mathfrak{b}}_{G/H})_{f}}\sup
  m(\Pi|_{G'})=\infty, 
\]
as was seen in Lemma \ref{lem:VKN}.  
However, 
 the set $\operatorname{Irr}(G)_{H, f}$
 may be much smaller
 than $\operatorname{Irr}(G;{\mathfrak{b}}_{G/H})_{f}$, 
 see Remark \ref{rem:rankCH}
 and Lemma \ref{lem:CH}.  
Thus, 
 the proof of Theorem \ref{thm:bddfd} needs some further argument.

We recall from \eqref{eqn:CW}
 that $\Pi_{\lambda}$ denotes an irreducible finite-dimensional holomorphic representation of $G_{\mathbb{C}}$
 having highest weight $\lambda$ 
 with respect to $\Delta^+({\mathfrak{g}}_{\mathbb{C}}, \widetilde{\mathfrak{j}}_{\mathbb{C}})$.  
The same letter $\Pi_{\lambda}$ is used 
 to denote the restriction to the real form $G$.  
We begin with an elementary 
 but useful observation:
\begin{lemma}
\label{lem:increase}
Let $G_{\mathbb{C}} \supset G_{\mathbb{C}}'$ be
 a pair of connected complex reductive Lie groups.  
Suppose $\Pi_{\lambda} \in \operatorname{Irr}(G_{\mathbb{C}})_{\operatorname{hol}}$.  
Then one has
\begin{equation}
\label{eqn:JLT62}
 m(\Pi_{\lambda+\nu}|_{G_{\mathbb{C}}'})
  \ge 
 m(\Pi_{\lambda}|_{G_{\mathbb{C}}'})
\end{equation}
 for any 
$
   \Pi_{\nu} \in \operatorname{Irr}(G_{\mathbb{C}})_{\operatorname{hol}}.  
$
 Here we recall \eqref{eqn:mPi}
 for the definition of $ m(\Pi_{\lambda}|_{G_{\mathbb{C}}'})$.  
\end{lemma}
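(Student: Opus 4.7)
The plan is to construct a linear map $T \colon \Pi_\lambda \hookrightarrow \Pi_{\lambda+\nu}$ which is injective and equivariant for a fixed Borel subgroup of $G_{\mathbb{C}}'$ up to a weight shift, and then to read off the inequality \eqref{eqn:JLT62} from a highest-weight comparison for $G_{\mathbb{C}}'$.

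To that end, I fix a Borel subgroup $B' \subset G_{\mathbb{C}}'$ with unipotent radical $N'$ and Cartan part $H'$, and realize $\Pi_\lambda$, $\Pi_\nu$, and $\Pi_{\lambda+\nu}$ as spaces of holomorphic sections of $G_{\mathbb{C}}$-equivariant line bundles over the full flag variety of $G_{\mathbb{C}}$ via the Borel--Weil theorem (as in Lemma~\ref{lem:BW}), arranged so that multiplication of sections provides the $G_{\mathbb{C}}$-equivariant Cartan surjection
\[
\Pi_\lambda \otimes \Pi_\nu \twoheadrightarrow \Pi_{\lambda+\nu}.
\]
Now pick $s_\nu \in \Pi_\nu$ to be a $B'$-highest weight vector when $\Pi_\nu$ is viewed as a $G_{\mathbb{C}}'$-module, and let $\chi$ denote its (dominant integral) $H'$-weight. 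Define $T(s) := s \cdot s_\nu$. Because the flag variety is irreducible and $s_\nu$ is a non-zero holomorphic section, $T$ is injective, and because $s_\nu$ is $N'$-fixed of $H'$-weight $\chi$, the map $T$ is $N'$-equivariant and sends vectors of $H'$-weight $\mu$ to vectors of $H'$-weight $\mu+\chi$.

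For each $B'$-dominant integral weight $\mu$, the map $T$ therefore induces an injection from the $\mu$-weight subspace of $(\Pi_\lambda)^{N'}$ into the $(\mu+\chi)$-weight subspace of $(\Pi_{\lambda+\nu})^{N'}$, and $\mu+\chi$ remains dominant since both summands are. By the theorem of the highest weight for the connected complex reductive group $G_{\mathbb{C}}'$, the dimensions of these two subspaces equal the $G_{\mathbb{C}}'$-isotypic multiplicities $[\Pi_\lambda|_{G_{\mathbb{C}}'} \colon \pi_\mu]$ and $[\Pi_{\lambda+\nu}|_{G_{\mathbb{C}}'} \colon \pi_{\mu+\chi}]$, where $\pi_\mu$ denotes the irreducible holomorphic $G_{\mathbb{C}}'$-representation with highest weight $\mu$. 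Taking the supremum over $\mu$ yields \eqref{eqn:JLT62}. The only delicate point is pinning down Borel--Weil conventions so that multiplication of sections actually realizes the Cartan projection; modulo this standard setup, injectivity of multiplication by a non-zero section on an irreducible variety together with highest-weight theory for the reductive group $G_{\mathbb{C}}'$ makes the argument routine.
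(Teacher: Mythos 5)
Your proof is correct and takes essentially the same approach as the paper: realize the representations via Borel--Weil, multiply by a $B'$-highest weight section of $\Pi_\nu$, use that multiplication by a nonzero holomorphic section on an irreducible variety is injective (the paper phrases this as $\mathcal{O}(G_{\mathbb{C}})$ having no zero divisors), and count $N'$-fixed vectors of a given weight to read off multiplicities. Your formulation as a globally defined injection $T$ on all of $\Pi_\lambda$ that shifts the $B'$-highest-weight decomposition by $\chi$ is a slightly cleaner packaging of the same argument, where the paper instead picks the optimizing weight $\mu$ and works with the $k = m(\Pi_\lambda|_{G_{\mathbb{C}}'})$ explicit highest weight vectors directly.
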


\begin{proof}
Without loss of generality, 
 we may assume
 that $G_{\mathbb{C}}$ is simply connected.  
Let $B$ be the Borel subgroup of $G_{\mathbb{C}}$
 corresponding to the positive system $\Delta^+({\mathfrak{g}}_{\mathbb{C}}, \widetilde {\mathfrak{j}}_{\mathbb{C}})$.

We lift $\lambda \in \Lambda_+$
 to a holomorphic character 
 of the Cartan subgroup of $G_{\mathbb{C}}$, 
 extend it to the opposite Borel subgroup $B_-$
 by letting the unipotent radical act trivially, 
 and then form a $G_{\mathbb{C}}$-equivariant holomorphic line bundle
 ${\mathcal{L}}_{\lambda} = G_{\mathbb{C}} \times_{B_-} {\mathbb{C}}_{\lambda}$
 over the flag variety $G_{\mathbb{C}}/B_-$
 so that the Borel--Weil theorem gives a realization of $\Pi_{\lambda}$ 
 on ${\mathcal{O}}(G_{\mathbb{C}}/B_-, {\mathcal{L}}_{\lambda})$.  
Likewise, 
 we realize $\Pi_{\nu}$ on ${\mathcal{O}}(G_{\mathbb{C}}/B_-, {\mathcal{L}}_{\nu})$.

For the subgroup $G_{\mathbb{C}}'$, 
 we take a Cartan subalgebra of ${\mathfrak{g}}_{\mathbb{C}}'$, 
 fix a positive system, 
 write the Cartan--Weyl bijection \eqref{eqn:CW}
 as $\operatorname{Irr}({\mathfrak{g}}')_f \simeq \Lambda_+'$, $\pi_{\mu} \leftrightarrow \mu$, 
 and denote by $B'$ the Borel subgroup of $G_{\mathbb{C}}'$.  
We take $\pi_{\mu} \in \operatorname{Irr}(G')_f$
 such that the multiplicity $[\Pi_{\lambda}|_{G_{\mathbb{C}}'}:\pi_{\mu}]$ attains
 its maximum $k:=m(\Pi_{\lambda}|_{G_{\mathbb{C}}'})$.  
We also take $\pi_{\tau} \in \operatorname{Irr}(G')_f$
such that $[\Pi_{\nu}|_{G_{\mathbb{C}}'}:\pi_{\tau}] \ne 0$.  
Via the Borel--Weil realization, 
 one finds holomorphic functions
 $h_j$ $(1 \le j \le k)$
 and $h \in {\mathcal{O}}(G_{\mathbb{C}})$
 corresponding to highest weight vectors
 for the subgroup $G'$
 satisfying 
\begin{align*}
  h_j (b^{-1} g q)=&\mu(b) \lambda^{-1}(q) h_j(g),
\qquad
 1 \le j \le k, 
\\
  h (b^{-1} g q)=&\tau(b) \nu^{-1}(q) h(g)
\end{align*}
for all $b\in B'$, $g \in G_{\mathbb{C}}$
 and $q \in B_-$, 
 where we use the same letters $\mu$ and $\tau$
 to denote the holomorphic characters of $B'$.  
This shows
 that $h_j h$ $(1 \le j \le k)$ belong
 to ${\mathcal{O}}(G_{\mathbb{C}}/B_-, {\mathcal{L}}_{\lambda+\nu})$
 on which $G_{\mathbb{C}}$ acts as the irreducible representation $\Pi_{\lambda+\nu}$,
 and they are highest weight vectors
 for the subgroup $G_{\mathbb{C}}'$
 with the some highest weight $\mu+\tau$.  
Since $h_j h$ ($1 \le j \le k$) are linearly independent, 
 one concludes $[\Pi_{\lambda+\nu}|_{G_{\mathbb{C}}'}:\pi_{\mu + \tau}] \ge k$ $(=m(\Pi_{\lambda}|_{G_{\mathbb{C}}'}))$.  
Thus the lemma is proved.  
\end{proof}

\begin{proof}
[Proof of Theorem \ref{thm:bddfd}]
Since neither of the assumption nor the conclusion changes
 if we replace $G$
 by the identity component
 of the derived subgroup $[G,G]$, 
 we may and do assume $G$ is a connected semisimple Lie group.  
Moreover, 
 we can assume $G'$ is connected.  
We take compatible positive systems
 $\Delta^+({\mathfrak{g}}_{\mathbb{C}},\widetilde {\mathfrak{j}}_{\mathbb{C}})$
 and $\Sigma^+({\mathfrak{g}}_{\mathbb{C}},{\mathfrak{j}}_{\mathbb{C}})$
 as in Section \ref{subsec:Satake}, 
 and define a parabolic subalgebra ${\mathfrak{q}}$
 associated to $\Sigma^+({\mathfrak{g}}_{\mathbb{C}},{\mathfrak{j}}_{\mathbb{C}})$
 as in Definition \ref{def:Borel}. 
Then ${\mathfrak{q}}$ is a Borel subalgebra
 for the symmetric space $G/H$.  
We denote by ${\mathfrak{l}}$ 
 the centralizer of ${\mathfrak{j}}$ 
in ${\mathfrak{g}}$.

We begin with the case 
 where $G$ is contained
 in a simply connected complexification $G_{\mathbb{C}}$.  
Let $Q$ be the parabolic subgroup
 of $G_{\mathbb{C}}$
 with Lie algebra ${\mathfrak{q}}$, 
 which is a Borel subgroup for $G/H$.

Suppose $G_{\mathbb{C}}/Q$ is not $G_{\mathbb{C}}'$-spherical.  
By Lemma \ref{lem:VKN}, 
 one finds $\lambda \in \Lambda_+$
 such that $\Pi_{N\lambda} \in \operatorname{Irr}(G_{\mathbb{C}};{\mathfrak{q}})_{\operatorname{hol}}$
 and 
 $m(\Pi_{N \lambda}|_{G_{\mathbb{C}}'}) \ge N+1$ for all $N \in {\mathbb{N}}$.  We recall 
$\widetilde{\mathfrak{j}}
=
 \widetilde{\mathfrak{j}}^{\sigma} 
\oplus 
 \widetilde{\mathfrak{j}}^{-\sigma}
={\mathfrak{t}} + {\mathfrak{j}}
$
 is a $\sigma$-split Cartan subalgebra 
 and that the positive system $\Delta^+({\mathfrak{g}}_{\mathbb{C}}, \widetilde{\mathfrak{j}}_{\mathbb{C}})$ is compatible with $\Sigma^+({\mathfrak{g}}_{\mathbb{C}}, {\mathfrak{j}}_{\mathbb{C}})$.  
We also recall
 that any element $\nu \in \Lambda_+({\mathfrak{g}}_{\mathbb{C}};{\mathfrak{h}}_{\mathbb{C}})$
 satisfies $\sigma \nu=-\nu$
 by the definition \eqref{eqn:JLTCH}.  
We take $\nu \in \Lambda_+({\mathfrak{g}}_{\mathbb{C}};{\mathfrak{h}}_{\mathbb{C}})$
which is regular enough, 
 namely, 
 $2 \langle \nu, \alpha \rangle \ge \langle \sigma \lambda, \alpha \rangle$
 for all $\alpha \in \Delta^+({\mathfrak{g}}_{\mathbb{C}}, \widetilde {\mathfrak{j}}_{\mathbb{C}})$
 such that $\alpha|_{\mathfrak{j}} \ne 0$.  
On the other hand, 
 for $\alpha \in \Delta^+({\mathfrak{g}}_{\mathbb{C}}, \widetilde {\mathfrak{j}}_{\mathbb{C}})$
 with $\alpha|_{\mathfrak{j}} = 0$, 
 one has $\langle \sigma \lambda, \alpha \rangle = \langle \lambda, \alpha \rangle=0$
because such $\alpha$ is regarded as an element of $\Delta({\mathfrak{l}}_{\mathbb{C}}, \widetilde {\mathfrak{j}}_{\mathbb{C}})$
 whereas $\lambda$ vanishes on $[{\mathfrak{l}}_{\mathbb{C}}, {\mathfrak{l}}_{\mathbb{C}}]$
 as the differential of a character of $Q$.  
Thus $\langle 2 \nu - \sigma \lambda, \alpha \rangle \ge 0$
 for all 
$
   \alpha 
   \in 
\Delta^+({\mathfrak{g}}_{\mathbb{C}}, \widetilde{\mathfrak{j}}_{\mathbb{C}}). 
$   
We define a dominant integral weight by 
\[
      \Lambda:=\lambda+\nu-\sigma(\lambda+\nu)=\lambda+(2 \nu-\sigma \lambda) 
\]
which vanishes on ${\mathfrak{t}}$ $(= \widetilde {\mathfrak{j}}^{\sigma})$.  
Thus one has 
$
   2\Lambda
   \in \Lambda_+({\mathfrak{g}}_{\mathbb{C}};{\mathfrak{h}}_{\mathbb{C}}).  
$
Moreover
 the irreducible representation $\Pi_{2N\Lambda}$ of $G_{\mathbb{C}}$
 satisfies
$
  m(\Pi_{2N\Lambda}|_{G_{\mathbb{C}}'}) \ge m(\Pi_{2N\lambda}|_{G_{\mathbb{C}}'})\ge 2 N+1
$
 for any $N \in {\mathbb{N}}$
 when restricted to the subgroup $G_{\mathbb{C}}'$
 by Lemma \ref{lem:increase}.

When $G_{\mathbb{C}}$ is not simply connected, 
 we replace $\Lambda$ by $k \Lambda$
 where $k$ is a positive integer given in Lemma \ref{lem:CH}.  
Then by the Cartan--Helgason theorem 
(Lemma \ref{lem:CH}), 
 $\Pi_{2N\Lambda} \in  \operatorname{Irr}(G)_H$.  
Thus Theorem \ref{thm:bddfd} is shown.  
\end{proof}

\subsection{Proof of Theorem \ref{thm:tensor}}
\label{subsec:tensorpf}

Suppose we are in the setting of Theorem \ref{thm:tensor}.  
We observe
 that the direct product group $B_{G/H_1} \times B_{G/H_2}$
 is a Borel subgroup 
 for the reductive symmetric space
 $(G \times G)/(H_1 \times H_2)$, 
 see Definition \ref{def:Borel}.

We also observe 
 that the restriction of the outer tensor product representation
 $\Pi_1 \boxtimes \Pi_2$
 of $G \times G$
 to its subgroup
 $\operatorname{diag} G$
 $(\simeq G)$
 is nothing but the tensor product representation
 $\Pi_1 \otimes \Pi_2$.  
Hence Theorem \ref{thm:tensor} follows 
 as a special case of Theorem \ref{thm:bdd}
 for the restriction $(G \times G) \downarrow \operatorname{diag} G$.

\section{Classification of the triples $(G,H,G')$}
\label{sec:classification}
Problem \ref{q:bdd} asks a criterion 
 for the triple 
 $H \subset G \supset G'$
 having the bounded multiplicity property \eqref{eqn:BBH}
 of the restriction, 
 namely, 
\begin{equation}
\label{eqn:BBHsup}
   \underset{\Pi \in \operatorname{Irr}(G)_H}\sup
\,\,
   \underset{\pi \in \operatorname{Irr}(G')}\sup
   [\Pi|_{G'}:\pi]<\infty, 
\end{equation}
where we recall
 $[\Pi|_{G'}:\pi]=\dim_{\mathbb{C}} \invHom{G'}{\Pi|_{G'}}{\pi}$
 from \eqref{eqn:JLTp2}.

In this section 
 we give a classification 
 of the triples $H \subset G \supset G'$
 in the setting 
 where $(G,H)$ and $(G,G')$ are symmetric pairs. 
In order to state the classification, 
 let us consider what will be a natural equivalence relation
 on triples $(G,H,G')$ for this problem.  
First we observe
 that the bounded multiplicity property \eqref{eqn:BBHsup}
 does not change by taking finite coverings
 or the identity components of the groups.  
Second, 
 it does not change also 
 by replacing the two subgroups $H$ and $G'$ simultaneously 
  with $\sigma H$ and $\sigma G'$, 
respectively, 
 by an automorphism $\sigma$ of the group $G$.  
Third, 
 we observe a $G$-equivalence of the regular representations
 on $C^{\infty}(G/H) \simeq C^{\infty}(G/a H a^{-1})$
 for any $a \in G$
 and a $G'$-equivalence of the restriction 
 $\Pi|_{G'} \simeq \Pi|_{b G'b^{-1}}$
 via the inner automorphism $G' \simeq b G'b^{-1}$
 for any $b \in G$.  
Hence we shall adopt the following definition
 of the infinitesimal equivalence
 in our classification 
 of the triples $(G,H,G')$
 satisfying the bounded multiplicity property \eqref{eqn:BBHsup}.  

\begin{definition}
[equivalence of the triple $(G,H,G')$]
\label{def:eqtriple}
We say the triples
 $H \subset G \supset G'$
 and $\widetilde H \subset \widetilde G \supset \widetilde G'$
are {\it{infinitesimally equivalent}}
 if there is an isomorphism
${\mathfrak{g}} \simeq \widetilde{\mathfrak{g}}$
 between the Lie algebras of $G$ and $\widetilde G$, 
 and if via this identification, 
 ${\mathfrak{h}}$ is conjugate to $\widetilde{\mathfrak{h}}$
 by an inner automorphism
 and ${\mathfrak{g}}'$ is conjugate to $\widetilde{\mathfrak{g}}'$
 by another inner automorphism.  
\end{definition}

The above definition says, 
 in particular, 
 that the triples
 $H_1 \subset G \supset G_1'$
 and $H_2 \subset G \supset G_2'$
 are infinitesimally equivalent if 
there exist $a,b \in G$ and $\sigma \in \operatorname{Aut}({\mathfrak{g}})$
 such that 
$
  {\mathfrak{h}}_2 = \sigma \operatorname{Ad}(a){\mathfrak{h}}_1
$
 and 
$
  {\mathfrak{g}}_2'= \sigma \operatorname{Ad}(b){\mathfrak{g}}_1'.  
$
We note that $\sigma$ may be an outer automorphism.

In what follows, 
 we shall assume
 that at least 
 one of the symmetric spaces $G/H$ or $G/G'$ is irreducible.  
This means that we shall treat the following cases:
\par\noindent
{\bf{Case I.}}\enspace
\hphantom{I}\,\,\,$G$ is simple, 
\par\noindent
{\bf{Case II.}}\hphantom{ii}
$G/H$ is a group manifold $({}^{\backprime} G \times {}^{\backprime} G)/\operatorname{diag}{}^{\backprime} G$ for simple ${}^{\backprime} G$, 
\par\noindent
{\bf{Case III.}}\enspace
$G/G'$ is a group manifold $({}^{\backprime} G \times {}^{\backprime} G)/\operatorname{diag}{}^{\backprime} G$ for simple ${}^{\backprime} G$.  
\par
Case II deals with the representation 
 $\Pi = \tau \boxtimes \tau^{\vee}$
 of $G={}^{\backprime}G \times {}^{\backprime}G$
 for some $\tau \in \operatorname{Irr}({}^{\backprime} G)$, 
 whereas the restriction of representations
 $\Pi|_{G'}$ in Case III is nothing 
 but the tensor product 
 of two irreducible representations
 $\Pi_1$ and $\Pi_2$ of ${}^{\backprime}G$
 when $\Pi \in \operatorname{Irr}(G)$ is of the form
 $\Pi_1 \boxtimes \Pi_2$.  
We note that there is an overlap 
between Case II and Case III.

Finally, 
 our criterion in Theorem \ref{thm:bdd}
 tells that it suffices to classify the triples
 of the {\it{complexified Lie algebras}}
 $({\mathfrak{g}}_{\mathbb{C}}, {\mathfrak{h}}_{\mathbb{C}}, {\mathfrak{g}}_{\mathbb{C}}')$.  
With these observations, 
 the classification of the triples $(G,H,G')$
 will be given in Theorems \ref{thm:listreal}
 and \ref{thm:cpxlist} for Case I, 
 in Theorem \ref{thm:gplist} for Case II, 
 and in Theorem \ref{thm:tensorlist} for Case III.

The proof of these theorems will be given in Section \ref{sec:pfclass}.  

\subsection{Classification of $(G,H,G')$ with $G$ simple}
In this subsection 
 we deal with the case
 that $G$ is a simple Lie group.  
The classification 
of the triples $(G, H, G')$
 having the bounded multiplicity property
 is given in Theorem \ref{thm:listreal}
 when 
 ${\mathfrak{g}}$ is not a complex Lie algebra
 (equivalently, the complexification ${\mathfrak{g}}_{\mathbb{C}}$ is simple)
 and Theorem \ref{thm:cpxlist}
 when ${\mathfrak {g}}$ is a complex simple Lie algebra
  up to the infinitesimal equivalence 
 in Definition \ref{def:eqtriple}.

\begin{theorem}
\label{thm:listreal}
Suppose that ${\mathfrak{g}}_{\mathbb{C}}$ is simple
 and that $(G,H)$ and $(G,G')$ are symmetric pairs.  
Then the bounded multiplicity property \eqref{eqn:BBHsup}
 holds for the triple $(G,H,G')$ 
 if and only if the complexified Lie algebras 
 $({\mathfrak{g}}_{\mathbb{C}}, {\mathfrak{h}}_{\mathbb{C}}, {\mathfrak{g}}_{\mathbb{C}}')$ 
 are in Table \ref{tab:0.1}.  
In the table, 
 $p$, $q$ are arbitrary subject to $n=p+q$.

\begin{table}[H]
\begin{minipage}[t]{.45\textwidth}
\begin{tabular}[t]{ccc}
${\mathfrak{g}}_{\mathbb{C}}$
&${\mathfrak{h}}_{\mathbb{C}}$
&${\mathfrak{g}}_{\mathbb{C}}'$
\\
\hline
${\mathfrak{sl}}_n$
&${\mathfrak{gl}}_{n-1}$
&${\mathfrak{sl}}_p \oplus {\mathfrak{sl}}_q \oplus {\mathbb{C}}$
\\
${\mathfrak{sl}}_{2m}$
&${\mathfrak{gl}}_{2m-1}$
&${\mathfrak{sp}}_m$
\\
${\mathfrak{sl}}_{6}$
&${\mathfrak{sp}}_{3}$
&${\mathfrak{sl}}_4 \oplus {\mathfrak{sl}}_2 \oplus {\mathbb{C}}$
\\
${\mathfrak{so}}_n$
&${\mathfrak{so}}_{n-1}$
&${\mathfrak{so}}_p \oplus {\mathfrak{so}}_q$
\\
${\mathfrak{so}}_{2m}$
&${\mathfrak{so}}_{2m-1}$
&${\mathfrak{gl}}_m$
\\
${\mathfrak{so}}_{2m}$
&${\mathfrak{so}}_{2m-2} \oplus {\mathbb{C}}$
&${\mathfrak{gl}}_m$
\\
${\mathfrak{sp}}_n$
&${\mathfrak{sp}}_{n-1} \oplus {\mathfrak{sp}}_1$
&${\mathfrak{sp}}_p \oplus {\mathfrak{sp}}_q$
\\
${\mathfrak{sp}}_n$
&${\mathfrak{sp}}_{n-2} \oplus {\mathfrak{sp}}_2$
&${\mathfrak{sp}}_{n-1} \oplus {\mathfrak{sp}}_1$
\\
${\mathfrak{e}}_6$
&${\mathfrak{f}}_4$
&${\mathfrak{so}}_{10} \oplus {\mathbb{C}}$
\\
${\mathfrak{f}}_4$
&${\mathfrak{so}}_{9}$
&${\mathfrak{so}}_9$
\\
\end{tabular}
  \end{minipage}
  \hfill
  \begin{minipage}[t]{.45\textwidth}
\begin{tabular}[t]{ccc}
${\mathfrak{g}}_{\mathbb{C}}$
&${\mathfrak{h}}_{\mathbb{C}}$
&${\mathfrak{g}}_{\mathbb{C}}'$
\\
\hline
${\mathfrak{sl}}_n$
&${\mathfrak{so}}_{n}$
&${\mathfrak{gl}}_{n-1}$
\\
${\mathfrak{sl}}_{2m}$
&${\mathfrak{sp}}_{m}$
&${\mathfrak{gl}}_{2m-1}$
\\
${\mathfrak{sl}}_n$
&${\mathfrak{sl}}_p \oplus {\mathfrak{sl}}_q \oplus {\mathbb{C}}$
&${\mathfrak{gl}}_{n-1}$
\\
${\mathfrak{so}}_{n}$
&${\mathfrak{so}}_{p} \oplus {\mathfrak{so}}_q$
&${\mathfrak{so}}_{n-1}$
\\
${\mathfrak{so}}_{2m}$
&${\mathfrak{gl}}_{m}$
&${\mathfrak{so}}_{2m-1}$
\\
\end{tabular}
  \end{minipage}
\caption{Triples $({\mathfrak{g}}_{\mathbb{C}}, {\mathfrak{h}}_{\mathbb{C}}, {\mathfrak{g}}_{\mathbb{C}}')$ with ${\mathfrak{g}}_{\mathbb{C}}$ simple in Theorem \ref{thm:bdd}}
\label{tab:0.1}
\hfil
\end{table}

\end{theorem}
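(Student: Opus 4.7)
The strategy is to translate the bounded multiplicity problem into a purely complex-geometric sphericity problem via Theorem \ref{thm:bdd}, and then appeal to the existing classification of spherical actions on generalized flag varieties of complex reductive groups.

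By Theorem \ref{thm:bdd}, the property \eqref{eqn:BBHsup} is equivalent to the sphericity of $G_{\mathbb{C}}/B_{G/H}$ with respect to $G_{\mathbb{C}}'$. In particular, it depends only on the triple of complexified Lie algebras $(\mathfrak{g}_{\mathbb{C}}, \mathfrak{h}_{\mathbb{C}}, \mathfrak{g}_{\mathbb{C}}')$. Therefore the first step is to identify, for each irreducible complex symmetric pair $(\mathfrak{g}_{\mathbb{C}}, \mathfrak{h}_{\mathbb{C}})$ from Berger's list, the Borel subgroup $B_{G/H}$ as a concrete (parabolic) subgroup $Q$ of $G_{\mathbb{C}}$. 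For this I would use Lemma \ref{lem:Satake}: $B_{G/H}$ is the complexification of a minimal parabolic subalgebra of the dual real form $\mathfrak{g}_{\mathbb{R}} = \mathfrak{g}_{\mathbb{C}}^{\sigma\theta}$, so the nilradical of $B_{G/H}$ is read off from the black nodes of the Satake diagram of $\mathfrak{g}_{\mathbb{R}}$. This produces a canonical parabolic $Q = Q_{(\mathfrak{g}_{\mathbb{C}},\mathfrak{h}_{\mathbb{C}})} \subset G_{\mathbb{C}}$ attached to each complex symmetric pair.

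Next, for each complex symmetric subgroup $G_{\mathbb{C}}'$ of $G_{\mathbb{C}}$ (again from Berger's list), the question becomes: when is $G_{\mathbb{C}}/Q$ a spherical $G_{\mathbb{C}}'$-variety? Equivalently, does the complex symmetric pair $(\mathfrak{g}_{\mathbb{C}}, \mathfrak{g}_{\mathbb{C}}')$ admit an open $Q$-orbit on $G_{\mathbb{C}}/G_{\mathbb{C}}'$? This is a classical \emph{spherical pair}/\emph{decomposable pair} problem for symmetric spaces, whose complete answer is available in the literature. I would rely on the classification of He--Nishiyama--Ochiai--Oshima \cite{xhnoo} of parabolic subgroups $Q$ acting spherically on $G_{\mathbb{C}}/G_{\mathbb{C}}'$ for complex symmetric pairs, cross-checked with Kr\"amer's list \cite{xkramer} (which corresponds to the extreme case when $Q$ is a Borel subgroup of $G_{\mathbb{C}}$, i.e., when $G/H$ is split) and with Tanaka's classification \cite{xtanaka12} of strongly visible actions via the equivalence (ii)$\Leftrightarrow$(iii) in Theorem \ref{thm:bdd}.

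Case-by-case, the verification splits naturally: the extremal case $\mathfrak{h}_{\mathbb{C}} = \mathfrak{k}_{\mathbb{C}}$ (Riemannian, so $B_{G/H}$ is a Borel subgroup) recovers the Kr\"amer list \eqref{eqn:BBlist} in every complexified slot, while the non-Riemannian complex symmetric pairs give rise to proper parabolics $Q$, for which the spherical subgroups $G_{\mathbb{C}}'$ are precisely those recorded in Table \ref{tab:0.1}. The left half of the table records the cases where $H$ is non-compact (so $Q$ is a proper parabolic) and $G_{\mathbb{C}}'$ is any Kr\"amer-spherical symmetric subgroup compatible with that $Q$; the right half records the cases where $G'$ is of Gan--Gross--Prasad type $(\mathfrak{sl}_n,\mathfrak{gl}_{n-1})$ or $(\mathfrak{so}_n,\mathfrak{so}_{n-1})$, for which spherical $Q$'s coming from symmetric $H$'s enlarge the range beyond the Kr\"amer list. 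The main obstacle is bookkeeping: one must carefully match each Satake-diagram-derived parabolic $Q$ with the correct entry in the spherical-parabolic classification, exclude spurious coincidences arising from outer automorphisms acting on the triple $(\mathfrak{g}_{\mathbb{C}},\mathfrak{h}_{\mathbb{C}},\mathfrak{g}_{\mathbb{C}}')$, and handle the exceptional pairs $(\mathfrak{e}_6,\mathfrak{f}_4)$ and $(\mathfrak{f}_4,\mathfrak{so}_9)$ individually, where the Satake diagram yields a non-standard parabolic and no shortcut via classical invariant theory is available. All remaining triples not appearing in Table \ref{tab:0.1} are ruled out by exhibiting an explicit non-open $Q$-orbit structure on $G_{\mathbb{C}}/G_{\mathbb{C}}'$, which amounts to a dimension count $\dim Q + \dim G_{\mathbb{C}}' < \dim G_{\mathbb{C}}$ in most cases, completed by invariant-theoretic obstructions from \cite{xhnoo} in the borderline cases.
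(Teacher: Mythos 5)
Your plan is essentially the paper's: reduce to the sphericity criterion of Theorem \ref{thm:bdd}, identify the Borel subalgebra $\mathfrak{b}_{G/H}$ via the Satake diagram of the dual real form (Lemma \ref{lem:Satake}), and invoke the He--Nishiyama--Ochiai--Oshima / Tanaka classification \cite{xhnoo,xtanaka12} of spherical (equivalently, strongly visible) parabolics for complex symmetric pairs. The only substantive difference is the ordering of the bookkeeping---you fix $(\mathfrak{g}_{\mathbb{C}},\mathfrak{h}_{\mathbb{C}})$ first and then scan $\mathfrak{g}_{\mathbb{C}}'$, while the paper fixes $(\mathfrak{g}_{\mathbb{C}},\mathfrak{g}_{\mathbb{C}}')$ and scans $\mathfrak{h}_{\mathbb{C}}$---though your dimension obstruction to sphericity should compare $\dim Q$ with $\dim G_{\mathbb{C}}/B'$ for a Borel subgroup $B'$ of $G_{\mathbb{C}}'$, not with $\dim G_{\mathbb{C}}'$ itself.
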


\begin{remark}
\label{rem:Table}
Since the classification is given by the infinitesimal equivalence
 given in Definition \ref{def:eqtriple}, 
 we have omitted some cases such as
\begin{alignat*}{3}
&({\mathfrak{g}}_{\mathbb{C}}, {\mathfrak{g}}_{\mathbb{C}}')
&&=({\mathfrak{so}}_8, {\mathfrak{spin}}_7)
&&\sim ({\mathfrak{so}}_8, {\mathfrak{so}}_7), 
\\
&({\mathfrak{g}}_{\mathbb{C}}, {\mathfrak{h}}_{\mathbb{C}}, {\mathfrak{g}}_{\mathbb{C}}')
&&=({\mathfrak{so}}_8, {\mathfrak{gl}}_4, {\mathfrak{so}}_6 \oplus {\mathfrak{so}}_2)
&&\sim ({\mathfrak{so}}_8, {\mathfrak{so}}_6 \oplus {\mathfrak{so}}_2, {\mathfrak{gl}}_4), 
\\
&({\mathfrak{g}}_{\mathbb{C}}, {\mathfrak{h}}_{\mathbb{C}}, {\mathfrak{g}}_{\mathbb{C}}')
&&=({\mathfrak{sl}}_4, {\mathfrak{sp}}_2, {\mathfrak{sl}}_2 \oplus {\mathfrak{sl}}_2 \oplus {\mathbb{C}})
&&\sim ({\mathfrak{so}}_6, {\mathfrak{so}}_5, {\mathfrak{so}}_4 \oplus {\mathfrak{so}}_2), 
\end{alignat*}
because the right-hand sides appear
 as special cases of the more general family
 in Table \ref{tab:0.1}.  
\end{remark}

{}From the classification in Theorem \ref{thm:listreal}, 
 one obtain the following:
\begin{corollary}
\label{cor:rankone}
Suppose $G/H$ is a symmetric space
 of rank one, 
 and ${\mathfrak{g}}_{\mathbb{C}}$ is simple.  
Then for any symmetric pair $(G,G')$ the bounded multiplicity property \eqref{eqn:BBHsup} holds
except for the following two cases:
\[\text{
 $({\mathfrak{g}}_{\mathbb{C}}, {\mathfrak{h}}_{\mathbb{C}}, {\mathfrak{g}}_{\mathbb{C}}')
=
 ({\mathfrak{sl}}_n, {\mathfrak{gl}}_{n-1}, {\mathfrak{so}}_{n})$, 
$({\mathfrak{sp}}_n, {\mathfrak{sp}}_{n-1} \oplus {\mathfrak{sp}}_1,
 {\mathfrak{gl}}_{n})$, 
or 
 $({\mathfrak{f}}_4, {\mathfrak{so}}_9, {\mathfrak{sp}}_3 \oplus {\mathfrak{sl}}_2)$.  }
\]

\end{corollary}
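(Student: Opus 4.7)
The plan is to reduce Corollary \ref{cor:rankone} directly to the case-by-case classification in Theorem \ref{thm:listreal}. First I would enumerate the rank-one complex symmetric pairs $({\mathfrak{g}}_{\mathbb{C}}, {\mathfrak{h}}_{\mathbb{C}})$ with ${\mathfrak{g}}_{\mathbb{C}}$ simple. From the classical list of Riemannian symmetric spaces of rank one (the real, complex, quaternionic, and Cayley projective families) together with their complexification, these are precisely
\[
({\mathfrak{sl}}_n, {\mathfrak{gl}}_{n-1}), \ ({\mathfrak{so}}_n, {\mathfrak{so}}_{n-1}), \ ({\mathfrak{sp}}_n, {\mathfrak{sp}}_{n-1} \oplus {\mathfrak{sp}}_1), \ ({\mathfrak{f}}_4, {\mathfrak{so}}_9).
\]

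Next, for each such $({\mathfrak{g}}_{\mathbb{C}}, {\mathfrak{h}}_{\mathbb{C}})$ I would run over the possible symmetric subalgebras ${\mathfrak{g}}_{\mathbb{C}}' \subset {\mathfrak{g}}_{\mathbb{C}}$ (from Berger's classification) and check, row by row, whether the triple $({\mathfrak{g}}_{\mathbb{C}}, {\mathfrak{h}}_{\mathbb{C}}, {\mathfrak{g}}_{\mathbb{C}}')$ appears in Table \ref{tab:0.1}, modulo the infinitesimal equivalence of Definition \ref{def:eqtriple}. Concretely: for ${\mathfrak{g}}_{\mathbb{C}} = {\mathfrak{so}}_n$ both candidates ${\mathfrak{g}}_{\mathbb{C}}' = {\mathfrak{so}}_p \oplus {\mathfrak{so}}_q$ and (when $n=2m$) ${\mathfrak{g}}_{\mathbb{C}}' = {\mathfrak{gl}}_m$ appear (rows~4 and~5 of the left column), so no exception arises. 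For ${\mathfrak{g}}_{\mathbb{C}} = {\mathfrak{sl}}_n$ the candidates ${\mathfrak{sl}}_p \oplus {\mathfrak{sl}}_q \oplus {\mathbb{C}}$ and (when $n=2m$) ${\mathfrak{sp}}_m$ appear (rows~1 and~2 left), but ${\mathfrak{g}}_{\mathbb{C}}' = {\mathfrak{so}}_n$ does not, yielding the first exception. For ${\mathfrak{g}}_{\mathbb{C}} = {\mathfrak{sp}}_n$ only ${\mathfrak{sp}}_p \oplus {\mathfrak{sp}}_q$ appears (row~7 left), while ${\mathfrak{gl}}_n$ does not, yielding the second exception. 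For ${\mathfrak{g}}_{\mathbb{C}} = {\mathfrak{f}}_4$ only ${\mathfrak{so}}_9$ appears (row~10 left), while ${\mathfrak{sp}}_3 \oplus {\mathfrak{sl}}_2$ does not, yielding the third exception.

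There is no real obstacle beyond some bookkeeping. The only subtle point is to make sure that the infinitesimal equivalence of Definition \ref{def:eqtriple} is applied consistently so that exceptional low-rank isomorphisms (for example $\mathfrak{so}_5 \simeq \mathfrak{sp}_2$, $\mathfrak{so}_6 \simeq \mathfrak{sl}_4$, or $\mathfrak{so}_4 \simeq \mathfrak{sl}_2 \oplus \mathfrak{sl}_2$) do not produce spurious omissions or duplicates in the enumeration. Once this is verified, the statement follows by a direct comparison with Table \ref{tab:0.1}.
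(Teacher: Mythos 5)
Your proposal is correct and takes essentially the same route the paper implicitly relies on: the paper simply says the corollary follows ``from the classification in Theorem~\ref{thm:listreal}'' without writing out the enumeration, and your row-by-row bookkeeping against Table~\ref{tab:0.1} is precisely the omitted check. (Note also that the corollary's statement of ``the following two cases'' is a typo in the paper; three exceptional triples are listed and your enumeration correctly produces all three.)
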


We shall examine in Example \ref{ex:SU3}
 the failure of \eqref{eqn:BBHsup}
 in the simplest exceptional case
 $({\mathfrak{sl}}_3, {\mathfrak{gl}}_2, {\mathfrak{so}}_3)$
 by an explicit computation
 of multiplicities.

Theorem \ref{thm:listreal} was announced in \cite[Cor.~7.8]{K21}.  
The right-hand side of Table \ref{tab:0.1} collects
 the case \eqref{eqn:BBlist}
 where a stronger bounded multiplicity property \eqref{eqn:BB} holds 
 for the restriction $G \downarrow G'$ independently of $H$, 
 see \cite[Thm.~D]{xktoshima}.  
The left-hand side includes:
\begin{example}
\label{ex:Opq}
Suppose $p_1 + p_2=p$ and $q_1 + q_2 =q$.  
We set 
$G/H:=O(p,q)/O(p,q-1)$, 
$
  G:'=O(p_1,q_1) \times O(p_2, q_2)
$.  
Then the triple $(G,H,G')$ satisfies the bounded multiplicity property
 \eqref{eqn:BBH}
 as the triple 
$({\mathfrak{g}}_{\mathbb{C}}, 
{\mathfrak{h}}_{\mathbb{C}}, 
{\mathfrak{g}}_{\mathbb{C}}')$
of the complexified Lie algebras
 occurs 
 in the fourth row of the left-hand column in Table \ref{tab:0.1}.  
The branching laws of the restriction $\Pi|_{G'}$ 
 of a discrete series representation $\Pi$ for $G/H$
 have been studied
 in \cite{K93, K21, MO15, OS19}, 
see Example \ref{ex:Opqanalysis}
 for some more details.  
\end{example}

Next we consider the classification when $G$ has a complex structure.  

\begin{theorem}
\label{thm:cpxlist}
Suppose that $G$ is a complex simple Lie group, 
 and that both $(G,H)$ and $(G,G')$ are symmetric pairs.  
Then the bounded multiplicity property \eqref{eqn:BBHsup} holds
 if and only if one of the following conditions hold:

\begin{enumerate}
\item[{\bf{a.}}]
Both ${\mathfrak{h}}$ and ${\mathfrak{g}}'$ are complex Lie subalgebras
 and the triple $({\mathfrak{g}}, {\mathfrak{h}}, {\mathfrak{g}}')$
 are in Table \ref{tab:0.1}.  

\item[{\bf{b.}}]
$({\mathfrak{g}}, {\mathfrak{h}})=({\mathfrak{s o}}_n({\mathbb{C}}), {\mathfrak{s o}}_{n-1}({\mathbb{C}}))$
 and ${\mathfrak{g}}'$ is any real form of ${\mathfrak{g}}$. 

\item[{\bf{c.}}]
$({\mathfrak{g}}, {\mathfrak{g}}')$ is 
$({\mathfrak{s l}}_n({\mathbb{C}}), {\mathfrak{g l}}_{n-1}({\mathbb{C}}))$
 $(n \ge 2)$
 or $({\mathfrak{s o}}_n({\mathbb{C}}), {\mathfrak{s o}}_{n-1}({\mathbb{C}}))$
 $(n \ge 5)$, 
 and ${\mathfrak{h}}$ is any real form of ${\mathfrak{g}}$.  

\item[{\bf{d.}}]
${\mathfrak{g}}={\mathfrak{s l}}_2({\mathbb{C}})$, 
 and both ${\mathfrak{h}}$ and ${\mathfrak{g}}'$ are any real forms 
 of ${\mathfrak{g}}$.  
\end{enumerate}
\end{theorem}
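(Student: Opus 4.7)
My plan is to apply Theorem~\ref{thm:bdd} to reduce the bounded multiplicity condition for $(G,H,G')$ to the geometric statement that $G_{\mathbb{C}}/B_{G/H}$ be $G_{\mathbb{C}}'$-spherical, and then classify using the structure of symmetric subalgebras in a complex simple Lie algebra. The starting observation is the canonical identification $\mathfrak{g}_{\mathbb{C}}\simeq \mathfrak{g}\oplus \mathfrak{g}$ as complex Lie algebras, into which $\mathfrak{g}_{\mathbb{R}}$ embeds ``diagonally''. Every symmetric subalgebra of $\mathfrak{g}$ is of exactly one of two types: a complex Lie subalgebra (defined by a holomorphic involution, so $\mathfrak{h}_{\mathbb{C}}\cong \mathfrak{h}\oplus \mathfrak{h}$), or a real form of $\mathfrak{g}$ (defined by an anti-holomorphic involution, so $\mathfrak{h}_{\mathbb{C}}$ is a diagonal copy of $\mathfrak{g}$). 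The same dichotomy applies to $\mathfrak{g}'$, producing the four cases (a)--(d) of the statement.

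The next step is to identify $B_{G/H}$ in each type. When $\mathfrak{h}$ is complex, the involution extends $\mathbb{C}$-linearly to $\sigma\oplus \sigma$ on $\mathfrak{g}\oplus \mathfrak{g}$; a maximal semisimple abelian $\sigma$-split subspace splits as $\mathfrak{j}_\sigma\oplus \mathfrak{j}_\sigma$, and the resulting Borel subalgebra is $\mathfrak{b}_\sigma\oplus \mathfrak{b}_\sigma$, where $\mathfrak{b}_\sigma$ denotes the Borel subalgebra of the complex symmetric pair $(\mathfrak{g},\mathfrak{h})$. When $\mathfrak{h}$ is a real form, $(\mathfrak{g}_{\mathbb{C}},\mathfrak{h}_{\mathbb{C}})$ is the group-manifold symmetric pair, and a direct computation using the subspace $\{(X,-X):X\in \mathfrak{g}\}$ shows that $B_{G/H}$ takes the form $\mathfrak{b}\oplus \mathfrak{b}^{-}$, a Borel of $\mathfrak{g}$ together with its opposite. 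In both cases $G_{\mathbb{C}}/B_{G/H}$ is a product of two (partial) flag varieties.

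I then analyze the four cases in turn. In case (a), $G_{\mathbb{C}}'=G'\times G'$ acts factor-wise on $G/B_\sigma\times G/B_\sigma$, so sphericity reduces to $G/B_\sigma$ being $G'$-spherical; applying Theorem~\ref{thm:bdd} once more to the resulting \emph{complex} triple $(\mathfrak{g},\mathfrak{h},\mathfrak{g}')$ yields exactly Table~\ref{tab:0.1}. In case (c), $G_{\mathbb{C}}/B_{G/H}=G/B\times G/B$ is the double full flag variety and $G'\times G'$ acts factor-wise, so sphericity is equivalent to $G/B$ being $G'$-spherical; by Kr\"amer's classification~\eqref{eqn:BBlist} this yields the listed pairs, with the triality case $(\mathfrak{so}_8,\mathfrak{spin}_7)$ absorbed into $(\mathfrak{so}_n,\mathfrak{so}_{n-1})$ under the equivalence of Definition~\ref{def:eqtriple}. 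In case (d), $G_{\mathbb{C}}'=\Delta G$ acts diagonally on $G/B\times G/B$; comparing the acting Borel's dimension ($=\rank(\mathfrak{g})+|\Delta^+|$) with that of the target ($=2|\Delta^+|$) forces $\rank(\mathfrak{g})\ge|\Delta^+|$, hence $\mathfrak{g}=\mathfrak{sl}_2(\mathbb{C})$.

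The hard case will be (b), where $\mathfrak{h}$ is complex and $\mathfrak{g}'$ is a real form, requiring the diagonal $G$-action on the double partial flag variety $G/B_\sigma\times G/B_\sigma$ to be spherical. My plan here is to classify the complex symmetric pairs $(\mathfrak{g},\mathfrak{h})$ satisfying this condition by combining the known list of complex symmetric pairs with the explicit structure of $\mathfrak{b}_\sigma$: rank-one pairs can be examined case by case via Borel-orbit computations on $G/B_\sigma\times G/B_\sigma$, while higher-rank pairs can be eliminated through dimension estimates together with existing results on diagonally spherical double flag varieties and Tanaka's classification of strongly visible actions~\cite{xtanaka12}, which by Theorem~\ref{thm:bdd} provides an equivalent criterion. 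This combinatorial step is where the main difficulty lies, and the expected outcome is the single family $(\mathfrak{so}_n,\mathfrak{so}_{n-1})$.
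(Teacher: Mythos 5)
Your overall structure matches the paper's: both split into the four cases according to whether $\mathfrak{h}$ and $\mathfrak{g}'$ are complex subalgebras or real forms, then run Theorem~\ref{thm:bdd} through the identification $G_{\mathbb{C}}\simeq G\times G$ and the explicit form of $B_{G/H}$ from Lemma~\ref{lem:QGcpxH}. Your cases (a) and (c) go exactly as the paper does (reducing to Theorem~\ref{thm:listreal} and to Kr\"amer's list \eqref{eqn:BBlist}, respectively). For case (d) you use a direct dimension count on $G/B\times G/B$ to force $\operatorname{rank}\mathfrak{g}\ge|\Delta^+|$, whereas the paper invokes Lemma~\ref{lem:3equiv} (equivalence with sphericity of $(G\times G\times G)/\operatorname{diag}G$); both are valid, and yours is more elementary for the necessity, though you still need to note sufficiency for $\mathfrak{sl}_2$.

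The genuine gap is case (b). You correctly identify it as the hard case, but what you give there is only a plan ("rank-one pairs can be examined case by case \dots higher-rank pairs can be eliminated \dots the expected outcome is \dots"), not a proof. You also silently replace the $\sigma$-twisted diagonal action of $G$ (for the anti-holomorphic $\sigma$ defining the real form $\mathfrak{g}'$) by the plain diagonal action; that step is exactly Lemma~\ref{lem:diagvis}, which is not automatic and needs either the classification of spherical double flag varieties or that of strongly visible actions as input. The paper's route is much shorter: Lemma~\ref{lem:diagvis} converts case (b) into a special case of Theorem~\ref{thm:tensorlist}(2) with $\mathfrak{h}_1=\mathfrak{h}_2$, and the outcome $(\mathfrak{so}_n,\mathfrak{so}_{n-1})$ falls out of Proposition~\ref{prop:210499} and Table~\ref{tab:tensor}. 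Your approach would eventually lean on the same external classification of sphericity of $G/B_\sigma\times G/B_\sigma$ under $\operatorname{diag}G$ (via \cite{xtanaka12} or \cite{xhnoo}), so the substance is similar, but you would be re-deriving Proposition~\ref{prop:210499} rather than citing it, and until that re-derivation is actually carried out, case (b) is unproved.
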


\begin{remark}
As in Remark \ref{rem:Table}, 
 we omit some triples $(G,H,G')$
 for which the infinitesimally equivalent
 ones are in the list.  
\end{remark}

\subsection{Classification of $(G,H,G')$: group manifold case}
In this subsection 
 we give a classification of the triples $(G,H,G')$
 having the bounded multiplicity property \eqref{eqn:BBH}
 in the setting where $G/H$ is a group manifold,  
 namely, 
 $G$ is the direct product group
 ${}^{\backprime} G \times {}^{\backprime} G$
 for some simple Lie group ${}^{\backprime} G$
 and $H$ is of the form $\operatorname{diag}{}^{\backprime} G$.  
We refer to this
 as the \lq\lq{group manifold case}\rq\rq.

In what follows, 
 we use the letter $G$
 to denote a simple Lie group
 instead of ${}^{\backprime} G$.  
Our task now is to classify the symmetric pair $(G \times G, G')$
 such that the following bounded multiplicity property holds:
\begin{equation}
\label{eqn:gpbdd}
\underset{\Pi \in \operatorname{Irr}(G \times G)_{\operatorname{diag}G}}\sup
\,\,
\underset{\pi \in \operatorname{Irr}(G')}\sup
[\Pi|_{G'}:\pi] <\infty, 
\end{equation}
where we recall 
$[\Pi|_{G'}:\pi] =\dim_{\mathbb{C}} \operatorname{Hom}_{G'}(\Pi|_{G'}, \pi)$.  
Here we note that 
$
   \Pi \in \operatorname{Irr}(G \times G)_{\operatorname{diag}G}
$
 if and only if $\Pi$ is of the form $\tau \boxtimes \tau^{\vee}$
 for some $\tau \in \operatorname{Irr}(G)$
 where $\tau^{\vee}$ is the contragredient representation of $\tau$
 in the category ${\mathcal{M}}(G)$.

Up to the infinitesimal equivalence as in Definition \ref{def:eqtriple}, 
 there are two possibilities
 for the subgroup $G'$ in $G \times G$:
\newline\indent
{\bf{Case II-1.}}\enspace
$G'=G_1 \times G_2$
 where $(G, G_j)$
 $(j=1,2)$
 are symmetric pairs.  
\newline\indent
{\bf{Case II-2.}}\enspace
$G'= \operatorname{diag}_{\sigma}(G)$
 where $\sigma$ is an involutive automorphism
 ${}^{\backprime}G$.

We shall see
 that Case II-2 occurs in the classification below
 only when either ${\mathfrak{g}}$ or ${\mathfrak{g}}_{\mathbb{C}}$
 is isomorphic to ${\mathfrak{sl}}_2({\mathbb{C}})$.  

\begin{theorem}
[group manifold case]
\label{thm:gplist}
Let $G$ be a simple Lie group.  

\begin{enumerate}
\item[{\rm{(1)}}]
Suppose $G'=G_1 \times G_2$ 
 where $(G, G_j)$ $(j=1,2)$ are symmetric pairs.  
Then the bounded multiplicity property \eqref{eqn:gpbdd} holds
 if and only if 
 $({\mathfrak{g}}, {\mathfrak{g}}_1, {\mathfrak{g}}_2)$ or
$({\mathfrak{g}}_{\mathbb{C}}, {{\mathfrak{g}}_1}_{\mathbb{C}}, {{\mathfrak{g}}_2}_{\mathbb{C}})$
 are isomorphic to 
$
({\mathfrak{s l}}_n, {\mathfrak{g l}}_{n-1}, {\mathfrak{g l}}_{n-1})
\text{ or }
({\mathfrak{s o}}_{n}, {\mathfrak{s o}}_{n-1}, {\mathfrak{s o}}_{n-1}).  
$

\item[{\rm{(2)}}]
Suppose $G':= \operatorname{diag}_{\sigma}(G)$
 where $\sigma$ is an involutive automorphism of $G$.  
Then \eqref{eqn:gpbdd} holds 
 if and only if
 ${\mathfrak{g}}={\mathfrak{s l}}_2({\mathbb{C}})$
 or ${\mathfrak{g}}_{\mathbb{C}}={\mathfrak{s l}}_2({\mathbb{C}})$
 ($\sigma$ is arbitrary).  
\end{enumerate}
\end{theorem}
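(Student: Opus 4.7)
The plan is to invoke the criterion of Theorem \ref{thm:bdd} for the reductive symmetric pair $(G \times G, \operatorname{diag} G)$ with subgroup $G'$, thereby recasting \eqref{eqn:gpbdd} as a sphericity question on the flag-variety product attached to the Borel subgroup of the symmetric space. The first task is to compute $B_{(G \times G)/\operatorname{diag} G}$. Fixing a Cartan subalgebra $\mathfrak{t} \subset \mathfrak{g}$, I would verify that $\mathfrak{j} := \{(X,-X) : X \in \mathfrak{t}\}$ is a maximal semisimple abelian subspace of $(\mathfrak{g} \oplus \mathfrak{g})^{-\sigma}$ for the swap involution $\sigma$, and analyze the $\mathfrak{j}_{\mathbb{C}}$-weight decomposition of $\mathfrak{g}_{\mathbb{C}} \oplus \mathfrak{g}_{\mathbb{C}}$. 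With a positive system compatible with a chosen $\Delta^+(\mathfrak{g}_{\mathbb{C}},\mathfrak{t}_{\mathbb{C}})$, the Borel subalgebra $\mathfrak{b}_{(G \times G)/\operatorname{diag} G}$ of Definition \ref{def:Borel} is exactly $\mathfrak{b} \oplus \mathfrak{b}_{\operatorname{opp}}$, with $\mathfrak{b}$ a Borel of $\mathfrak{g}_{\mathbb{C}}$ and $\mathfrak{b}_{\operatorname{opp}}$ its opposite (alternatively, this follows from Lemma \ref{lem:Satake} since the Satake diagram of the underlying real form in the group case has no black circles). The sphericity criterion therefore becomes: \eqref{eqn:gpbdd} holds if and only if $\mathcal{B} \times \mathcal{B}$ is $G'_{\mathbb{C}}$-spherical, where $\mathcal{B} := G_{\mathbb{C}}/B$ is the full flag variety.

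For Part (1), when $G' = G_1 \times G_2$ the action of $G'_{\mathbb{C}} = (G_1)_{\mathbb{C}} \times (G_2)_{\mathbb{C}}$ on $\mathcal{B} \times \mathcal{B}$ is componentwise, so a Borel $B'_1 \times B'_2$ of $G'_{\mathbb{C}}$ has an open orbit if and only if each $B'_j$ has an open orbit on $\mathcal{B}$, i.e., $(G_j)_{\mathbb{C}}$ acts spherically on the full flag variety. By Kr\"amer's classification \cite{xkramer} (recalled as the list \eqref{eqn:BBlist} in the introduction), this is equivalent to $(\mathfrak{g}_{\mathbb{C}},(\mathfrak{g}_j)_{\mathbb{C}})$ lying in $\{(\mathfrak{sl}_n,\mathfrak{gl}_{n-1}),(\mathfrak{so}_n,\mathfrak{so}_{n-1}),(\mathfrak{so}_8,\mathfrak{spin}_7)\}$; all three are (or are infinitesimally equivalent to) symmetric pairs, and absorbing the triality entry $(\mathfrak{so}_8,\mathfrak{spin}_7)$ into $(\mathfrak{so}_8,\mathfrak{so}_7)$ via Definition \ref{def:eqtriple} reduces the possibilities to $\{(\mathfrak{sl}_n,\mathfrak{gl}_{n-1}),(\mathfrak{so}_n,\mathfrak{so}_{n-1})\}$ for each $j$. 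Since the ambient $\mathfrak{g}_{\mathbb{C}}$ must match for $j=1$ and $j=2$, the two announced families for the triple emerge.

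For Part (2) with $G' = \operatorname{diag}_\sigma G$ a dimension count governs the situation. A Borel of $G'_{\mathbb{C}} \simeq G_{\mathbb{C}}$ has dimension $r + N$, where $r$ and $N$ are the rank and the number of positive roots of $\mathfrak{g}_{\mathbb{C}}$ respectively, whereas $\dim(\mathcal{B} \times \mathcal{B}) = 2N$; sphericity thus forces $r \geq N$, which in a semisimple complex Lie algebra occurs only when every simple factor has rank one. This singles out $\mathfrak{g}_{\mathbb{C}} \simeq \mathfrak{sl}_2(\mathbb{C})$ when $\mathfrak{g}$ is a real form of a simple complex Lie algebra (covering $\mathfrak{sl}_2(\mathbb{R})$ and $\mathfrak{su}(2)$) and, applying the same count to $\mathfrak{g}_{\mathbb{C}} = \mathfrak{g} \oplus \mathfrak{g}$ when $\mathfrak{g}$ itself is complex simple, forces $\mathfrak{g} = \mathfrak{sl}_2(\mathbb{C})$. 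Conversely, when $\mathfrak{g}_{\mathbb{C}} \simeq \mathfrak{sl}_2(\mathbb{C})$ every holomorphic involution of $SL_2(\mathbb{C})$ is inner, so $\operatorname{diag}_\sigma SL_2(\mathbb{C})$ is conjugate to $\operatorname{diag} SL_2(\mathbb{C})$ inside $SL_2(\mathbb{C}) \times SL_2(\mathbb{C})$; since the diagonal $SL_2(\mathbb{C})$ acts on $\mathbb{P}^1 \times \mathbb{P}^1$ transitively on the complement of the diagonal (with trivial generic stabilizer), a Borel already has an open orbit, confirming sphericity.

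The main obstacle I anticipate is the initial step of identifying $\mathfrak{b}_{(G \times G)/\operatorname{diag} G}$ cleanly enough to feed into Theorem \ref{thm:bdd}; a secondary bookkeeping issue is ensuring that the triality pair $(\mathfrak{so}_8,\mathfrak{spin}_7)$ is consistently folded into the $(\mathfrak{so}_n,\mathfrak{so}_{n-1})$ family via the outer automorphism allowed in Definition \ref{def:eqtriple}, rather than being left as a spurious extra entry in the classification of Part (1).
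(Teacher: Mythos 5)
Your overall strategy coincides with the paper's: identify the Borel subgroup for the group manifold $(G\times G)/\operatorname{diag}G$ as (conjugate to) $B\times B$, feed it into Theorem~\ref{thm:bdd}, and reduce \eqref{eqn:gpbdd} to the $G'_{\mathbb{C}}$-sphericity of $G_{\mathbb{C}}/B \times G_{\mathbb{C}}/B$. Part~(1) then proceeds exactly as in the paper: the componentwise action splits the sphericity question into two copies of Kr\"amer's classification \eqref{eqn:BBlist}, and the triality pair $({\mathfrak{so}}_8,{\mathfrak{spin}}_7)$ is folded into $({\mathfrak{so}}_n,{\mathfrak{so}}_{n-1})$ by the equivalence of Definition~\ref{def:eqtriple}. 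No issues there.

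For Part~(2), you take a genuinely different and cleaner route than the paper. The paper handles the twisted-diagonal case via Lemma~\ref{lem:diagvis} (to remove the $\sigma$-twist) followed by Lemma~\ref{lem:3equiv} (which reduces to the sphericity of the triple flag variety and cites external classifications). Your self-contained dimension count $\dim B' = r+N \ge \dim(\mathcal{B}\times\mathcal{B}) = 2N$ forcing $r\ge N$, hence every simple factor is $A_1$, is more elementary and gives the necessity direction without citations. This is a real improvement for that half of the argument.

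However, your sufficiency argument for Part~(2) has a small gap in the complex case. You only verify the converse when ${\mathfrak{g}}_{\mathbb{C}} \simeq {\mathfrak{sl}}_2({\mathbb{C}})$, using that every holomorphic involution of $SL_2({\mathbb{C}})$ is inner. But the theorem also claims sufficiency when ${\mathfrak{g}} = {\mathfrak{sl}}_2({\mathbb{C}})$ is itself the complex simple algebra, where $\sigma$ may be an anti-holomorphic involution (a Cartan involution of $SL_2({\mathbb{C}})$ regarded as a real group). In that situation the ${\mathbb{C}}$-linear extension $\sigma_{\mathbb{C}}$ swaps the two factors of ${\mathfrak{g}}_{\mathbb{C}} = {\mathfrak{g}}\oplus{\mathfrak{g}}$ and is thus \emph{outer} in ${\mathfrak{sl}}_2^{\oplus 2}$, so $\operatorname{diag}_{\sigma}(G)_{\mathbb{C}}$ is not conjugate to $\operatorname{diag}(G_{\mathbb{C}})$ and your inner-conjugacy reduction does not apply. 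This is fixable: either appeal to Lemma~\ref{lem:sl2} (which guarantees $G_{\mathbb{C}}/B$ is ${G'}_{\mathbb{C}}$-spherical for \emph{any} symmetric pair $(G,G')$ once ${\mathfrak{g}}_{\mathbb{C}}$ is a direct sum of copies of ${\mathfrak{sl}}_2({\mathbb{C}})$, applied here to $G = {}^{\backprime}G\times{}^{\backprime}G$), or carry out the direct open-orbit computation on $({\mathbb{P}}^1)^4$ for the antidiagonal subgroup that arises.
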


Theorem \ref{thm:gplist} (1) includes the following assertion:
{\sl{when $G$ is a complex Lie group
 and $G'=G_1 \times G_2$ with at least one of $G_j$ being a real form of $G$, 
 then the bounded multiplicity property \eqref{eqn:gpbdd} holds 
 if and only if ${\mathfrak{g}}={\mathfrak{sl}}_2({\mathbb{C}})$.  }}

\subsection{Classification of $(G,H,G')$: tensor product case}

In this subsection 
 we give a classification of the triples $(G,H,G')$
 having the bounded multiplicity property \eqref{eqn:BBH}
 in Case III, 
 that is, 
 in the setting
 where $(G,G')=({}^{\backprime} G \times {}^{\backprime} G, \operatorname{diag} {}^{\backprime} G)$
 for some simple Lie group ${}^{\backprime} G$.  
In this case, 
 the restriction of an irreducible representation of $G$ to the subgroup $G'$
 is nothing but the tensor product
 of two irreducible representations of ${}^{\backprime} G$.  
We refer to this 
 as the \lq\lq{tensor product case}\rq\rq.

In what follows, 
 we use the letter $G$ instead of ${}^{\backprime} G$.  
Our task now
 is to classify the symmetric pairs
 $(G \times G, H)$ 
 having the following bounded multiplicity property:
$
   \underset{(\Pi_1, \Pi_2)}\sup\,\,\,
   m(\Pi_1 \otimes \Pi_2) <\infty
$, 
 see \eqref{eqn:JLTp5}, 
 namely, 
\begin{equation}
\label{eqn:gtensor}
\underset{(\Pi_1, \Pi_2)}\sup\,\,\,
\underset{\Pi \in \operatorname{Irr}(G)} \sup
\dim_{\mathbb{C}} \operatorname{Hom}_{G}
(\Pi_1 \otimes \Pi_2, \Pi) <\infty
\end{equation}
where the first supremum is taken
 over all the pairs $(\Pi_1, \Pi_2)$
 with $\Pi_1, \Pi_2 \in \operatorname{Irr}(G)$
 subject to $\Pi_1 \boxtimes \Pi_2 \in \operatorname{Irr}(G \times G)_H$.

Up to the infinitesimal equivalence 
 given as in Definition \ref{def:eqtriple}, 
 there are two possibilities of the subgroup $H$ in $G \times G$:
\newline\indent
{\bf{Case III-1.}}\enspace
$H=H_1 \times H_2$ where $(G, H_j)$
 $(j=1,2)$
 are symmetric pairs, 
\newline\indent
{\bf{Case III-2.}}\enspace
$H$ is of the form $\operatorname{diag}_{\sigma}(G)$
 where $\sigma$ is an involution of $G$.

We note that there is an overlap
 between Case II-2 and Case III-2.  
We shall see in Theorem \ref{thm:tensorlist} below that 
 Case III-2 occurs in the classification
 only when either ${\mathfrak{g}}$
 or ${\mathfrak{g}}_{\mathbb{C}}$
 is isomorphic to ${\mathfrak{sl}}_2({\mathbb{C}})$.  

\begin{theorem}
[Tensor product case]
\label{thm:tensorlist}
Suppose that $G$ is a simple Lie group, 
 and $(G \times G, H)$ is a symmetric pair.  
Then the bounded multiplicity property \eqref{eqn:gtensor} holds
 if and only if one of the following conditions hold:
\begin{enumerate}
\item[{\rm{(1)}}]
Either ${\mathfrak{g}}$ or ${\mathfrak{g}}_{\mathbb{C}}$ is
 ${\mathfrak{s l}}_2({\mathbb{C}})$.  
No condition on ${\mathfrak{h}}$.  

\item[{\rm{(2)}}]
Suppose ${\mathfrak{g}}_{\mathbb{C}}$ is simple
 and ${\mathfrak{g}}_{\mathbb{C}} \ne {\mathfrak{sl}}_2({\mathbb{C}})$.  
Then the bounded multiplicity property \eqref{eqn:gtensor} holds 
if and only if $H$ is of the form $H=H_1 \times H_2$
 and the triple $({\mathfrak{g}}, {\mathfrak{h}}_1, {\mathfrak{h}}_2)$
 is in the following table.  

\begin{table}[H]
\begin{center}
\begin{tabular}[t]{ccc}
${\mathfrak{g}}_{\mathbb{C}}$
&${{\mathfrak{h}}_1}_{\mathbb{C}}$
&${{\mathfrak{h}}_2}_{\mathbb{C}}$
\\
\hline
${\mathfrak{s o}}_n$
&${\mathfrak{s o}}_{n-1}$
&${\mathfrak{s o}}_{n-1}$
\\
${\mathfrak{s o}}_{8}$
&${\mathfrak{s o}}_{7}$
&${\mathfrak{spin}}_7$
\\
${\mathfrak{s o}}_{8}$
&${\mathfrak{s o}}_{7}$
&${\mathfrak{g l}}_4$
\\
${\mathfrak{s l}}_{4}$
&${\mathfrak{g l}}_{3}$
&${\mathfrak{s p}}_2$
\\
\end{tabular}
\end{center}
\caption{Tensor product with bounded multiplicities}
\label{tab:tensor}
\hfil
\end{table}

\item[{\rm{(3)}}]
Suppose ${\mathfrak{g}}$ is a complex simple Lie algebra 
 and ${\mathfrak{g}} \ne {\mathfrak{sl}}_2({\mathbb{C}})$.  
Then the bounded multiplicity property \eqref{eqn:gtensor} holds
 if and only if ${\mathfrak{h}}$ is a complex subalgebra
 and $H$ is of the form $H=H_1 \times H_2$
 and the triple 
 $({\mathfrak{g}}, {\mathfrak{h}}_1, {\mathfrak{h}}_2)$
 is in Table \ref{tab:tensor}.  
\end{enumerate}
\end{theorem}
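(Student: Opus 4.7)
The plan is to apply Theorem~\ref{thm:bdd} to the triple $H \subset G \times G \supset \operatorname{diag} G$, which converts the bounded multiplicity property \eqref{eqn:gtensor} into the geometric condition that $(G \times G)_{\mathbb{C}}/B_{(G \times G)/H}$ be $\operatorname{diag}(G_{\mathbb{C}})$-spherical. Since $G$ is simple, a symmetric subgroup $H$ of $G \times G$ falls into one of two types up to the infinitesimal equivalence of Definition~\ref{def:eqtriple}: Case III-1 with $H = H_1 \times H_2$ and $(G,H_j)$ a symmetric pair for $j=1,2$, or Case III-2 with $H = \operatorname{diag}_\sigma G$ for some involution $\sigma$ of $G$. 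I would treat the two cases separately and then compare against Table~\ref{tab:tensor} and the $\mathfrak{sl}_2$-exception of Part~(1).

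In Case III-1, I would first observe that $(G \times G)/H$ decomposes as the product $G/H_1 \times G/H_2$, so Definition~\ref{def:Borel} yields $B_{(G \times G)/H} = B_{G/H_1} \times B_{G/H_2}$; the criterion then reduces to Theorem~\ref{thm:tensor}. The problem becomes the classification, for each complex simple Lie algebra $\mathfrak{g}_{\mathbb{C}}$, of the pairs of parabolic subgroups $(Q_1,Q_2) = (B_{G/H_1}, B_{G/H_2})$ of $G_{\mathbb{C}}$ arising from symmetric pairs and making $G_{\mathbb{C}}/Q_1 \times G_{\mathbb{C}}/Q_2$ diagonally $G_{\mathbb{C}}$-spherical. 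Each $B_{G/H_j}$ is identified explicitly via Lemma~\ref{lem:Satake} as the complexification of a minimal parabolic subalgebra of the real form $\mathfrak{g}_{\mathbb{R}}^{(j)}$ of $\mathfrak{g}_{\mathbb{C}}$ associated to $(\mathfrak{g}_{\mathbb{C}}, (\mathfrak{h}_j)_{\mathbb{C}})$ by the correspondence \eqref{eqn:gRghC}, so the defining set $\Theta_j$ is read directly from the Satake diagram. Matching these admissible pairs against the known classification of diagonally spherical double flag varieties (Littelmann~\cite{Li94} for maximal parabolics, and the general case via Tanaka~\cite{xtanaka12}) produces the entries of Table~\ref{tab:tensor} when $\mathfrak{g}_{\mathbb{C}}$ is simple and distinct from $\mathfrak{sl}_2$, giving Part~(2). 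Part~(3) follows by noting that when $\mathfrak{g}$ is complex simple, $\mathfrak{g}_{\mathbb{C}} \cong \mathfrak{g} \oplus \mathfrak{g}$ is not simple, and the sphericity condition forces each $\mathfrak{h}_j$ to be a complex subalgebra before reducing, factor by factor, back to the situation of Part~(2).

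For Case III-2, I would use that $(G \times G)/\operatorname{diag}_\sigma G$ is $(G \times G)$-equivariantly isomorphic to $G$ with a twisted action, so its rank equals $\operatorname{rank} \mathfrak{g}$. A direct inspection via Lemma~\ref{lem:Satake} identifies $B_{(G \times G)/H}$, up to $(G \times G)_{\mathbb{C}}$-conjugacy, with a product $B \times B^-$ for a Borel $B$ of $G_{\mathbb{C}}$ and a suitable (possibly $\sigma$-twisted) opposite Borel $B^-$, so the sphericity criterion becomes the requirement that $G_{\mathbb{C}}/B \times G_{\mathbb{C}}/B^-$ be $\operatorname{diag}(G_{\mathbb{C}})$-spherical. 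By Littelmann's list (or by a direct dimension-theoretic check: one must attain $\dim G_{\mathbb{C}}/B \times G_{\mathbb{C}}/B^- = 2|\Delta^+|$ as a $\operatorname{diag}(B)$-orbit), this forces $\operatorname{rank} \mathfrak{g}_{\mathbb{C}} = 1$, hence $\mathfrak{g}_{\mathbb{C}} = \mathfrak{sl}_2$ in the simple case and $\mathfrak{g} = \mathfrak{sl}_2(\mathbb{C})$ in the complex case, with $\sigma$ unrestricted. This produces exactly Part~(1).

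The hardest step, and the main obstacle, is the combinatorial bookkeeping in Case III-1: for each complex symmetric pair $(\mathfrak{g}_{\mathbb{C}}, (\mathfrak{h}_j)_{\mathbb{C}})$ one must read off $\Theta_j$ from the Satake diagram, determine the resulting parabolic $B_{G/H_j}$, and cross-reference the resulting pairs against Littelmann's/Tanaka's classification to verify that Table~\ref{tab:tensor} is exhaustive. A secondary but more conceptual difficulty is the handling of Part~(3), where $\mathfrak{g}_{\mathbb{C}}$ is no longer simple and one must separately argue that non-complex subalgebras $\mathfrak{h}_j$ are forbidden before invoking the simple-$\mathfrak{g}_{\mathbb{C}}$ classification.
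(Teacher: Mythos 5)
Your proposal follows essentially the same route as the paper: apply Theorem~\ref{thm:bdd} (or equivalently Theorem~\ref{thm:tensor}) to reduce to the sphericity of $(G_{\mathbb{C}} \times G_{\mathbb{C}})/B_{(G\times G)/H}$ under the diagonal $G_{\mathbb{C}}$-action, split into $H = H_1 \times H_2$ versus $H = \operatorname{diag}_{\sigma}G$, read the Borel subalgebras $B_{G/H_j}$ off the Satake diagrams via Lemma~\ref{lem:Satake}, and cross-reference against the Littelmann/Tanaka classification of spherical double flag varieties (which is precisely Proposition~\ref{prop:210499} in the paper), while the twisted-diagonal and real-form cases collapse to $\mathfrak{sl}_2$ by Lemma~\ref{lem:3equiv} and Lemma~\ref{lem:QGcpxH}. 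This matches the paper's argument in Section~\ref{subsec:pftensor} in both structure and substance.
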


\begin{remark}
(1)\enspace
The infinitesimal equivalence in Definition \ref{def:eqtriple}
 includes the switch
 of factors $H_1$ and $H_2$
 because it is induced
 by an outer automorphism
 of the direct product group.  
\par\noindent
(2)\enspace
Inside the Lie algebra ${\mathfrak{s o}}_{8}$, 
 there are three subalgebras
 that are isomorphic to ${\mathfrak{s o}}_{7}$
 up to inner automorphisms, 
 to which we may refer 
 as ${\mathfrak{so}}_7$, 
 ${\mathfrak{spin}}_{7}^+$, 
 and ${\mathfrak{spin}}_{7}^-$, 
 and these are conjugate to each other
 by an outer automorphism
 of ${\mathfrak{so}}_8$.  
By the equivalence in Definition \ref{def:eqtriple}, 
 there are two equivalence classes
 of the triples 
$
   ({\mathfrak{g}}_{\mathbb{C}}, {{\mathfrak{h}}_1}_{\mathbb{C}}, 
{{\mathfrak{h}}_2}_{\mathbb{C}})
$
 where ${\mathfrak{g}}_{\mathbb{C}}={\mathfrak{s o}}_{8}$
 and ${{\mathfrak{h}}_j}_{\mathbb{C}}$
 are isomorphic to ${\mathfrak{s o}}_{7}$
 ($j=1,2$), 
 according to whether ${{\mathfrak{h}}_1}_{\mathbb{C}}$
 is conjugate to ${{\mathfrak{h}}_2}_{\mathbb{C}}$
 by an inner automorphism or not.  
In Table \ref{tab:tensor}, 
 we write them 
 as $({\mathfrak{s o}}_{8}, {\mathfrak{s o}}_{7}, {{\mathfrak{s o}}_7})$
 and $({{\mathfrak{s o}}_8}, {\mathfrak{s o}}_7, {\mathfrak{spin}}_{7})$, 
 respectively.  
\end{remark}

\section{Proof of classification results}
\label{sec:pfclass}

By applying the geometric criteria
 in Theorems \ref{thm:bdd} and \ref{thm:tensor}, 
 we complete the proof 
 of Theorems \ref{thm:listreal}, \ref{thm:cpxlist}, 
 and \ref{thm:gplist}
 about the classification 
 of the triples $(G,H,G')$ 
having the bounded multiplicity property \eqref{eqn:BBH}
 of the restriction $\Pi|_{G'}$
 for all $\Pi \in \operatorname{Irr}(G)_H$
 and also the proof of Theorem \ref{thm:tensorlist}
 for the tensor product case.

\subsection{Preliminary lemmas}

In this subsection 
 we collect some lemmas 
 that we shall use in the proof.

Given two $G$-spaces $X_j$ $(j=1,2)$
 and an automorphism $\sigma$ of $G$, 
 we let $g \in G$ act on the direct product space $X_1 \times X_2$ by
 $(x,y) \mapsto (g x, \sigma(g) y)$, 
 and call it the {\it{$\sigma$-twisted diagonal action}} of $G$.  

\begin{lemma}
\label{lem:diagvis}
Let $G_{\mathbb{C}}$ be a complex simple Lie group, 
 and $G_U$ a maximal compact subgroup of $G_{\mathbb{C}}$.  
Suppose that $Q_1$, $Q_2$ are parabolic subgroups
 of $G_{\mathbb{C}}$, 
 and we set $X=G_{\mathbb{C}}/Q_1 \times G_{\mathbb{C}}/Q_2$.  
Let $\sigma$ be an automorphism of $G_{\mathbb{C}}$.  
Then the following four conditions are equivalent:
\begin{enumerate}
\item[{\rm{(i)}}]
$X$ is $G_{\mathbb{C}}$-spherical via the diagonal action;
\item[{\rm{(i)$'$}}]
$X$ is strongly $G_U$-visible via the diagonal action;
\item[{\rm{(ii)}}]
$X$ is $G_{\mathbb{C}}$-spherical via the $\sigma$-twisted diagonal action;  
\item[{\rm{(ii)$'$}}]
$X$ is strongly $G_U$-visible
 via the $\sigma$-twisted diagonal action.  
\end{enumerate}
\end{lemma}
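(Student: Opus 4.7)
The plan is to verify the two pairs of equivalences within the statement and the cross-equivalence (i) $\iff$ (ii) separately. First, the equivalences (i) $\iff$ (i)$'$ and (ii) $\iff$ (ii)$'$ will follow directly from Tanaka's theorem \cite{xtanaka}, which establishes the coincidence of $G_{\mathbb{C}}$-sphericity and $G_U$-strong visibility for any holomorphic action of a complex reductive group on a complex manifold; one applies it to the ordinary diagonal action and to the $\sigma$-twisted diagonal action, respectively, with no further work.

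For the key equivalence (i) $\iff$ (ii), my approach is first to exhibit a geometric reduction, by introducing the diffeomorphism
\[
\widetilde{\Phi} \colon G_{\mathbb{C}}/Q_1 \times G_{\mathbb{C}}/Q_2 \longrightarrow G_{\mathbb{C}}/Q_1 \times G_{\mathbb{C}}/\sigma^{-1}(Q_2), \qquad (x, hQ_2) \longmapsto (x, \sigma^{-1}(h)\sigma^{-1}(Q_2)).
\]
A direct computation using $\sigma^{-1}(\sigma(g)h) = g\sigma^{-1}(h)$ shows that $\widetilde{\Phi}$ intertwines the $\sigma$-twisted diagonal action on the source with the ordinary diagonal action on the target. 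Consequently, condition (ii) becomes equivalent to the $G_{\mathbb{C}}$-sphericity of the diagonal action on $G_{\mathbb{C}}/Q_1 \times G_{\mathbb{C}}/\sigma^{-1}(Q_2)$.

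It then remains to verify that this latter sphericity is equivalent to (i). My strategy here is via the Vinberg--Kimelfeld correspondence (cf.\ Lemma \ref{lem:VKN} and \cite{VK78}): sphericity of the diagonal action on $G_{\mathbb{C}}/Q_1 \times G_{\mathbb{C}}/Q'_2$ is equivalent to multiplicity-freeness of $\Pi_1 \otimes \Pi_2$ for all $\Pi_1 \in \operatorname{Irr}(G_{\mathbb{C}};\mathfrak{q}_1)_{\operatorname{hol}}$ and $\Pi_2 \in \operatorname{Irr}(G_{\mathbb{C}};\mathfrak{q}'_2)_{\operatorname{hol}}$. The pullback $\Pi \mapsto \Pi \circ \sigma^{-1}$ sets up a bijection $\operatorname{Irr}(G_{\mathbb{C}};\mathfrak{q})_{\operatorname{hol}} \leftrightarrow \operatorname{Irr}(G_{\mathbb{C}};\sigma(\mathfrak{q}))_{\operatorname{hol}}$, and combining this with the $G_{\mathbb{C}}$-module isomorphism $\Pi_1 \otimes (\Pi_2 \circ \sigma^{-1}) \simeq (\Pi_1 \circ \sigma) \otimes \Pi_2$, implemented by the change of variable $g \mapsto \sigma(g)$, transfers multiplicity-freeness between the two sides.

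The principal obstacle lies in this last step: the one-sided twist of $Q_2$ by $\sigma$ is not implemented by any single automorphism of $G_{\mathbb{C}}$, so naive automorphism-invariance of sphericity does not apply. The resolution requires the double-twist argument just described, combining the bijection of irreducibles with the tensor-product change-of-variable identity, and exploiting that multiplicity-freeness of $\Pi_1 \otimes \Pi_2$ is preserved under simultaneous $\sigma$-pullback of both factors (since the latter merely permutes the irreducible decomposition).
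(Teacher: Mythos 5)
Your treatment of the equivalences (i) $\iff$ (i)$'$ and (ii) $\iff$ (ii)$'$ via Tanaka's theorem is exactly what the paper does. Your reduction via $\widetilde\Phi$ is also correct: (ii) becomes the sphericity of $G_{\mathbb{C}}/Q_1 \times G_{\mathbb{C}}/\sigma^{-1}(Q_2)$ under the ordinary diagonal $G_{\mathbb{C}}$-action. The gap is in the final representation-theoretic step. The identity you invoke, $\Pi_1 \otimes (\Pi_2 \circ \sigma^{-1}) \simeq (\Pi_1 \circ \sigma) \otimes \Pi_2$ (an intertwining along the automorphism $g \mapsto \sigma(g)$, not a $G_{\mathbb{C}}$-module map, but multiplicity-preserving as you note), twists \emph{both} factors. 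Writing $D(\mathfrak{q},\mathfrak{q}'')$ for the diagonal sphericity of $G_{\mathbb{C}}/Q \times G_{\mathbb{C}}/Q''$, your argument yields only the trivially true $D(\mathfrak{q}_1, \sigma^{-1}(\mathfrak{q}_2)) \iff D(\sigma(\mathfrak{q}_1), \mathfrak{q}_2)$, not the required one-sided comparison $D(\mathfrak{q}_1, \sigma^{-1}(\mathfrak{q}_2)) \iff D(\mathfrak{q}_1, \mathfrak{q}_2)$. The group of symmetries of $D$ you have available (conjugation, simultaneous automorphism-twist of both slots, and the swap $\Pi_1 \otimes \Pi_2 \simeq \Pi_2 \otimes \Pi_1$) does not carry $(\Theta_1,\Theta_2)$ to $(\Theta_1,\sigma\Theta_2)$ when $\sigma$ is outer: already for ${\mathfrak{g}}_{\mathbb{C}}={\mathfrak{sl}}_3$ with $\Theta_1=\Theta_2=\{\alpha_1\}$ and $\sigma$ the diagram flip, $(\{\alpha_1\},\{\alpha_1\})$ and $(\{\alpha_1\},\{\alpha_2\})$ lie in distinct orbits of that symmetry group.

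So the one-sided invariance you need is a true statement but not a formal consequence of functoriality; the paper derives it by inspecting the classifications directly, namely that the list of spherical double flag varieties \cite[Thm.~5.2]{xhnoo} (equivalently, the list of strongly visible diagonal actions \cite{xtanaka12}) is visibly stable under applying an outer automorphism to one factor, while for $\sigma$ inner the two conditions (i) and (ii) coincide trivially since $\sigma^{-1}(Q_2)$ is then conjugate to $Q_2$. If you want to retain a proof-sketch flavor rather than a classification appeal, you would need to supply the one-sided stability as a separate input (either by citing the classification as the paper does, or by an ad hoc argument), because the double-twist step as written does not close the loop.
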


\begin{proof}
See \cite{xtanaka} for the equivalences
 (i) $\iff$ (i)$'$
 and  (ii) $\iff$ (ii)$'$.  
The equivalences (i) $\iff$ (ii) and (i)$'$ $\iff$ (ii)$'$
 are obvious
 when $\sigma$ is an inner automorphism, 
 and follow from the classification
 of strongly visible actions
 \cite{xtanaka12} for (i)$'$ $\iff$ (ii)$'$, 
 or alternatively that of spherical varieties  \cite[Thm.~5.2]{xhnoo}
 for (i) $\iff$ (ii)
 when $\sigma$ is an outer automorphism.  
\end{proof}

Since our criteria in Theorems \ref{thm:bdd} and \ref{thm:tensor}
 are formulated
 by the complexified Lie group $G_{\mathbb{C}}$, 
 it is convenient to fix our convention
 when $G$ itself has a complex structure.  
Suppose $G$ is a complex Lie group.  
We write $J$ for the complex structure on ${\mathfrak{g}}$, 
 and decompose ${\mathfrak{g}}_{\mathbb{C}}={\mathfrak{g}} \otimes_{\mathbb{R}}{\mathbb{C}}$
 into the direct sum of the eigenspaces
 ${\mathfrak{g}}^{\operatorname{hol}}$ and ${\mathfrak{g}}^{\operatorname{anti}}$
 of $J$
 with eigenvalues
 $\sqrt{-1}$ and $-\sqrt{-1}$, 
respectively.  
Then one has a direct sum decomposition:
\[
 {\mathfrak{g}} \oplus {\mathfrak{g}} \overset \sim \to
 {\mathfrak{g}}^{\operatorname{hol}} \oplus {\mathfrak{g}}^{\operatorname{anti}} ={\mathfrak{g}}_{\mathbb{C}}, 
\quad
  (X,Y) \mapsto \frac 1 2 (X-\sqrt{-1} J X, Y+\sqrt{-1}JY).  
\]
Accordingly, 
 the complexification $G_{\mathbb{C}}$
 of the complex Lie group $G$
 is given by the totally real embedding 
\begin{equation}
\label{eqn:cpxcpx} 
\operatorname{diag} \colon G \hookrightarrow G \times G=:G_{\mathbb{C}}, 
\end{equation}
where the second factor equipped 
 with the reverse complex structure.

For a connected complex simple Lie group $G$, 
 there are two types for symmetric pairs $(G,H)$
 defined by an involution $\sigma$ of $G$:
\begin{enumerate}
\item[(1)]($\sigma$ is holomorphic)\quad\quad\,\,
$H$ is a complex subgroup of $G$, 
\item[(2)]($\sigma$ is anti-holomorphic)\enspace
$H$ is a real form of $G$.  
\end{enumerate}

For simplicity, 
 suppose that the subgroup $H$ is connected.  
Then via the identification 
 $G_{\mathbb{C}} \simeq G \times G$ in \eqref{eqn:cpxcpx}, 
 one has 
\begin{alignat*}{3}
H_{\mathbb{C}} &\simeq && H \times H
&&\text{for (1)}, 
\\
H_{\mathbb{C}} &\simeq && \operatorname{diag}_{\sigma}(G)
:=\{(g, \sigma g):g \in G\}
\quad
&&\text{for (2)}.  
\end{alignat*}
Correspondingly, 
 the Borel subgroup $B_{G/H}$
 for the symmetric space $G/H$
 (Definition \ref{def:Borel})
 is given as follows.

\begin{lemma}
\label{lem:QGcpxH}
Suppose $G$ is a complex simple Lie group, 
 and $G/H$ a symmetric space defined
 by an involutive automorphism $\sigma$ of $G$.  
Let $B_{G/H}$ be a Borel subgroup for $G/H$ 
 as a (real) symmetric space, 
 which is regarded as a subgroup of $G \times G$
 via the identification
 $G_{\mathbb{C}} \simeq G \times G$ in \eqref{eqn:cpxcpx}.  
\begin{enumerate}
\item[{\rm{(1)}}]
If $\sigma$ is holomorphic, 
 then $G/H$ is a complex symmetric space.  
We write $B_{G/H}^c$ for the Borel subgroup
 for the {\it{complex}} symmetric space 
 $G/H$, 
 see below.  
Then $B_{G/H}$ is isomorphic to $B_{G/H}^c \times B_{G/H}^c$.  
\item[{\rm{(2)}}]
If $\sigma$ is anti-holomorphic, 
 then $H$ is a real form of $G$
 and $B_{G/H}$ is isomorphic to $B \times B$
 where $B$ is a Borel subgroup of the complex Lie group $G$.  
\end{enumerate}
\end{lemma}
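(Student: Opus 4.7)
The plan is to use the decomposition $\mathfrak{g}_{\mathbb{C}} \simeq \mathfrak{g} \oplus \mathfrak{g}$ from \eqref{eqn:cpxcpx} (with the second factor carrying the reverse complex structure) and the induced identification of involutions. First I would record how the $\mathbb{C}$-linear extension $\tilde{\sigma}$ of $\sigma$ to $\mathfrak{g}_{\mathbb{C}}$ acts under this decomposition. In Case (1) the holomorphicity of $\sigma$ means $\sigma$ commutes with the complex structure $J$ on $\mathfrak{g}$, so $\tilde{\sigma}$ commutes with $J \otimes 1$ and hence preserves each factor, acting as $\sigma$ on each; this gives $\mathfrak{h}_{\mathbb{C}} \simeq \mathfrak{h} \oplus \mathfrak{h}$. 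In Case (2) the anti-holomorphicity of $\sigma$ means $\sigma J = -J\sigma$, so $\tilde{\sigma}$ anticommutes with $J \otimes 1$ and hence swaps the two summands (composed with $\sigma$); a short fixed-point computation yields $\mathfrak{h}_{\mathbb{C}} \simeq \operatorname{diag}_{\sigma}(\mathfrak{g})$.

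For Case (1), since $\mathfrak{g}^{-\sigma}$ is $J$-invariant, I would choose $\mathfrak{j}$ to be a maximal complex-abelian subspace of $\mathfrak{g}^{-\sigma}$ consisting of semisimple elements, i.e., a Cartan subspace for the complex symmetric pair $(\mathfrak{g}, \mathfrak{h})$, and verify that it is also maximal real-abelian semisimple in $\mathfrak{g}^{-\sigma}$ (if $X \in \mathfrak{g}^{-\sigma}$ is semisimple and commutes with $\mathfrak{j}$, its centralizer is a complex subalgebra, and maximality in the complex setting forces $X \in \mathfrak{j}$). Because $\mathfrak{j}$ is $J$-invariant, its real complexification $\mathfrak{j}_{\mathbb{C}}$ splits as $\mathfrak{j} \oplus \mathfrak{j}$ inside $\mathfrak{g} \oplus \mathfrak{g}$, and the restricted root system $\Sigma(\mathfrak{g}_{\mathbb{C}}, \mathfrak{j}_{\mathbb{C}})$ becomes the disjoint union of two copies of the restricted root system $\Sigma(\mathfrak{g}, \mathfrak{j})$ of the complex symmetric pair. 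Taking the same positive system in each factor produces the parabolic subalgebra $\mathfrak{b}_{G/H}^c \oplus \mathfrak{b}_{G/H}^c$, whose group-level incarnation $B_{G/H}^c \times B_{G/H}^c$ sits in $G \times G \simeq G_{\mathbb{C}}$.

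For Case (2), I would set $\mathfrak{j} := J\mathfrak{t}$ for a Cartan subalgebra $\mathfrak{t}$ of the real form $\mathfrak{h}$; then $\mathfrak{j}$ is maximal abelian semisimple in $\mathfrak{g}^{-\sigma} = J\mathfrak{h}$. The crucial calculation is that under $\mathfrak{g}_{\mathbb{C}} \simeq \mathfrak{g} \oplus \mathfrak{g}$, the complexification $\mathfrak{j}_{\mathbb{C}}$ sits diagonally and both coordinate projections $p_1, p_2 \colon \mathfrak{j}_{\mathbb{C}} \to \mathfrak{t}_{\mathbb{C}}$ (where $\mathfrak{t}_{\mathbb{C}} = \mathfrak{t} + J\mathfrak{t}$ is a Cartan subalgebra of the complex Lie algebra $\mathfrak{g}$) are $\mathbb{C}$-linear isomorphisms with respect to the appropriate complex structures on source and target. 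Pulling back the root systems $\Delta(\mathfrak{g}, \mathfrak{t}_{\mathbb{C}})$ from each factor recovers $\Sigma(\mathfrak{g}_{\mathbb{C}}, \mathfrak{j}_{\mathbb{C}})$, and an Iwasawa-compatible positive system yields a parabolic subalgebra $\mathfrak{b} \oplus \mathfrak{b}$ where $\mathfrak{b}$ is a Borel subalgebra of complex $\mathfrak{g}$. As an alternative route, one can invoke Lemma \ref{lem:Satake}: the real form $\mathfrak{g}_{\mathbb{R}}$ of $\mathfrak{g}_{\mathbb{C}}$ corresponding to the complex symmetric pair $(\mathfrak{g} \oplus \mathfrak{g}, \operatorname{diag}_{\sigma}(\mathfrak{g}))$ turns out to be $\mathfrak{g}$ viewed as a real Lie algebra (whose minimal parabolic subalgebras are precisely the Borel subalgebras of complex $\mathfrak{g}$), and complexifying such a $\mathfrak{b}$ gives $\mathfrak{b} \otimes_{\mathbb{R}} \mathbb{C} \simeq \mathfrak{b} \oplus \mathfrak{b}$.

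The main obstacle is the careful bookkeeping of the two distinct complex structures at play---the intrinsic complex structure $J$ on $\mathfrak{g}$ coming from $G$ being a complex Lie group versus the complex structure on the real complexification $\mathfrak{g}_{\mathbb{C}} = \mathfrak{g} \otimes_{\mathbb{R}} \mathbb{C}$. In particular, verifying in Case (2) that the projections $p_1$ and $p_2$ are $\mathbb{C}$-linear with respect to the correct structures (one uses $J$, the other $-J$ on the target), and confirming that the pulled-back full root system really defines a Borel rather than a larger parabolic, is where the computation is most delicate. Once this bookkeeping is pinned down, the product decompositions $B_{G/H} \simeq B_{G/H}^c \times B_{G/H}^c$ and $B_{G/H} \simeq B \times B$ follow directly.
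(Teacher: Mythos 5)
The paper states Lemma \ref{lem:QGcpxH} without an explicit proof, relying implicitly on the machinery built up in Section \ref{subsec:Satake} (the correspondence \eqref{eqn:gRghC} between complex symmetric pairs and real forms, and Lemma \ref{lem:Satake}). Your argument is correct. In Case (1) your preliminary step — that a maximal \emph{complex}-abelian semisimple subspace of $\mathfrak{g}^{-\sigma}$ is also maximal \emph{real}-abelian semisimple (one adjoins $\mathbb{C}X$ rather than $\mathbb{R}X$, noting $JX$ stays semisimple and commutes) — is the right thing to check and you handle it adequately; the $J$-invariance of $\mathfrak{j}$ then forces $\mathfrak{j}_{\mathbb{C}} \simeq \mathfrak{j}\oplus\mathfrak{j}$ and the factorwise reading of the restricted roots. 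In Case (2), the direct computation with the two projections $p_1,p_2$ works, but it is worth observing the short step that the centralizer of $\mathfrak{j}_{\mathbb{C}}$ in $\mathfrak{g}\oplus\mathfrak{g}$ is exactly the Cartan $\mathfrak{t}_{\mathbb{C}}\oplus\mathfrak{t}_{\mathbb{C}}$, which is what guarantees the resulting parabolic is a genuine Borel and not something larger. The ``alternative route'' you sketch — identify the real form $\mathfrak{g}_{\mathbb{R}}$ associated by \eqref{eqn:gRghC} to $({\mathfrak{g}}\oplus{\mathfrak{g}},\operatorname{diag}_{\sigma}\mathfrak{g})$ as $\mathfrak{g}$ regarded as a real Lie algebra, and invoke Lemma \ref{lem:Satake} — is cleaner and is exactly the path the paper's toolkit is designed for; the hands-on bookkeeping version you lead with is a sound but heavier substitute.
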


Here, 
 a Borel subgroup $B_{G/H}^c$
 for the {\it{complex}} symmetric space $G/H$
 is defined as a parabolic subgroup of $G$, 
 rather than that of the complexification $G_{\mathbb{C}}$.  
That is, 
 when $G$ itself is a complex reductive Lie group 
 and $\sigma$ is a holomorphic involution, 
 we define $B_{G/H}^c$ to be the parabolic subgroup of $G$ 
 associated to $\Sigma^+({\mathfrak{g}}, {\mathfrak{j}})$
 instead
 of that of $G_{\mathbb{C}}$
 associated to $\Sigma^+({\mathfrak{g}}_{\mathbb{C}}, {\mathfrak{j}}_{\mathbb{C}})$
 in Definition \ref{def:Borel}.

The case ${\mathfrak{g}}$ or ${\mathfrak{g}}_{\mathbb{C}}={\mathfrak{sl}}_2({\mathbb{C}})$ is distinguished in the classification from other cases, 
 for which we formulate in the following two lemmas.  
\begin{lemma}
\label{lem:3equiv}
Let $G$ be a complex simple Lie group 
 with Lie algebra ${\mathfrak{g}}$, 
 and $B$ a Borel subgroup of $G$. 
Then the following three conditions are equivalent:
\begin{enumerate}
\item[{\rm{(i)}}]
$(G \times G)/(B \times B)$
 is $G$-spherical
 via the diagonal action.  
\item[{\rm{(ii)}}]
$(G \times G \times G)/\operatorname{diag}(G)$
 is spherical as a $(G \times G \times G)$-space.  
\item[{\rm{(iii)}}]
${\mathfrak{g}}={\mathfrak{sl}}_2({\mathbb{C}})$.  
\end{enumerate}
\end{lemma}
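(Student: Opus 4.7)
The plan is to recast both (i) and (ii) as the single Lie-theoretic inequality $3\,\mathrm{rank}(\mathfrak{g})\ge\dim_{\mathbb{C}}\mathfrak{g}$, and to observe that among complex simple Lie algebras this holds only for $\mathfrak{g}=\mathfrak{sl}_2(\mathbb{C})$, with equality. (For any other simple type one checks $\dim\mathfrak{g}/\mathrm{rank}\,\mathfrak{g}\ge 4$ from the Cartan classification.) Thus the lemma reduces to deriving the inequality from each of (i) and (ii), and to verifying sufficiency in the $\mathfrak{sl}_2$-case.

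For (i), use the identification $(G\times G)/(B\times B)\cong(G/B)\times(G/B)$: $G$-sphericity under the diagonal action means that a Borel $B\subset G$ has an open orbit on $(G/B)^2$. Any such orbit has dimension at most $\dim B=\tfrac12(\dim\mathfrak{g}+\mathrm{rank}(\mathfrak{g}))$, whereas $\dim(G/B)^2=\dim\mathfrak{g}-\mathrm{rank}(\mathfrak{g})$, and comparing the two yields the required inequality. Conversely, for $\mathfrak{g}=\mathfrak{sl}_2(\mathbb{C})$ the $B$-stabilizer of $(0,1)\in\mathbb{P}^1\times\mathbb{P}^1$ is the finite subgroup $\{\pm I\}$, giving a full-dimensional open $B$-orbit.

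For (ii), I invoke the double-coset duality
\[
   B^3\backslash G^3/\operatorname{diag}(G)\;\longleftrightarrow\;\operatorname{diag}(G)\backslash(G/B)^3
\]
obtained by taking quotients in either order. A direct stabilizer calculation shows that corresponding orbits have stabilizers of equal dimension, so open $B^3$-orbits on $G^3/\operatorname{diag}(G)$ correspond to open diagonal-$G$ orbits on $(G/B)^3$. Thus (ii) is equivalent to the diagonal $G$-action on $(G/B)^3$ admitting an open orbit, and the dimension count $\dim G\ge\dim(G/B)^3=\tfrac32(\dim\mathfrak{g}-\mathrm{rank}(\mathfrak{g}))$ once again rearranges to $3\,\mathrm{rank}(\mathfrak{g})\ge\dim\mathfrak{g}$. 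Sufficiency in the $\mathfrak{sl}_2$-case follows from the $3$-transitivity of the $SL_2(\mathbb{C})$-action on $\mathbb{P}^1$ modulo the finite center: the stabilizer of a generic triple of points in $\mathbb{P}^1$ is $\{\pm I\}$.

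The main obstacle is conceptual rather than technical: in (ii), one must be careful to distinguish $G^3$-sphericity of $G^3/\operatorname{diag}(G)$ — which concerns orbits of the Borel $B^3$ of $G^3$ — from the diagonal $G$-action on the same variety. The key observation is that the double-coset duality above converts the former into an open-orbit statement for the latter, so that (i) and (ii) reduce to the same numerical inequality $3\,\mathrm{rank}(\mathfrak{g})\ge\dim\mathfrak{g}$, which by the Cartan classification singles out $\mathfrak{sl}_2(\mathbb{C})$.
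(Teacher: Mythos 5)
Your proof is correct, but it takes a genuinely different route from the paper's. The paper proves (i) $\Leftrightarrow$ (ii) directly from the double-coset bijection
$\operatorname{diag} B \backslash (G \times G)/(B \times B) \simeq (B \times B \times B) \backslash (G \times G \times G)/\operatorname{diag}G$
and then cites references (Ex.\ 2.8.6 of \cite{Ksuron}; Prop.\ 4.3 of \cite{xKMt}) for (ii) $\Leftrightarrow$ (iii), so its proof of the hard implication is by appeal to the literature. You instead make the argument self-contained: you show that each of (i) and (ii) independently implies, by a dimension count for an open orbit (using $\dim B=\tfrac12(\dim\mathfrak g+\operatorname{rank}\mathfrak g)$ and $\dim G/B=\tfrac12(\dim\mathfrak g-\operatorname{rank}\mathfrak g)$), the inequality $3\operatorname{rank}\mathfrak g\geq\dim\mathfrak g$; that this inequality singles out $\mathfrak{sl}_2$ among complex simple Lie algebras is an easy check against the Cartan list; and you verify sufficiency for $\mathfrak{sl}_2$ by computing the $B$-stabilizer of a generic pair in $(\mathbb P^1)^2$ and using the $3$-transitivity of $PSL_2(\mathbb C)$ on $\mathbb P^1$ for the triple case. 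Your double-coset step inside (ii) is the same device the paper uses for (i) $\Leftrightarrow$ (ii) (there is nothing to worry about in the ``equal stabilizer dimensions'' remark, since open double cosets correspond to open double cosets under inversion). The trade-off: the paper's proof is shorter but opaque at the citation; yours is longer but elementary and complete, and reveals the quantitative content (the inequality with equality precisely for $A_1$) behind the lemma.
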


\begin{proof}
The first equivalence (i) $\iff$ (ii) is 
 clear from the bijection:
\[
   \operatorname{diag} B \backslash (G \times G)/(B \times B)
\simeq
 (B \times B \times B) \backslash (G \times G \times G)/\operatorname{diag}G.  
\]
See \cite[Ex.~2.8.6]{Ksuron} or \cite[Prop.~4.3]{xKMt}
 {\it{e.g.}}, for the equivalence (ii) $\iff$ (iii).  
\end{proof}

\begin{lemma}
\label{lem:sl2}
Let $G$ be a semisimple Lie group, 
 $G_{\mathbb{C}}$ a complexification of $G$, 
 and $B$ a Borel subgroup of $G_{\mathbb{C}}$.  
Assume that ${\mathfrak{g}}_{\mathbb{C}}$ is a direct sum
 of copies of ${\mathfrak{sl}}_2({\mathbb{C}})$.  
Then for any symmetric pair $(G,G')$, 
 $G_{\mathbb{C}}/B$ is $G_{\mathbb{C}}'$-spherical.  
\end{lemma}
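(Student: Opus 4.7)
My plan is to verify the sphericity of $G_{\mathbb{C}}/B$ under $G_{\mathbb{C}}'$ by exhibiting an open orbit of a Borel subgroup $B'$ of $G_{\mathbb{C}}'$ directly, using the fact that the hypothesis forces $G_{\mathbb{C}}/B$ to be a product of projective lines. Since sphericity is insensitive to isogeny, I may replace $G_{\mathbb{C}}$ by $\prod_{i=1}^{r}\mathrm{SL}_{2}(\mathbb{C})$ and $B$ by $\prod_{i=1}^{r} B_{0}$ with $B_{0}$ a Borel of $\mathrm{SL}_{2}(\mathbb{C})$, so that $G_{\mathbb{C}}/B \simeq (\mathbb{P}^{1})^{r}$.

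First, I would let $\sigma$ denote the involution of $G$ defining $G'$, extended complex-linearly to $\mathfrak{g}_{\mathbb{C}}$, and observe that $\sigma$ permutes the simple ideals $\mathfrak{g}_{i} \simeq \mathfrak{sl}_{2}(\mathbb{C})$. Grouping the ideals into $\sigma$-orbits of size one or two produces a direct product decomposition $\mathfrak{g}'_{\mathbb{C}} \simeq \bigoplus_{k}\mathfrak{h}_{k}$ compatible with a factorization of $G_{\mathbb{C}}/B$ into pieces of the form $\mathbb{P}^{1}$ or $\mathbb{P}^{1}\times\mathbb{P}^{1}$. Because $\mathrm{Aut}(\mathfrak{sl}_{2}(\mathbb{C}))=\mathrm{Inn}(\mathfrak{sl}_{2}(\mathbb{C}))$, every fixed singleton contributes either all of $\mathfrak{sl}_{2}(\mathbb{C})$ or a Cartan subalgebra isomorphic to $\mathbb{C}$, and every swapped pair $\{\mathfrak{g}_{i},\mathfrak{g}_{j}\}$ contributes a diagonal $\mathfrak{sl}_{2}(\mathbb{C})$ (any inner twist can be absorbed by changing basepoint on the second $\mathbb{P}^{1}$).

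Second, I would reduce to three local sphericity assertions, using that a Borel of a direct product is the product of Borels of the factors. The local models are: $\mathrm{SL}_{2}(\mathbb{C})$ acting on $\mathbb{P}^{1}$, where the Borel has an open affine orbit; a one-dimensional torus acting on $\mathbb{P}^{1}$, where the torus itself has the open orbit $\mathbb{C}^{\times}\subset\mathbb{P}^{1}$; and diagonal $\mathrm{SL}_{2}(\mathbb{C})$ acting on $\mathbb{P}^{1}\times\mathbb{P}^{1}$. The last model is precisely the content of Lemma \ref{lem:3equiv} specialized to $\mathfrak{g}=\mathfrak{sl}_{2}(\mathbb{C})$, which has already been established. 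Assembling these three open orbits into a product yields an open $B'$-orbit on $G_{\mathbb{C}}/B$, which proves the lemma. The only potentially delicate step is the twisted-diagonal case in a 2-cycle, but as noted the triviality of $\mathrm{Out}(\mathrm{SL}_{2}(\mathbb{C}))$ removes any real obstacle there.
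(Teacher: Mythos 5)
Your proof is correct and follows essentially the same route as the paper's: both reduce to a product of local models and invoke Lemma~\ref{lem:3equiv} for the case of the diagonal $\mathrm{SL}_2(\mathbb{C})$ in $\mathbb{P}^1\times\mathbb{P}^1$. The only difference is that you decompose according to how $\sigma$ permutes the simple ideals of $\mathfrak{g}_{\mathbb{C}}$ rather than reducing to $\sigma$-irreducible pieces of $\mathfrak{g}$ itself, which is slightly finer and in fact handles the sub-case where a simple ideal of $\mathfrak{g}$ is $\mathfrak{sl}_2(\mathbb{C})$ viewed as a real Lie algebra (so that the corresponding flag factor is $\mathbb{P}^1\times\mathbb{P}^1$, not $\mathbb{P}^1$) more explicitly than the paper's terse treatment of its ``first case.''
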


\begin{proof}
It suffices to show 
 when the symmetric pair $(G,G')$ is irreducible, 
 namely, 
 either ${\mathfrak{g}}$ is simple
 or $({\mathfrak{g}}, {\mathfrak{g}}')=({}^{\backprime}{\mathfrak{g}}\oplus{}^{\backprime} {\mathfrak{g}}, \operatorname{diag}{}^{\backprime}{\mathfrak{g}})$
 for simple ${}^{\backprime}{\mathfrak{g}}$.  
The assertion is straightforward
 in the first case
 where $G_{\mathbb{C}}/B \simeq {\mathbb{P}}^1{\mathbb{C}}$, 
 and follows from the implication (iii) $\Rightarrow$ (i) and (ii)
 of Lemma \ref{lem:3equiv}
 in the second case.  
\end{proof}

\subsection{Proof of Theorem \ref{thm:listreal} (${\mathfrak{g}}_{\mathbb{C}}$ simple)}

In this subsection, 
 we give a proof of Theorem \ref{thm:listreal}
 which deals with the case
 that $G$ is a simple Lie group
 and $G$ is not a complex Lie group, 
 namely, 
 the complexified Lie algebra ${\mathfrak{g}}_{\mathbb{C}}$ is simple.

Let $G/H$ and $G/G'$ be symmetric spaces, 
 and $B_{G/H}$ $(\subset G_{\mathbb{C}})$ a Borel subgroup for $G/H$.  
The proof of Theorem \ref{thm:listreal} is 
 based on the criterion in Theorem \ref{thm:bdd}
 for the bounded multiplicity property \eqref{eqn:BBH}, 
 which we observe 
 is determined
 only by the complexified Lie algebras
 ${\mathfrak{g}}_{\mathbb{C}}$, 
 ${\mathfrak{h}}_{\mathbb{C}}$, 
 and ${\mathfrak{g}}_{\mathbb{C}}'$.  
Then our strategy to classify the triple
 $({\mathfrak{g}}_{\mathbb{C}},{\mathfrak{h}}_{\mathbb{C}},{\mathfrak{g}}_{\mathbb{C}}')$
 is to fix a complex symmetric pair
 $({\mathfrak{g}}_{\mathbb{C}},{\mathfrak{g}}_{\mathbb{C}}')$
 and  to classify ${\mathfrak{h}}_{\mathbb{C}}$ 
 such that $G_{\mathbb{C}}/B_{G/H}$
 is $G_{\mathbb{C}}'$-spherical, 
 which is divided into two steps.  
\par\noindent
{\bf{Step 1.}}\enspace
Classify parabolic subgroups $P$
 of a complex simple Lie group $G_{\mathbb{C}}$
 such that $G_{\mathbb{C}}/P$ is $G_{\mathbb{C}}'$-spherical
 or equivalently, 
 is $G_U'$-strongly visible.  
\par\noindent
{\bf{Step 2.}}\enspace
Classify the complex symmetric pairs $({\mathfrak{g}}_{\mathbb{C}}, {\mathfrak{h}}_{\mathbb{C}})$ 
 such that the Borel subgroup $B_{G/H}$ 
 appears in the list obtained in Step 1.  
\par
Step 1 is done in \cite[Thm.~5.2]{xhnoo}. 
See also \cite{xrims40, xtanaka12}
 for some classification results of strongly visible actions.

\begin{proof}[Proof of Theorem \ref{thm:listreal}]
By the above argument, 
 it suffices is to carry out a computation in Step 2
 for each complex symmetric pairs
 $({\mathfrak{g}}_{\mathbb{C}}, {\mathfrak{g}}_{\mathbb{C}}')$
 with ${\mathfrak{g}}_{\mathbb{C}}$ simple.  
We illustrate this computation in the following setting:
\begin{equation}
\label{eqn:slsl}
\text{$({\mathfrak{g}}_{\mathbb{C}},{\mathfrak{g}}_{\mathbb{C}}')=({\mathfrak{sl}}_n, {\mathfrak{sl}}_p \oplus {\mathfrak{sl}}_{q} \oplus {\mathbb{C}})$
 with $n=p+q$.  }
\end{equation}

Take a subset $\Theta$ of the set $\{\alpha_1, \cdots, \alpha_{n-1}\}$
 of simple roots
 for $\Delta^+({\mathfrak{g}}_{\mathbb{C}}, \widetilde{\mathfrak{j}}_{\mathbb{C}})$
 labelled as in Bourbaki \cite{Bou7a9}.  
We write $\Theta^{c}$
 for the complement of $\Theta$, 
 and $P^{\Theta}$ for the parabolic subgroup of $G_{\mathbb{C}}$
 corresponding to $\Theta$.  
We recall our convention
 that $P^{\Theta}$ is a Borel subgroup of $G_{\mathbb{C}}$
 if ${\Theta}$ is empty.  
Then $G_{\mathbb{C}}/{P^{\Theta}}$ is $G_{\mathbb{C}}'$-spherical
 (\cite[Thm.~5.2]{xhnoo}), 
 or equivalently,
 $G_U'$-strongly visible 
 (\cite[Thm.~A]{K07b}, 
 see also \cite[Thm.~16]{xrims40})
 if and only if $\Theta$ satisfies one of the following conditions:

\par\noindent\phantom{n}
{\bf{Case 1.}}\enspace
$\# \Theta^c \le 1$, 
\vskip 0.5pc
\par\noindent\hphantom{n}
{\bf{Case 2.}}\enspace
$\Theta^c =\{\alpha_1, \alpha_i\}$, 
$\{\alpha_i, \alpha_{i+1}\}$, $\{\alpha_i, \alpha_{n-2}\}$
 for some $i$, 
\vskip 0.5pc
\par\noindent\hphantom{n}
{\bf{Case 3.}}\enspace
$\operatorname{min}(p,q)=2$
 and $\# \Theta^c =2$, 
\vskip 0.5pc
\par\noindent\hphantom{n}
{\bf{Case 4.}}\enspace
$\operatorname{min}(p,q)=1$
 and $\Theta$ is arbitrary.  
\par
As a second step, 
 we now examine
 if the corresponding parabolic subalgebra ${\mathfrak{p}}^{\Theta}$ is isomorphic to the Borel subalgebra
 of some symmetric pair $({\mathfrak{g}}, {\mathfrak{h}})$.  
Suppose $({\mathfrak{g}}, {\mathfrak{h}})$ is a symmetric pair.  
Let ${\mathfrak{g}}_{\mathbb{R}}$ be the real form of ${\mathfrak{g}}_{\mathbb{C}}$
 corresponding to the pair
 $({\mathfrak{g}}_{\mathbb{C}},{\mathfrak{h}}_{\mathbb{C}})$
 as in \eqref{eqn:gRghC}, 
 and $\Theta$ the set of the black circles
 in the Satake diagram  of ${\mathfrak{g}}_{\mathbb{R}}$, 
 see \cite[Ch.~X, Table VI]{He78}, 
 for instance.  
By Lemma \ref{lem:Satake}, 
 the complex parabolic subalgebra ${\mathfrak{p}}^{\Theta}$
 is a Borel subalgebra ${\mathfrak{b}}_{G/H}$
 for the symmetric pair $({\mathfrak{g}}, {\mathfrak{h}})$.  
Among real forms ${\mathfrak{g}}_{\mathbb{R}}={\mathfrak{sl}}_n({\mathbb{R}})$,
 ${\mathfrak{su}}(p,n-p)$,
 and ${\mathfrak{su}}^{\ast}(n)$ ($n$: even)
 of ${\mathfrak{g}}_{\mathbb{C}}={\mathfrak{s l}}_n({\mathbb{C}})$, 
 the number of the white circles
 ($= \# \Theta^c$)
 in the Satake diagram 
 is equal to 1 or 2
 if and only if the real form ${\mathfrak{g}}_{\mathbb{R}}$ is isomorphic to 
 one of the following:
\begin{equation}
\label{eqn:white2}
   {\mathfrak{sl}}(n,{\mathbb{R}})\,\,(n=2,3);\,
   {\mathfrak{su}}^{\ast}(2m)\,\,(m=2,3);
   \text{ or }
   {\mathfrak{su}}(1,n-1), 
\end{equation}
 and correspondingly, 
 the set of the white circles 
 is given by
\[
   \{\alpha_1\}, \{\alpha_1, \alpha_2\};\,\, 
   \{\alpha_2\}, \{\alpha_2,\alpha_4\};\,\,
   \{\alpha_1, \alpha_{n-1}\}, 
\]
respectively.  
Therefore $G_{\mathbb{C}}/P^{\Theta}$ is $G_{\mathbb{C}}'$-spherical
 if and only if one of the following conditions holds:
\par\noindent\hphantom{n}
{\bf{Case 1.}}\enspace
${\mathfrak{g}}_{\mathbb{R}} \simeq {\mathfrak{sl}}(2,{\mathbb{R}})$
 or ${\mathfrak{su}}^{\ast}(4)$, 
\vskip 0.5pc
\par\noindent\hphantom{n}
{\bf{Case 2.}}\enspace
${\mathfrak{g}}_{\mathbb{R}} \simeq {\mathfrak{sl}}(3,{\mathbb{R}})$
 or ${\mathfrak{su}}(1,n-1)$, 
\vskip 0.5pc
\par\noindent\hphantom{n}
{\bf{Case 3.}}\enspace
$\operatorname{min}(p,q)=2$
 and ${\mathfrak{g}}_{\mathbb{R}}$ is one of \eqref{eqn:white2}, 
\vskip 0.5pc
\par\noindent\hphantom{n}
{\bf{Case 4.}}\enspace
$\operatorname{min}(p,q)=1$
 and ${\mathfrak{g}}$ is arbitrary.  
\par
\vskip 0.5pc
This exhausts the list in Table \ref{tab:0.1} 
 with $({\mathfrak{g}}_{\mathbb{C}},{\mathfrak{g}}_{\mathbb{C}}')=({\mathfrak{sl}}_n, {\mathfrak{sl}}_p \oplus {\mathfrak{sl}}_{q} \oplus {\mathbb{C}})$
 except for the triple
 $({\mathfrak{g}}_{\mathbb{C}},{\mathfrak{h}}_{\mathbb{C}}, {\mathfrak{g}}_{\mathbb{C}}')=({\mathfrak{sl}}_4, {\mathfrak{sp}}_2, {\mathfrak{sl}}_2 \oplus {\mathfrak{sl}}_2 \oplus {\mathbb{C}})$, 
 which we have omitted from the table 
 because it is isomorphic to a special case of the triple 
$({\mathfrak{so}}_{a+b}, {\mathfrak{so}}_{a+b-1}, {\mathfrak{so}}_a \oplus {\mathfrak{so}}_b)$
 with $(a,b)=(2,4)$.

For other symmetric pairs 
 $({\mathfrak{g}}_{\mathbb{C}}, {\mathfrak{g}}_{\mathbb{C}}')$, 
 the proof of the classification ${\mathfrak{h}}_{\mathbb{C}}$
 is similar
 (and often simpler).  
\end{proof}
\subsection{Proof of Theorem \ref{thm:cpxlist} 
(${\mathfrak{g}}$ complex simple)}
\label{subsec:pfcpxlist}

In this section, 
 we give a proof of Theorem \ref{thm:cpxlist}
 which deals with the case
 that $G$ is a complex simple Lie group.  
Since the Lie algebra ${\mathfrak{g}}$ 
 has a complex structure, 
 there are four possibilities
 for the symmetric pairs $({\mathfrak{g}}, {\mathfrak{h}})$
 and $({\mathfrak{g}}, {\mathfrak{g}}')$:
\newline\indent
{\bf{Case I${}^c$-a.}}\enspace
Both ${\mathfrak{h}}$ and ${\mathfrak{g}}'$ are complex subalgebras.  
\newline\indent
{\bf{Case I${}^c$-b.}}\enspace
${\mathfrak{h}}$ is a complex subalgebra
 and ${\mathfrak{g}}'$ is a real form of ${\mathfrak{g}}$.  
\newline\indent
{\bf{Case I${}^c$-c.}}\enspace
${\mathfrak{h}}$ is a real form of ${\mathfrak{g}}$, 
 and ${\mathfrak{g}}'$ is a complex subalgebra.  
\newline\indent
{\bf{Case I${}^c$-d.}}\enspace
Both ${\mathfrak{h}}$ and ${\mathfrak{g}}'$ are real forms 
 of ${\mathfrak{g}}$.

\begin{proof}[Proof of Theorem \ref{thm:cpxlist}]
The classification in Case I$^c$-a goes similarly
 to the aforementioned proof of Theorem \ref{thm:listreal}.  
By Lemma \ref{lem:diagvis}, 
 the classification in Case I$^c$-b is equivalent to the special case
 of Theorem \ref{thm:tensorlist} (2)
 with ${\mathfrak{h}}_1 = {\mathfrak{h}}_2$, 
 which will be proved in Section \ref{subsec:pftensor}.  
The classification in Case I$^c$-c is reduced 
 to the classification of the pairs 
 $({\mathfrak{g}}, {\mathfrak{g}}')$
 such that $(G_{\mathbb{C}} \times G_{\mathbb{C}})/(B \times B)$
 is $(G_{\mathbb{C}}' \times G_{\mathbb{C}}')$-spherical, 
 or equivalently, 
 $G_{\mathbb{C}}/B$ is $G_{\mathbb{C}}'$-spherical.  
This is classified as in \eqref{eqn:BBlist}
 by Kr{\"a}mer \cite{xkramer}.  
The pair $({\mathfrak{so}}_8, {\mathfrak{spin}}_7)$
 is not listed in Theorem \ref{thm:cpxlist}
 because the classification is listed
 up to the equivalence in Definition \ref{def:eqtriple}.  
The classification in Case I$^c$-d
 is equivalent to that of ${\mathfrak{g}}$ 
 such that $(G_{\mathbb{C}} \times G_{\mathbb{C}})/(B \times B)$
 is $G_{\mathbb{C}}$-spherical under the diagonal action of $G_{\mathbb{C}}$.  
Then ${\mathfrak{g}}_{\mathbb{C}} \simeq {\mathfrak{sl}}_2({\mathbb{C}})$
 by Lemma \ref{lem:3equiv}.  
\end{proof}

\subsection{Proof of Theorem \ref{thm:gplist} for group manifold case}
\label{subsec:pfgplist}

Let $G$ be a simple Lie group. 
In this section we complete the proof of Theorem \ref{thm:gplist}
 by classifying symmetric pairs
 $(G \times G, G')$ 
 having the bounded multiplicity property \eqref{eqn:BBH}.  
We need to treat the following cases.  
\newline\indent
{\bf{Case II-1.}}\enspace
${\mathfrak{g}}'={\mathfrak{g}}_1' \oplus {\mathfrak{g}}_2'$
 such that $({\mathfrak{g}}, {\mathfrak{g}}_j')$
 ($j=1,2$) are symmetric pairs.  
\newline\indent\indent
{\bf{Case II$^r$-1.}}\enspace
${\mathfrak{g}}_{\mathbb{C}}$ is simple.
\newline\indent\indent
{\bf{Case II$^c$-1.}}\enspace
${\mathfrak{g}}$ is a complex simple Lie algebra.  
\newline\indent
{\bf{Case II-2.}}\enspace
${\mathfrak{g}}'=\operatorname{diag}_{\sigma}({\mathfrak{g}})$
 for some involutive automorphism $\sigma$ of ${\mathfrak{g}}$.  
\newline\indent\indent
{\bf{Case II$^r$-2.}}\enspace
${\mathfrak{g}}_{\mathbb{C}}$ is simple.  
\newline\indent\indent
{\bf{Case II$^c$-2.}}\enspace
${\mathfrak{g}}$ is a complex simple Lie algebra.

We recall that for ${\mathfrak{g}}$ a simple Lie algebra, 
 ${\mathfrak{g}}$ is a complex simple Lie algebra 
 if and only if
 ${\mathfrak{g}}_{\mathbb{C}}$ is not simple.  

\begin{proof}
[Proof of Theorem \ref{thm:gplist}]
We note 
 that the Borel subgroup for the symmetric space
 $(G \times G)/\operatorname{diag}G$
 is given by $B \times B_-$, 
 where $B$ is a Borel subgroup of $G_{\mathbb{C}}$
 and $B_-$ its opposite Borel subgroup.  
Since $B_-$ is conjugate by $B$, 
 we shall simply use $B \times B$
 instead of $B \times B_-$.  
Then the bounded multiplicity property \eqref{eqn:gpbdd} holds
 if and only if $(G_{\mathbb{C}} \times G_{\mathbb{C}})/(B \times B)$
 is $G_{\mathbb{C}}'$-spherical by Theorem \ref{thm:bdd}.  
\newline
(1)\enspace
In Case II-1, 
 it suffices to classify the triple 
 $({\mathfrak{g}},{\mathfrak{g}}_1, {\mathfrak{g}}_2)$
 for which the flag variety
 $(G_{\mathbb{C}} \times G_{\mathbb{C}})/(B \times B)$
 is $({G_1}_{\mathbb{C}} \times {G_2}_{\mathbb{C}})$-spherical.  
By a classical result of Kr{\"a}mer \cite{xkramer}, 
 this happens
 if and only if $({\mathfrak{g}}, {\mathfrak{g}}_j)$
 or $({\mathfrak{g}}_{\mathbb{C}}, {{\mathfrak{g}}_j}_{\mathbb{C}})$
 are in the list \eqref{eqn:BBlist}.  
By the equivalence relation 
 for triples in Definition \ref{def:eqtriple}, 
 the classification in Case II-1 follows.  
\par\noindent
(2)\enspace
In Case II-2, 
 it suffices to classify $(G, \sigma)$
 for which $(G_{\mathbb{C}} \times G_{\mathbb{C}})/(B \times B)$
 is $G_{\mathbb{C}}$-spherical
 via the $\sigma$-twisted diagonal action.  
Then the classification follows from Lemma \ref{lem:3equiv}
 in Case II$^r$-2.  
Similarly for Case II$^c$-2.  
\end{proof}

\subsection{Proof of Theorem \ref{thm:tensorlist} for tensor product case}
\label{subsec:pftensor}

Let $G$ be a simple Lie group.  
In this section we complete the proof of Theorem \ref{thm:tensorlist}
 by classifying the symmetric pairs $(G \times G, H)$
 having the bounded multiplicity property
 \eqref{eqn:gtensor}
 for the tensor product representations
 by using the criterion in Theorem \ref{thm:tensor}.  
According to whether the simple Lie algebra ${\mathfrak{g}}$ has 
 a complex structure or not, 
 we treat separately in the following subcases:
\newline\indent
{\bf{Case III-1.}}\enspace
${\mathfrak{h}}={\mathfrak{h}}_1 \oplus {\mathfrak{h}}_2$
 such that $({\mathfrak{g}}, {\mathfrak{h}}_j)$
 ($j=1,2$) are symmetric pairs.  
\newline\indent\indent
{\bf{Case III$^r$-1.}}\enspace
${\mathfrak{g}}_{\mathbb{C}}$ is simple.  
\newline\indent\indent
{\bf{Case III$^c$-1.}}\enspace
${\mathfrak{g}}$ is a complex simple Lie algebra.  
\newline\indent
{\bf{Case III-2.}}\enspace
${\mathfrak{h}}=\operatorname{diag}_{\sigma}({\mathfrak{g}})$
 for some involutive automorphism $\sigma$ of ${\mathfrak{g}}$.  
\newline\indent\indent
{\bf{Case III$^r$-2.}}\enspace
${\mathfrak{g}}_{\mathbb{C}}$ is simple.  
\newline\indent\indent
{\bf{Case III$^c$-2.}}\enspace
${\mathfrak{g}}$ is a complex simple Lie algebra.

We first prove the following proposition:
\begin{proposition}
\label{prop:210499}
Let $B_{G/H_j}$ $(\subset G_{\mathbb{C}})$ be
 the Borel subgroup for symmetric spaces $G/H_j$ $(j=1,2)$.  
Suppose ${\mathfrak{g}}_{\mathbb{C}}$ is simple.  
Then the following three conditions are equivalent:
\begin{enumerate}
\item[{\rm{(i)}}]
$(G_{\mathbb{C}} \times G_{\mathbb{C}})/(B_{G/H_1} \times B_{G/H_2})$ is
$G_{\mathbb{C}}$-spherical.  
\item[{\rm{(ii)}}]
$(G_{\mathbb{C}} \times G_{\mathbb{C}})/(B_{G/H_1} \times B_{G/H_2})$ is
$G_U$-strongly visible.  
\item[{\rm{(iii)}}]
The triple $({\mathfrak{g}}_{\mathbb{C}}, {{\mathfrak{h}}_1}_{\mathbb{C}}, {{\mathfrak{h}}_2}_{\mathbb{C}})$ is in Table \ref{tab:tensor}.  
\end{enumerate}
\end{proposition}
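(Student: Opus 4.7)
The equivalence (i) $\iff$ (ii) is immediate from the general result of Tanaka \cite{xtanaka} already invoked in the proof of Theorem \ref{thm:Qsph}, which asserts that for a pair of connected complex reductive Lie groups $G_{\mathbb{C}} \supset L_{\mathbb{C}}$ acting on a generalized flag variety $G_{\mathbb{C}}/Q$, sphericity of the $L_{\mathbb{C}}$-action is equivalent to strong visibility of the $L_U$-action (applied here to the $G_{\mathbb{C}}$-space $G_{\mathbb{C}}/B_{G/H_1} \times G_{\mathbb{C}}/B_{G/H_2}$ via the diagonal embedding $G_{\mathbb{C}} \hookrightarrow G_{\mathbb{C}} \times G_{\mathbb{C}}$ and using Lemma \ref{lem:diagvis}).

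The substance therefore lies in (i) $\iff$ (iii). The plan is to convert the problem into one about pairs of standard parabolic subgroups of $G_{\mathbb{C}}$ and then to quote the existing classification of spherical double flag varieties. First, by Lemma \ref{lem:Satake} each Borel subgroup $B_{G/H_j}$ is conjugate, inside $G_{\mathbb{C}}$, to the standard parabolic subgroup $P^{\Theta_j}$, where $\Theta_j \subset \Psi$ is the set of black nodes in the Satake diagram of the real form $\mathfrak{g}_{\mathbb{R},j}$ associated with the complex symmetric pair $(\mathfrak{g}_{\mathbb{C}},(\mathfrak{h}_j)_{\mathbb{C}})$ under the correspondence \eqref{eqn:gRghC}. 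Hence condition (i) becomes the diagonal sphericity of $G_{\mathbb{C}}/P^{\Theta_1} \times G_{\mathbb{C}}/P^{\Theta_2}$. Second, the classification of pairs of standard parabolics $(P^{\Theta_1},P^{\Theta_2})$ for which $G_{\mathbb{C}}/P^{\Theta_1}\times G_{\mathbb{C}}/P^{\Theta_2}$ is $\operatorname{diag}(G_{\mathbb{C}})$-spherical is available: the case in which both $\Theta_j^c$ are singletons (maximal parabolics) is due to Littelmann \cite{Li94}, and the general classification appears in Stembridge's work and, equivalently from the visibility side, in Tanaka \cite{xtanaka12}. Combining these, we obtain a short list of admissible pairs $(\Theta_1,\Theta_2)$ for each complex simple $\mathfrak{g}_{\mathbb{C}}$.

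It then remains to intersect this list with the constraint that both $\Theta_1$ and $\Theta_2$ arise as Satake sets of black nodes of real forms of $\mathfrak{g}_{\mathbb{C}}$, using the complete table of Satake diagrams (\cite[Ch.~X]{He78}). This is the same Satake enumeration already carried out in the proof of Theorem \ref{thm:listreal}; for each classical $\mathfrak{g}_{\mathbb{C}}$ (and then for the exceptional series) we read off which $\Theta^c\subset\Psi$ of cardinality $\leq \operatorname{rank}\mathfrak{g}_{\mathbb{C}}$ can actually occur as the white-node set of a Satake diagram, and then pair them up. For $\mathfrak{g}_{\mathbb{C}}=\mathfrak{sl}_n$, $\mathfrak{so}_n$, $\mathfrak{sp}_n$ the admissible $\Theta^c$ are severely restricted (essentially those arising from the split, quasi-split, or rank-one real forms), and the spherical pairs surviving the intersection are precisely the entries of Table \ref{tab:tensor}; the exceptional series contribute nothing new, as one checks directly from the Satake diagrams of $E_6, E_7, E_8, F_4, G_2$. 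The sufficiency direction (iii) $\Rightarrow$ (i) is then verified case-by-case for the four entries of Table \ref{tab:tensor}, either by exhibiting an open $G_{\mathbb{C}}$-orbit on the double flag variety or by appeal to Littelmann's list.

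The main obstacle I anticipate is bookkeeping rather than conceptual: matching the combinatorial output of the double-flag sphericity classification (which is labelled by pairs of subsets of simple roots) with the Satake-diagram realizability constraint, and tracking the equivalence relation of Definition \ref{def:eqtriple} (which involves both inner automorphisms and the outer $H_1 \leftrightarrow H_2$ swap), in order to avoid duplicate entries such as the three ${\mathfrak{so}}_7$-type subalgebras of ${\mathfrak{so}}_8$ which must be reduced to the two equivalence classes $({\mathfrak{so}}_8,{\mathfrak{so}}_7,{\mathfrak{so}}_7)$ and $({\mathfrak{so}}_8,{\mathfrak{so}}_7,{\mathfrak{spin}}_7)$ recorded in Table \ref{tab:tensor}. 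The triality for ${\mathfrak{so}}_8$ is the most delicate case and will require invoking triality explicitly to separate the two inequivalent placements of ${\mathfrak{so}}_7$ inside ${\mathfrak{so}}_8$.
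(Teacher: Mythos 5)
Your proposal follows the same route as the paper's own proof: the equivalence (i) $\iff$ (ii) is quoted from Tanaka's general sphericity--visibility result, and (i)/(ii) $\iff$ (iii) is obtained by using Lemma~\ref{lem:Satake} to reduce to the question of which standard parabolics $P^{\Theta_j}$ arise as Satake black-node parabolics, intersecting with the known classification of spherical double flag varieties (\cite{Li94,xtanaka12}, with \cite{K07b} for type~$A$), and handling $D_4$ with its triality as the delicate case. Apart from a minor citation variation (you mention Stembridge rather than \cite{K07b}), this is essentially the paper's argument.
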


In Case III-1, 
 we recall from Theorem \ref{thm:tensor}
 that the bounded multiplicity property \eqref{eqn:bddt}
 of the tensor product representation $\Pi_1 \otimes \Pi_2$
 is equivalent to one of (therefore, any of) (i) and (ii) 
 in Proposition \ref{prop:210499}.  

\begin{proof}
The complete list of the pairs of parabolic subgroups $(Q_1,Q_2)$
 satisfying (ii) (or equivalently (i))
 is given in \cite{xtanaka12}, 
see also \cite{K07b} for type $A$, 
 and \cite{Li94} for maximal parabolic subgroups.  
In order to prove (ii) $\iff$ (iii)
 (or (i) $\iff$ (iii)), 
 it suffices to determine
 when $Q_j$ ($j=1,2$) are isomorphic to Borel subgroups
 for some symmetric pairs $(G, H_j)$.  
We illustrate the proof for (ii) $\iff$ (iii)
 with two cases: ${\mathfrak{g}}_{\mathbb{C}}$
 is of type $A$ and of type $D$, 
 in particular,
 of $D_4$.  
Other cases are similar.

Suppose ${\mathfrak{g}}_{\mathbb{C}}$ is of type $A$.  
We first observe from \cite{K07b} or \cite{xtanaka12}
 that if (ii) holds
 then 
$
\# \Theta_1^c + \# \Theta_2^c \le 3
$
 when ${\mathfrak{g}}_{\mathbb{C}}$
 is of type $A$.

In view of the Satake diagram
 for real forms of ${\mathfrak{g}}_{\mathbb{C}}$ of type $A$, 
$\# \Theta_1^c \le 2$
 if and only if one of the following holds:
\begin{alignat*}{2}
\# \Theta_1^c&=1
\quad
&&({\mathfrak{g}}_{\mathbb{C}}, {{\mathfrak{h}}_1}_{\mathbb{C}})
=({\mathfrak{sl}}_4({\mathbb{C}}), {\mathfrak{sp}}_2({\mathbb{C}})), 
\\
\# \Theta_1^c&=2
\quad
&&({\mathfrak{g}}_{\mathbb{C}}, {{\mathfrak{h}}_1}_{\mathbb{C}})
=({\mathfrak{sl}}_n({\mathbb{C}}), {\mathfrak{gl}}_{n-1}({\mathbb{C}})).  
\end{alignat*}
Hence
$\# \Theta_1^c + \# \Theta_2^c \le 3$
 only if 
$({\mathfrak{g}}_{\mathbb{C}}, {{\mathfrak{h}}_1}_{\mathbb{C}}, {{\mathfrak{h}}_2}_{\mathbb{C}})$
 is either 
$({\mathfrak{sl}}_4({\mathbb{C}}), {\mathfrak{sp}}_2({\mathbb{C}}), {\mathfrak{sp}}_2({\mathbb{C}}))$
 or $({\mathfrak{sl}}_4({\mathbb{C}}), {\mathfrak{sp}}_2({\mathbb{C}}), 
{\mathfrak{gl}}_3({\mathbb{C}}))$
 up to switch of factors.  
Conversely, 
 the condition (ii) holds in this case by \cite{K07b}.

Suppose ${\mathfrak{g}}$ is of type $D$.  
Then (ii) holds
 only if 
$
\# \Theta_1^c + \# \Theta_2^c \le 3
$.  
Then a similar argument to the type $A$ case tells
 that the equality is not attained 
 if $n \ge 5$
 when $\Theta_j^c$ arise from symmetric pairs
 $({\mathfrak{g}}, {\mathfrak{h}}_j)$
 and that (ii) holds 
 if and only if 
$
({\mathfrak{g}}_{\mathbb{C}}, {{\mathfrak{h}}_1}_{\mathbb{C}}, {{\mathfrak{h}}_2}_{\mathbb{C}})
=({\mathfrak{so}}_{2n}({\mathbb{C}}), {\mathfrak{so}}_{2n-1}({\mathbb{C}}), {\mathfrak{so}}_{2n-1}({\mathbb{C}}))$
 for the type $D$ case
 if $n \ge 5$.

The remaining case is when ${\mathfrak{g}}_{\mathbb{C}}$ is of type $D_4$
 and 
$
\# \Theta_1^c + \# \Theta_2^c = 3
$.  
By \cite{xtanaka12}, 
 the list of such pairs $(\Theta_1^c, \Theta_2^c)$ 
 satisfying (ii) is of three types
\begin{alignat*}{3}
& (\{\alpha_i\}, \{\alpha_j, \alpha_k\})
\quad
&&\{i,j,k\}=\{1,3,4\}
\quad
&&\text{(3 cases),}
\\
& (\{\alpha_i\}, \{\alpha_i, \alpha_j\})
\quad
&&\{i,j\} \subset \{1,3,4\}
&&\text{(6 cases), }
\\
& (\{\alpha_i\}, \{\alpha_2, \alpha_j\})
\quad
&&\{i,j\} \subset\{1,3,4\}
&&\text{(6 cases), }
\end{alignat*}
up to switch of factors.  
The first two types do not arise from symmetric pairs, 
 whereas the third types arises from 
\begin{equation}
\label{eqn:JLTp49}
  ({\mathfrak{g}}_{\mathbb{C}}, {{\mathfrak{h}}_1}_{\mathbb{C}}, {{\mathfrak{h}}_2}_{\mathbb{C}})
=({\mathfrak{so}}_{8}({\mathbb{C}}), {\mathfrak{so}}_{7}({\mathbb{C}}), {\mathfrak{gl}}_{4}({\mathbb{C}}))
\end{equation}
up to outer automorphisms.  
(We recall
 that the group of outer automorphisms of $D_4$ is of order 6.)

The cases when ${\mathfrak{g}}_{\mathbb{C}}$ is not of type $A$ or $D$, 
the proof is similar.  
\end{proof}

\begin{proof}
[Proof of Theorem \ref{thm:tensorlist}]
Let $Q$ $(\subset G_{\mathbb{C}} \times G_{\mathbb{C}})$ be a Borel subgroup 
 for the symmetric space $(G \times G)/H$.  
By Theorem \ref{thm:bdd}, 
 it suffices to determine
 when the flag variety $(G_{\mathbb{C}} \times G_{\mathbb{C}})/Q$
 is $G_{\mathbb{C}}'$-spherical.
\par\noindent
(1)\enspace
 For  ${\mathfrak{g}}$ or ${\mathfrak{g}}_{\mathbb{C}}={\mathfrak{sl}}_2({\mathbb{C}})$, 
 the assertion follows from Lemma \ref{lem:sl2}
 for any ${\mathfrak{h}}$ $(\subset {\mathfrak{g}} \oplus {\mathfrak{g}})$.  
\par\noindent
(2)\enspace
Suppose that ${\mathfrak{g}}_{\mathbb{C}}$ is simple.  
In Case III$^r$-1, 
 the classification is given by Proposition \ref{prop:210499}.  
\par\noindent
In Case III$^r$-2, 
 $Q$ is the direct product of Borel subgroup of $G_{\mathbb{C}}$, 
 hence the classification follows from Lemma \ref{lem:3equiv}.  

\par\noindent
(3)\enspace
Suppose that ${\mathfrak{g}}$ is a complex simple Lie algebra.  
In this case, 
 the complexification $G_{\mathbb{C}}$ of $G$
 is given as $G \times G$ by \eqref{eqn:cpxcpx}.  
\par\noindent
In Case III$^c$-1, 
 the proof is the same with Case III$^r$-1
 if both ${\mathfrak{h}}_1$ and ${\mathfrak{h}}_2$ are complex subalgebras 
 of ${\mathfrak{g}}$.

If at least one of ${\mathfrak{h}}_1$ or ${\mathfrak{h}}_2$
 is a real form of ${\mathfrak{g}}$ in Case III$^c$-1, 
 then one has $Q=Q_1 \times Q_2$
 where $Q_1$ or $Q_2$ must be a Borel subgroup of $G_{\mathbb{C}}$
 by Lemma \ref{lem:QGcpxH}.  
Therefore, 
 if $(G_{\mathbb{C}} \times G_{\mathbb{C}})/(Q_1 \times Q_2)$
 is $G_{\mathbb{C}}$-spherical 
 by the diagonal action, 
 then ${\mathfrak{g}}_{\mathbb{C}}$ must be 
 ${\mathfrak{sl}}_2({\mathbb{C}})$ from the implication (i) $\Rightarrow$ (iii)
 in Lemma \ref{lem:3equiv}.

Similarly in Case III$^c$-2, 
the Borel subgroup $Q$ for the symmetric space
 $(G \times G)/\operatorname{diag}_{\sigma}(G)$
 is the direct product of Borel subgroups of $G_{\mathbb{C}}$, 
 hence the bounded multiplicity criterion in Theorem \ref{thm:tensor} amounts
 to the sphericity of $(G_{\mathbb{C}} \times G_{\mathbb{C}})/(B \times B)$
 by the diagonal $G_{\mathbb{C}}$-action, 
 which forces ${\mathfrak{g}}$ to be ${\mathfrak{sl}}_2({\mathbb{C}})$ again 
 by (i) $\Rightarrow$ (iii) in Lemma \ref{lem:3equiv}.  
\end{proof}

\section{Examples and some perspectives}

By the branching problems
 we mean the broad problem of understanding
 how irreducible representations
 of a group behave when restricted to a subgroup.  
As viewed in \cite{xKVogan2015}, 
 we may divide the branching problems
 into the following three stages:
\par\noindent
{\bf{Stage A.}}\enspace
Abstract features of the restriction;
\par\noindent
{\bf{Stage B.}}\enspace
Branching law;
\par\noindent
{\bf{Stage C.}}\enspace
Construction of symmetry breaking/holographic operators.  

The role of Stage A is to develop 
 an abstract theory on the restriction of representations
 as generally
 as possible.  
In turn, 
 we could expect a detailed study of the restriction
 in Stages B (decomposition of representations)
 and C (decomposition of vectors)
 in the specific settings 
 that are {\it{a priori}} guaranteed
 to be \lq\lq{nice}\rq\rq\
 in Stage A.

Solving Problem \ref{q:Bdd} or Problem \ref{q:bdd}
 on the bounded multiplicity property
 may be considered as in Stage A.  
In this section, 
 we discuss some promising examples
 in Stages B and C in the new settings
 that fit well into the framework of the present article.  
We also mention some examples
 which are \lq\lq{outside}\rq\rq\
 this framework, 
 to clarify the limitation
 as well.

When the pair $(G,G')$ satisfies the bounded multiplicity property \eqref{eqn:BB}, 
or
 when the complexified pair 
 $({\mathfrak{g}}_{\mathbb{C}}, {\mathfrak{g}}_{\mathbb{C}}')$
 is essentially 
 $({\mathfrak{sl}}_n, {\mathfrak{gl}}_{n-1})$
 or 
 $({\mathfrak{s o}}_n, {\mathfrak{s o}}_{n-1})$
 up to outer automorphisms, 
 see \eqref{eqn:BBlist}, 
 there have been active and rich study
 of the branching problems
 in Stages B and C
 in recent years, 
 such as
 the Gan--Gross--Prasad conjecture (Stage B)
 and 
 the construction of symmetry breaking operators
 (Stage C), 
 see {\it{e.g.,}} an exposition \cite{K19a}
 and references therein.  
Let us now focus on the {\bf{new settings}}
 when the pair $(G,G')$ does not satisfy \eqref{eqn:BB}, 
 but the triple $H \subset G \supset G'$
 satisfies 
the bounded multiplicity property \eqref{eqn:BBH}, 
 the triple $(\Pi,G,G')$ 
 with $\Pi \in \operatorname{Irr}(G)$ 
 satisfies $m(\Pi|_{G'}) < \infty$
 (see \eqref{eqn:msup}), 
 or the triple $(G, \Pi_1, \Pi_2)$
 with $\Pi_1, \Pi_2 \in \operatorname{Irr}(G)$ satisfies
 $m(\Pi_1 \otimes \Pi_2)<\infty$ (see \eqref{eqn:JLTp5}).  
We shall see in Examples \ref{ex:Opqanalysis}--\ref{ex:tri}
 below some previous successful results on the analysis
 of the branching laws $\Pi|_{G'}$
 in Stages B and C in these settings.  
One may observe
 that the existing results treated only a small part
 of this new framework 
 in comparison 
 with the complete list in Section \ref{sec:classification}.  
This observation indicates possible new avenues 
 of the rich study of the branching problems
 in Stages B and C.

\begin{example}
[Restriction of discrete series representations for $G/H$]
\label{ex:Opqanalysis}
~~~\newline
Suppose
 that $(G, H, G')=(O(p,q), O(p-1,q), O(p_1, q_1) \times O(p_2, q_2))$
 where $p_1+p_2=p$ and $q_1+q_2=q$.  
For simplicity, 
 suppose $p+q \ge 5$.  
In this case
 the pair $(G,G')$ does not satisfy
 the bounded multiplicity property \eqref{eqn:BB}
 if $p_1+q_1>1$ and $p_2+q_2>1$, 
 and not the finite multiplicity property \eqref{eqn:PP}
 if $p>1$ and $q>1$ 
 in addition, 
 see \cite{xKMt}, 
 however, 
 the triple $(G,H,G')$ always satisfies the finer bounded multiplicity property
 \eqref{eqn:BBH}
 as was seen in Example \ref{ex:Opq}.  
The branching problem 
 for the restriction $\Pi|_{G'}$
 of $\Pi \in \operatorname{Irr}(G)_H$ relates
 harmonic analysis 
 involving three groups $G$, $H$, and $G'$.  
Discrete series representations $\Pi$ were classified
 by Faraut \cite{F79}
 and Strichartz \cite{S83}, 
 which can be expressed also 
 in algebraic terms of Zuckerman derived functor modules
 $A_{\mathfrak{q}}(\lambda)$ \cite{KO02}.  
The branching laws $\Pi|_{G'}$ to the subgroup $G'$ were determined
 in the discretely decomposable case $(p_2=0)$
 in \cite{K93}, 
 and also in the case containing continuous spectrum
 under the assumption 
 that $(q_1,q_2)=(1,0)$
 by Frahm and Y.~Oshima \cite{MO15}.  
For general $(p_1, p_2, q_1, q_2)$, 
 full discrete spectrum
 of the restriction $\Pi|_{G'}$ occurs in a multiplicity-free fashion
 and is constructed and classified in \cite{K21}.  
See also a recent work 
 of {\O}rsted and Speh \cite{OS19}
 for another approach to capture a generic part
 of discrete spectrum
 in the branching law of $\Pi|_{G'}$.  
The triple $(G,H,G')$ in this example is a real form 
 of the complexified triple appearing in the fourth row of the left column in Table \ref{tab:0.1}.  
\end{example}

\begin{example}
[Unitary branching laws for mirabolic]
\label{ex:kop}
Let $G=GL_n({\mathbb{R}})$
 and $P$ a mirabolic, 
 {\it{i.e.,}}
 a maximal parabolic subgroup 
 with Levi factor $GL_1({\mathbb{R}}) \times GL_{n-1}({\mathbb{R}})$.  
Then for any symmetric pair $(G,G')$, 
 namely, 
 $G'=O(p,n-p)$, 
$GL_p({\mathbb{R}}) \times GL_{n-p}({\mathbb{R}})$,  
 $Sp_m({\mathbb{R}})$
 or $GL_m({\mathbb{C}})$
 when $n=2m$, 
 the generalized flag variety
 $G_{\mathbb{C}}/P_{\mathbb{C}}$ is $G_{\mathbb{C}}'$-spherical, 
 hence the triple $(G,P,G')$ is an example
 that fulfills the geometric assumption in Theorem \ref{thm:Qsph}
 with $Q=P_{\mathbb{C}}$.  
In this case the branching laws 
 of the unitary representation $\Pi|_{G'}$
 is explicitly found in \cite{KOP11}
for all the symmetric pairs $(G,G')$
 when $\Pi$ is a unitarily induced representation
 of a unitary character of $P$.  
The multiplicity in the (unitary) branching laws 
 of the restriction $\Pi|_{G'}$ is guaranteed
 to be uniformly bounded
 by Theorem \ref{thm:Qsph}, 
 even though the symmetric pairs $(G,G')$ do not satisfy
 the general finite multiplicity condition \eqref{eqn:PP}
 for most of the cases, 
 see \cite{xKMt}.  
\end{example}

\begin{example}
[Symmetry breaking operators]
\label{ex:NO}
For the symmetric pair $(G,G')=(Sp_n({\mathbb{R}}), GL_n({\mathbb{R}}))$, 
 the general finite multiplicity property \eqref{eqn:PP} fails
 \cite{xKMt}.  
However, 
 if we take $P$ to be the Siegel parabolic subgroup of $G$, 
 and $P'$ to be a maximal parabolic subgroup of $G'$, 
 then one has $\#(P_{\mathbb{C}}'\backslash G_{\mathbb{C}}/P_{\mathbb{C}})<\infty$, 
 and therefore the geometric assumption in Theorem \ref{thm:restQQ}
 is satisfied with $Q=P_{\mathbb{C}}$
 and $Q'=P_{\mathbb{C}}'$, 
 and thus the bounded multiplicity property
 \eqref{eqn:dpsbdd} holds
 for the space of symmetry breaking operators.  
Nishiyama--\O rsted \cite{NO18}
 has constructed explicitly (integral) symmetry breaking operators
 between the corresponding degenerate principal series representations of $G$ and $G'$
  generalizing \cite{KS15}.  
\end{example}

\begin{example}
[Invariant trilinear form]
\label{ex:tri}
For a noncompact simple Lie group $G$, 
 the space of invariant trilinear forms
 $\invHom G {\Pi_1 \otimes \Pi_2 \otimes \Pi_3}{\mathbb{C}}$
 is finite-dimensional
 for {\it{all}} $\Pi_1$, $\Pi_2$, $\Pi_3 \in \operatorname{Irr}(G)$, 
 or equivalently, 
 the pair $(G \times G, \operatorname{diag}G)$
 satisfies the finite multiplicity condition \eqref{eqn:PP}, 
 if and only if ${\mathfrak{g}}$ is isomorphic
 to ${\mathfrak{s o}}(n,1)$
 (\cite{Ksuron}, see also \cite[Cor.~4.2]{xkProg2014}).  
Beyond this case, 
 one may consider the setting 
 where $\Pi_1$, $\Pi_2$, $\Pi_3$ are \lq\lq{small representations}\rq\rq\
 such as degenerate principal series 
representations.  
For instance, 
 if $G=Sp_n({\mathbb{R}})$
 and $P$ is a Siegel parabolic subgroup, 
 then 
 $G_{\mathbb{C}}/P_{\mathbb{C}}\times G_{\mathbb{C}}/P_{\mathbb{C}}$
 is $G_{\mathbb{C}}$-spherical 
 via the diagonal action
 \cite{Li94}, 
 or equivalently, 
 it is $G_U$-strongly visible
 via the diagonal action \cite{xtanaka12}, 
 and therefore one has the bounded property
 \eqref{eqn:tribdd}
 of the space of invariant trilinear forms
 by Theorem \ref{thm:trilinear}
 and the one \eqref{eqn:JLT4102}
 of the tensor product
 by Corollary \ref{cor:fmtensor}.  
Construction of trilinear forms 
 and explicit evaluations
 of spherical vectors
 by the generalized Bernstein--Reznikov integrals
 in these cases
 have been studied
 in Clerc {\it{et.~al.~}} \cite{CKOP11} 
 and Clare \cite{C15}, 
 for instance.  
\end{example}

Finally, 
 we mention a couple
 of examples
 for which the bounded multiplicity property fails
 by explicit computations.  

\begin{example}
[Compact symmetric pairs of rank one]
\label{ex:SU3}
Let us consider the triple $(G,H,G')=(SU(3), U(2), SO(3))$.  
Note that the triple of the complexified Lie algebras
 $({\mathfrak{g}}_{\mathbb{C}}, 
{\mathfrak{h}}_{\mathbb{C}}, 
{\mathfrak{g}}_{\mathbb{C}}')
=
(
{\mathfrak{sl}}_3, 
{\mathfrak{gl}}_2, 
{\mathfrak{so}}_3)
$
 is not in Table \ref{tab:0.1}, 
and thus the bounded multiplicity property
 \eqref{eqn:BBH} of the branching 
 should fail.  
In fact, 
let $\Pi_n$ be the irreducible representations of $G=SU(3)$
 with highest weight $(n,0,-n)$
 in the standard coordinates, 
 and $\pi_n$ the $(2n+1)$-dimensional irreducible representation 
 of $G'=SO(3)$.  
Then $\Pi_n \in \operatorname{Irr}(G)_H$
 and $[\Pi_n|_{G'}:\pi_n]=[\frac n 2]+1$, 
 hence 
$
  \underset{n \in {\mathbb{N}}}\sup\,\, m(\Pi_n|_{G'}) = \infty
$
 showing the failure
 of the bounded multiplicity property \eqref{eqn:BBH}
 for the triple $(G, H, G')$.  
We note that this triple is one of the three exceptional cases
 when $\operatorname{rank}G/H=1$
 indicated in Corollary \ref{cor:rankone}.  
\end{example}

The following example is a reformulation
 of \cite[Sect.~6.3]{mf-korea}. 

\begin{example}
[Restriction of Harish-Chandra's discrete series representations]
\label{ex:sp2C}
Let $(G,H,G')=(SO(5,{\mathbb{C}}),SO(3,2), SO(3,2))$.  
If $\Pi$ is a discrete series representation $\Pi$ for the symmetric space
 $G/H$, 
 then there exists a (Harish-Chandra) discrete series representation $\pi$
 of $G'$
 such that $\pi$ occurs in the restriction $\Pi|_{G'}$
 as discrete spectrum
 of infinite multiplicity, 
 and in particular, 
$[{\Pi}|_{G'}:\pi]=\infty$.  
In fact, 
 this triple $(G,H,G')$ does not appear
 in the classification given in Theorem \ref{thm:cpxlist}.  
\end{example}

\vskip 1pc
\par\noindent
{\bf{$\langle$Acknowledgements$\rangle$}}\enspace
This work was partially supported
 by Grant-in-Aid for Scientific Research (A) (18H03669), 
Japan Society for the Promotion of Science.

The author is greatly indebted to T.\ Kubo and Y.\ Tanaka
 for reading carefully the first draft
 of this article
 and for making useful comments.

\vskip 1pc
\par\noindent
Toshiyuki Kobayashi, 
Graduate School of Mathematical Sciences
and Kavli IPMU (WPI), 
The University of Tokyo.

\vskip 1pc
Added in Proof.  
Recently, 
 Aisenbud and Gourevitch have posted a preprint 
 \lq\lq{Finite multiplicities beyond spherical pairs}\rq\rq\
 in the arXiv (arXiv:2109.00204)
 concerning the individual finite multiplicity property 
 among other results, 
 though not the uniform estimate
 as in this article.  
Similar statements to the implications (iii) $\Rightarrow$ (i) in Example \ref{ex:Qsph2}
 and (v) $\Rightarrow$ (i) in Corollary \ref{cor:fmtensor} in this article
 may be found there 
 in \lq\lq{Corollaries $G$ and $H$}\rq\rq\
 with the same geometric assumption 
 but without the uniform estimate of the multiplicities.  
\end{document}